\documentclass[reqno,12pt]{amsart}
\usepackage{amssymb, amsmath,latexsym,amsfonts,amsbsy, amsthm}
\usepackage{color}
\usepackage{txfonts}
\usepackage{hyperref}

\usepackage{tikz}
\usetikzlibrary{arrows,chains,matrix,positioning,scopes}
\makeatletter
\tikzset{join/.code=\tikzset{after node path={%
\ifx\tikzchainprevious\pgfutil@empty\else(\tikzchainprevious)%
edge[every join]#1(\tikzchaincurrent)\fi}}}
\makeatother
\tikzset{>=stealth',every on chain/.append style={join},
         every join/.style={->}}
\tikzstyle{labeled}=[execute at begin node=$\scriptstyle, execute at end node=$]
\newtheorem{theorem}{Theorem}[section]

\newtheorem{lemma}[theorem]{Lemma}
\newtheorem{proposition}[theorem]{Proposition}

\newtheorem{remark}[theorem]{Remark}

\def\bR{\mathbb R}
\def\bN{\mathbb N}

\def\bT{\mathbb T}

\def\t{\tilde}
\def\q{\quad}

\def\th{\theta}

\def\dl{\delta}
\def\Dl{\Delta}
\def\lt{\left}

\def\rt{\right}

\def\i{\infty}
\def\e{\epsilon}

\def \ls{\lesssim}
\def\p{\partial}
\def\f{\frac}
\def\na{\nabla}
\def\al{\alpha}

\def\O{\Omega}
\def\o{\omega}

\def\s{\sqrt}
\def\bl{\boldsymbol}

\def\nn{\nonumber}
\def\be{\begin{equation}}
\def\ee{\end{equation}}
\def\bes{\begin{equation*}}
\def\ees{\end{equation*}}
\def\bali{\begin{aligned}}
\def\eali{\end{aligned}}
\def\beas{\begin{eqnarray*}}
\def\eeas{\end{eqnarray*}}

\def\pf{\noindent {\bf Proof. \hspace{2mm}}}
\def\ef{ \hfill $ \Box $ \vskip 3mm}

\allowdisplaybreaks[4]

\begin{document}
\title[Vanishing viscosity limit in an annulus]
{Vanishing viscosity limit of the stationary Navier-Stokes equations in an expanding annulus}

\author{Xinghong Pan}
\address[X. Pan]{School of Mathematics and Key Laboratory of MIIT, Nanjing University of Aeronautics and Astronautics, Nanjing 211106, China}
\email{xinghong\_87@nuaa.edu.cn}

\author{Chuankai Zhao}
\address[C. Zhao]{School of Mathematical Sciences, Capital Normal University, Beijing 100048, China}
\email{chuankaizhao@amss.ac.cn}

\author{Jianfeng Zhao}
\address[J. Zhao]{School of Mathematics and Information Science, Center for Applied Mathematics of Guangxi, Post-doctoral Research Station of the First-level Discipline in Mathematics, Guangxi University, Nanning, 530004, China}
\email{zhaojianfeng@amss.ac.cn}

%\date{\today}

\subjclass[2020]{35Q30, 76D05.}

\keywords{vanishing viscosity limit, stationary Navier-Stokes, expanding annulus}

%\thanks{X. Pan is supported by National Natural Science Foundation of China (No. 12471222, 12031006.)}

\maketitle

\begin{abstract}
  In this paper, we establish vanishing viscosity limit of the 2D stationary Navier-Stokes equations in an expanding annulus with a fixed width. The flow is subjected to a large perturbation of a rotation of rigid body at one boundary and stays at rest at another boundary. Due to this different boundary condition setting, the boundary layer effects are different near the interior and exterior boundaries, which result in different thickness of boundary layer and different leading order boundary layer equations. We will construct an approximate solution to this 2D Navier-Stokes equations by using higher order asymptotic approximation and show the validity of the boundary layer expansion. The leading order of the Euler solution is the Taylor-Couette flow $\left(Ar+B\f{1}{r}\right)e_\th$ for some suitable constant $A$ and $B$, which is determined by using the principle of the Prandtl-Batchelor theory. The perturbation of the tangential velocity becomes large as the annulus expands, which causes heavy difficulties when we show the convergence of the approximate solution. Variously weighted energy estimates up to third-order derivatives of the error solution are constructed to close the energy of the nonlinear system.

\end{abstract}

\numberwithin{equation}{section}

%\tableofcontents

\indent

\section{Introduction}

\indent

We consider the following stationary Navier-Stokes equations in an annulus  $B_{r_0+1}/B_{r_0}$,
\begin{equation}\label{ns}
\left \{
\begin {array}{ll}
\bl{u}^\e\cdot\na \bl{u}^\e+\na p^\e-\e^2\Dl \bl{u}^\e=0,\\ [5pt]
\na\cdot\bl{u}^\e=0,\\
\bl{u}^\e\cdot \boldsymbol{\tau}\big|_{\p B_{r_0+1}}=\lt(\o +\delta f(\th)\rt)(r_0+1),\q\ \bl{u}^\e\cdot \boldsymbol{n}\big|_{\p B_{r_0+1}}=0,\\
\bl{u}^\e\big|_{\p B_{r_0}}=0,
\end{array}
\right.
\end{equation}
where $(x,y)\in \bR^2$ and $B_r$ denotes the disc centered at the origin in $\bR^2$ with radius $r$. $\bl{u}^\e$ is the velocity and $p^\epsilon$ is the pressure.  $\epsilon^2>0$ is the viscosity coefficient, which is reciprocal to Reynolds number. $\o>0$ is a constant, $\dl$ is a  small number, and $f(\th)$ is a smooth function, defined on the $\p B_{r_0+1}$, where $\th\in [0,2\pi)$ is the polar angle.  $\boldsymbol{n}$ and $\boldsymbol{\tau}$ are the unite outer normal vector and the tangential vector on the outer boundary $\p B_{r_0+1}$. ($\boldsymbol{n}$, $\boldsymbol{\tau}$) have the same orientation with the Euclidean orthogonal basis $(\bl{e}_1,\bl{e}_2)$. Since we consider that the annulus is expanding, the radius $r_0$ is set to be large, {\it i.e.}, $r_0\geq 100$.
\begin{remark}
The boundary condition for the tangential component of the velocity at the exterior boundary shows that its angular velocity is a small perturbation of some non-zero constant. However, the perturbation of the linear velocity at the exterior boundary is large as the radius $r_0$ expands.
\end{remark}

 Formally, as $\epsilon\rightarrow 0$, we obtain the following 2D steady Euler equations for $\bl{u}^e$

\begin{equation}\label{equeuler}
\left \{
\begin {array}{ll}
\bl{u}^e\cdot\na \bl{u}^e+\na p^e=0,\\ [5pt]
\na\cdot\bl{u}^e=0,\\
\bl{u}^e\cdot\bl{n}\big|_{\p B_{r_0}}=\bl{u}^e\cdot\bl{n}\big|_{\p B_{r_0+1}}=0.
\end{array}
\right.
\end{equation}

We will show the existence of solution $\bl{u}^\e$ to (\ref{ns}) which converges to a solution of the steady Euler equations (\ref{equeuler}).

Since our referenced domain is an annulus, it is more convenient to reformulate system \eqref{ns} and system \eqref{equeuler} in polar coordinates. In polar coordinates $(r,\th)$, $x=r \cos \th,\ y=r \sin \th$, the polar orthogonal basis
\bes
\bl{e}_r=(\cos\th,\sin\th),\q \bl{e}_\th=(-\sin\th,\cos\th)
\ees
have the same orientation with $(\bl{e}_1,\bl{e}_2)$. We denote the vector function $\bl{u}$ in the polar coordinate by
\bes
\bl{u}=(v,u)=v\bl{e}_r+u\bl{e}_\th.
\ees
Then system \eqref{ns} is reformulated in polar coordinate into the following
\be\label{nspolar}
\left\{
\begin{array}{l}
u^{\epsilon} u_\theta^{\epsilon}+r v^{\epsilon} u_r^{\epsilon}+u^{\epsilon} v^{\epsilon}+p_\theta^{\epsilon}-\epsilon^2\left(\frac{u_{\theta \theta}^{\epsilon}}{r}+r u_{r r}^{\epsilon}+u_r^{\epsilon}+\frac{2}{r} v_\theta^{\epsilon}-\frac{u^{\epsilon}}{r}\right)=0, \\
u^{\epsilon} v_\theta^{\epsilon}+r v^{\epsilon} v_r^{\epsilon}-\left(u^{\epsilon}\right)^2+r p_r^{\epsilon}-\epsilon^2\left(\frac{v_{\theta \theta}^{\epsilon}}{r}+r v_{r r}^{\epsilon}+v_r^{\epsilon}-\frac{2}{r} u_\theta^{\epsilon}-\frac{v^{\epsilon}}{r}\right)=0, \\
u_\theta^{\epsilon}+\left(r v^{\epsilon}\right)_r=0, \\
\lt(u^{\epsilon},v^\e\rt)(\theta, r_0+1)=\lt((\o+\delta f(\theta))(r_0+1),0\rt), \\
\lt(u^{\epsilon},v^\e\rt)(\theta, r_0)=0,
\end{array}
\right.
\ee
where $\bl{u}^\e=(v^\e,u^\e)$. Also the Euler system \eqref{equeuler} in polar coordinates is reformulated into
\bes
\left\{
\begin{array}{l}
u^{e} u_\theta^{e}+r v^{e} u_r^{e}+u^{e} v^{e}+p_\theta^{e}=0, \\
u^{e} v_\theta^{e}+r v^{e} v_r^{e}-\left(u^{e}\right)^2+r p_r^{e}=0, \\
u_\theta^{e}+\left(r v^{e}\right)_r=0, \\
v^e(\theta, r_0)=v^e(\theta, r_0+1)=0,
\end{array}
\right.
\ees
where $\bl{u}^e=(v^e,u^e)$. In the current work, we consider the limiting Euler solution has no stagnation point $(|\bl{u}^e|\neq 0)$ in the interior of the annulus. From \cite[Theorem 1.3]{WangZ:2023ARXIV}, the Euler solution must to be a tangential shear flow $u^e(r)\bl{e}_\th$. Moreover, from the Prandtl-Batchelor theory, see for example \cite[Appendix]{FeiGLT:2023CMP}, we can obtain that
\bes
\Dl u^e(r)=0,
\ees
which indicates that the only possible Euler flow is the following Taylor-Couette flow
\bes
\bl{u}^e=\lt(Ar+B\f{1}{r}\rt)\bl{e}_\th.
\ees

The constants $A,B$ are deduced by the Batchelor-Wood formula. One can refer to \cite{DrivasIN:2023ARXIV} for its detailed derivation. We will also give an explanation later on. See Lemma \ref{lembw} and Section \eqref{sec431}. The principle of choosing $A$ and $B$ is

{\bf The integral average of the square of the tangential components to the solutions of the Navier-Stokes equations and the limiting Euler equations on the boundary must be the same.}

This indicates that
\be \label{BatchelorWood}
\lt\{
\begin{aligned}
&\f{1}{2\pi}\int^{2\pi }_0 \lt(Ar+B\f{1}{r}\rt)^2\Big|_{r=r_0}d\th=0,\\
&\f{1}{2\pi}\int^{2\pi }_0 \lt(Ar+B\f{1}{r}\rt)^2\Big|_{r=r_0+1}d\th=\f{1}{2\pi}\int^{2\pi }_0 \lt(\o+\dl f(\th)\rt)^2(r _0+1)^2d\th:=\t{\o}^2(r_0+1)^2.
\end{aligned}
\rt.
\ee
From \eqref{BatchelorWood}, we see that
\be\label{defAB}
A=\f{(r_0+1)^2\t{\o}}{2r_0+1},\q B=-Ar^2_0=-\f{(r_0+1)^2r^2_0\t{\o}}{2r_0+1}.
\ee
Next, we introduce the leading order steady boundary layer equations near the exterior boundary $r=r_0+1$. Denote the leading order term of the outer boundary layer expansions by $(u_p^{(0)},v_p^{(1)})$, then
\be\label{boundarylayeruppermain}
\left\{
\begin {array}{ll}
\big(\t{\o}(r_0+1)+u_p^{(0)}\big)\partial_\th u_p^{(0)}+\big( v_p^{(1)}- v_p^{(1)}(\th,0)\big)(r_0+1)\partial_\zeta u_p^{(0)}-(r_0+1)\partial^2_{\zeta}u_p^{(0)}=0,\\[5pt]
\partial_\th u_p^{(0)}+(r_0+1)\partial_\zeta v_p^{(1)}=0,\\[5pt]
u_p^{(0)}(\th,\zeta)=u_p^{(0)}(\th+2\pi,\zeta),\ v_p^{(1)}(\th,\zeta)=v_p^{(1)}(\th+2\pi,\zeta),\\[5pt]
u_p^{(0)}\big|_{\zeta=0}=(r_0+1)\lt(\o+\dl f(\th)-\t{\o}\rt)\thickapprox \dl,\\[5pt]
\lim\limits_{\zeta\rightarrow -\infty}u_p^{(0)}=\lim\limits_{\zeta\rightarrow -\infty}v_p^{(1)}=0,
\end{array}
\right.
\ee
where $\zeta:=\f{r-r_0-1}{\e}$ is the scaled variable. This boundary layer equations \eqref{boundarylayeruppermain} is derived by the procedure of matched asymptotic expansion and its well-posedness is stated in Proposition \ref{propdcu0}.

Define the domain $$\O:=\bT\times[r_0,r_0+1],$$ where $\bT=[0,2\pi)$ is the $2\pi$ periodic interval. The following is the main result of our paper.
\begin{theorem}\label{thmain}
Assume that $f(\th)$ is a smooth $2\pi$-periodic function. Then there exist two constants $\e_0$ and $\dl_0$ such that for any $\e\in(0,\e_0]$ and $\dl\in(0,\dl_0]$, the system \eqref{nspolar} has a solution $(u^\e,v^\e)$ satisfying
\be\label{estimain1}
\bali
\lt\|u^\e(\th,r)-u^e(r)-u_p^{(0)}(\th,\zeta)\rt\|_{L^\i(\O)}+\lt\|v^\e(\th,r)\rt\|_{L^\i(\O)}\leq C\e r_0,
\eali
\ee
where $u^e(r)=Ar+\f{B}{r}$, $C$ is independent of $\e$ and $\dl$, and $u_p^{(0)}(\th,\zeta)$ is the solution of \eqref{boundarylayeruppermain}. \qed
\end{theorem}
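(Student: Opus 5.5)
We construct an approximate solution by matched asymptotic expansion and then solve for the remainder by a fixed point argument built on weighted energy estimates. The approximate solution is
\bes
u^a=\sum_{j=0}^{N}\e^j\big(u_e^{(j)}(\th,r)+u_p^{(j)}(\th,\zeta)+\hat u_p^{(j)}(\th,\eta)\big),
\ees
with $v^a$ built the same way. Here $\zeta=(r-r_0-1)/\e$ is the outer boundary-layer variable. The variable $\eta$ is the inner one, near $r=r_0$. Since $u^e(r_0)=0$ by \eqref{defAB}, the Euler flow vanishes there, so the inner layer is not of Prandtl type; I expect $\eta=(r-r_0)/\e^{\al}$ with a different scaling (plausibly $\al=2/3$, from balancing the linear shear $u^e\approx u^e_r(r_0)(r-r_0)$ against viscosity). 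Its leading-order equation is a linear layer that only corrects higher-order mismatches.

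The leading Euler term is $u^e=Ar+B/r$, with $A,B$ given by \eqref{defAB}. The leading outer layer $(u_p^{(0)},v_p^{(1)})$ comes from Proposition \ref{propdcu0}, which gives exponential decay in $\zeta$ and smallness of order $\dl$. Higher-order Euler correctors solve linearized Euler around the shear flow. These correctors are determined at each order by a Batchelor--Wood type solvability condition (Lemma \ref{lembw}), which fixes the $\th$-independent parts; the boundary traces of $v$ are inherited from the previous layer. Higher-order layers solve linearized Prandtl equations around $\t{\o}(r_0+1)+u_p^{(0)}$. Each is cut off so that it is supported near its own boundary. Taking $N$ large, the residual $R^a$ of \eqref{nspolar} satisfies $\|R^a\|\ls \e^{K}r_0^{m}$ for some large $K$. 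Note that $u_p^{(0)}$ contributes $O(r_0)$-sized quantities only through $r$-weights, and the stated error $C\e r_0$ reflects this.

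Next we write $(u^\e,v^\e)=(u^a,v^a)+\e^{\gamma}(u,v)$ and derive the error system. Its linear part is Navier--Stokes linearized around $(u^a,v^a)$, which carries the large tangential speed $\sim r_0$. The main step is the linear estimate. We test the equation against $(u,v)$ and use the structure of the shear flow $u^e=Ar+B/r$; since $\Dl u^e=0$, the vorticity of $u^e$ is the constant $2A$. The dangerous term is $v\,\p_r u^a$ coming from the boundary layer, where $\p_\zeta u_p^{(0)}\sim \dl/\e$. We control it with the smallness of $\dl$ and with a Hardy-type inequality $\|v/(r_0+1-r)\|\ls\|\p_r v\|$, since $v=0$ on the boundary. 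The leftover terms, of order $r_0$, are handled by the dissipation $\e^2\|\na(u,v)\|^2$ together with the divergence-free structure.

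Because $r_0$ is large, a basic energy estimate alone is not enough. Following the abstract, I would add weighted estimates up to third-order derivatives: test the vorticity equation and its $\p_\th$ derivatives, using weights tied to the distance to the inner boundary. This gives $L^\i$ control through Sobolev embedding of the form $\|u\|_{L^\i}\ls \e^{-1}(\ldots)$, which is needed for the nonlinear term $\e^\gamma u\cdot\na u$.

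Finally, we close with a contraction mapping in the resulting norm, choosing $\gamma$ so that the nonlinear loss is absorbed; the Sobolev bound shows $\gamma=1$ suffices. This gives \eqref{estimain1}, because all higher-order correctors are $O(\e r_0)$ in $L^\i$.

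The main obstacle is the linear stability estimate. The large background speed $\t{\o}(r_0+1)$ and the degeneracy of the shear at $r=r_0$ prevent a straightforward Gronwall or absorption argument. I would first try to exploit the near-rigid-rotation structure, which should give cancellation of the transport term in the energy identity, and put all effort into the $\dl$-small boundary-layer interaction terms.
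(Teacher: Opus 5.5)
The gap is in the linear stability step, which is the heart of the argument. Testing the error equation against $(u,v)$ cannot close.

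For the Euler part $u^a\approx u^e=A(r-r_0^2/r)$, the transport and pressure terms drop out, but the stretching terms leave $\int_\Omega(S_uu+S_vv)=\int_\Omega(ru^e_r-u^e)\,uv=2Ar_0^2\int_\Omega r^{-1}u_{\neq}v$, with $2A\approx\tilde\omega r_0$.
\begin{itemize}
\item This term has size $\tilde\omega r_0^2\int_\Omega|u_{\neq}v|$. It is independent of $\epsilon$ and $\delta$, and it has no sign. Incompressibility gives no cancellation either: in terms of the stream function it is a weighted $\int\Phi_r\Phi_\theta$.
\item The dissipation only provides $\epsilon^2 r_0\|(u_r,v_r)\|^2$, and Poincar\'e cannot make up the missing factor $\epsilon^{-2}$.
\item The constant vorticity $u^e_r+u^e/r=2A$ does not help in this identity. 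Only the symmetric part of $\nabla\boldsymbol{u}^e$ enters, namely the strain $u^e_r-u^e/r=2Ar_0^2/r^2$.
\item The background is not close to a rigid rotation. $u^e\approx\tilde\omega r_0(r-r_0)$ is a Couette-type shear of rate $\sim r_0$ that vanishes at $r=r_0$.
\end{itemize}
So the genuinely dangerous term is not the $\delta$-small layer piece $v\,\partial_r u^a_p$ you single out, but this $O(r_0)$ Euler strain term. Your claim that the leftover $O(r_0)$ terms are ``handled by the dissipation'' fails.

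The missing idea is the $\epsilon$-independent positivity estimate of Proposition \ref{proppositive}: multiply the momentum equations by $(u_\theta,v_\theta)$ instead.
\begin{itemize}
\item The transport term then yields $\int_\Omega u^a(u_\theta^2+v_\theta^2)\approx\tilde\omega r_0\int_\Omega(r-r_0)(u_\theta^2+v_\theta^2)$, since $u^a>0$ in the interior.
\item Using $u_\theta=-(rv)_r$, the dangerous terms become $-\frac12\int_\Omega(u^a_r+u^a/r)_r(rv)^2$. Its Euler part vanishes precisely because the vorticity is constant.
\item The remaining term is $-\int_\Omega(u^e/r)_r(rv)^2=-2Ar_0^2\int_\Omega r^{-3}(rv)^2\le 0$, which has a good sign.
\end{itemize}
This is where the Taylor--Couette structure is actually used.

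This estimate controls only tangential derivatives, and only with the degenerate weight $r-r_0$. It must therefore be coupled with weighted energy estimates. The zero mode $u_0$ is handled separately. The nonzero modes are handled by testing the vorticity equation with $\Phi_{\neq}(r-r_0)^{\alpha}$ and using Hardy inequalities. There, the $O(r_0)$ strain terms are bounded by multiples of $\int_\Omega(r-r_0)(u_\theta^2+v_\theta^2)$ rather than absorbed by dissipation. A non-weighted estimate then gives $\epsilon^4\|(u_r,v_r)\|^2$.

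Only on top of this $H^1$ scheme do the $H^3$ bounds and the contraction go through. The $H^3$ bounds use the same scheme for $\partial_\theta^2$, Bogovskii for $p_\theta$, and the vorticity equation for $u_{rrr}$.

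Two smaller points:
\begin{itemize}
\item With an $\epsilon^{2/3}$ inner layer, the expansion must proceed in powers $\epsilon^{1+k/3}$, not $\epsilon^j$.
\item Lemma \ref{lembw} fixes only $\tilde\omega$, at leading order. The higher-order $\theta$-independent Euler parts are shear correctors $h_{1+k/3}(r)$, matched to the far-field constants $A_{1+k/3}$ and $\hat A_{1+k/3}$ of the layers.
\end{itemize}
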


 \begin{remark}
  If we change the boundary condition between the inner and outer boundary, i.e. the flow is subjected to a perturbation of the rotation of rigid body at the inner boundary and stays at rest at the outer boundary, we can obtain a similar result as Theorem \ref{thmain} when $r_0$ is suitably away from the origin. The proof is completely the same as that for Theorem \ref{thmain}. The only difference is presented in Remark \ref{remboundaryc}.
\end{remark}

\begin{remark}
   The authors in \cite{FeiGLT:2024ADV} also considered  the  vanishing viscosity limit of the 2D stationary Navier-Stokes equations in the annulus. The differences between theirs and ours
  are mainly stays in the following two aspects which cause many more difficulties in analysis:

  One is the degeneration of the inner boundary condition. This degeneration results in different boundary expansions at the outer and inner boundary. The thickness of the boundary layer will be different at the outer and inner boundary and more subtle asymptotic expansion will be constructed.

  The other is the expansion of the annulus in our paper, which means that $r_0$ in our setting is a large number which can go to infinity. This causes heavily difficult problems when we show existence of the error equations after the approximate solution is constructed. More dedicate and complicated estimates are needed to close the  energy functionals. Our result gives a quantitative dependence on the radius of the error between the solutions of the Navier-Stokes equations and the Euler equations when the annulus expands.
\end{remark}

The rigorous justification of vanishing viscosity limit from the Navier-Stokes equations to the Euler equations, particularly under the non-slip boundary condition, remains a significant and challenging problem in fluid dynamics, seeing  \cite{OleinikS:1999,SchlichtingG:2017}. This issue arises due to the mismatch between the boundary conditions of the viscous Navier-Stokes equations and the inviscid Euler equations. A classical approach to address this discrepancy is through the Prandtl boundary layer theory, first introduced by Prandtl in 1904 \cite{Prandtl:1904}, which provides a formal asymptotic expansion to describe the boundary layer behavior.

In recent decade, there are some important results for the steady vanishing viscosity limit problem. Guo-Nguyen \cite{GuoN:2017ANNPDE} considered the Prandtl boundary expansion of the steady Navier-Stokes equations in a moving plate in $(0,L)\times \bR_+$, where they developed methodologies based on positivity estimates. And then Guo-Iyer \cite{GuoI:2023CPAM} showed the Prandtl boundary layer expansion for the motionless plate with the Euler flow being a shear flow. Gao-Zhang advanced this progress in \cite{GaoZ:2023SCM} by introducing a good unknown-the ratio of the stream function to the approximate solution-enabling them to validate Prandtl boundary layer asymptotic expansions across arbitrarily large horizontal domains. In subsequent work \cite{GaoZ:2024JMP}, they employed innovative weighted forms and domain decomposition techniques to estimate the material derivative of the stream function under non-shear Euler flows, thereby extending the applicability of boundary layer asymptotic expansions. Recently, Gao-Xin demonstrated in \cite{GX2023} that in an infinitely long two-dimensional converging channel, if the boundary curve satisfies a decreasing curvature condition, then point-source Navier-Stokes flows with negative mass flux adhere to Prandtl boundary layer theory as viscosity diminishes. Additionally, relevant work can be found in references \cite{Iyer:2017ARMA,Iyer:2019SIAM,Iyer:2019Peking,Gerard:2019ARMA}, which provide further insights and discussions on this topic. Furthermore, we mention some results in \cite{Xin:2004ADV,Xin:2024SIAM,WangZ:2021AIHP,Iyer:2020ARMA,WangZ:2023MATHANN,Jia:2024ARXIV,Gao:2025ARXIV,GuoWZ:2023ANNPDE,DM:2019,ShenWZ:2021} on the well-posedness, asymptotic behavior, dynamic stability, and boundary layer separation of the stationary Prandtl system.

In the study of the steady inviscid limit problem, the identification of an appropriate Euler flow remains a fundamental challenge, particularly for general domains where no precise principle for selection exists despite notable advancements. However, in the context of closed regions, significant insights have been derived from the Prandtl-Batchelor theory. Prandtl, in his seminal work \cite{Prandtl:1904}, demonstrated that in simply-connected domains, the vorticity of the steady Euler flow must be constant within closed streamlines for viscous incompressible fluids. This pivotal discovery was later reaffirmed and extended by Batchelor and Wood in \cite{Batchelor:1956} and \cite{Wood:1957JFM}, respectively, who further elucidated that the exact value of this vorticity constant is determined by the boundary conditions. This kind of result is now referred to as Prandtl-Batchelor theory in the literature. See some related researches on the Prandtl-Batchelor theory in \cite{Kim-thesis, Kim:1998SIAM, KimC:2001SIAM, WV:2007} and references therein. Building upon this theoretical framework, Fei-Gao-Lin-Tao in \cite{FeiGLT:2023CMP, FeiGLT:2024ADV} considered the Prandtl boundary expansion in a disk and an annulus respectively, where the boundary conditions for the swirl component of the velocity is a small perturbation of the rigid-rotation. Furthermore, Fei-Pan-Zhao \cite{FeiPZ:2023ARXIV} extended this analysis to a periodic strip with a motionless boundary on one side. Very recently, Chen-Fei-Lin-Zhao \cite{ChenFLZ:24arxiv} and Pan-Zhao \cite{PanZ:2025ARXIV} have addressed the vanishing viscosity limit in domains of an interior disk with a center vortex and  an exterior disc,  respectively, which further advanced our understanding of inviscid limit phenomena in complex geometries.

In the case of the non-stationary solution, existing rigorous mathematical analyses of the Prandtl boundary layer expansion have been established within the analytic framework \cite{FTZ2018, KVW2020, M2014, NN2018, SC1998-1, SC1998-2, WWZ2017} and references therein. However, validation of the boundary layer expansion in the Sobolev framework remains an open  problem in the non-stationary context. Notably, several instability results concerning the Prandtl expansion of shear flow type in Sobolev spaces have been presented in \cite{G2000, GGN2016, GN2019}. The nonlinear asymptotic stability and transition threshold analysis near Taylor-Couette flow in an annulus for the Navier-Stokes equations in \cite{AnHL:2024CMP} was mentioned here for its relation with our works. We also mention some results on the vanishing viscosity limit of the stationary and non-stationary Navier-Stokes equations with Navier-slip boundary in \cite{MasmoudiR:2012ARMA,WangXZ:2012JMFM,BedrossianHIW:2023ARXIV,BedrossianHIW:2024ARXIV,PanZ:2025preprint} and references therein.

Next we present some ideas of proving the main theorems. In order to prove Theorem \ref{thmain}, we firstly construct an approximate solution to the system \eqref{ns} by the method of asymptotic expansion, which share the same boundary condition with \eqref{ns},  and then estimate the error around the constructed approximate solution. A brief view of the strategy of the proof will be given as follows.

{\bf\noindent Construction of an approximation solution}
We first build an approximate solution $(u^a,v^a)$ by the asymptotic expansion, which contain the Euler part $(u^a_e, v^a_e)$, the outer boundary layer part $(u^a_p,v^a_p)$ and inner boundary layer part $(\hat{u}^a_p,\hat{v}^a_p)$. The Euler part of the approximate solution satisfies the following estimates
\begin{align}
&\lt\|\p^j_\th\p^k_r\lt(u^a_e-\lt(Ar+\f{B}{r}\rt)\rt)\rt\|_\infty\leq C_{j,k}\epsilon r^{1-k}_0(\dl+\epsilon),\q \|\p^j_\th\p^k_rv^a_e\|_\infty\leq C_{j,k} r^{-k}_0\epsilon(\dl+\epsilon). \label{eulerapprox}
\end{align}
While for the boundary outer layer part, we have for any $j,k,\ell\in \{0\}\cup\bN$,
\begin{align}
\|\zeta^\ell\p^{j}_\th\partial_\zeta^k u_p^a\|_\infty\leq C_{j,k,\ell} r_0(\dl+\epsilon), \ \|\zeta^\ell\p^{j}_\th\partial_\zeta^k v_p^a\|_\infty\leq C_{j,k,\ell} \epsilon(\dl+\epsilon), \label{prandtlapprox1}
\end{align}
and for the inner boundary layer parts,
\begin{align}
\|\eta^\ell\p^{j}_\th\partial_\eta^k \hat{u}_p^a\|_\infty\leq C_{j,k,\ell} r_0\e(\dl+\epsilon^{2/3}),  \ \|\eta^\ell\p^{j}_x\partial_\eta^k v_p^a\|_\infty\leq C_{j,k,\ell} \epsilon^{1+2/3}(\dl+\epsilon^{2/3}),\label{prandtlapprox2}
\end{align}
where $\zeta:=\f{r-r_0-1}{\e}$ is the scaled variable near the outer boundary and $\eta:=\f{r-r_0}{\e^{2/3}}$ is the scaled variable near the inner boundary. Estimates \eqref{eulerapprox}, \eqref{prandtlapprox1} and \eqref{prandtlapprox2} will be frequently used in our next error estimates. The construction and the asymptotic behavior of the approximate solution are rather long and very technical. In order to keep fluent and show clearly the proof line of our paper, we postpone this construction to Section \ref{secappro}.

{\bf\noindent ${H}^1$ stability estimate of the linearized error Navier-Stokes equations}

In the polar coordinate system \eqref{nspolar}, denote the error function by
\bes
u:=u^\epsilon-u^a,\ v:=v^\epsilon-v^a,\ p:=p^\epsilon-p^a,
\ees
where $p^a$ is the constructed pressure. Then the error function will satisfy the following linear forced Navier-Stokes equations

\begin{align}\label{errorequation}
\left\{
\begin{array}{ll}
-\epsilon^2\left(\frac{u_{\theta \theta}}{r}+r u_{r r}+u_r+\frac{2}{r} v_\theta-\frac{u}{r}\right)+\p_\th p +S_u=R_u, &(\th,r)\in \O,\\[5pt]
-\epsilon^2\left(\frac{v_{\theta \theta}}{r}+r v_{r r}+v_r-\frac{2}{r} u_\theta-\frac{v}{r}\right)+r\p_rp+S_v=R_v,&(\th,r)\in \O,\\[5pt]
\p_\th u+\p_r(rv)=0,  &(\th,r)\in \O,\\[5pt]
( u^a,v^a)(\th,r)=( u^a,v^a)(\th+2\pi,r), &(\th,r)\in \O, \\[5pt]
u(\th,1)=v(\th,1)=u(\th,+\i)=v(\th,+\i)=0, &\th\in \bT,
 \end{array}
\right.
\end{align}
where
\bes
\begin{aligned}
S_u:&=u^a \p_\th u +v^ar\p_ru+u \p_\th u^a+vr\p_ru^a+uv^a+vu^a,\\
S_v:&=u^a \p_\th v +v^ar\p_rv+u \p_\th v^a+v r\p_r v^a-2u u^a,
\end{aligned}
\ees
 along with $R_u$ and $R_v$ being the remainders.

We will first give a $\dot{H}^1$ stability estimate for the above linearized Navier-Stokes equations as follows.
\be\label{linearxx4}
\e^4\int_\O (u^2_r+v^2_r)+ (u^2_\th+v^2_\th)(r-r_0)+r^{-2}_0 (u^2_\th+v^2_\th)\ls\cdots.
\ee
where $``\cdots"$ represents the remainders terms. This stability estimate contain three parts:

One is the weighted positive estimate from the transport term $S_u$ and $S_v$. By multiplying  \eqref{errorequation}$_{1,2}$ with $u_\th$ and $v_\th$, then integrating the resulting equation on $\O$, we can achieve the following estimate with the help of the estimate for the approximation solution
\be\label{linear1}
\bali
&\int (u^2_\th+v^2_\th)(r-r_0)\ls \e^2\dl\int u^2_{0,r}+(u^2_{\neq,r}+v^2_r)(r-r_0) \cdots.
\eali
\ee
 Here $u_0$ and $u_{\neq}$ represent the zero and non-zero frequencies of $u$ in $\th$ direction, defined by
$$u_0(r):=\f{1}{2\pi}\int^{2\pi}_{0}u(\th,r)d\th:=\fint^{2\pi}_{0}u(\th,r)d\th,\q u_{\neq}=u-u_0. $$

The second one is the energy estimate for the first-order normal derivatives which comes from the diffusion terms. The energy estimate is decomposed into the zero frequency and non-zero frequency estimate. The zero frequency estimates can be directly from \eqref{errorequation}$_1$ by integration on $\th$. While for the non-zero frequency estimates, we need to perform a weighted estimates.
However, it is hard to deal with the pressure for system \eqref{errorequation} when performing the weighted estimates. Therefore, we first utilize the incompressibility to eliminate the pressure and reformulate the system \eqref{errorequation} in the form of vorticity function.

Define the vorticity $\o=\f{v_\th-(ru)_r}{r}$. Subtracting $\p_r$\eqref{errorequation}$_1$ from $\f{1}{r}\p_\th$\eqref{errorequation}$_2$, we can obtain that

\begin{align}
&-\epsilon^2 r\Dl \o+\f{1}{r}\p_\th S_v-\p_rS_u=\f{1}{r}\p_\th R_v-\p_rR_u. \label{errorequation1}
\end{align}

The zero frequency and non-zero frequency energy estimates will be presented as the following, for a fixed small $\al\in(0,0.01]$
\be\label{linearx1}
\e^2\int u^2_{0,r} +(r-r_0)^\al(u^2_{\neq,r}+v^2_r)\ls\al^{-6}\int(r-r_0) (u^2_\th+v^2_\th) \cdots.
\ee
Actually, estimates \eqref{linear1} and \eqref{linearx1} are closed for the weighted $\dot{H}^1$ estimates, namely, we have
\be\label{linearx2}
\int (r-r_0)(u^2_{\th}+v^2_\th)+\e^2\int u^2_{0,r} +(r-r_0)^\al(u^2_{\neq,r}+v^2_r)\ls\cdots.
\ee
Based on the above weighted estimates, the third one is the non-weighted energy estimates for non-zero frequency of the first-order normal derivatives, which will be achieved by  multiplying  \eqref{errorequation1} with $\e^2 u$ and $\e^2 \Phi_{\neq}$, where $\Phi_{\neq }$ is the non-zero frequency of the stream function. Then integrating the resulting equation on $\O$ indicates that
\be\label{linearxx3}
\e^4\int (u^2_{\neq,r}+v^2_r)+r^{-2}_0(u^2_{\th}+v^2_\th)\ls \int (r-r_0)(u^2_{\th}+v^2_\th)+\cdots.
\ee
Combining \eqref{linearxx3} and \eqref{linearx2}, we can obtain the linear stability $\dot{H}^1$ estimate in \eqref{linearxx4}.

{\bf\noindent ${H}^3$ estimate of the linearized error Navier-Stokes equations}

In order to obtain $L^\i$ estimate and close the nonlinear terms for the error function $(u,v)$, we still need $\dot{H}^3$ derivatives, which is achieved by the following two steps

1. {\it Tangential derivatives' estimates.} Similar  as \eqref{linearxx3}, multiplying  \eqref{errorequation1} with $-\e^2(rv_{\th\th \th})$ and then integrating the resulting equation indicate that
 \be\label{linearx4}
 \e^4\|(u_{\th\th r},v_{\th\th r})\|^2_{L^2(\O)}+ \e^4\|(u_{\th\th \th},v_{\th\th \th})\s{r-r_0}\|^2_{L^2}+ \e^4r^{-2}_0\|(u_{\th\th \th},v_{\th\th \th})\|^2_{L^2}\ls \cdots.
 \ee
Unfortunately, the tangential derivatives estimates from the energy integral can not induce the estimate for $u_{rr}$, $u_{rr\th}$ and $u_{rrr}$. The estimates for $u_{rr}$ can only be obtained from the equation \eqref{errorequation}$_1$.

2. {\it Normal derivatives' estimates.} Using Bogovski mapping Lemma to obtain the $L^2$ estimate of $p_\th$ and $p_{\th\th}$, which results in the following estimate
 \be\label{linearx5}
 \e^8\|(u_{rr},v_{rr})\|^2_{L^2(\O)}+ \e^8\|(u_{rr\th},v_{rr\th})\|^2_{L^2(\O)}\ls+\cdots.
 \ee
 At last directly from the vorticity equation \eqref{errorequation1}, we can control the pure normal derivatives,
 \be \label{linearxx6}
 \e^{10} \|(u_{rrr},v_{rrr})\|^2_{L^2(\O)}\ls+\cdots.
 \ee
Then estimates \eqref{linearxx4}, \eqref{linearx4}, \eqref{linearx5} and \eqref{linearxx6} together can close the $H^3$ linear weighted estimates for the nonlinear error equation.

{\bf\noindent Contraction mapping of the nonlinear error Navier-Stokes equations}

 By using the deduced a priori estimates up to third-order derivatives, the standard contraction mapping implies the existence of $L^\i$ solution to the system \eqref{errorequation}.

Throughout the paper, $C_{a,b,c,...}$ denotes a positive constant depending on $a,\,b,\, c,\,...$ which may vary from line to line. We also apply $A\lesssim_{a,b,c,\cdots} B$ to denote $A\leq C_{a,b,c,...}B$, while $A \approx_{a,b,c,...}B$ means $A\leq C_{a,b,c,...}B$ and $B\leq C_{a,b,c,...}A$. For a norm $\|\cdot\|$, we use $\|(f,g,\cdots)\|$ to denote $\|f\|+\|g\|+\cdots$.  For a function $f(\th,r)$ and $1\leq p,q \leq +\i$,  define
\bes
\|f\|_{L^p_\th L^q_r}:=\lt(\int^{+\i}_1 \lt(\int_{\bT} |f|^p d\th\rt)^{q/p}dr \rt)^{1/q}.
\ees
If $p=q$, we simply write it as $\|f\|_{L^p}$ or $\|f\|_{p}$. If $p=q=2$, we use $\|f\|$ to denote the $\|f\|_{L^2}$.

Our paper is organized as follows. In section \ref{sec2}, we first present the equations satisfied by the approximate solution and its asymptotic behavior, and derive some useful lemmas for later use. In section \ref{sec3}, based on the approximate solution's property, we derive the error equations and establish the linear stability $H^1$ estimates of the error solution. In section \ref{sec4}, we derive the tangential and normal derivatives' estimates up to third order of the error solutions. Then, in Section \ref{sec5}, we estimate the nonlinear terms, prove the existence of the error solution by contraction mapping theorem, which finishes the proof of Theorem \ref{thmain}. At last, in Section \ref{secappro}, we come back to give the construction of the approximate solution.

\section{Preliminary}\label{sec2}

\indent
In this section, we derive the error equation and some useful lemmas for later use. The properties and some detailed estimates for the approximate solution will be given. Also we will present a Hardy inequality for our domain.
First we present the following approximate solution $(u^a,v^a,p^a)$  which will be constructed in Section \ref{secappro}:

\begin{eqnarray}\label{app equationd}
\left\{
\begin{array}{ll}
u^{a} u_\theta^{a}+r v^{a} u_r^{a}+u^{a} v^{a}+p_\theta^{a}-\epsilon^2\left(\frac{u_{\theta \theta}^{a}}{r}+r u_{r r}^{a}+u_r^{a}+\frac{2}{r} v_\theta^{a}-\frac{u^{a}}{r}\right)=R_u^a, &(\th,r)\in \O,\\[5pt]
u^{a} v_\theta^{a}+r v^{a} v_r^{a}-\left(u^{a}\right)^2+r p_r^{a}-\epsilon^2\left(\frac{v_{\theta \theta}^{a}}{r}+r v_{r r}^{a}+v_r^{a}-\frac{2}{r} u_\theta^{a}-\frac{v^{a}}{r}\right)=R_v^a, &(\th,r)\in \O,\\[5pt]
 \p_\th u^a+\p_r(rv^a)=0,  &(\th,r)\in \O, \\[5pt]
( u^a,v^a)(\th,r)=( u^a,v^a)(\th+2\pi,r), &(\th,r)\in \O,\\[5pt]
u^a(\th, r_0+1)=(\o+\dl f(\th))(r_0+1), \ v^a(\th,r_0+1)=0, &\th\in \bT, \\[5pt]
 u^a(\th,r_0)=0,\ v^a(\th,r_0)=0, & \th\in \bT.
\end{array}
\right.
\end{eqnarray}
where the forced terms $(R^a_u,R^a_v)$ satisfy the following estimates
 \begin{align}
 \|\p^i_\th\p^j_r(R_u^a, R_v^a)\|_2\leq C_{i,j}\epsilon^{16-j} r^2_0.\label{remainderesti}
 \end{align}
Besides we have the following lemma which will be shown in Section \ref{secappro} to state the asymptotic behavior of the approximate solution $(u^a,v^a)$ near and away from the boundary of the strip $\O$.
\begin{lemma}\label{lemdetail}
The constructed approximate solution $(u^a,v^a)$ is decomposed as
\be\label{uaconstruct}
u^a:=u^a_e+\chi(r)u^a_p+\hat{\chi}(r)\hat{u}^a_p+\e^{16} h(\th,r),\q v^a:=v^a_e+\chi(r)v^a_p+\hat{\chi}(r)\hat{v}^a_p,
\ee
where $\chi(r)\in C^\i_c((-\i,r_0+1])$ is a function satisfying
\bes
\chi(r)=\lt\{
\bali
&0,\q r\in \lt[-\i,r_0+0.25\rt],\\
&1,\q r\geq r_0+0.75,
\eali
\rt.
\ees
and $\hat{\chi}(r)=1-\chi(r)$. The Euler part $(u^a_e,v^a_e)$ satisfies,
\begin{align}
&\|\p^j_\th\p^{k}_r(u^a_e-u^e(r))\|_\infty\leq C_{j,k}r^{1-k}_0\epsilon(\dl+\epsilon),\q \|\p^j_\th\p^k_rv^a_e\|_\infty\leq C_{j,k}r^{-k}_0 \epsilon (\dl+\epsilon). \label{eulerapproxd}
\end{align}
The outer boundary layer part $(u^a_p,v^a_p)$ satisfies
\begin{align}
\|\zeta^\ell\p^{j}_\th\partial_\zeta^k u_p^a\|_\infty\leq C_{j,k,\ell} r_0(\dl+\epsilon), \ \|\zeta^\ell\p^{j}_\th\partial_\zeta^k {v}_p^a\|_\infty\leq C_{j,k,\ell} \epsilon(\dl+\epsilon), \label{prandtlapprox1d}
\end{align}
and the inner boundary layer $(\hat{u}^a_p,\hat{v}^a_p)$ satisfies
\begin{align}
\|\eta^\ell\p^{j}_\th\partial_\eta^k \hat{u}_p^a\|_\infty\leq C_{j,k,\ell} r_0\e(\dl+\epsilon^{2/3}),  \ \|\eta^\ell\p^{j}_\th\partial_\eta^k \hat{v}_p^a\|_\infty\leq C_{j,k,\ell} \epsilon^{1+2/3}(\dl+\epsilon^{2/3}).\label{prandtlapprox2d}
\end{align}
While $h(\th,r)$ satisfies
\bes
h(\th,r_0)=h(\th,r_0+1)=0,\q \|\p^j_\th\p^k_r h(\th,r)\|_{L^\i}\leq C_{j,k}.
\ees
\qed
\end{lemma}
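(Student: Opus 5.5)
The statement to prove is Lemma \ref{lemdetail}. It is a bookkeeping statement about the approximate solution, so the proof is really a description of how that solution is built by matched asymptotic expansions. My plan is to carry out the construction in the order Euler part, outer layer, inner layer, corrector, and to read off each estimate from the explicit form of the corresponding term.

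\textbf{Outer expansion and outer layer.} I take the ansatz
\[
u^a_e=u^e(r)+\sum_{i\ge1}\e^i u^{(i)}_e,\qquad u^a_p=\sum_{i\ge0}\e^i u^{(i)}_p(\th,\zeta),
\]
and similarly for the $v$ components, with $\zeta=(r-r_0-1)/\e$. At each order the Euler correctors solve a linearized Euler problem around the Taylor--Couette flow $u^e(r)e_\th$. The normal velocity $v^{(i)}_e$ on $r=r_0+1$ is fixed by the value $-v^{(i)}_p(\th,0)$ left over by the layer, and the normal velocity on $r=r_0$ by the inner layer; at every order the flux is zero.

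Since $u^e$ has vanishing vorticity, the linearized problem reduces, via the stream function, to an elliptic equation in the annulus. That equation is solvable once the zero mode of the vorticity is chosen by a Batchelor--Wood type solvability condition. This gives \eqref{eulerapproxd}, where the factor $r_0^{1-k}$ comes from the scaling of $u^e$. The leading outer layer is $u^{(0)}_p$, the solution of \eqref{boundarylayeruppermain} from Proposition \ref{propdcu0}. Its boundary data is $(r_0+1)(\o+\dl f-\t\o)$, which is $O(r_0\dl)$, and it decays exponentially in $\zeta$; this gives the bound $r_0(\dl+\e)$ in \eqref{prandtlapprox1d}. The higher-order layers solve linearized Prandtl equations around $u^{(0)}_p$, and we get $v_p=O(\e)$ from divergence-free.

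\textbf{Inner layer.} At $r=r_0$ the Euler velocity $u^e(r_0)$ vanishes because $B=-Ar_0^2$, so the layer there is degenerate. Linearizing $u^e$ near $r_0$ gives a shear of slope $2A\sim r_0$. Balancing $r\,v\,\p_r u$ against $\e^2 r u_{rr}$ then produces the triple-deck scaling $\eta=(r-r_0)/\e^{2/3}$ and a linear Airy-type layer equation for $\hat u_p$, with data cancelling the $O(\e r_0)$ tangential Euler correction. This accounts for the prefactor $r_0\e$ in \eqref{prandtlapprox2d}. The equation is solved mode by mode in $\th$, and its exponential decay gives the $\eta^\ell$ weights.

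\textbf{Closing the construction.} Truncating at a high order and gluing with $\chi,\hat\chi$ leaves cutoff errors that are exponentially small. The final term $\e^{16}h$ fixes the exact boundary values, and its bounds are immediate. The remainder bound \eqref{remainderesti} follows by taking enough orders.

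\textbf{Main obstacle.} The hardest step is the inner degenerate layer: proving well-posedness and decay of the linear layer problems around a vanishing shear. Part of this is making sure the Euler correctors' solvability conditions stay compatible with both layers. A second difficulty is tracking the exact powers of $r_0$ consistently through all orders.
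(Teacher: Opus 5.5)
There are genuine gaps. Your overall architecture matches the paper: an outer layer of thickness $\epsilon$, a degenerate inner layer of thickness $\epsilon^{2/3}$, linearized Euler correctors, gluing with $\chi,\hat\chi$, and a final corrector $h$. But several steps fail as you have written them.

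\textbf{The ansatz and the layer profiles.} You expand the Euler and outer-layer parts in integer powers $\epsilon^i$, and these cannot be matched with an inner layer of thickness $\epsilon^{2/3}$. The first inner corrector $\hat u^{(1)}_p$ has order $\epsilon$ and cancels the slip $u^{(1)}_e(\theta,r_0)$. Through $\partial_\theta\hat u+\partial_r(r\hat v)=0$ in the variable $\eta$, it produces a normal velocity of order $\epsilon^{1+2/3}$. Its trace at $\eta=0$ must be cancelled by an Euler corrector of order $\epsilon^{1+2/3}$. That corrector's tangential slip at $r=r_0+1$ then forces an outer-layer corrector of the same order, and so on. This is why the paper runs all three expansions in powers $\epsilon^{1+k/3}$ (see the diagram in Section \ref{secappro}). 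With your ansatz, the step ``normal velocity on $r=r_0$ fixed by the inner layer'' has no term to go into.

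The reduction of the linearized Euler problem also does not come from $u^e$ having vanishing vorticity; its vorticity is $\frac{1}{r}(ru^e)_r=2A\approx\tilde\omega r_0$. It comes from the vorticity being constant. That removes the $v\,\partial_r$(vorticity) term and gives $u^e\Delta(rv_e)=0$, so $rv_e$ is harmonic because $u^e>0$ in the interior.

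Finally, the layer profiles do not decay on their own. The $\theta$-average of each higher-order layer solves a problem like $-\partial_\eta^2\hat u_0=\hat f_0$, and it tends to a nonzero constant ($A_{1+k/3}$ or $\hat A_{1+k/3}$ in the paper). Your claim that exponential decay gives the weights only covers the nonzero Fourier modes. The weighted bounds \eqref{prandtlapprox1d}--\eqref{prandtlapprox2d} hold only after these constants are subtracted and transferred to a shear correction $h_{1+k/3}(r)$ of the Euler zero mode. That shear takes the boundary values $\hat A_{1+k/3}$ and $A_{1+k/3}$ and is chosen so that the compatibility condition at the next order holds. Your ``zero mode chosen by a solvability condition'' has to be tied to these far-field constants.

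\textbf{The corrector $h$.} Your description of $\epsilon^{16}h$ is wrong, and it contradicts the statement itself, which asserts $h(\theta,r_0)=h(\theta,r_0+1)=0$. The boundary values need no fixing. At $r=r_0+1$ one has $\chi=1$ and $\hat\chi=0$, and the matched pairs $u^e+u^{(0)}_p$, $u^{(1+k/3)}_e+u^{(1+k/3)}_p$ and $v^{(1+k/3)}_e+v^{(1+k/3)}_p$ reproduce the data order by order. The same holds at $r=r_0$, using $u^e(r_0)=0$ and the inner matching.

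What the gluing destroys is exact incompressibility. The quantity $\partial_\theta(u^a_e+\chi u^a_p+\hat\chi\hat u^a_p)+\partial_r(r(v^a_e+\chi v^a_p+\hat\chi\hat v^a_p))$ contains $r\chi'v^a_p+r\hat\chi'\hat v^a_p$. It also contains the top-order truncation terms $\epsilon^{1+k/3}\partial_\zeta(\zeta v^{(1+k/3)}_p)$ and $\epsilon^{1+k/3}\partial_\eta(\eta\hat v^{(1+k/3)}_p)$. Exactness is needed because the error $(u,v)$ must satisfy $\partial_\theta u+\partial_r(rv)=0$, both to define the stream function $\Phi$ and to eliminate the pressure.

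In the paper, $h$ is the periodic, mean-zero solution of $\partial_\theta h=K$, where $K$ is this divergence defect after rescaling (Appendix lemma). Solvability needs $\int_0^{2\pi}K\,d\theta=0$, which follows from $\int_0^{2\pi}v_p\,d\theta=\int_0^{2\pi}\hat v_p\,d\theta=0$. The vanishing of $h$ on the two circles comes from the vanishing of $K$ there.

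The bound on $h$ is not immediate either. On the supports of $\chi'$ and $\hat\chi'$, both $r-r_0$ and $r_0+1-r$ are at least $1/4$. So one writes, for example, $\chi'v^a_p=\chi'(r-r_0-1)^{-\ell}\epsilon^{\ell}\zeta^{\ell}v^a_p$, and uses the $\zeta^\ell$- and $\eta^\ell$-weighted bounds with $\ell$ large to extract the power of $\epsilon$ that is divided out. What is used is polynomial decay of every order, not exponential decay.
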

Set the error function by
\bes
u:=u^\epsilon-u^a,\ v:=v^\epsilon-v^a,\ p:=p^\epsilon-p^a,
\ees
then there holds
\begin{align}\label{errorequ}
\left\{
\begin{array}{ll}
-\epsilon^2\left(\frac{u_{\theta \theta}}{r}+r u_{r r}+u_r+\frac{2}{r} v_\theta-\frac{u}{r}\right)+\p_\th p +S_u=R_u, &(\th,r)\in \O,\\[5pt]
-\epsilon^2\left(\frac{v_{\theta \theta}}{r}+r v_{r r}+v_r-\frac{2}{r} u_\theta-\frac{v}{r}\right)+r\p_rp+S_v=R_v,&(\th,r)\in \O,\\[5pt]
\p_\th u+\p_r(rv)=0,  &(\th,r)\in \O,\\[5pt]
( u^a,v^a)(\th,r)=( u^a,v^a)(\th+2\pi,r), &(\th,r)\in \O, \\[5pt]
u(\th,r_0)=v(\th,r_0)=u(\th,r_0+1)=v(\th,r_0+1)=0, &\th\in \bT,
 \end{array}
\right.
\end{align}
where
\begin{align}
S_u:&=u^a \p_\th u +v^ar\p_ru+u \p_\th u^a+vr\p_ru^a+uv^a+vu^a, \label{sudef}\\[5pt]
S_v:&=u^a \p_\th v +v^ar\p_rv+u \p_\th v^a+v r\p_r v^a-2u u^a, \label{svdef}
\end{align}
and the remainders
\bes
R_u:=R_u^a-(u\p_\th+v r\p_r)u-uv,\ R_v:=R_v^a-(u\p_\th+v r\p_r)v+u^2.
\ees
In order to avoid the trouble caused by the pressure, sometimes it is more convenient to reformulate our system in the vorticity form and eliminate the pressure.

In polar coordinates, the vorticity $\o$ is defined by
\bes
\o=\f{v_\th-(ru)_r}{r}
\ees
From the incompressibility and $\int^{2\pi}_0 rvd\th=0$, there exists a $\th$-periodic stream function $\Phi$ such that
\bes
\Phi_r=u,\q \Phi_\th=-rv,
\ees
Actually we can take
\bes
\Phi(\th,r):=\int^r_{r_0+1} u(\th,\bar{r})d\bar{r},
\ees
which satisfies $\Phi(\th,r_0+1)=0$. Direct calculation implies that $-\Dl\Phi=\o$.

Subtracting $\p_r$\eqref{errorequ}$_1$ from $\f{1}{r}\p_\th$\eqref{errorequ}$_2$, we can obtain that
\begin{align}
&-\epsilon^2r\Dl\o+\f{1}{r}\p_\th S_v-\p_rS_u=\f{1}{r}\p_\th R_v-\p_rR_u. \label{errorequcut1}
\end{align}
Define the zero frequency and non-zero frequency of a $\th$-periodic function $f(\th,r)$ by
 \bes
 f_0=\f{1}{2\pi}\int^{2\pi}_0 f(\th,r)d\th:=\fint^{2\pi}_0 f(\th,r)d\th \q f_{\neq}=f-f_0.
 \ees
 The boundary condition of $v|_{r=r_0}=0$ and the incompressibility indicate that
\bes
\p_\th\Phi(\th,r_0)=\int^{r_0}_{r_0+1} u_\th(\th,r)dr=-\int^{r_0}_{r_0+1} (rv)_rdr=rv(\th,r)\big|^{r_0+1}_{r_0}=0,
\ees
which indicates that $\Phi(\th,r_0)=constant$. We have the following boundary condition for $\Phi$,
\bes
\Phi|_{r=r_0+1}=0, \q \p_\th \Phi|_{r=r_0 \text{ and } r_0+1}=0,\q \p_r\Phi|_{r=r_0\text{ and }r_0+1}=0.
\ees
and noting that $\Phi_{\neq}(\th,r_0)=\int^{r_0}_{r_0+1} (u-u_0)dr=0$, we have the following boundary condition for $\Phi_{\neq}$,
\bes
(\Phi_{\neq},\p_r\Phi_{\neq}, \p_\th\Phi_{\neq})|_{r=r_0 \text{ and } r_0+1}=0.
\ees

Next, we present a Lemma to give more detailed and useful estimates of the approximate solution from Lemma \ref{lemdetail}.
\begin{lemma} \label{lemdetail1}
By using the boundary conditions  in \eqref{app equationd} and the asymptotic behaviors in \eqref{eulerapproxd}, \eqref{prandtlapprox1d}, \eqref{prandtlapprox2d} satisfied by $(u^a,v^a)$, we have the following estimates.\\
{\noindent i) Estimates for $u^a$:}
{\small
\begin{align}
&\q u^a\thickapprox \t{\o}r_0(r-r_0),\q |\p^{i}_\th (u^a-u^e(r))|\ls \dl r_0(r-r_0). \label{uadetail1}\\
&|\p^{i}_\th (u^a-u^e(r))|\ls_{i,k,\ell} \e\dl r_0+\lt(\f{\e^{2/3}}{r-r_0}\rt)^k\e\dl r_0+\lt(\f{\e}{r_0+1-r}\rt)^\ell\dl r_0(r-r_0)^2, \label{uadetail3}\\
&|\p^{i}_\th\p^{j}_r (u^a-u^e(r))|\ls_{i,j,k,\ell} \e\dl r_0+\lt(\f{\e^{2/3}}{r-r_0}\rt)^k\e^{1-\f{2}{3}j}\dl r_0+\lt(\f{\e}{r_0+1-r}\rt)^\ell\e^{-j}\dl r_0(r-r_0)^2. \label{uadetail2}
\end{align}
}
{\noindent ii) Estimates for $v^a$:}
\begin{align}
&|\p^{i}_\th v^a|\ls_{i,\ell} \e\dl(r-r_0),\label{vadetail2} \\
&|\p^{i}_\th\p^j_r v^a|\ls_{i,k,\ell} \e\dl + \lt(\f{\e^{2/3}}{r-r_0}\rt)^k\e^{\f{5}{3}-\f{2}{3}j}\dl +\lt(\f{\e}{|r-r_0-1|}\rt)^\ell\e^{1-j}\dl(r-r_0)^2. \label{vadetail1}
\end{align}
\end{lemma}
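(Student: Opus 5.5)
The plan is to derive every bound from the decomposition \eqref{uaconstruct} in Lemma \ref{lemdetail}, combined with the boundary values in \eqref{app equationd}. Wherever a factor of $(r-r_0)$ or $(r-r_0)^2$ is needed, we produce it by Taylor expansion at $r=r_0$, using that the relevant quantity vanishes there.

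We begin with the leading profile. Since $B=-Ar_0^2$, we have $u^e(r)=A(r^2-r_0^2)/r=A(r+r_0)(r-r_0)/r$. With $A\approx \t{\o} r_0/2$ and $(r+r_0)/r\approx 2$ on $\O$, this gives $u^e\approx \t{\o} r_0(r-r_0)$.

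Next we treat $w:=u^a-u^e$, which satisfies $w(\th,r_0)=0$. For $\p^i_\th w$, we split the domain.
\begin{itemize}
\item On $[r_0+1/2,r_0+1]$, the factor $r-r_0$ is bounded below. There \eqref{eulerapproxd} and \eqref{prandtlapprox1d}, together with $\|\p^i_\th u^a_p\|_\infty\ls r_0(\dl+\e)$, give $|\p^i_\th w|\ls r_0(\dl+\e)\ls \dl r_0 (r-r_0)$. This uses $\e\ls\dl$, or else absorbs $\e$, as the paper implicitly does.
\item On $[r_0,r_0+1/2]$, only $u^a_e-u^e$ and $\hat u^a_p$ contribute. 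We write $\p^i_\th w(\th,r)=\int_{r_0}^r\p_r\p^i_\th w$. The Euler part contributes $\e(\dl+\e)$ by \eqref{eulerapproxd} with $k=1$. The inner layer contributes $\e^{-2/3}\cdot r_0\e(\dl+\e^{2/3})$ times $\min\{r-r_0,\e^{2/3}\}$-type factors, using $\p_r=\e^{-2/3}\p_\eta$ and \eqref{prandtlapprox2d}. This is bounded by $\dl r_0(r-r_0)$.
\end{itemize}

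For \eqref{uadetail3} and \eqref{uadetail2}, the term $\e\dl r_0$ comes from $\p^j_r(u^a_e-u^e)$ via \eqref{eulerapproxd}. The inner layer term uses $\eta^k\p^j_\eta\hat u^a_p$ bounded, i.e. $|\p^j_r\hat u^a_p|\ls \e^{-2j/3}r_0\e\dl\,\eta^{-k}$, which yields $(\e^{2/3}/(r-r_0))^k\e^{1-2j/3}\dl r_0$. The outer layer similarly uses $\zeta^\ell$-decay, giving $(\e/(r_0+1-r))^\ell\e^{-j}r_0\dl$. Since $\chi$ is supported where $r-r_0\ge 1/4$, the factor $(r-r_0)^2$ is harmless and can be inserted. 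Derivatives falling on $\chi,\hat\chi$ are supported in $[r_0+1/4,r_0+3/4]$, where both layers are $O(\e^\infty)$ by the weighted decay. The $\e^{16}h$ term is negligible.

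For $v^a$, we use $v^a(\th,r_0)=0$. Near $r_0$ we write $v^a=\int_{r_0}^r\p_rv^a$. For the Euler part, \eqref{eulerapproxd} with $k=1$ gives $r_0^{-1}\e\dl$. For the inner layer, we use $\hat v^a_p$ bounds with $\p_r=\e^{-2/3}\p_\eta$, giving $\e^{1}\dl$. This yields \eqref{vadetail2}; away from $r_0$ we use \eqref{prandtlapprox1d} directly. The bound \eqref{vadetail1} follows from the same layer-by-layer bookkeeping as \eqref{uadetail2}.

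The main obstacle is purely bookkeeping: tracking the $\dl$ versus $\e$ factors (replacing $\dl+\e$ by $\dl$ requires either $\e\le\dl$ or the $\e$-parts of the expansion being themselves $O(\dl)$, which we would verify from the construction in Section \ref{secappro}, where all correctors are driven by $f$).
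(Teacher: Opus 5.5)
Your proposal is correct and is essentially the paper's argument. You decompose via \eqref{uaconstruct}, use that $u^a-u^e$ and $v^a$ vanish at $r=r_0$ together with the mean value theorem (with $\p_r=\e^{-2/3}\p_\eta$ on the inner layer) to gain the factor $r-r_0$, and use the $\eta^k$, $\zeta^\ell$ weighted decay plus Leibniz on the cutoffs for \eqref{uadetail3}, \eqref{uadetail2} and \eqref{vadetail1}.

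The differences are cosmetic. The paper applies the mean value theorem to the grouped term $u^a_e-u^e+\hat\chi\hat u^a_p+\e^{16}h$ and bounds $\chi u^a_p$ via $\chi\lesssim r-r_0$, instead of splitting at $r_0+1/2$. Strictly, in your split $\chi$ need not vanish on $(r_0+1/4,r_0+1/2]$, but this is harmless since $r-r_0\ge 1/4$ there. Your caveat about replacing $\dl+\e$ by $\dl$ flags a step the paper takes silently; it is justified because every corrector bound in Section~\ref{secappro} carries a factor $\dl$.
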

\begin{proof} We utilize decomposition in \eqref{uaconstruct} and estimates in \eqref{eulerapproxd}, \eqref{prandtlapprox1d} and \eqref{prandtlapprox2d} to the above estimates.

{\noindent\bf Proof of \eqref{uadetail1}.} First we show the second one of \eqref{uadetail1}, then the first one follow from that
\be\label{uathird}
u^e(r)\approx \t{\o}r_0(r-r_0).
\ee
 From \eqref{uaconstruct}, for $i\in\bN$,
\bes
|\p^i_\th(u^a-u^e(r))|\leq |\p^i_\th\lt[u^a_e-u^e(r)+\hat{\chi}\hat{u}^a_p+\e^{16}h\rt]|+|{\chi}\p^i_\th{u}^a_p|.
\ees
Since $u^a_e+\hat{\chi}\hat{u}^a_p+\e^{14}h-u^e(r)|_{r=r_0}=0$, by using \eqref{eulerapproxd} and \eqref{prandtlapprox2d}, we have
\begin{align}
&|\p^i_\th\lt[u^a_e-u^e(r)+\hat{\chi}\hat{u}^a_p+\e^{16} h\rt]|\nn\\
\leq&\|\p^i_\th(u^a_e-u^e(r)+\hat{\chi}\hat{u}^a_p+\e^{16}h)_r\|_{L^\i}(r-r_0)\nn\\
\ls&  \lt[\e\dl r_0+\e^{-2/3}|\p^i_\th\hat{u}^a_{p,\eta}|+\e^{16}\rt](r-r_0)\nn\\
\ls& \lt[\e\dl r_0+\e^{1/3}\dl r_0 +\e^{16}\rt] (r-r_0)\ls \e^{1/3}\delta r_0 (r-r_0). \label{uafirst}
\end{align}

Also, from  \eqref{eulerapproxd} and \eqref{prandtlapprox1d}, it is easy to see that
\be\label{uasecond}
|{\chi}\p^i_\th{u}^a_p|\leq C{\chi}\dl \leq C\dl (r-r_0),\text{ for } (r-r_0)\in[0.25, 1].
\ee

Combining \eqref{uafirst} and \eqref{uasecond}, we see the second one of \eqref{uadetail1} is valid. Remembering \eqref{uathird}, we see that the first one of \eqref{uadetail1} is valid.

{\noindent\bf Proof of \eqref{uadetail3}.} Also from \eqref{uaconstruct}, and by using \eqref{eulerapproxd}, \eqref{prandtlapprox1d} and \eqref{prandtlapprox2d}, we see that, for $i\in\bN$,
\begin{align}
&|\p^{i}_\th (u^a-u^e(r))|=\lt|\p^i_\th\lt[u^a_e-u^e(r)+\hat{\chi}\hat{u}^a_p+{\chi}\p^i_\th{u}^a_p+\e^{16}h\rt]\rt|\nn\\
\ls& \e\dl r_0+ \hat{\chi} \eta^{-k}\eta^k |\p^i_\th \hat{u}^a_p+ {\chi} \zeta^{-\ell}\zeta^\ell |\p^i_\th {u}^a_p|+ \e^{16} |\p^i_\th h| \nn\\
\ls&\e\dl r_0+\hat{\chi}\eta^{-k} \e\dl r_0+ \chi \zeta^{-\ell}\dl r_0\nn\\
\ls& \e\dl r_0+ \lt(\f{\e^{2/3}}{r-r_0}\rt)^k\e\dl r_0+\lt(\f{\e}{|r-r_0-1|}\rt)^\ell\dl r_0(r-r_0)^2,\nn
\end{align}
which is the third one of \eqref{uadetail3}.

{\noindent\bf Proof one of \eqref{uadetail2}}. From \eqref{uaconstruct}, Leibniz formula, and by using \eqref{eulerapproxd}, \eqref{prandtlapprox1d} and \eqref{prandtlapprox2d}, we see that, for $i,j\in\bN$,
\begin{align}
&|\p^{i}_\th\p^{j}_r (u^a-u^e(r))|\leq |\p^{i}_\th\p^{j}_r (u^a_e-u^e(r))|+|\p^{i}_\th\p^{j}_r (\hat{\chi}\hat{u}^a_p+{\chi}u^a_p)|+\e^{16}|\p^{i}_\th\p^{j}_r h|\nn\\
\ls &\e\dl r^{1-j}_0+ \hat{\chi}|\p^{i}_\th\p^{j}_r \hat{u}^a_p|+\sum^{j}_{j'=1}|\hat{\chi}^{(j')}\p^{i}_\th \p^{j-j'}_r \hat{u}^a_p|+{\chi}\p^{i}_\th|\p^{j}_r {u}^a_p|+\sum^{j+1}_{j'=1}|{\chi}^{(j')}\p^{i}_\th\p^{j-j'}_r{u}^a_p|+\e^{16}\nn\\
\ls&\e\dl r_0+\hat{\chi}\eta^{-k}\e^{-\f{2}{3}(j)}\eta^{k}|\p^{i}_\th\p^{j}_\eta \hat{u}^a_p|+{\chi}|\zeta|^{-\ell}\e^{-j}|\zeta^{\ell}\p^{i}_\th\p^{j}_\zeta {u}^a_p|\nn\\
\ls& \e\dl r_0+ \lt(\f{\e^{2/3}}{r-r_0}\rt)^k\e^{1-\f{2}{3}j}\dl r_0+\lt(\f{\e}{|r-r_0-1|}\rt)^\ell\e^{-j}\dl r_0(r-r_0)^2,\nn
\end{align}
which is the second one of \eqref{uadetail2}.

{\noindent\bf Proof of \eqref{vadetail2}.} This will be similar with that of the second of \eqref{uadetail1}. From \eqref{uaconstruct}, for $i\in\bN$,
\be
|\p^i_\th v^a|\leq |\p^i_\th\lt[v^a_e+\hat{\chi}\hat{v}^a_p\rt]|+|{\chi}\p^i_\th{v}^a_p|. \nn
\ee
Since $v^a_e+\hat{\chi}\hat{v}^a_p|_{r=r_0}=0$, by using \eqref{eulerapproxd} and \eqref{prandtlapprox2d}, we have
\begin{align}
&|\p^i_\th\lt[v^a_e+\hat{\chi}\hat{v}^a_p\rt]|\leq\|\p^i_\th(v^a_e+\hat{\chi}\hat{v}^a_p)_r\|_{L^\i}(r-r_0)\ls \lt[\e\dl +\e^{-2/3}|\p^i_\th\hat{v}^a_{p,\eta}|\rt](r-r_0)\ls \e\delta(r-r_0).\nn
\end{align}
Also it is easy to see that $|{\chi}\p^i_\th{v}^a_p|\ls \e\dl (r-r_0)$. Then \eqref{vadetail2} is valid .

{\noindent\bf Proof of \eqref{vadetail1}.} From \eqref{uaconstruct} and by using \eqref{eulerapproxd}, \eqref{prandtlapprox1d} and \eqref{prandtlapprox2d}, we see that, for $i\in\bN$,
\begin{align}
&|\p^{i}_\th\p^{j}_rv^a|\leq |\p^{i}_\th\p^{j}_r v^e(r)|+|\p^{i}_\th\p^{j}_r (\hat{\chi}\hat{v}^a_p+{\chi}v^a_p)|\nn\\
\ls &\e\dl r^{-j}_0+ \hat{\chi}|\p^{i}_\th\p^{j}_r \hat{v}^a_p|+\sum^{j}_{j'=1}|\hat{\chi}^{(j')}\p^{i}_\th \p^{j-j'}_r \hat{v}^a_p|+{\chi}|\p^{i}_\th\p^{j}_r {v}^a_p|+\sum^{j}_{j'=1}|{\chi}^{(j')}\p^{i}_\th\p^{j-j'}_r{v}^a_p|\nn\\
\ls&\e\dl +\hat{\chi}\eta^{-k}\e^{-\f{2}{3}j}\eta^{k}|\p^{i}_\th\p^{j}_\eta \hat{u}^a_p|+{\chi}|\zeta|^{-\ell}\e^{-j}|\zeta^{\ell}\p^{i}_\th\p^{j+1}_\zeta {v}^a_p|\nn\\
\ls& \e\dl + \lt(\f{\e^{2/3}}{r-r_0}\rt)^k\e^{\f{5}{3}-\f{2}{3}j}\dl +\lt(\f{\e}{|r-r_0-1|}\rt)^\ell\e^{1-j}\dl(r-r_0)^2,\nn
\end{align}
which is \eqref{vadetail1}.

\end{proof}

Next, we present two types of the Hardy inequalities as follows.
\begin{lemma}\label{lemhardy}
For function $f(r)\in C^1([r_0,r_0+1])$ with $f(r_0+1)=0$, we have
\be\label{hardy1}
\int^{r_0+1}_{r_0} (r-r_0)^{\kappa} f^2 dr \leq \f{4}{(\kappa+1)^2}\int^{r_0+1}_{r_0} (r-r_0)^{\kappa+2} (f_r)^2dr\q \text{for } \kappa>-1.
\ee
For function $f(r)\in C^1([r_0,r_0+1])$ with $f(r_0)=0$, we have
\be\label{hardy1plus}
\int^{r_0+1}_{r_0} (r_0+1-r)^{\kappa} f^2 dr \leq \f{4}{(\kappa+1)^2}\int^{r_0+1}_{r_0} (r_0+1-r)^{\kappa+2} (f_r)^2dr\q \text{for } \kappa>-1.
\ee
If $f(r)\in C^1([r_0,r_0+1])$ with $f|_{r=r_0}=f|_{r=r_0+1}=0$, we have, for $-3<\kappa<-1$,
\be\label{hardy2}
\bali
&\int^{r_0+1}_{r_0} (r-r_0)^{\kappa}f^2 dr \leq \f{4}{(\kappa+1)^2}\int^{r_0+1}_{r_0} (r-r_0)^{\kappa+2}(f_r)^2dr,\\
&\int^{r_0+1}_{r_0}  (r_0+1-r)^{\kappa}f^2 dr \leq \f{4}{(\kappa+1)^2}\int^{r_0+1}_{r_0}  (r_0+1-r)^{\kappa+2}(f_r)^2dr.
\eali
\ee
If $f(r)\in C^2([r_0,r_0+1])$ with $(f,f_r)|_{r=r_0 \text{ and } r_0+1}=0$, we have, for $-5<\kappa<-3$,
\be\label{hardy3}
\bali
&\int^{r_0+1}_{r_0} (r-r_0)^{\kappa}f^2 dr \leq \f{24}{(\kappa+3)^2}\int^{r_0+1}_{r_0} (r-r_0)^{\kappa+4} f^2_{rr}dr,\\
&\int^{r_0+1}_{r_0}  (r_0+1-r)^{\kappa} f^2 dr \leq \f{24}{(\kappa+3)^2}\int^{r_0+1}_{r_0} (r_0+1-r)^{\kappa+4} f^2_{rr} dr.
\eali
\ee

\end{lemma}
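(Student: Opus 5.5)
The four Hardy inequalities in Lemma \ref{lemhardy} all follow from the same device: integrate by parts against a power weight, let the vanishing boundary condition kill the boundary term, and absorb with Cauchy--Schwarz. Set $s=r-r_0\in[0,1]$. For \eqref{hardy1}, with $\kappa>-1$ and $f(r_0+1)=0$, write $s^\kappa=\frac{1}{\kappa+1}\p_s(s^{\kappa+1})$ and integrate by parts:
\bes
\int_0^1 s^\kappa f^2\,ds=\frac{1}{\kappa+1}\big[s^{\kappa+1}f^2\big]_0^1-\frac{2}{\kappa+1}\int_0^1 s^{\kappa+1}ff_r\,ds .
\ees
The boundary term vanishes at $s=1$ because $f(r_0+1)=0$. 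It also vanishes at $s=0$, since $\kappa+1>0$ and $f$ is bounded. Cauchy--Schwarz gives
\bes
\int_0^1 s^{\kappa+1}|ff_r|\,ds\le\Big(\int s^\kappa f^2\Big)^{1/2}\Big(\int s^{\kappa+2}f_r^2\Big)^{1/2},
\ees
and dividing by $(\int s^\kappa f^2)^{1/2}$ and squaring yields the constant $4/(\kappa+1)^2$. To make the division legitimate, I first note that $\int s^\kappa f^2<\infty$, which holds because $\kappa>-1$. Inequality \eqref{hardy1plus} follows by the reflection $r\mapsto 2r_0+1-r$.

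For \eqref{hardy2}, with $-3<\kappa<-1$, the same identity is used, but now the boundary term $s^{\kappa+1}f^2$ at $s=0$ needs care. Since $f(r_0)=0$ and $f\in C^1$, we have $|f|\le Cs$, so $s^{\kappa+1}f^2\le Cs^{\kappa+3}\to0$ because $\kappa+3>0$. By the same bound, $\int s^\kappa f^2\le C\int s^{\kappa+2}<\infty$, so the division step is again justified. At $s=1$ the boundary term vanishes since $f(r_0+1)=0$. The sign of the coefficient $1/(\kappa+1)$ is irrelevant, because the boundary terms are zero and we only use absolute values. The second line of \eqref{hardy2} follows by reflection.

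For \eqref{hardy3}, with $-5<\kappa<-3$ and $(f,f_r)$ vanishing at both ends, I iterate. First apply the $f$-version of \eqref{hardy2}-type reasoning with exponent $\kappa$. Here $f=O(s^2)$, since $f(r_0)=f_r(r_0)=0$, so the boundary term $s^{\kappa+1}f^2=O(s^{\kappa+5})\to0$ and all integrals are finite. This gives
\bes
\int s^\kappa f^2\le\frac{4}{(\kappa+1)^2}\int s^{\kappa+2}f_r^2 .
\ees
Next apply \eqref{hardy2} to $g=f_r$, which vanishes at both ends, with exponent $\kappa+2\in(-3,-1)$:
\bes
\int s^{\kappa+2}f_r^2\le\frac{4}{(\kappa+3)^2}\int s^{\kappa+4}f_{rr}^2 .
\ees
Combining the two gives the constant $\frac{16}{(\kappa+1)^2(\kappa+3)^2}$. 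Since $|\kappa+1|>2$ on this range, this is at most $\frac{4}{(\kappa+3)^2}\le\frac{24}{(\kappa+3)^2}$. The reflected version is identical.

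The only delicate step is verifying that the boundary terms vanish and that the integrals are finite in the negative-exponent cases; this is handled by the Taylor bounds $f=O(s)$ and $f=O(s^2)$ at the endpoint where $f$ (and $f_r$) vanish. If $f$ is only $C^1$ (respectively $C^2$), these bounds still hold, so no extra approximation argument is needed.
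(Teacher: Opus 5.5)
Your proof is correct and follows the same route as the paper. You integrate by parts against $(r-r_0)^{\kappa+1}$, kill the boundary terms using $f=O(r-r_0)$ (resp.\ $O((r-r_0)^2)$) at the vanishing endpoint, and close with Cauchy--Schwarz. For \eqref{hardy3}, your iteration (one integration by parts at exponent $\kappa$, then \eqref{hardy2} applied to $f_r$) is exactly the ``two times' integration by parts'' whose details the paper omits, and it even gives the sharper constant $16/\big((\kappa+1)^2(\kappa+3)^2\big)\le 24/(\kappa+3)^2$.
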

\begin{proof}
Using integration by parts, the Cauchy inequality and H\"{o}lder inequality, we have
\begin{align}
\int^{r_0+1}_{r_0} (r-r_0)^{\kappa} f^2 dr =&\f{1}{1+\kappa}\int^{r_0+1}_{r_0}  f^2 d (r-r_0)^{\kappa+1}=-\f{1}{1+\kappa}\int^{r_0+1}_{r_0} 2(r-r_0)^{\kappa+1} f f_r dr\nn\\
                       \leq & \f{2}{1+\kappa}\lt(\int^{r_0+1}_{r_0} (r-r_0)^{\kappa} f^2 dr\rt)^{1/2}\lt(\int^{r_0+1}_{r_0} (r-r_0)^{\kappa+2} (f_r)^2 dr\rt)^{1/2},\label{hardyproof2}
\end{align}
which indicates \eqref{hardy1}. The proof of \eqref{hardy1plus} is the same, we omit the detail.

For the proof of \eqref{hardy2}, also by using integration by part, we have
\begin{align}
\int^{r_0+1}_{r_0} (r-r_0)^{\kappa} f^2 dr=&\f{1}{\kappa+1}\int^{r_0+1}_{r_0}  f^2 d(r-r_0)^{\kappa+1}\nn\\
                =& \f{1}{\kappa+1} f^2(r)(r-r_0)^{\kappa+1}\Big|^{r_0+1}_{r_0}-\f{2}{\kappa+1}\int^{r_0+1}_{r_0} (r-r_0)^{\kappa+1} f f_r dr.\nn
\end{align}
Using the facts $f(r_0)=0$ and $f(r)\in C^1([r_0,r_0+1])$ and the mean value formula, we see that
\bes
|f(r)|\leq\|\p_r f\|_{L^\i}(r-r_0)\ls (r-r_0),
\ees
which indicate that $f^2(r)(r-r_0)^{\kappa+1}\big|_{r=r_0}=0$, then the same as \eqref{hardyproof2}, we have the first one \eqref{hardy2}. Proof of the second one is the same as that of the first one of \eqref{hardy2}.

Proof of \eqref{hardy3} is similar as that of \eqref{hardy2} with two times' integration by parts and Cauchy inequality, we omit the details.
\end{proof}

By using the above Hardy inequality Lemma, we give the following useful Lemma which will be used in the energy estimates.
\begin{lemma}
For $f(r)\in C^1([r_0,r_0+1])$ with $f(r_0)=0$, then for $\al\in(0,\f{1}{100})$, we have
\be\label{hardy4}
\int^{r_0+1}_{r_0} f^2(r-r_0)^{\al-1}dr\ls \al\int^{r_0+1}_{r_0} f^2_r(r-r_0)^\al dr+\f{1}{\al^3} \int^{r_0+1}_{r_0} f^2(r-r_0) dr.
\ee
\end{lemma}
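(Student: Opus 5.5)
The plan is to split $[r_0,r_0+1]$ at a threshold $r_0+t$, with $t\in(0,1)$ to be tuned in terms of $\al$. Near $r_0$ the weight $(r-r_0)^{\al-1}$ is handled by an integration by parts that uses $f(r_0)=0$, as in the proof of \eqref{hardy1}. Away from $r_0$ the weight is simply dominated by $(r-r_0)$. Write $s=r-r_0\in[0,1]$ throughout.

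\emph{Far region.} For $s\ge t$ we have $s^{\al-1}=s^{\al-2}\,s\le t^{\al-2}s$, hence
\[
\int_{t}^{1} f^2 s^{\al-1}\,ds\le t^{\al-2}\int_0^1 f^2 s\,ds .
\]
This is acceptable provided $t^{\al-2}\ls \al^{-3}$, i.e.\ $t\gtrsim \al^{3/2}$ (up to harmless factors, since $t^{\al}\approx 1$ in this range).

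\emph{Near region.} On $[0,\tau]$ with $\tau\in[t,2t]$, integrate by parts against $d(s^\al)/\al$. Because $f(r_0)=0$ and $\al>0$, the lower boundary term vanishes, and we obtain
\[
\int_0^\tau f^2 s^{\al-1}=\f{1}{\al}f^2(\tau)\tau^\al-\f{2}{\al}\int_0^\tau s^\al f f_r .
\]
For the bulk term, apply Cauchy--Schwarz with the splitting $s^\al|ff_r|=(s^{\f{\al-1}{2}}|f|)(s^{\f{\al+1}{2}}|f_r|)$. Half of $\int_0^\tau f^2s^{\al-1}$ is absorbed into the left side, leaving
\[
\f{C}{\al^2}\int_0^\tau s^{\al+1}f_r^2\le \f{C\tau}{\al^2}\int_0^1 s^\al f_r^2 .
\]
For the boundary term, average over $\tau\in[t,2t]$. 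It becomes
\[
\f{1}{\al t}\int_t^{2t}f^2s^\al\le \f{t^{\al-2}}{\al}\int_0^1 f^2 s ,
\]
which is again a far-region type quantity.

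\emph{Balancing $t$.} The near region produces the coefficient $\tau/\al^2\sim t/\al^2$ in front of $\int f_r^2 s^\al$, and we need this to be $\ls\al$. That forces $t\sim\al^3$. On the other hand, the far region and the averaged boundary term produce $t^{-2}$ and $t^{-2}/\al$ in front of $\int f^2 s$, and these should be $\ls\al^{-3}$, which forces $t\gtrsim \al^{3/2}$.

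I expect this step to be the main obstacle. The two requirements on $t$ are incompatible with the crude Cauchy--Schwarz above, and it is the gain of the small factor $\al$ in front of $\int f_r^2 s^\al$ that causes the conflict. Three refinements can be tried:
\begin{itemize}
\item[(a)] Replace the sharp split by a dyadic decomposition $s\in[2^{-k-1},2^{-k}]$. On each piece, use $s^{\al}\approx 2^{-k\al}$ and a one-dimensional Poincar\'e/trace estimate relative to the adjacent piece, then sum the geometric series in $2^{-k\al}$. A sum of this type produces factors $\sim 1/\al$, and the aim is for them to enter as $\al\cdot(\text{sum})$ on the derivative side rather than multiplying it.
\item[(b)] Apply the Hardy bound \eqref{hardy1} with $\kappa=\al-1$ to $\chi f$, where $\chi$ is a cutoff at scale $t$, so that the $\f{4}{\al^2}$ constant is paired with $s^{\al+1}\le t\,s^\al$.
\item[(c)] Accept a weaker power of $\al$, which would suffice for the later energy estimates if the factor in front of $\int f_r^2(r-r_0)^\al$ stays small.
\end{itemize}
I would first try (a) and track the constants carefully. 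Once the near-region bound is of the form $\al\int f_r^2 s^\al+C(\al)\int f^2 s$, adding the far-region estimate gives \eqref{hardy4}.
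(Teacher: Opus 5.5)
There is a genuine gap. The step you flag as the main obstacle is never resolved, and none of your three options closes it for the stated power $\al^{-3}$.

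The obstruction is not a matter of sharper bookkeeping. Your near-region integration by parts never actually uses $f(r_0)=0$: the boundary term at $s=0$ vanishes simply because $\al>0$. Without that hypothesis the inequality with $\al^{-3}$ is false. Take a smoothing of $f=1$ on $[0,B]$, $f=2-s/B$ on $[B,2B]$, $f=0$ for $s\ge 2B$, with $B=\al^{4/3}$. The left side is at least $B^\al/\al\ge\frac12\al^{-1}$. On the other hand, $\al\int f_r^2s^\al\le\al/B=\al^{-1/3}$ and $\al^{-3}\int f^2 s\le 2\al^{-3}B^2=2\al^{-1/3}$. So no refinement of an argument that ignores the vanishing at $r_0$ can reach $\al^{-3}$.

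Your option (b) is precisely the paper's proof: a cutoff at scale $\al^3$, then \eqref{hardy1} with $\kappa=\al-1$ applied to $\chi f$. It likewise never uses $f(r_0)=0$, and it only gives a worse power. The paper's chain ends with $\al^{-6}$, and if the Hardy factor $\al^{-2}$ is kept on the $(\chi')^2$ term it is really $\al^{-8}$. That weaker power is all that is invoked later in \eqref{wenergyj1se}, so your option (c) is in effect what the paper settles for. It does not, however, prove the statement as given. Option (a) is too unspecified to assess, and it faces the same obstruction unless it brings in $f(r_0)=0$.

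The missing idea is to use $f(r_0)=0$ pointwise rather than through integration by parts. For $0<s\le t$, write $f(r_0+s)=\int_0^s f_r(r_0+\sigma)\,d\sigma$ and apply Cauchy--Schwarz with the weights $\sigma^{-\al}$ and $\sigma^{\al}$:
\[
f^2(r_0+s)\le\int_0^s\sigma^{-\al}\,d\sigma\int_0^s\sigma^{\al}f_r^2\,d\sigma\le\frac{s^{1-\al}}{1-\al}\int_0^1\sigma^{\al}f_r^2\,d\sigma,
\qquad\text{hence}\qquad
\int_0^t f^2s^{\al-1}\,ds\le\frac{t}{1-\al}\int_0^1 s^{\al}f_r^2\,ds .
\]
This bound carries no $\al^{-2}$ Hardy loss, so you may take $t=\al$. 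The near region then contributes at most $2\al\int f_r^2s^\al$. Your own far-region bound gives $\int_\al^1f^2s^{\al-1}\le\al^{\al-2}\int f^2s\le\al^{-2}\int f^2 s$. Adding the two proves the lemma, in fact with $\al^{-2}$ in place of $\al^{-3}$.
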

\begin{proof}
Let $\chi(r)\in C^\i_c([r_0,+\i))$ be a function satisfying
\bes
\chi(r)=\lt\{
\bali
&1,\q r\in \lt[r_0, r_0+\al^3\rt],\\
&0,\q r\geq r_0+2\al^3,
\eali
\rt.
\ees
and $|\chi'|\ls \f{1}{\al^3}$.  Then by using Hardy inequality \eqref{hardy1}, we can obtain that
\begin{align*}
&\int^{r_0+1}_{r_0} f^2\left(r-r_0\right)^{\alpha-1} dr\nn\\
=&\int^{r_0+1}_{r_0} f^2\chi^2\left(r-r_0\right)^{\alpha-1}dr+\int^{r_0+1}_{r_0} f^2(1-\chi^2)\left(r-r_0\right)^{\alpha-1} dr\\
\ls& \f{1}{\al^2} \int^{r_0+1}_{r_0} \lt[f_r^2\chi^2+f^2(\chi')^2\rt]\left(r-r_0\right)^{\alpha+1}dr+\int^{r_0+1}_{r_0} f^2(1-\chi^2)\left(r-r_0\right)^{\alpha-1}dr\\
\ls&  \f{1}{\al^2} \int^{r_0+1}_{r_0} f_r^2\chi^2\left(r-r_0\right)^{\alpha+1}dr+\int^{r_0+1}_{r_0} f^2\lt[(\chi')^2\left(r-r_0\right)^{\alpha+1}+(1-\chi^2)\left(r-r_0\right)^{\alpha-1}\rt]dr\\
\ls& \al\int^{r_0+1}_{r_0} f_r^2\left(r-r_0\right)^{\alpha}dr+\f{1}{\al^6}\int^{r_0+1}_{r_0} f^2\left(r-r_0\right)dr.
\end{align*}
At the last line of the above inequality, we have used the following facts
\bes
|r-r_0|\ls \al^3 \text{ in } \text{spt}\chi,\q |\chi'|\ls \f{1}{\al^3},\q |r-r_0|\gtrsim \al^3 \text{ in } \text{spt}(1-\chi^2).
\ees
\end{proof}

\section{$H^1$ stability estimates of the linear error equations}\label{sec3}

\indent In this section, we give the $H^1$ stability estimate of \eqref{errorequ}.  The main estimate for the linear Stokes system \eqref{errorequ} is the following.
\begin{proposition}\label{proplinearh1}
 Let $(u,v)$ be a smooth solution of (\ref{errorequ}). Then there exist $\epsilon_0>0, \dl_0>0$ and a small  constant $\al\in(0,0.01]$, such that for any $\epsilon\in (0,\epsilon_0), \dl\in(0,\dl_0)$, there holds
\begin{align}\label{linearstabilityh1}
&\e^4\lt\{\int_{\O}\lt(u^2_{r}+v^2_r\rt)+\lt(u^2_{\th}+v^2_\th\rt)(r-r_0)+r^{-2}_0\lt(u^2_{\th}+v^2_\th\rt)\rt\}\nn\\
&\leq Cr^{-1}_0\lt|\int_{\O} \lt( R_u u_{\th}+R_v v_{\th}\rt)\rt|+Cr^{-1}_0\lt|\int_{\O} R_u u_0\rt|\nn\\
&+Cr^{-1}_0\lt|\int_{\O} \lt( R_u u_{\neq}+R_v v\rt)\rt|+ Cr^{-1}_0\lt|\int_{\O}  R_u \Phi_{\neq}(r-r_0)^{\al-1}\rt|.
\end{align}
Here the constant $\al$ is independent of $\e$ and $\dl$.
\end{proposition}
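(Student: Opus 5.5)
We follow the three-step scheme announced in the introduction: a weighted positivity estimate for the tangential derivatives, weighted and zero-frequency diffusion estimates for the normal derivatives, and an unweighted closing estimate. Constants depending on $\al$ will be tracked; $\al$ is fixed first, then $\dl_0,\e_0$ are taken small depending on $\al$.

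\textbf{Step 1 (positivity).} Multiply \eqref{errorequ}$_1$ by $u_\th$ and \eqref{errorequ}$_2$ by $v_\th$, integrate over $\O$, and add.
\begin{itemize}
\item The viscous terms are symmetric in $\th$ after integration by parts and vanish up to lower order. The term $\epsilon^2\frac{2}{r}(v_\th u_\th-u_\th v_\th)$ cancels, and $\int r u_{rr}u_\th=-\int r u_r u_{r\th}-\int u_ru_\th$ is a total $\th$-derivative plus a term that cancels with $\int u_r u_\th$.
\item The pressure terms combine to $\int p_\th u_\th+r p_r v_\th=\int p\,(-u_{\th\th}-(rv_\th)_r)=0$ by incompressibility.
\item The main transport contribution is $\int u^a(u_\th^2+v_\th^2)$. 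By \eqref{uadetail1}, $u^a\approx\t\o r_0(r-r_0)$, so this gives $r_0\int(r-r_0)(u_\th^2+v_\th^2)$.
\item The terms $vr u^a_r u_\th$ and $-2uu^av_\th$ carry the Taylor–Couette part. With $u^e=Ar+B/r$ one has $r u^e_r+u^e=2Ar$ and $-2u^e$. Using $v=-\Phi_\th/r$ and $u=\Phi_r$, these combine into $\int 2A(\Phi_\th u_\th/r\cdots)$, which should cancel exactly by integration by parts, since for the shear flow the rotational terms form an exact derivative.
\item The remaining terms involve $u^a-u^e$ and $v^a$. Using Lemma \ref{lemdetail1} together with the Hardy inequalities of Lemma \ref{lemhardy} and \eqref{hardy4}, they are bounded by $\dl r_0\epsilon^2\int u_{0,r}^2+(u_{\neq,r}^2+v_r^2)(r-r_0)^\al$ plus a small multiple of the left side. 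The boundary-layer sizes $\epsilon^{-1}$ and $\epsilon^{-2/3}$ in the $r$-derivatives are what force the $\epsilon^2$ weight.
\end{itemize}
This yields \eqref{linear1}, scaled by $r_0$.

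\textbf{Step 2 (normal derivatives).}
\begin{itemize}
\item For the zero mode, average \eqref{errorequ}$_1$ in $\th$ to get an ODE for $u_0$: $-\epsilon^2(ru_0')'+\epsilon^2u_0/r+(S_u)_0=(R_u)_0$. Test it with $u_0$. The term $(S_u)_0$ involves only $\th$-averages of products of nonzero modes and $v$. These are bounded by $\int(r-r_0)(u_\th^2+v_\th^2)$ through Poincaré in $\th$ and the bound $v^a=O(\epsilon\dl(r-r_0))$.
\item For the nonzero modes, test the vorticity equation \eqref{errorequcut1} with $\Phi_{\neq}(r-r_0)^{\al-1}$. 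This is where the last remainder term in \eqref{linearstabilityh1} comes from. The principal part gives $\epsilon^2\int(r-r_0)^\al(u_{\neq,r}^2+v_r^2)$ up to errors from differentiating the weight, which carry factors $(r-r_0)^{\al-2}\Phi_{\neq}^2$ etc. These are handled by \eqref{hardy2}–\eqref{hardy3}, using $\Phi_{\neq}=\p_r\Phi_{\neq}=0$ at both boundaries, and by \eqref{hardy4}, which produces the $\al^{-6}\int(r-r_0)(u_\th^2+v_\th^2)$ term.
\item The transport terms $u^a\p_\th\o$ tested against $\Phi_{\neq}w$ with $u^a\sim r_0(r-r_0)$ are again bounded by the weighted tangential quantity.
\end{itemize}
This gives \eqref{linearx1}. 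Combining it with Step 1 and choosing $\dl\al^{-6}$ small closes \eqref{linearx2}.

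\textbf{Step 3 (unweighted closure).} Test \eqref{errorequcut1} with $\epsilon^2u$ and $\epsilon^2\Phi_{\neq}$. This gives $\epsilon^4\int(u_{\neq,r}^2+v_r^2)$, and via Poincaré $r_0^{-2}\epsilon^4\int(u_\th^2+v_\th^2)$, where the $r$-factors in $\Delta$ produce the $r_0^{-2}$. The transport terms are controlled by \eqref{linearx2}. Multiplying \eqref{linearx2} by $\epsilon^4$ and adding yields \eqref{linearstabilityh1}, after dividing by $r_0$.

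\textbf{Main obstacle.} The hardest step will be Step 2 for the nonzero frequencies. The singular weight $(r-r_0)^{\al-1}$ must beat the degenerate positivity $u^a\sim r_0(r-r_0)$ near the no-slip inner wall, and the inner layer of thickness $\epsilon^{2/3}$ gives derivative losses. I would first try to absorb every weight-derivative error with \eqref{hardy4}, picking $\al$ so that the $\al\int f_r^2(r-r_0)^\al$ piece is absorbed. Uniformity in $r_0$, ensuring no uncompensated $r_0$ factors from $u^a$, is checked along the way.
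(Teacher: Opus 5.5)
You follow the paper's architecture. Positivity comes from the multipliers $(u_\th,v_\th)$, the zero mode is tested with $u_0$, and the nonzero modes get a weighted vorticity estimate in which \eqref{hardy4} produces $\al^{-6}$. The scheme closes with $\dl\al^{-6}$ small, followed by the unweighted test with $\e^2\Phi_{\neq}$. Two steps are stated wrongly, however, and the first one, taken literally, fails.

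\textbf{Wrong weight for the nonzero modes.} The multiplier must be $\Phi_{\neq}(r-r_0)^{\al}$, not $\Phi_{\neq}(r-r_0)^{\al-1}$.

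The remainder $\int R_u\Phi_{\neq}(r-r_0)^{\al-1}$ in \eqref{linearstabilityh1} is not the weight itself. It appears when $-\int\p_rR_u\,\Phi_{\neq}(r-r_0)^{\al}$ is integrated by parts and $\p_r$ falls on the weight, giving $\int R_uu_{\neq}(r-r_0)^{\al}+\al\int R_u\Phi_{\neq}(r-r_0)^{\al-1}$.

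With a general weight $w=(r-r_0)^{\beta}$, the terms from differentiating $w$ in $-\e^2\int r\Dl\o\,\Phi_{\neq}w$ carry factors $\beta$ and $\beta(\beta-1)$. After Hardy they are small relative to $\e^2\int r\o_{\neq}^2w$ only if $\beta=\al$ is small. For $\beta=\al-1$ they are of the same size, and the coercivity behind \eqref{wenergyj1th} is lost.

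More seriously, the transport term $\int(ru^e)_r\,v\,u_{\neq}w=2A\int rv\,u_{\neq}w$, with $2A\approx\tilde\omega r_0$, has an $\e$-free, non-small coefficient. It must therefore be dominated by the positivity quantity $r_0\int(r-r_0)(u_\th^2+v_\th^2)$; the viscous part cannot help because it carries $\e^2$.
\begin{itemize}
\item For $w=(r-r_0)^{\al}$ this works, since $\int(rv)^2(r-r_0)^{\al-1}\lesssim\al^{-2}\int(r-r_0)^{\al+1}u_\th^2$; see \eqref{wenergyj5fiv}.
\item For $w=(r-r_0)^{\al-1}$, controlling $u_{\neq}$ by Poincar\'e in $\th$ requires at least the weight $(r-r_0)$ on $u_{\neq}^2$. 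That leaves $\int(rv)^2(r-r_0)^{2\al-3}\lesssim\int u_\th^2(r-r_0)^{2\al-1}$, which the positivity estimate does not control.
\end{itemize}
The principal part $\e^2\int(r-r_0)^{\al}(u_{\neq,r}^2+v_r^2)$ and the $(r-r_0)^{\al-2}$-type errors you list are exactly those of the weight $(r-r_0)^{\al}$. So you have conflated the multiplier with the remainder's weight. Once corrected, the $\al$-smallness of the weight derivatives is precisely what makes the principal part coercive.

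\textbf{The Taylor--Couette terms do not cancel exactly.} The rotational piece $v(ru^e)_r$ of $S_u$ does vanish against $u_\th$, because $(ru^e)_r=2Ar$:
\[
\int(ru^e)_r\,v\,u_\th=-2A\int rv\,(rv)_r=0.
\]
The piece $-2uu^e$ of $S_v$, tested against $v_\th$, leaves a nonzero term:
\[
2\int u^eu\,v_\th=-\int\Bigl(\frac{u^e}{r}\Bigr)_r(rv)^2=-2Ar_0^2\int\frac{(rv)^2}{r^3}\le0 .
\]
Here it has the favorable sign, as in \eqref{positiveesti8}. Even without the sign it is harmless. By Hardy it is at most about $4\tilde\omega\int(r-r_0)u_\th^2$, against about $\tilde\omega r_0\int(r-r_0)u_\th^2$ on the left, so it can be absorbed since $r_0\ge100$. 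This is how Remark \ref{remboundaryc} handles the reversed configuration. Your Step 1 needs one of these two arguments in place of an exact cancellation.

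With both corrections, the rest of your outline matches the paper's proof.
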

Proof of Proposition \ref{proplinearh1} contains three estimates: one is a positive estimate in Section \ref{subspositive} to obtain the $\e$ independently tangential derivative estimate for the second term on the left hand of \eqref{linearstabilityh1}, the second one is the weighted normal energy estimates in Section \ref{subsenergy} to close the positive estimates. The third one is the non-weighted energy estimate in  Section \ref{sub3.3} for the normal derivatives for the first term on the left hand of \eqref{linearstabilityh1}.

\subsection{The $\boldsymbol{\e}$ independently positivity estimate}\label{subspositive}

\indent

In this subsection, we give the weighted $\e$ independently horizontal derivatives' estimate for the solution $(u,v)$. We have the following lemma.

\begin{proposition}\label{proppositive}
 Let $(u,v)$ be a smooth solution of (\ref{errorequ}), then there exist $\epsilon_0>0, \dl_0>0$ such that for any $\epsilon\in (0,\epsilon_0), \dl\in(0,\dl_0)$, there holds
\begin{align}\label{estipositive}
&\int_{\O}(r-r_0) (u^2_\th+v^2_\th) d\th dr\ls \e^2\dl\int_{\O}u^2_{0,r}+(r-r_0)(u^2_{\neq r}+v^2_r)+r^{-1}_0\lt|\int_{\O} (R_u u_{\th}+R_v v_{\th})\rt|.
\end{align}
\end{proposition}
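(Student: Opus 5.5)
We multiply \eqref{errorequ}$_1$ by $u_\th$ and \eqref{errorequ}$_2$ by $v_\th$, integrate over $\O$ and add. The point is that the leading transport term $u^a\p_\th$ of the linearized operator is killed by this multiplier. What survives is a positive quadratic form in $(u_\th,v_\th)$ weighted by $\p_r u^a\approx\t\o r_0$. All other contributions should be either $O(\dl)$ multiples of that form or $\e^2\dl$ times the weighted normal energies.

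\textbf{Cancellations.} The viscous terms only involve $\p_\th$-integration by parts. Using $u=v=0$ on $r=r_0,r_0+1$ and periodicity, we get
\[
-\e^2\int \Big(\tfrac{u_{\th\th}}{r}+ru_{rr}+u_r-\tfrac{u}{r}\Big)u_\th=0 ,
\]
since each piece is a $\th$-derivative or reduces to $\int r u_r u_{r\th}=\tfrac12\int r\p_\th(u_r^2)=0$. The $\tfrac{2}{r}v_\th u_\th$ and $-\tfrac2r u_\th v_\th$ cross terms cancel. The pressure terms combine to $\int (p_\th u_\th+r p_r v_\th)$; integrating $r p_r v_\th$ by parts in $r$ and $\th$ gives $-\int p_\th (rv)_r$, and by $\p_\th u+(rv)_r=0$ this cancels the first piece. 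Likewise $\int u^a(u_\th u_\th)$ is not zero, but $\int u^a u_\th\,u_\th$ is the nonnegative transport contribution. Rather than that, the correct positive term comes from $\int v\,r\p_r u^a\,u_\th$. Write $r v=-\Phi_\th$, so this equals $-\int \Phi_\th\,u_\th\,\p_r u^a$. Integrating by parts, and using $u=\Phi_r$ and $u^a\approx u^e$, the main part (with $\p_ru^e$ depending only on $r$) becomes $\int \Phi_{\th}\Phi_{r\th}\p_ru^e$. Integrating by parts in $r$ again turns this into $-\frac12\int \Phi_\th^2\,\p_r^2u^e$, which is only of size $r_0^{-1}$, since $\p_r^2u^e=2B/r^3$. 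So the positive term must come instead from the combination $u^a\p_\th u\cdot u_\th+u^a\p_\th v\cdot v_\th$. Since $u^a\approx \t\o r_0(r-r_0)>0$ by \eqref{uadetail1}, we obtain
\[
\int u^a(u_\th^2+v_\th^2)\gtrsim r_0\int (r-r_0)(u_\th^2+v_\th^2).
\]
Dividing by $r_0$ yields the left-hand side of \eqref{estipositive} and explains the factor $r_0^{-1}$ on the remainder.

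\textbf{Remaining terms.} These are $\int (v^a r u_r+u\p_\th u^a+uv^a+vu^a)u_\th$ and $\int(v^a r v_r+u\p_\th v^a+vr\p_rv^a-2uu^a)v_\th+\int vr\p_ru^a u_\th$. Terms of the form $\int u^e(r)\,(\cdot)$ with $u$ or $v$ are of type $\int u^e(v u_\th-2u v_\th)$. Splitting $u=u_0+u_{\neq}$, using $\p_\th$-antisymmetry and $rv=-\Phi_\th$, these reduce, after integrations by parts, to expressions with $u^e$ and $\p_ru^e$ that must be combined into a nonnegative or cancelling structure; this is the delicate step. I would rely on the identity $\int(\p_r u^e)\, rv\, u_\th=-\int \p_ru^e\,\Phi_\th\Phi_{r\th}$ and the Taylor--Couette identity $\p_r(u^e/r)\cdot r+2u^e/r=2A+\dots$ to cancel the $O(r_0)$ contributions against $\int 2u^eu v_\th/r$-type terms. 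Every term carrying $u^a-u^e$, $\p_\th u^a$, $v^a$ or $\p_\th v^a$ has an extra factor $\dl$ by Lemma \ref{lemdetail1}. These are bounded using \eqref{uadetail1}--\eqref{vadetail1} together with the Hardy inequalities \eqref{hardy1}, \eqref{hardy2}. For example,
\[
|v^a r u_r u_\th|\ls \e\dl r_0(r-r_0)|u_r||u_\th|\le \tfrac{\eta}{1} r_0(r-r_0)u_\th^2+C\e^2\dl^2 r_0(r-r_0)u_r^2 ,
\]
and $u_0$-parts are controlled with $(r-r_0)\le 1$, giving the $\e^2\dl\int u_{0,r}^2$ term. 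Terms like $\int u\,\p_\th(u^a-u^e)u_\th$ are controlled by $\dl r_0\int(r-r_0)|u_{\neq}||u_\th|\le\dl r_0\int (r-r_0)u_\th^2$ via Poincaré in $\th$. Terms like $\int u\,\p_\th u^a\, u_\th$ with $u_0$ are handled through $\int u_0\p_\th(u^a)u_{\neq,\th}$ and Hardy.

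\textbf{Main obstacle.} I expect the key difficulty to be exactly the $O(r_0)$, $\dl$-independent terms $vr\p_ru^e u_\th$, $vu^e u_\th$ and $-2uu^e v_\th$. These must combine with $u^e(u_\th^2+v_\th^2)$ without destroying positivity, using incompressibility and the precise profile $Ar+B/r$. The boundary-layer contributions should then be absorbed by the smallness of $\dl$.
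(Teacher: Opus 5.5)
The proposal is incomplete at its one essential step: the terms that are not small in $\dl$ are never controlled.

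Your setup is the paper's, once the false start is discarded. The transport term $u^a\p_\th$ is not killed by the multiplier; it is exactly what produces the positive term $\int_\O u^a(u_\th^2+v_\th^2)\gtrsim r_0\int_\O(r-r_0)(u_\th^2+v_\th^2)$. The pressure and viscous cancellations, and most of the $\dl$-small terms, are then handled as in the paper.

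The gap concerns $-\int_\O(vr\,\p_ru^e+u^ev)\,u_\th$ and $2\int_\O u^e u\,v_\th$. You leave these as an unresolved ``main obstacle''. The strategy you sketch, cancelling them against one another or combining them with $u^e(u_\th^2+v_\th^2)$, is not what makes them harmless.

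The missing idea is to use $u_\th=-(rv)_r$ together with $rv=0$ on $r=r_0,r_0+1$. This writes each term as $\int c(r)\,\p_r\big[(rv)^2\big]$, and you then integrate by parts in $r$:
\[
-\int_\O (u^e_r\,vr+u^e v)\,u_\th=\frac12\int_\O\Big(u^e_r+\frac{u^e}{r}\Big)\p_r\big[(rv)^2\big]=-\frac12\int_\O\Big(u^e_r+\frac{u^e}{r}\Big)_r(rv)^2=0,
\]
because $u^e_r+u^e/r\equiv 2A$ for $u^e=Ar+B/r$. For the second term, first integrate by parts in $\th$:
\[
2\int_\O u^e u\,v_\th=-2\int_\O u^e u_\th\, v=\int_\O\frac{u^e}{r}\,\p_r\big[(rv)^2\big]=-\int_\O\Big(\frac{u^e}{r}\Big)_r(rv)^2=-2Ar_0^2\int_\O\frac{(rv)^2}{r^3}\le 0 .
\]
So the first vanishes identically and the second has a favorable sign. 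Nothing has to be combined with the positive term.

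There is also a cruder alternative. Each term has the form $\int c(r)(rv)^2$ with $|c|\lesssim\tilde\omega$, so Hardy bounds it by $C\tilde\omega\int_\O(r-r_0)u_\th^2$. This is absorbed by $\int_\O u^a u_\th^2\approx\tilde\omega r_0\int_\O(r-r_0)u_\th^2$ for $r_0$ large, as in Remark \ref{remboundaryc}. Your own computation, showing $\int vr\,\p_ru^e\,u_\th$ is only relatively $O(r_0^{-1})$, was pointing this way. It also contradicts your later description of these terms as $O(r_0)$.

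The same integration by parts is needed for the $\dl$-part $\int_\O vr\,\p_r(u^a-u^e)\,u_\th$. You claim all $\dl$-terms follow from pointwise bounds plus \eqref{hardy1}, \eqref{hardy2}, but this one does not:
\begin{itemize}
\item $\p_r(u^a-u^e)$ need not vanish at $r=r_0$; it is of size up to $\e^{1/3}\dl r_0$ there.
\item So Cauchy--Schwarz leaves $\int_\O (rv)^2/(r-r_0)$. This is the endpoint $\kappa=-1$ excluded in Lemma \ref{lemhardy}, and it is not bounded by any multiple of $\int_\O(r-r_0)u_\th^2$.
\end{itemize}
The paper instead keeps all of $u^a$ in the identity above. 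This gives $-\frac12\int_\O\big[(u^a-u^e)_{rr}+\big((u^a-u^e)/r\big)_r\big](rv)^2$, and the paper then uses
\[
|(u^a-u^e)_{rr}|+|((u^a-u^e)/r)_r|\lesssim \e\dl r_0+\dl r_0(r-r_0)^{-1/2}+\dl r_0(r-r_0)^2(r_0+1-r)^{-2}.
\]
Once the derivative is off $rv$, the inner-boundary singularity $(r-r_0)^{-1/2}$ is covered by \eqref{hardy1}, and the outer-boundary term is handled by \eqref{hardy2}.
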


\pf Multiplying \eqref{errorequ}$_1$ by $u_\th$, \eqref{errorequ}$_2$ by $v_\th$, integrating the resulted equations on $\O$  and adding them together to obtain that
\begin{align}
\int_{\O} u^a(u^2_\th+v^2_\th)=&\underbrace{-\int_{\O}(p_\th u_\th+rp_rv_\th)}_{I_1}+\underbrace{\epsilon^2\int_{\O}\left(\frac{u_{\theta \theta}}{r}+r u_{r r}+u_r+\frac{2}{r} v_\theta-\frac{u}{r}\right)u_\th}_{I_2}\nn\\
&\underbrace{+\epsilon^2\int_{\O}\left(\frac{v_{\theta \theta}}{r}+r v_{r r}+v_r-\frac{2}{r} u_\theta-\frac{v}{r}\right)v_\th}_{I_3}\nn\\
&-\underbrace{\int_{\O}(v^ar u_r+u u^a_\th+vr u^a_r+uv^a+u^av)u_\th}_{I_4}\underbrace{-\int_{\O}(v^ar v_r+u v^a_\th+vr v^a_r-2uu^a)v_\th}_{I_5}, \label{positiveesti13}\\
&+\int_{\O} (R_u u_{\th}+R_v v_{\th}).\nn
\end{align}

We estimate the righthand term by term.

{\noindent\bf Term $I_1$}: By using integration by parts and the incompressibility, we see that
\begin{align*}
I_1:=& -\int_{\O}(p_\th u_\th+rp_rv_\th)=\int_{\O}( u_{\th\th}+(rv_{\th})_r)p=0.
\end{align*}
{\noindent\bf Term $I_2+I_3$}: By using integration by parts, we see that
\begin{align*}
I_2+I_3=&\int_{\O}\lt\{ \f{1}{2r}(u^2_\th+v^2_\th)_\th-\f{r}{2}(u^2_r+v^2_r)_\th-\f{1}{2r}(u^2+v^2)_\th\rt\}=0.
\end{align*}

{\noindent\bf Term $I_4$}:

{\it\noindent \underline{Estimate of $\int_{\O}v^ar u_r u_\th d\th dr$}}

Using the estimate \eqref{vadetail2} and Cauchy inequality, we have
\begin{align}
\lt|\int_{\O}v^ar u_r u_\th\rt|\ls& \e\dl \int_{\O}(r-r_0)r |u_r u_\th|\ls \e^2\dl r_0 \int_{\O}(r-r_0) u^2_r+\dl r_0\int_{\O}(r-r_0) u^2_\th. \label{positive4}
\end{align}

{\it\noindent \underline{Estimate of $\int_{\O}(u^a_\th+v^a) u u_\th d\th dr$}}

Using the estimate \eqref{uadetail1} and \eqref{vadetail2},  Cauchy inequality, Poincar\'{e} inequality with zero mean in $\th$ variable, we have
\begin{align}
&\lt|\int_{\O}(u^a_\th+v^a) u_{\neq} u_\th\rt|\ls \dl r_0\int_{\O}(r-r_0)|u_{\neq0} u_\th|\nn\\
\ls& \dl r_0 \int_{\O}(r-r_0) u^2_{\neq}+\dl r_0 \int_{\O}(r-r_0) u^2_\th\ls \dl r_0\int_{\O}(r-r_0) u^2_\th. \label{positiveesti2}
\end{align}
Using the estimate \eqref{uadetail3} and \eqref{vadetail1}, we have
\bes
|u^a_\th |+|v^a|\ls \e\dl r_0+\e\dl r_0\f{(r-r_0)^2}{r_0-1-r}.
\ees
Then by Cauchy inequality, Poincar\'{e} inequality and Hardy inequality \eqref{hardy2}, we have
\begin{align}
\lt|\int_{\O}(u^a_\th+v^a) u_0 u_\th\rt|\ls& \e\dl r_0 \int_{\O}\lt( 1+ \f{(r-r_0)^2}{r_0+1-r} \rt)|u_0 u_\th|\nn\\
\ls& \e^2\dl r_0 \int_{\O}\lt(\f{u^2_0}{(r-r_0)}+\f{((r-r_0) u_0)^2}{(r_0+1-r)^2}\rt)+\dl r_0 \int_{\O} (r-r_0) u^2_\th\nn\\
\ls& \e^2\dl r_0 \int_{\O}\lt(\f{u^2_0}{(r-r_0)^{2}}+\f{((r-r_0) u_0)^2}{(r_0+1-r)^2}\rt)+\dl r_0 \int_{\O}(r-r_0) u^2_\th\nn \\
\ls& \e^2\dl r_0 \int_{\O} u^2_{0,r}+\dl r_0 \int_{\O}(r-r_0) u^2_\th. \label{positiveesti3}
\end{align}
Combining estimates \eqref{positiveesti2} and \eqref{positiveesti3}, we can obtain
\be\label{positiveesti4}
\lt|\int_{\O}(u^a_\th+v^a) u u_\th\rt|\ls\e^2\dl r_0 \int_{\O} u^2_{0,r}+\dl r_0 \int_{\O}(r-r_0) u^2_\th.
\ee

{\it\noindent \underline{Estimate of $\int_\O (u^a_r vr+u^av) u_\th d\th dr$}}

First, using the divergence free condition, integration by parts and the fact that
\bes
\lt(u^e_{r}+\f{u^e}{r}\rt)_r=0,
\ees we have
\begin{align}
&-\int_\O (u^a_r vr+u^av) u_\th=\int_\O (u^a_r vr+u^av) (vr)_r\nn\\
=&\f{1}{2}\int_\O \lt(u^a_r+\f{u^a}{r}\rt) (vr)^2_{r}=-\f{1}{2}\int_\O\lt(u^a_r+\f{u^a}{r}\rt)_r(vr)^2\nn\\
=&-\f{1}{2}\int_\O\lt(u^e_{r}+\f{u^e}{r}\rt)_r(vr)^2-\f{1}{2}\int_\O\lt[(u^a-u^e)_{rr}+\lt(\f{u^a-u^e}{r}\rt)_r\rt] (vr)^2\nn\\
=&-\f{1}{2}\displaystyle\int_\O\lt[(u^a-u^e)_{rr}+\lt(\f{u^a-u^e}{r}\rt)_r \rt](vr)^2. \label{positiveesti1}
\end{align}

Using the estimate \eqref{uadetail1} and \eqref{uadetail2}, we have
\begin{align*}
&|(u^a-u^e)_{rr}|+\lt|\lt(\f{u^a-u^e}{r}\rt)_r\rt|\ls \e\dl r_0+\dl r_0\f{1}{\s{r-r_0}}+\dl r_0\f{(r-r_0)^2}{(r_0+1-r)^2}.
\end{align*}
Inserting the above estimates into \eqref{positiveesti1}, using Hardy inequality and the divergence free condition, we can obtain that
\begin{align}
&\lt|\int_\O (u^a_r vr+u^av) u_\th\rt|\ls r_0\int_\O \lt(\f{\dl}{\s{r-r_0}}+\e\dl+\dl \f{(r-r_0)^2}{(r_0+1-r)^2}\rt) (vr)^2\nn\\
\ls& \dl r_0 \int_\O (r-r_0)^{3/2}(vr)^2_r +\e\dl r_0 \int_\O(r-r_0)^2(vr)^2_r+\dl r_0 \int_\O (r-r_0)^2(vr)^2_r\nn\\
\ls&  \dl r_0 \int_\O (r-r_0)(vr)^2_r\ls \dl r_0 \int_\O (r-r_0)u^2_\th. \label{positiveesti10}
\end{align}

{\noindent\bf Term $I_5$}:

{\it\noindent \underline{Estimate of $\int_\O (v^a r v_r+v^a_\th u) v_\th d\th dr$}}

Using the estimate \eqref{vadetail2},  Cauchy inequality and Hardy inequality \eqref{hardy2}, we have
\begin{align}
&\lt|\int_\O (v^a r v_r+v^a_\th u) v_\th\rt|\ls \e\dl  \int_{\O} (r_0|v_r|+ |u|)(r-r_0) v_\th\nn\\
\ls &\dl r_0 \int_{\O} (r-r_0)v^2_\th+ \e^2\dl r_0\int_\O (r-r_0) v^2_r +\e^2\dl\f{1}{r_0}\int_\O (r-r_0)u^2\nn\\
\ls &\dl r_0\int_{\O} (r-r_0)v^2_\th+ \e^2\dl r_0\int_\O (r-r_0) v^2_r +\e^2\dl\f{1}{r_0}\int_\O (r-r_0)^3 u^2_r\nn\\
\ls& \dl r_0 \int_{\O}(r-r_0)v^2_\th+ \e^2\dl r_0\int_\O (r-r_0)(u^2_r+v^2_r).  \label{positiveesti11}
\end{align}
{\it\noindent \underline{Estimate of $\int_\O vrv^a_r v_\th d\th dr$}}

Using \eqref{uadetail1}, \eqref{vadetail2} and the incompressibility, we see that
\bes
r|v^a_r|\leq |u^a_\th|+|v^a|\ls \dl(r-r_0),
\ees
Utilizing the above estimate, Cauchy inequality and Poincar\'{e} inequality with zero mean in $\th$ variable, we obtain that
\begin{align}
&\lt|\int_{\O}v^a_r vr v_\th\rt|\ls  \dl r_0\int_{\O} (r-r_0)|v v_\th|\ls \dl r_0\int_{\O}(r-r_0)(v^2+v^2_\th)\ls \dl r_0\int_{\O}(r-r_0)v^2_\th.\label{positiveesti12}
\end{align}

{\it\noindent \underline{Estimate of $2\int_\O u^a u v_\th d\th dr$}}

First, by using integration by parts and the incompressibility, we have
\begin{align}
&2\int_\O u^a u v_\th =-2\int_\O u^a_\th u v -2\int_\O u^a u_\th v\nn\\
=& -2\int_\O u^e(r) u_\th v -2\int_\O (u^a-u^e) u_\th v -2\int_\O u^a_\th u v\nn\\
=& 2\int_\O u^e(r) (rv)_r v -2\int_\O (u^a-u^e) u_\th v -2\int_\O u^a_\th u v\label{positiveesti5}\\
=& \int_\O \f{u^e}{r} (rv)^2_r -2\int_\O (u^a-u^e) u_\th v -2\int_\O u^a_\th u v\nn\\
=& -\int_\O \lt(\f{u^e}{r}\rt)_r (rv)^2 -2\int_\O (u^a-u^e) u_\th v -2\int_\O u^a_\th u v.\nn
\end{align}
Using the estimate \eqref{uadetail1}, Cauchy inequality and Poincar\'{e} inequality, it is easy to see that
\begin{align}
\lt|\int_\O (u^a-u^e) u_\th v\rt|\ls \dl r_0 \int_\O (r-r_0)(u^2_\th+v^2)\ls \dl r_0 \int_\O (r-r_0)(u^2_\th+v^2_\th). \label{positiveesti6}
\end{align}
The same estimate as \eqref{positiveesti2} and \eqref{positiveesti3}, we can have
\begin{align}
\lt|\int_{\O}u^a_\th u v\rt|\ls\e^2\dl r_0 \int_{\O} u^2_{0,r}+\dl r_0 \int_{\O}(r-r_0) (u^2_\th+v^2_\th). \label{positiveesti7}
\end{align}
Since $u^e=A\lt(r-\f{r^2_0}{r}\rt)$, then
\begin{align}
-\int_\O \lt(\f{u^e}{r}\rt)_r (rv)^2= -2Ar^2_0\int_\O \f{(rv)^2}{r^3}\leq 0. \label{positiveesti8}
\end{align}
Inserting \eqref{positiveesti6}, \eqref{positiveesti7} and \eqref{positiveesti8} into \eqref{positiveesti5} implies that
\begin{align}
&2\int_\O u^a u v_\th \ls  \e^2\dl r_0 \int_{\O} u^2_{0,r}+\dl r_0 \int_{\O}(r-r_0) (u^2_\th+v^2_\th). \label{positiveesti9}
\end{align}

\begin{remark} \label{remboundaryc}

 If we change the boundary condition between the inner and outer boundary, i.e. the flow is subjected to a perturbation of the rotation of rigid body at the inner boundary and stays at rest at the outer boundary, then the limiting Euler $\bl{u}^e$ is given by $\bl{u}^e=u^e\bl{e}_\th$, where  $u^e=\f{r^2_0}{2r_0+1}\t{o}\lt(\f{(r_0+1)^2}{r}-r\rt)$. In this case, the only difference is the estimate of \eqref{positiveesti8}. In this case, although the sign of this term changes, however if $r_0$ is suitably large, it can be absorbed. We calculate it as follows. By using Hardy iequality \eqref{hardy1} and the incompressibility

\begin{align}
&-\int_\O \lt(\f{u^e}{r}\rt)_r (rv)^2= \f{2r^2_0(r_0+1)^2}{2r_0+1}\t{\o}\int_\O \f{(rv)^2}{r^3} \leq  \f{2(r_0+1)^2}{(2r_0+1)r_0}\t{\o}\int_\O (rv)^2\leq \f{8(r_0+1)^2}{(2r_0+1)r_0}\t{\o}\int_\O (r-r_0)(rv)^2_r \nn\\
\leq&  9\t{\o} \int_\O (r-r_0)u^2_\th.\nn
\end{align}
This term can be absorbed by  $\int_\O u^au^2_\th$ if $r_0$ is suitably large.
\end{remark}

At last, inserting estimates in \eqref{positive4}, \eqref{positiveesti4},  \eqref{positiveesti10},  \eqref{positiveesti11},  \eqref{positiveesti12},  \eqref{positiveesti9} into  \eqref{positiveesti13} indicates
 the following positive estimate
\begin{align}
\int_{\O} u^a(u^2_\th+v^2_\th) \ls&\e^2\dl r_0 \int_{\O}u^2_{0,r}+ (r-r_0) (u^2_{r}+v^2_r)+\dl r_0\int_{\O}(r-r_0) \lt(u^2_\th+v^2_\th\rt)\nn\\
                                 &+\lt|\int_{\O} (R_u u_{\th}+R_v v_{\th})\rt|.\nn
\end{align}
Remembering \eqref{uadetail1}, then there exists a small $\dl_0$ such that when $\dl\in (0,\dl_0]$, such that \eqref{estipositive} stands.

In order to close the linear estimate, we still need to give the control of the first term on the right hand of \eqref{estipositive}, which will be achieved by the following lemma through basic energy estimates.

\subsection{Weighted energy estimates}\label{subsenergy}
\begin{proposition}\label{proplinearenergy}
 Let $(u,v)$ be a smooth solution of (\ref{errorequ}), then there exist $\epsilon_0>0, \dl_0>0$ such that for any $\epsilon\in (0,\epsilon_0), \dl\in(0,\dl_0)$  and a small constant $\al\in (0,0.1]$, there holds
\begin{align}\label{estilinearwei}
&\e^2\int_{\O} u^2_{0,r}+(r-r_0)^\al (u^2_{\neq,r}+v^2_r)\nn\\
&\ls \al^{-6}\int_{\O}(r-r_0)(u^2_\th+v^2_\th)+Cr^{-1}_0\lt|\int_{\O} R_u u_0\rt|\nn\\
&+Cr^{-1}_0\lt|\int_{\O} (R_u u_{\neq}+R_v v)\rt|+ Cr^{-1}_0\lt|\int_{\O} (r-r_0)^{\al-1}R_u \Phi_{\neq}\rt|.
\end{align}
\end{proposition}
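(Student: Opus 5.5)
The plan is to split the error into zero and non-zero frequencies in $\th$ and run two separate energy estimates, with the weighted Hardy inequality \eqref{hardy4} used to trade the singular weight $(r-r_0)^{\al-1}$ for the weight $(r-r_0)$ that Proposition \ref{proppositive} controls.

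\emph{Zero frequency.} Average \eqref{errorequ}$_1$ in $\th$. Since $v_0=0$ by incompressibility and the boundary condition, the pressure drops out. This leaves an ODE
\[
-\e^2(ru_{0,r})_r+\e^2\f{u_0}{r}+(S_u)_0=(R_u)_0 .
\]
Multiply by $u_0$ and integrate; the diffusion gives $\e^2 r_0\int u_{0,r}^2$ up to harmless terms. In $(S_u)_0$, every term has the form (coefficient of $u^a,v^a$) times (linear in $u,v$). The average of $u^a\p_\th u$ and of $u\p_\th u^a$ involves only the non-zero modes. I would bound these using Lemma \ref{lemdetail1}:
\[
|v^a|+|\p_\th u^a|\ls \dl(r-r_0) \quad\text{or}\quad \e\dl r_0\Bigl(1+\tfrac{(r-r_0)^2}{r_0+1-r}\Bigr).
\]
The only dangerous piece is $(vr\p_r u^a)_0$, where $\p_r u^a$ is large. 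For it, write $v=v_{\neq}$, use $rv=-\Phi_\th$, and integrate by parts in $\th$ to move the derivative onto $u^a_r$. This produces $\int \Phi_{\neq}\,\p_\th u^a_r\, u_0$. Its Prandtl parts carry $\dl$-smallness, and using Cauchy, Hardy \eqref{hardy2}, and Poincaré in $\th$ it is bounded by
\[
\dl\,\e^2 r_0\int u_{0,r}^2+C\dl^{-1}r_0\int (r-r_0)(u_\th^2+v_\th^2).
\]

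\emph{Non-zero frequency.} Work with the vorticity equation \eqref{errorequcut1}. Multiply it by $-\Phi_{\neq}(r-r_0)^{\al}$ (or the analogous multiplier) and integrate, so that the leading diffusion term yields, after integrating by parts twice with the boundary conditions $(\Phi_{\neq},\p_r\Phi_{\neq})=0$,
\[
\e^2 r_0\int (r-r_0)^\al(u_{\neq,r}^2+v_r^2)
\]
plus commutator terms. These commutators involve $(r-r_0)^{\al-1}$ and $(r-r_0)^{\al-2}$. They are absorbed via \eqref{hardy4} and \eqref{hardy3}: each costs $\al\cdot(\text{LHS})$ plus $\al^{-3}$ or $\al^{-6}$ times $\int(r-r_0)(u^2+v^2)$, and Poincaré in $\th$ bounds the latter by $\int(r-r_0)(u_\th^2+v_\th^2)$. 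This is the source of the $\al^{-6}$ factor and of the remainder term $\int R_u\Phi_{\neq}(r-r_0)^{\al-1}$.

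\emph{Transport terms.} Handling the transport terms $\f1r\p_\th S_v-\p_rS_u$ against the weighted multiplier is the main obstacle, in particular $u^a\p_\th\o$ with $u^a\approx \t\o r_0(r-r_0)$ of size $r_0$. I would integrate by parts in $\th$ so that these terms become either exact derivatives, which vanish, or terms with a $\th$-derivative on the error paired with $u^a$. The latter are bounded by
\[
\int r_0(r-r_0)|u_\th||u_{\neq,r}|(r-r_0)^{\al-1}\cdots,
\]
then controlled by Cauchy with a small parameter against $\e^2 r_0\int(r-r_0)^\al u_{\neq,r}^2$. Any leftover $\e^{-2}$ loss should be compensated by the $\e^2$ in front. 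If necessary, I would use the fact that the positivity-type structure of the shear $u^e$ makes the worst term have a sign, as in \eqref{positiveesti8}.

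\emph{Closing.} Finally, divide by $r_0$, take $\dl$ small, and combine the zero and non-zero frequency estimates to obtain \eqref{estilinearwei}.
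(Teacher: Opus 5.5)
There is a genuine gap, and it is in the transport terms of the weighted non-zero-frequency estimate.

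\textbf{Why your transport bound fails.} You plan to leave terms such as $r_0\int(r-r_0)^\al|u_\th||u_{\neq,r}|$ and absorb them by Cauchy against the diffusion. You say the resulting $\e^{-2}$ loss is ``compensated by the $\e^2$ in front''. But the transport terms carry no power of $\e$. The only $\e^2$ sits on the diffusion, i.e. on the quantity you are estimating. Cauchy therefore leaves $C\e^{-2}r_0\int(r-r_0)^\al u_\th^2$ on the right-hand side. This is not of the form $\al^{-6}\int(r-r_0)(u_\th^2+v_\th^2)$: the power of $\e$ is wrong, and the weight $(r-r_0)^\al$ is less degenerate at $r=r_0$ than the available $(r-r_0)$. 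It also cannot be closed with \eqref{estipositive}, which only yields $\int(r-r_0)(u_\th^2+v_\th^2)\ls\e^2\dl\int(\cdots)$. Multiplying the equation by $\e^2$ instead would give only an $\e^4$-weighted bound, which is not the statement. Appealing to a sign of the shear as in \eqref{positiveesti8} does not repair the $\e^{-2}$ loss.

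\textbf{The missing idea.} No derivative of the error has to survive against the large coefficient $u^a\approx\t{\o}r_0(r-r_0)$. One integration by parts gives
\[
\int_\O\Bigl(\f1r\p_\th S_v-\p_rS_u\Bigr)\Phi_{\neq}(r-r_0)^\al=\int_\O(S_uu_{\neq}+S_vv)(r-r_0)^\al+\al\int_\O S_u\Phi_{\neq}(r-r_0)^{\al-1}.
\]
On the right, the pieces $(u^a\p_\th+v^ar\p_r)u\cdot u_{\neq}$ and $(u^a\p_\th+v^ar\p_r)v\cdot v$ are exact derivatives, thanks to $\p_\th u^a+\p_r(rv^a)=0$. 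What remains of them is $-\f\al2\int rv^a(u_{\neq}^2+v^2)(r-r_0)^{\al-1}$ and a cross term $\int v^aru_{0,r}u_{\neq}(r-r_0)^\al$, both with the $\e\dl$-small coefficient $v^a$. Integrating by parts in $\th$, $\al\int(uu^a)_\th\Phi_{\neq}(r-r_0)^{\al-1}=\al\int uu^a\,rv\,(r-r_0)^{\al-1}$.

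Everything left is of zeroth order in $(u,v)$. The coefficients are either of size $r_0$ (the shear part, e.g. $(ru^e)_r=2Ar$ inside $v(ru^a)_r$) or $\dl$-small boundary-layer profiles. These terms are bounded, using Hardy \eqref{hardy1} with exponent $\al-1$ and Poincar\'e in $\th$, by $\al^{-2}r_0\int(r-r_0)(u_\th^2+v_\th^2)+\e^2\dl r_0\int(r-r_0)^\al u_r^2$. A typical step is $\int(rv)^2(r-r_0)^{\al-1}\le 4\al^{-2}\int(r-r_0)^{\al+1}u_\th^2$. The $O(r_0)$ terms are neither small nor absorbed. They are controlled directly by the positivity quantity with an $\e$-independent constant, which is exactly why that quantity appears on the right of \eqref{estilinearwei}.

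\textbf{Zero frequency.} Your argument there is essentially the paper's Lemma \ref{lemlinearzero}; one detail should be fixed. The $C\dl^{-1}$ you write is wrong. That term carries a factor $\dl$, so Cauchy gives $\dl r_0\bigl(\e^2\int u_{0,r}^2+\int(r-r_0)(u_\th^2+v_\th^2)\bigr)$. A $\dl^{-1}$ would cancel precisely the $\dl$ gained in \eqref{estipositive}, and the estimates would not close.
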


\pf The proof is divided into two parts. The first part deals with  the non weighted estimate of zero frequency of $u$, namely, $u_0$. The other part is the weighted estimate of the non-zero frequency of $u$, namely, $u_\neq$.

\subsubsection{Energy estimate of $u_{0}$}
\indent
\begin{lemma}\label{lemlinearzero}
 Let $(u,v)$ be a smooth solution of (\ref{errorequ}), then there exist $\epsilon_0>0, \dl_0>0$ such that for any $\epsilon\in (0,\epsilon_0), \dl\in(0,\dl_0)$, there holds
\begin{align}
\e^2 \int_{\O} u^2_{0,r} \ls\dl \int_{\O} (r-r_0) u^2_{\th}+Cr^{-1}_0\lt|\int_{\O} R_u  u_0\rt|.\nn
\end{align}
\end{lemma}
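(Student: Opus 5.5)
Multiplying \eqref{errorequ}$_1$ by $u_0(r)$ and integrating over $\O$ (equivalently, averaging \eqref{errorequ}$_1$ in $\th$ and testing against $u_0$) gives the estimate. Averaging in $\th$ kills $p_\th$, $u_{\th\th}$ and $v_\th$. The boundary conditions $u_0(r_0)=u_0(r_0+1)=0$ make the viscous part coercive:
\[
-\e^2\int_{r_0}^{r_0+1}\Big(ru_{0,rr}+u_{0,r}-\f{u_0}{r}\Big)u_0\,dr=\e^2\int_{r_0}^{r_0+1}\Big(r u_{0,r}^2+\f{u_0^2}{r}\Big)dr\geq \e^2 r_0\int u_{0,r}^2 .
\]
This explains the factor $r_0^{-1}$ in front of $\int R_u u_0$. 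It remains to bound the averaged transport term $\int_\O S_u u_0$.

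To handle $S_u$, first write it in conservative form using incompressibility, so that only fluctuating products survive the average. Since $\p_\th u+\p_r(rv)=0$, we have $u^a\p_\th u+v^a r\p_r u+u\p_\th u^a+vr\p_r u^a=\p_\th(u^au)+\p_r(r(v^au+vu^a))-(v^au+vu^a)$. Hence
\[
\int_\O S_u u_0=\int_\O\big[\p_r(r(v^au+vu^a))\big]u_0=-\int_\O r(v^au+vu^a)u_{0,r}.
\]
The lower-order terms $uv^a+vu^a$ cancel exactly. Now split $u^a=u^e(r)+(u^a-u^e)$. The term $\int u^e v u_{0,r}$ vanishes because $v_0=0$, which follows from $\p_r(rv_0)=0$ and $v_0(r_0)=0$. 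The term $\int v^a u_0 u_{0,r}$ likewise sees only $v^a_0$, and $v^a_0=0$ by the same argument applied to $v^a$.

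We are left with
\[
\Big|\int_\O r\big(v^a u_{\neq}+v(u^a-u^e)\big)u_{0,r}\Big|.
\]
By \eqref{vadetail2} and \eqref{uadetail1}, $|v^a|\ls\e\dl(r-r_0)$ and $|u^a-u^e|\ls\dl r_0(r-r_0)$. Poincar\'e in $\th$ then gives $\int(r-r_0)^2(u_{\neq}^2+v^2)\ls\int(r-r_0)(u_\th^2+v_\th^2)$. Therefore the remaining integral is bounded by
\[
\dl r_0^2\Big(\int(r-r_0)(u_\th^2+v_\th^2)\Big)^{1/2}\Big(\int u_{0,r}^2\Big)^{1/2}.
\]
Absorbing $\int u_{0,r}^2$ into the left side by Cauchy's inequality costs a factor $\dl r_0^3/\e^2$, which is too large. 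This weight bookkeeping is the main obstacle. The clean term is $v(u^a-u^e)$, whose $v$ part is really controlled through $v_\th$. The dangerous term is $v^a u_\neq$: its coefficient is $\e\dl$, so one factor of $\e$ is available there.

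The plan is therefore to keep the $\e$ from \eqref{vadetail2} in the $v^au_\neq$ term and to use a sharper structure for the $v(u^a-u^e)$ term. For the latter, I would look to the averaging of $(u^a-u^e)_{\neq}$ against $v$, noting that $(u^a-u^e)_0$ pairs with $v_0=0$ and drops out. Then $\int(u^a-u^e)_{\neq}v\,u_{0,r}$ involves $\p_\th$ of the coefficient, estimated by \eqref{uadetail3} with gain $\e$ away from the outer layer. Near the outer layer $r\approx r_0+1$, I would use Hardy's inequality \eqref{hardy2} in $r_0+1-r$ together with $v|_{r_0+1}=0$. Collecting terms, the target is
\[
\e^2 r_0\int u_{0,r}^2\ls \e\,\dl r_0\Big(\int(r-r_0)(u_\th^2+v_\th^2)\Big)^{1/2}\Big(\int u_{0,r}^2\Big)^{1/2}\cdot r_0+\Big|\int R_u u_0\Big|.
\]
Cauchy's inequality then yields the lemma after dividing by $r_0$, with the remaining $r_0$-powers tracked against the stated normalization.
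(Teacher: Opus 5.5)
You have a genuine gap. There is one minor error and one decisive one.

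First, the minor error: your conservative-form identity is wrong. Using each incompressibility relation once,
\[
u^a\p_\th u+rv^a\p_r u+u\p_\th u^a+rv\p_r u^a=2\p_\th(u^au)+\p_r\big(r(v^au+vu^a)\big).
\]
As a check, take $u^a=0$, $v^a=c/r$, $u=u(r)$, $v=0$: the left side is $cu_r$, while your right side gives $cu_r-cu/r$. So the zeroth-order terms do not cancel, and in fact
\[
\int_\O S_uu_0=-\int_\O r(v^au+vu^a)u_{0,r}+\int_\O (v^au+vu^a)u_0 .
\]
The extra term has the same structure with one fewer factor of $r$. It is harmless, but it must be estimated.

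The decisive gap is the last step. Your target inequality has coefficient $\e\dl r_0\cdot r_0$ and involves $v_\th$. Cauchy then only gives $\e^2\int u_{0,r}^2\ls\dl^2r_0^2\int(r-r_0)(u_\th^2+v_\th^2)+r_0^{-1}|\int R_uu_0|$. That is not the lemma: uniformity in large $r_0$ is the whole point, and $v_\th$ does not appear on the lemma's right-hand side. No bookkeeping of $r_0$-powers removes that factor, because it comes from controlling $v$ by Poincar\'e in $\th$.

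The missing idea is to keep the factor $r$ together with $v$. Since $rv$ vanishes at $r=r_0$ and $r=r_0+1$, and $(rv)_r=-u_\th$:
\begin{itemize}
\item \eqref{hardy1} gives $\int(rv)^2\ls\int(r-r_0)^2u_\th^2$;
\item \eqref{hardy2} applied to $(r-r_0)rv$ gives $\int\f{(r-r_0)^2(rv)^2}{(r_0+1-r)^2}\ls\int(r-r_0)^2u_\th^2$.
\end{itemize}
Combine these with the $\e$-gain you correctly anticipated, $|u^a_{\neq}|\ls\e\dl r_0\big(1+\f{r-r_0}{r_0+1-r}\big)$ from \eqref{uadetail3}. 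Writing $Y^2=\int(r-r_0)u_\th^2$ and $X^2=\int u_{0,r}^2$, this gives $|\int_\O rv\,u^a_{\neq}u_{0,r}|\ls\e\dl r_0\,YX$. The $v^au_{\neq}$ term satisfies the same bound (via $|v^a|\ls\e\dl(r-r_0)$ and Poincar\'e on $u_{\neq}$), and so does the extra zeroth-order term. Cauchy then yields $\e^2X^2\ls\dl^2Y^2+r_0^{-1}|\int R_uu_0|$, which implies the lemma.

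The paper uses the same two ingredients in a different arrangement:
\begin{itemize}
\item it writes the transport part as $(u^au)_\th+v^a(ru_{\neq})_r+v(ru^a_{\neq})_r$ and integrates by parts only the $v^a$ piece;
\item it keeps the derivative on the coefficient $(ru^a_{\neq})_r$, bounded via \eqref{uadetail2} by $\e\dl r_0r(\f{1}{r-r_0}+\f{(r-r_0)^2}{(r_0+1-r)^2})$;
\item it splits by Cauchy into $\dl r_0\int(\cdots)(rv)^2+\e^2\dl r_0\int(\cdots)u_0^2$ and applies Hardy to both $rv$ and $u_0$.
\end{itemize}
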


Now we go back to the first equation of \eqref{errorequ}.
\be\label{errorenergys1}
-\epsilon^2\left(\frac{u_{\theta \theta}}{r}+r u_{r r}+u_r+\frac{2}{r} v_\theta-\frac{u}{r}\right)+\p_\th p +S_u=R_u.
\ee
Now multiplying \eqref{errorenergys1} by $ u_0$ and then integrating on $\O$, we can obtain that
\begin{align}
&\underbrace{-\int_{\O}\epsilon^2\left(\frac{u_{\theta \theta}}{r}+r u_{r r}+u_r+\frac{2}{r} v_\theta-\frac{u}{r}\right) u_0}_{J_1}\nn\\
&=\underbrace{-\int_{\O} \lt(u^au_\th+uu^a_\th+p_\th\rt) u_0}_{J_2}\underbrace{-\int_{\O} \lt(v^a ru_r+vru^a_r+uv^a+u^av\rt) u_0}_{J_3}+\int_{\O} R_u  u_0.\nn
\end{align}
{\noindent\bf Term $J_1$}: Using the boundary condition $u|_{r=r_0,r_0+1}=u_0|_{r=r_0,r_0+1}=0$ and integration by parts, we have
\begin{align}
J_1=&-\int_{\O}\epsilon^2\left(\frac{u_{\theta \theta}}{r}+r u_{r r}+u_r+\frac{2}{r} v_\theta-\frac{u}{r}\right) u_0\nn\\
=&-\e^2 \int_{\O}\left(r u_{r r}+u_r-\frac{u}{r}\right) u_0=-\e^2 \int_{\O}\lt( (ru_r)_r-\f{u}{r}  \rt)    u_0\nn\\
=&\e^2 \int_{\O}r  u^2_{0,r}+\f{u^2_0}{r}\geq \e^2r_0\int_{\O} u^2_{0,r}. \label{energyk1}
\end{align}
{\noindent\bf Term $J_2$}: By using integration by parts in $\th$ and $u_0$ being independent of $\th$, it is easy to see that
\be\label{energyk2}
J_2=0.
\ee
{\noindent\bf Term $J_3$}: By using the facts that
\bes
\int^{2\pi}_0 v^ad\th=\int^{2\pi}_0 vd\th=0,
\ees
and $u_0$ being independent on $\th$, we have
\begin{align}
J_3=&\int_{\O}  (r{u}_{\neq} )_r v^a  u_0+ [ru^a_{\neq }]_r v  u_0\nn\\
   =&\int_{\O} -r{u}_{\neq}  (v^a_ru_0+v^au_{0,r}) +[ru^a_{\neq }]_r v u_0. \label{energyk3}
\end{align}
Using the incompressibility, estimates \eqref{uadetail3}, \eqref{vadetail2} and \eqref{vadetail1}, we have
\bes
|rv^a_r|\ls |u^a_\th|+|v^a| \ls \e\dl r_0+\e\dl r_0 \f{(r-r_0)}{r_0+1-r}, \text{and } |v^a|\ls \e\dl(r-r_0).
\ees
Then by using Cauchy inequality, Poincar\'{e} inequality with zero mean, and Hardy inequality \eqref{hardy2}, we have
\begin{align}
&\lt|\int_{\O} r{u}_{\neq} (v^a_ru_0+v^au_{0,r})\rt|\nn\\
\ls& \e\dl r_0 \int_{\O} \lt(1+\f{(r-r_0)}{r_0+1-r}\rt)|{u}_{\neq} u_0|+\e\dl r_0 \int_{\O} (r-r_0) |u_{\neq}u_{0,r}| \nn\\
\ls& \dl r_0\int_{\O} (r-r_0)u^2_{\neq}+\e^2\dl r_0\int_{\O} (r-r_0)^{-2}\lt(u^2_0+\f{(r-r_0)^2u^2_0}{(r_0+1-r)^2}\rt) +\e^2\dl r_0 \int_{\O}  u^2_{0,r}\nn\\
\ls& \dl r_0\int_{\O} (r-r_0)u^2_{\th}+\e^2\dl r_0\int_{\O} u^2_{0,r}.\label{energyk3f}
\end{align}

Using the estimate \eqref{uadetail2} and \eqref{uadetail1}, we see that
\bes
|(ru^a_{\neq })_r|\ls r|(u^a-u^e)_r|+|(u^a-u^e)|\ls r\e\dl r_0\lt(\f{1}{r-r_0}+\f{(r-r_0)^2}{(r_0+1-r)^2}\rt).
\ees
Then by using Cauchy inequality, Hardy inequality \eqref{hardy1}, \eqref{hardy2}, and the incompressibility, we have
\begin{align}
&\lt|\int_{\O} (ru^a_{\neq })_r v u_0\rt|\ls \e\dl r_0\int_\O \lt(\f{1}{r-r_0}+\f{(r-r_0)^2}{(r_0+1-r)^2}\rt) |rv u_0|\nn\\
\ls &\dl r_0\int_\O \lt(1+\f{(r-r_0)^{2}}{(r_0+1-r)^2}\rt) (rv)^2+\e^2\dl r_0\int_\O (r-r_0)^{-2}\lt(1+\f{(r-r_0)^{4}}{(r_0+1-r)^2}\rt) u^2_0\nn\\
\ls &\dl r_0\int_\O (r-r_0)^{2} (rv)^2_r+ \e^2\dl r_0\int_\O  u^2_{0,r}\ls \dl r_0\int_\O (r-r_0) u^2_\th + \e^2\dl r_0\int_\O u^2_{0,r}.\label{energyk3s}
\end{align}
Inserting \eqref{energyk3f} and \eqref{energyk3s} into \eqref{energyk3}, we see that
\be\label{energyk3four}
J_3\ls \dl r_0\int_\O (r-r_0) u^2_\th + \e^2\dl r_0\int_\O  u^2_{0,r}.
\ee
Then combining estimates \eqref{energyk1}, \eqref{energyk2} and \eqref{energyk3four}, we obtain that there exists small $\dl_0$, when $\dl\leq \dl_0$,
\begin{align}
&\e^2 \int_{\O} u^2_{0,r} \ls\dl \int_{\O} (r-r_0) u^2_{\th}+Cr^{-1}_0\lt|\int_{\O} R_u  u_0\rt|.\nn
\end{align}
 \qed

{
Next, we give the weighted energy estimates for the non-zero frequency. In order to get rid of the trouble caused by the pressure, we will proceed it in the vorticity equation \eqref{errorequcut1}.
\subsubsection{Weighted energy estimates for $u_{\neq}$ and $v$ }
\begin{lemma}\label{lemlinearnonzero}
 Let $(u,v)$ be a smooth solution of (\ref{errorequ}), then there exist $\epsilon_0>0, \dl_0>0$ such that for any $\epsilon\in (0,\epsilon_0), \dl\in(0,\dl_0)$ and small $\al\in(0,0.01]$, there holds
\begin{align}\label{estiunonzero}
&\e^2 \int_{\O} (r-r_0)^\al\lt[u^2_{\neq,r}+v^2_r+ r^{-2}_0(u^2_\th+v^2_\th)\rt]\nn\\
 \ls& \al^{-6} \int_{\O} (r-r_0) \lt(u^2_{\th}+v^2_\th\rt)+\e^2\dl \int u^2_{0,r}+Cr^{-1}_0\lt|\int_{\O} R_u  u_{\neq}+ R_v  v\rt|+ \al r_0^{-1}\lt| \int_\O R_u \Phi_{\neq}(r-r_0)^{\al-1}\rt|.
\end{align}
\end{lemma}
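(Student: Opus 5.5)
We test the vorticity equation \eqref{errorequcut1} against the weighted stream function $\e^2\Phi_{\neq}(r-r_0)^{\al-1}$ (up to the factor $r$ needed to make the Laplacian symmetric) and integrate over $\O$. The weight is chosen for two reasons. First, $(r-r_0)^{\al-1}$ is integrable against $\Phi_{\neq}^2/(r-r_0)^2$ by \eqref{hardy3}, so the diffusion term produces a coercive quantity. Second, the degeneracy of $u^a\approx\t\o r_0(r-r_0)$ near $r=r_0$ is compensated by the factor $(r-r_0)^{\al-1}$, which lets the transport terms be bounded by $\int(r-r_0)(u_\th^2+v_\th^2)$.

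\textbf{Diffusion term.} We integrate $-\e^2 r\Dl\o\cdot\Phi_{\neq}(r-r_0)^{\al-1}$ by parts twice, using $-\Dl\Phi=\o$ and the boundary conditions $(\Phi_{\neq},\p_r\Phi_{\neq},\p_\th\Phi_{\neq})=0$ at $r=r_0,r_0+1$. The leading term is $\e^2\int r(\Dl\Phi_{\neq})^2(r-r_0)^{\al-1}$, plus commutators in which derivatives fall on the weight; these carry factors $(\al-1)(r-r_0)^{\al-2}$ and $(\al-1)(\al-2)(r-r_0)^{\al-3}$. Expanding $\Dl\Phi_{\neq}=\Phi_{rr}+r^{-1}\Phi_r+r^{-2}\Phi_{\th\th}$ and integrating the cross terms by parts, we aim for the lower bound
\[
\e^2\int_\O (r-r_0)^{\al-1}\big(u_{\neq,r}^2+r_0^{-2}(u_\th^2+v_\th^2)\big),
\]
modulo the commutators. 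The commutators containing $\Phi_{\neq}^2(r-r_0)^{\al-3}$ and $u_{\neq}^2(r-r_0)^{\al-1}$ are absorbed with \eqref{hardy2} and \eqref{hardy3}. The sign of these terms must be tracked carefully, because $\al-1<0$; the constants $4/(\kappa+1)^2$ with $\kappa=\al-1$ blow up like $\al^{-2}$, so the commutators cannot be absorbed naively. Instead, we keep the full weight $(r-r_0)^{\al-1}$ only close to the wall and use \eqref{hardy4} to replace $\int u_{\neq}^2(r-r_0)^{\al-1}$ by $\al\int u_{\neq,r}^2(r-r_0)^{\al}+\al^{-6}\int u_{\neq}^2(r-r_0)$. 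The last integral is at most $\al^{-6}\int(r-r_0)u_\th^2$ by Poincar\'e in $\th$, which is the origin of the $\al^{-6}$ in \eqref{estiunonzero}. The quantity $(r-r_0)^\al v_r^2$ then follows from incompressibility, since $rv_r=-u_\th-v$, together with the tangential bound.

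\textbf{Transport terms.} Next we treat $\int(\f1r\p_\th S_v-\p_rS_u)\Phi_{\neq}(r-r_0)^{\al-1}$ and move the derivatives onto $\Phi_{\neq}$ and the weight. The principal pieces $u^a u_\th$ and $u^a v_\th$ are paired with $\Phi_{\neq,r}$ and $\Phi_{\neq,\th}$ times the weight. With $|u^a|\ls r_0(r-r_0)$ and $\e^2$ absent on this side, Cauchy--Schwarz gives
\[
r_0\int (r-r_0)^{\al}|u_\th||u_{\neq}|+\cdots\ls r_0\int(r-r_0)(u_\th^2+v_\th^2)+r_0\int(r-r_0)^{2\al-1}u_{\neq}^2 .
\]
We again apply \eqref{hardy4} to the second term (or Poincar\'e in $\th$ combined with \eqref{hardy1}). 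The remaining pieces, those involving $v^a$, $u^a_\th$, $u^a_r-u^e_r$ and the zero mode $u_0$, are treated exactly as in Proposition \ref{proppositive} and Lemma \ref{lemlinearzero}, using Lemma \ref{lemdetail1}; this generates the $\e^2\dl\int u_{0,r}^2$ contribution. The shear term $vru^e_r$ may appear to produce a large contribution of size $r_0\int|v||\Phi_{\neq,r}|$, but since $\Phi_\th=-rv$ it is bounded by $r_0\int (r-r_0)^{\al-1}|\Phi_{\neq,\th}||u_{\neq}|$, which is handled in the same way. Dividing the resulting inequality by $r_0$ (the factor $r\approx r_0$ sits in front of the diffusion term) yields the $r_0^{-1}$ in front of the remainder integrals. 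For the remainders, we write $\f1r\p_\th R_v-\p_r R_u$ and integrate by parts against $\Phi_{\neq}(r-r_0)^{\al-1}$, which produces $R_u u_{\neq}$, $R_v v$ and $R_u\Phi_{\neq}(r-r_0)^{\al-2}$-type terms. The $(r-r_0)^{\al-1}$ version in \eqref{estiunonzero} should come from using the test function $\Phi_{\neq}\,\p_r\big((r-r_0)^{\al}\big)/\al$, which carries the factor $\al$.

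\textbf{Main obstacle.} The step I expect to be hardest is the coercivity of the diffusion term under the singular weight. Here the commutator $(\al-1)(\al-2)\int\Phi_{\neq}\Phi_{\neq,r}(r-r_0)^{\al-3}$-type terms are of the same order as the principal term, and their sign must be favorable. If the direct computation fails, I would fall back on the weight $(r-r_0)^{\al}$ applied to $-\Dl\Phi$, i.e.\ test against $\p_r\big((r-r_0)^\al\Phi_{\neq,r}\big)$-type multipliers. Alternatively, I would split into a region $r-r_0\le\al^3$, where the weight is essentially singular and the Hardy constants are favorable, and its complement, where all weights are bounded by powers of $\al^{-1}$.
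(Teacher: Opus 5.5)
There is a genuine gap, and it comes from your choice of weight rather than from the coercivity question you flag. Testing \eqref{errorequcut1} with $\Phi_{\neq}(r-r_0)^{\alpha-1}$ makes the inviscid transport terms, which carry no factor $\epsilon^2$, one power of $r-r_0$ too singular to be controlled by $\int_\Omega (r-r_0)(u_\theta^2+v_\theta^2)$.

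The clearest instance is the shear coupling. The $v(ru^e)_r$ part of $S_u$ and the $-2uu^e$ part of $S_v$ combine exactly into
\[
\int_\Omega \frac{2Ar_0^2}{r^2}\,(rv)\,u_{\neq}\,w,\qquad w=(r-r_0)^{\alpha-1}.
\]
The piece against $\Phi_{\neq}w'$ vanishes, since $rv=-\partial_\theta\Phi_{\neq}$. In general this term does not cancel, and its coefficient is of size $r_0$. Take a nonzero mode of amplitude $M$ localized in a layer of width $\sigma$ at $r=r_0$. Then:
\begin{itemize}
\item the shear term is of size $r_0\sigma^{\alpha+1}M^2$;
\item $r_0\int(r-r_0)u_\theta^2\sim r_0\sigma^2M^2$;
\item your diffusion term gives at most $\epsilon^2r_0\sigma^{\alpha-2}M^2$.
\end{itemize}
For $\sigma=K\epsilon^{2/3}$ with $K$ large, the shear term dominates both of the others. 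So it cannot be split into a small multiple of the left side plus an $\epsilon$-independent multiple of $\int(r-r_0)u_\theta^2$.

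Your own estimates show the same failure. After Cauchy--Schwarz you are left with $r_0\int(r-r_0)^{2\alpha-1}u_{\neq}^2$. Both \eqref{hardy4} and \eqref{hardy1} turn this into $r_0\int u_{\neq,r}^2(r-r_0)^{2\alpha}$ (or $(r-r_0)^{2\alpha+1}$) \emph{without} a factor $\epsilon^2$, which the $\epsilon^2$-weighted left side can never absorb. Poincar\'e in $\theta$ instead gives $r_0\int(r-r_0)^{\alpha}u_\theta^2$, which is not controlled either. The same objection applies to your crude bound on $u^au_\theta u_{\neq}w$. There you also throw away the exact antisymmetry
\[
\int(u^a\partial_\theta+rv^a\partial_r)u_{\neq}\,u_{\neq}\,w=-\frac12\int u_{\neq}^2\,rv^aw'.
\]
The coercivity of the diffusion term under the singular weight is also left unresolved, but that is secondary.

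The fix is the milder weight that the statement itself points to: test with $\Phi_{\neq}(r-r_0)^{\alpha}$.
\begin{itemize}
\item \emph{Diffusion.} Every commutator from differentiating the weight now carries a factor $\alpha$. It only needs \eqref{hardy2} and \eqref{hardy3} with $\kappa=\alpha-2$ and $\kappa=\alpha-4$, whose constants are bounded. So the diffusion term gives $\epsilon^2(1-\alpha)\int r\omega_{\neq}^2(r-r_0)^\alpha$ up to absorbable errors.
\item \emph{Source of $\alpha^{-6}$.} Expanding $r\omega_{\neq}^2$ leaves $\int(u_{\neq}^2+v^2)(r-r_0)^{\alpha-1}$. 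Then \eqref{hardy4} and Poincar\'e turn this into $\alpha^{-6}\int(r-r_0)u_\theta^2$.
\item \emph{Principal transport.} By the antisymmetry above it reduces to $\frac{\alpha}{2}\int u_{\neq}^2 rv^a(r-r_0)^{\alpha-1}$.
\item \emph{Shear term.} It is bounded by $r_0\int(rv)^2(r-r_0)^{\alpha-1}+r_0\int u_{\neq}^2(r-r_0)^{\alpha+1}\lesssim\alpha^{-2}r_0\int(r-r_0)u_\theta^2$, using \eqref{hardy1} and $(rv)_r=-u_\theta$.
\item \emph{Remainder term.} The term $\alpha R_u\Phi_{\neq}(r-r_0)^{\alpha-1}$ in the statement comes from $\partial_r(r-r_0)^\alpha$. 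Your test function $\Phi_{\neq}\,\partial_r\big((r-r_0)^\alpha\big)/\alpha$ is just $\Phi_{\neq}(r-r_0)^{\alpha-1}$ again and carries no factor $\alpha$.
\end{itemize}
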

\pf Multiplying \eqref{errorequcut1} by $\Phi_{\neq}(r-r_0)^\al$ to obtain
\be\label{wenergyj1zer}
\begin{aligned}
 &\underbrace{-\epsilon^2 \int r\Delta\o \Phi_{\neq} (r-r_0)^\al}_{K_1}+\underbrace{\int\left(\frac{1}{r} \partial_\th S_v-\p_rS_u\right)\Phi_{\neq} (r-r_0)^\al}_{K_2}=\underbrace{\int\left(\frac{1}{r} \partial_\th R_v-\partial_r R_u\right)\Phi_{\neq} (r-r_0)^\al}_{K_3}.
\end{aligned}
\ee
\underline{{\bf\noindent Estimates of $K_1$}}

Using integration by parts and the relation among $\o$, $u_{\neq}$ and $\Phi_{\neq}$ , we have
\begin{align}
& K_1=-\epsilon^2 \int\left[\p_r(r\o_r)+\f{\o_{\th\th}}{r}\right] \Phi_{\neq} (r-r_0)^\al \nn\\
=&\epsilon^2 \int r \o_{r}\left(u_{\neq}\left(r-r_0\right)^\alpha+\Phi_{\neq} \alpha\left(r-r_0\right)^{\alpha-1}\right) +\epsilon^2 \int \left(r-r_0\right)^\alpha v_\th \o \nn\\
=&\epsilon^2 \int \left(r-r_0\right)^\alpha v_\th \o-\epsilon^2 \int \o(ru_{\neq})_r\left(r-r_0\right)^\alpha-\e^2\al \int  \o\lt[ru_{\neq} \left(r-r_0\right)^{\alpha-1} \rt]\nn\\
 &-\epsilon^2 \al \int  \o \left[(r\Phi_{\neq})_r\left(r-r_0\right)^{\alpha-1}+r\Phi_{\neq}(\al-1)\left(r-r_0\right)^{\alpha-2}\right]\nn\\
=&\epsilon^2 \int r\o^2_{\neq}(r-r_0)^\al-\e^2\al \int  \o\lt[(2ru_{\neq}+\Phi_{\neq})\left(r-r_0\right)^{\alpha-1}+r\Phi_{\neq}(\al-1)\left(r-r_0\right)^{\alpha-2}\right].\nn
\end{align}

Using Cauchy inequality and Hardy inequalities \eqref{hardy2} and \eqref{hardy3}, we see that
\begin{align}
K_1\geq& \epsilon^2 (1-\al)\int r\o^2_{\neq}(r-r_0)^\al -C \e^2\al r_0\int \lt(u^2_{\neq}\left(r-r_0\right)^{\alpha-2}+\Phi^2_{\neq}\left(r-r_0\right)^{\alpha-4}\rt)\nn\\
    \geq & \epsilon^2 (1-\al)\int r\o^2_{\neq}(r-r_0)^\al-C \e^2\al r_0\int  u^2_{\neq,r}\left(r-r_0\right)^{\alpha}.\label{wenergyj1fi}
\end{align}
Remembering that $\o_{\neq}=\f{v_\th-(ru_{\neq})_r}{r}$, by direct calculation, we see that
\bes
r\o^2=\f{r^2u^2_{\neq,r}+v^2_\th+u^2}{r}+(u^2_{\neq})_r-2\f{v_\th(ru_{\neq})_r}{r}.
\ees
Then by integration by parts, the incompressibility, we see that
\begin{align}
\int r\o^2_{\neq}(r-r_0)^\al=&\int \f{r^2u^2_{\neq,r}+v^2_\th+(u_{\neq})^2}{r}(r-r_0)^\al\nn\\
 &-\al \int u^2_{\neq}(r-r_0)^{\al-1}-2\int ru_\th\lt[\lt(\f{v_r}{r}-\f{v}{r^2}\rt)(r-r_0)^\al+\f{v}{r}\al(r-r_0)^{\al-1}\rt]\nn\\
 =&\int \f{r^2u^2_{\neq,r}+r^2v^2_r+(v_\th-u_{\neq})^2+(u_\th+v)^2}{r}(r-r_0)^\al\nn\\
 &-\al \int (u^2_{\neq}+v^2+2u_\th v)(r-r_0)^{\al-1}.\nn
\end{align}
Then by using  the incompressibility, integration by parts and Cauchy inequality, we can obtain that
\bes
\int r\o^2_{\neq}(r-r_0)^\al
 \geq \int \lt\{r u^2_{\neq,r}+r v^2_r+\f{v^2_\th+u^2_\th}{2r} \rt\}(r-r_0)^\al-Cr_0\int (u^2_{\neq}+v^2)(r-r_0)^{\al-1}.
\ees
Using \eqref{hardy4} and Poincar\'{e} inequality with zero mean, we see that
\begin{align}
\int r\o^2_{\neq}(r-r_0)^\al
 \geq &\int \lt\{r u^2_{\neq,r}+r v^2_r+\f{v^2_\th+u^2_\th}{2r} \rt\}(r-r_0)^\al\nn\\
      &-C\al r_0\int (u^2_{\neq,r}+v^2_r)(r-r_0)^{\al}-C\al^{-6}r_0 \int (u^2_{\th}+v^2_\th)(r-r_0).  \label{wenergyj1se}
\end{align}

For $\al\in (0,1)$ sufficiently small close to zero, inserting \eqref{wenergyj1se} into \eqref{wenergyj1fi}, we obtain
\be
K_1\geq \f{1}{4}\e^2\int \lt\{r (u^2_{\neq,r}+ v^2_r)+\f{v^2_\th+u^2_\th}{r} \rt\}(r-r_0)^\al-C\al^{-6}\e^2r_0 \int (u^2_{\th}+v^2_\th)(r-r_0).  \label{wenergyj1th}
\ee
}

\noindent\underline{{\bf Estimates of $K_2$}}

Using integration by parts and the relation among $\Phi_{\neq}$ and $rv,\, u_{\neq}$, we have
\begin{align}
&\int_\O (r-r_0)^\al \Phi_{\neq} \lt(\f{1}{r}\p_\th S_v-\p_rS_u\rt)\nn\\
=&-\int_\O (r-r_0)^\al \f{1}{r} \Phi_{\neq,\th} S_v-S_u \lt((r-r_0)^\al\Phi_{\neq,r}+\al\Phi_{\neq}(r-r_0)^\al\rt)\nn\\
=&\int_\O (r-r_0)^\al \f{1}{r} (rv) S_v+S_u \lt((r-r_0)^\al u_{\neq}+\al\Phi_{\neq}(r-r_0)^{\al-1}\rt)\nn\\
=&\int_\O  (S_u u_{\neq}+S_v v)(r-r_0)^\al+\al S_u\Phi_{\neq}(r-r_0)^{\al-1}.\label{wenergyj5fi}
\end{align}
We estimate the right-hand of \eqref{wenergyj5fi} term by term.

\noindent\underline{{\it Estimates of $\int_\O S_u u_{\neq}(r-r_0)^\al$}:}
Remembering the formula of $S_u$, we write it as follows.
\begin{align}
\int_\O S_u u_{\neq}(r-r_0)^\al=&\underbrace{\int_\O (u^a u_\th+v^a r u_r) u_{\neq}(r-r_0)^\al}_{K_{21}}+\underbrace{\int_\O (u u^a_\th+uv^a) u_{\neq}(r-r_0)^\al}_{K_{22}}\nn\\
                             &+\underbrace{\int_\O (vr u^a_r+vu^a) u_{\neq}(r-r_0)^\al.}_{K_{23}} \label{wenergyj5se}
\end{align}
Using integration by parts and the incompressibility, we have
\begin{align}
|K_{21}|=&\lt|\int_\O (u^a u_{\neq,\th}+v^a r u_{\neq,r}) u_{\neq}(r-r_0)^\al+\int_\O v^a r u_{0,r} u_{\neq}(r-r_0)^\al\rt|\nn\\
      =&\lt|-\int_\O \f{\al}{2} u^2_{\neq} rv^a(r-r_0)^{\al-1} +\int_\O v^a r u_{0,r} u_{\neq0}(r-r_0)^\al\rt|  \nn\\
      \ls & \al\int_\O r|v^a| u^2_{\neq}(r-r_0)^{\al-1} +\int_\O r|v^a| |u_{0,r} u_{\neq}|(r-r_0)^\al.\nn
\end{align}
Using the estimate \eqref{vadetail2}, Cauchy inequality and Hardy inequality \eqref{hardy2}, we see that
\begin{align}
  &|K_{21}|\ls \dl r_0 \int_\O u^2_{\neq}(r-r_0)+\e^2\dl r_0\int_\O (r-r_0)^{\al-2} u^2\nn\\
    \ls& \dl r_0 \int_\O u^2_{\neq}(r-r_0)+\e^2\dl r_0\int_\O (r-r_0)^{\al} u^2_{r}. \label{wenergyj5th}
\end{align}
Using estimates in \eqref{uadetail3} and \eqref{vadetail2}, Cauchy inequality, Poincar\'{e} inequality with zero mean and Hardy inequality \eqref{hardy2}, we obtain that
\begin{align}
|K_{22}|\ls&\e\dl r_0\int \lt(1+\f{r-r_0}{r_0+1-r} \rt)u (r-r_0)^\al u_{\neq}\nn\\
     \ls &\dl r_0 \int (r-r_0)u^2_{\neq}+ \e^2\dl r_0 \lt(u^2+\f{(r-r_0)^2u^2}{(r_0+1-r)^2} \rt) (r-r_0)^{\al-2}\nn\\
     \ls &\dl r_0 \int (r-r_0)u^2_{\th}+ \e^2\dl r_0 \int u^2_r(r-r_0)^{\al}. \label{wenergyj5fo}
\end{align}

Using the estimate \eqref{uadetail1}, \eqref{uadetail2}, we first have
\bes
\f{|u^a_r+u^a|}{r}=2A+\f{|(u^a-u^e)_r|+|u^a-u^e|}{r}\ls 2A +\dl+\f{(r-r_0)}{r_0+1-r},
\ees
Then by using Cauchy inequality, Hardy inequality, Poincar\'{e}, inequality and the incompressibility, we see that
we have
\begin{align}
|K_{23}|\ls& \lt|\int_\O  \lt(2A +\dl+\f{\chi(r)}{r_0+1-r} \rt)|rv| |u_{\neq}|(r-r_0)^\al\rt|\nn\\
   \ls & \int_\O (rv)^2(r-r_0)^{\al-1}+ \int_\O u^2_{\neq}(r-r_0)+\int_\O\f{((r-r_0)rv)^2}{(r_0+1-r)^2} \nn\\
       \ls & \al^{-2}\int_\O (r-r_0)^{\al+1}(rv)^2_r+ \int_\O u^2_{\th}(r-r_0)\ls \al^{-2}\int_\O u^2_{\th}(r-r_0).\label{wenergyj5fiv}
\end{align}
Inserting \eqref{wenergyj5th}, \eqref{wenergyj5fo} and \eqref{wenergyj5fiv} into \eqref{wenergyj5se}, wee achieve that
\be\label{wenergyj5six}
\lt|\int S_u u_{\neq}(r-r_0)^\al\rt|\ls \al^{-2} r_0\int_\O u^2_{\th}(r-r_0)+ \e^2\dl r_0 \int_\O u^2_{\neq,r}(r-r_0)^\al.
\ee
\noindent\underline{{\it Estimates of $\int_\O S_v v_{\neq}(r-r_0)^\al$}:} Remembering the formula of $S_v$, we write it as follows.
\begin{align}
\int_\O S_v v(r-r_0)^\al=&\underbrace{\int_\O (u^a v_\th+v^a r v_r) v(r-r_0)^\al}_{K_{24}}+\underbrace{\int_\O (u v^a_\th-2uu^a) v(r-r_0)^\al}_{K_{25}}\nn\\
                             &+\underbrace{\int_\O vr v^a_r v (r-r_0)^\al}_{K_{26}}.\label{wenergyk2fir}
\end{align}
Using integration by parts, the incompressibility, estimate \eqref{vadetail2} and Poincar\'{e} inequality with zero mean, we have
\begin{align}
|K_{24}|=&\f{\al}{2} \lt|\int_\O v^ar v^2(r-r_0)^{\al-1}\rt|\ls \e\dl r_0\int_\O (r-r_0)^\al v^2_{\th}\ls \dl r_0\int_\O (r-r_0) v^2_{\th}.  \label{wenergyk2sec}
\end{align}
First we decompose $K_{25}$ as the following basic estimate
\begin{align}
|K_{25}|\ls \lt|\int u^e(r) uv(r-r_0)^\al\rt|+\int (|v^a_\th|+|u^a-u^e(r)|)|uv|(r-r_0)^\al\nn\\
   =\lt|\int u^e(r) u_{\neq}v(r-r_0)^\al\rt|+\int (|v^a_\th|+|u^a-u^e(r)|)|uv|(r-r_0)^\al. \nn
\end{align}
Then using \eqref{vadetail1}, \eqref{uadetail3}, we see that
\bes
|v^a_\th|+|u^a-u^e(r)|\ls \e\dl r_0+\e\dl r_0\f{r-r_0}{r_0+1-r}.
\ees
Then by using Cauchy inequality, Hardy inequality, and Poincar\'{e} inequality with zero mean, we obtain
\begin{align}
|K_{25}|\ls &r_0\int_\O  (u^2_{\neq}+v^2)(r-r_0)^{\al+1}+\dl r_0\int_\O v^2 (r-r_0)+ \e^2\dl r_0 \int \lt(u^2+\f{(r-r_0)^2u^2}{(r_0+1-r)^2}\rt)(r-r_0)^{\al-2}\nn\\
  \ls &r_0\int_\O  (u^2_{\th}+v^2_\th)(r-r_0)+ \e^2\dl r_0 \int u^2_r(r-r_0)^{\al}.  \label{wenergyk2fou}
\end{align}

Using estimate \eqref{uadetail1} and \eqref{vadetail2}, we see that $|rv^a_r|=|u^a_\th+v^a|\ls \e\dl r_0(r-r_0)$, which indicates that

\begin{align}
|K_{26}|\ls & \dl r_0\int_\O v^2 (r-r_0)^{\al+1}\ls \dl r_0\int_\O v^2_\th (r-r_0). \label{wenergyk2fiv}
\end{align}
Inserting \eqref{wenergyk2sec}, \eqref{wenergyk2fou} and \eqref{wenergyk2fiv} into \eqref{wenergyk2fir}, we obtain that

\begin{align}
\lt|\int_\O S_v v(r-r_0)^\al\rt|=r_0\int_\O  (u^2_{\th}+v^2_\th)(r-r_0)+ \e^2\dl r_0 \int u^2_r(r-r_0)^{\al}.   \label{wenergyk2six}
\end{align}
\noindent\underline{{\it Estimates of $\al\int_\O S_u \Phi_{\neq}(r-r_0)^{\al-1}$}:} Using the representation of $S_u$, we first have
\begin{align}
\al\int_\O S_u \Phi_{\neq}(r-r_0)^{\al-1}=&\al\int_\O \lt[(uu^a)_\th+ v^a (ru)_r+v(ru^a)_r  \rt]\Phi_{\neq}(r-r_0)^{\al-1}.\nn
\end{align}

Using integration by parts, estimate \eqref{uadetail3}, Cauchy inequality, Hardy inequality, Poincar\'{e} inequality with zero mean and the incompressibility, we see that

\begin{align}
&\lt|\al\int_\O (uu^a)_\th\Phi_{\neq}(r-r_0)^{\al-1}\rt|=\lt |\al\int_\O uu^a rv(r-r_0)^{\al-1}\rt|\nn\\
\leq& \lt|\al\int_\O u_{\neq}u^e(r) rv(r-r_0)^{\al-1}\rt|+\lt|\al\int_\O u(u^a-u^e(r)) rv(r-r_0)^{\al-1}\rt|\nn\\
\ls & r_0\int_\O  u^2_{\neq}(r-r_0)+  r_0\int_\O (rv)^2(r-r_0)^{\al-1}+ \e^2\dl \al^2 r_0 \int \lt(u^2+\f{(r-r_0)^2u^2}{(r_0+1-r)^2}\rt)(r-r_0)^{\al-1}\nn\\
  \ls & \al^{-2}r_0\int_\O  (u^2_{\th}+v^2_\th)(r-r_0)+ \e^2\dl  r_0 \int u^2_r(r-r_0)^{\al}.  \label{wenergyk2sev}
\end{align}

Using estimate \eqref{vadetail2}, Cauchy inequality, Hardy inequality, Poincar\'{e} inequality with zero mean and and the incompressibility, we see that

\begin{align}
&\lt|\al \int_\O v^a (ru)_r\Phi_{\neq}(r-r_0)^{\al-1}\rt|\ls \e\dl\al \lt |\int_\O(ru)_r\Phi_{\neq}(r-r_0)^{\al}\rt|\nn\\
\ls& \e\dl\al r_0 \lt |\int_\O(u_r+\f{u}{r})\Phi_{\neq}(r-r_0)^{\al}\rt|\nn\\
\ls & \e^2 \dl r_0\int_\O  (u^2_{r}+u^2)(r-r_0)^{\al}+  \dl r_0\int_\O \Phi^2_{\neq}(r-r_0)^{\al}\nn\\
  \ls & \e^2 \dl r_0\int_\O  (u^2_{r}+u^2)(r-r_0)^{\al}+\dl r_0\int_\O (rv)^2(r-r_0)^{\al} \nn\\
  \ls & \e^2 \dl r_0\int_\O  u^2_{r}(r-r_0)^{\al}+\dl r_0\int_\O u^2_\th(r-r_0).  \label{wenergyk2eig}
\end{align}

Using estimate \eqref{uadetail2}, Cauchy inequality, Hardy inequality, Poincar\'{e} inequality with zero mean and the incompressibility, we see that

\begin{align}
&\lt|\al\int_\O v(ru^a)_r\Phi_{\neq}(r-r_0)^{\al-1}\rt|=\lt|\al\int_\O v(ru^e(r))_r\Phi_{\neq}(r-r_0)^{\al-1}\rt|+\lt|\al\int_\O v(r(u^a-u^e(r)))_r\Phi_{\neq}(r-r_0)^{\al-1}\rt|\nn\\
\ls& \lt|\al\int_\O rv\Phi_{\neq}(r-r_0)^{\al-1}\rt|+ \e\dl r_0\lt|\al\int_\O \lt(\f{1}{r-r_0}+\f{(r-r_0)^2}{(r_0+1-r)^2}\rt)rv\Phi_{\neq}(r-r_0)^{\al-1}\rt|\nn\\
\ls & \int_\O  (rv)^2(r-r_0)^{\al-1}+ \e^2\dl \al^2 r_0 \int \lt(\Phi^2_{\neq}+\f{(r-r_0)^4\Phi^2_{\neq}}{(r_0+1-r)^4}\rt)(r-r_0)^{\al-3}\nn\\
  \ls & \al^{-2}r_0\int_\O  u^2_{\th}(r-r_0)+ \e^2\dl  r_0 \int u^2_r(r-r_0)^{\al}.  \label{wenergyk2nin}
\end{align}

Combining estimates \eqref{wenergyk2sev}, \eqref{wenergyk2eig} and \eqref{wenergyk2nin}, we see that
\be\label{wenergyk2ten}
\lt|\al\int_\O S_u \Phi_{\neq}(r-r_0)^{\al-1}\rt|\ls \al^{-2}r_0\int_\O (u^2_{\th}+v^2_\th)(r-r_0)+ \e^2\dl  r_0 \int u^2_r(r-r_0)^{\al}.
\ee
Combining estimates in \eqref{wenergyj5six}, \eqref{wenergyk2six} and \eqref{wenergyk2ten}, we see that
\be\label{wenergyk2ele}
|K_2|\ls \al^{-2} r_0\int (r-r_0)(u^2_\th+v^2_\th)+ \e^2\dl r_0\int (r-r_0)^\al u^2_r.
\ee
\noindent\underline{{\bf Estimates of $K_3$}}

The same as \eqref{wenergyj5fi}, we have
\begin{align}
|K_3|=&\lt|\int_\O (r-r_0)^\al \Phi_{\neq} \lt(\f{1}{r}\p_\th R_v-\p_rR_u\rt)\rt|\nn\\
=&\lt|\int_\O  (R_u u_{\neq}+R_v v)(r-r_0)^\al+\al R_u\Phi_{\neq}(r-r_0)^{\al-1}\rt|.\label{wenergyk3}
\end{align}
Inserting \eqref{wenergyj1th}, \eqref{wenergyk2ele} and \eqref{wenergyk3} into \eqref{wenergyj1zer}, we can obtain \eqref{estiunonzero}.
\begin{remark}
The estimate for the term $K_{23}$ in \eqref{wenergyj5fiv}term show that we need weighted energy estimate and it also indicates that the energy and the positive estimates must perform the same order. Later we will see that the nonlinear term determine how many order derivatives we need to perform. \qed
\end{remark}

\subsection{Non-weighted energy estimates}\label{sub3.3}

Next we give the non-weighted estimates for the nonzero frequency of $u$. We have the following Proposition.
\begin{proposition}\label{proplinearnonw}
 Let $(u,v)$ be a smooth solution of (\ref{errorequ}), then there exist $\epsilon_0>0, \dl_0>0$ such that for any $\epsilon\in (0,\epsilon_0), \dl\in(0,\dl_0)$, there holds
\begin{align}\label{estiunonw}
&\e^4\int_{\O} \lt[u^2_{\neq,r}+v^2_r+ r^{-2}_0(u^2_\th+v^2_\th)\rt]\nn\\
\ls&  \int_{\O} (r-r_0) \lt(u^2_{\th}+v^2_\th\rt)+\e^4\dl \int u^2_{0,r}+C\e^2 r^{-1}_0\lt|\int_{\O} R_u  u_{\neq}+ R_v  v\rt|.
\end{align}
\end{proposition}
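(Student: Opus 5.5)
We repeat the argument of Lemma \ref{lemlinearnonzero} with the weight $(r-r_0)^\al$ replaced by $1$. That is, we test the vorticity equation \eqref{errorequcut1} against $\e^2\Phi_{\neq}$, integrate over $\O$, and split the result as
\bes
\e^2 K_1'+\e^2 K_2'=\e^2 K_3',
\ees
in analogy with \eqref{wenergyj1zer}. Since there is no weight, no $\al$-derivative terms appear. We use the boundary conditions $(\Phi_{\neq},\p_r\Phi_{\neq},\p_\th\Phi_{\neq})|_{r=r_0,r_0+1}=0$ and $\Phi_{\neq,r}=u_{\neq}$, $\Phi_{\neq,\th}=-rv$. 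Integration by parts as in the computation of $K_1$ then gives
\bes
K_1'=\e^2\int r\o_{\neq}^2 .
\ees
The algebraic identity for $r\o^2_{\neq}$ derived there, now with $\al=0$, yields
\bes
K_1'\ge \e^2\int \Big\{r(u^2_{\neq,r}+v^2_r)+\f{u^2_\th+v^2_\th}{2r}\Big\}.
\ees
All the lower order terms vanish or have a sign after integration by parts and incompressibility. In particular $\int (u_\th+v)^2/r\ge0$, and $\int(v_\th-u_{\neq})^2/r$ is nonnegative. Multiplied by $\e^2$, this produces the left-hand side of \eqref{estiunonw} after dividing by $r_0$. The forcing term is handled exactly as in \eqref{wenergyk3}:
\bes
K_3'=\int (R_uu_{\neq}+R_vv),
\ees
which gives the last term of \eqref{estiunonw}.

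For the transport part we have, as in \eqref{wenergyj5fi},
\bes
K_2'=\int(S_uu_{\neq}+S_vv).
\ees
We estimate it term by term, following $K_{21}$–$K_{26}$ with $\al=0$. The goal is to bound each piece by
\bes
\e^{-2}\int(r-r_0)(u^2_\th+v^2_\th)\cdot C r_0 \;+\; \dl r_0\int(u^2_{r}+v^2_r)\;+\;\text{absorbable terms}.
\ee
After multiplication by $\e^2$ these become admissible.

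Several terms are easy. The term $\int(u^au_\th+v^aru_r)u_{\neq}$ reduces, by integration by parts and incompressibility, to $\int v^aru_{0,r}u_{\neq}$. This is bounded by $\e\dl r_0\int (r-r_0)|u_{0,r}u_{\neq}|$, and therefore by $\e^2\dl r_0\int u^2_{0,r}$ plus $\dl r_0\int(r-r_0)u_\th^2$. Similarly, $K_{24}$ vanishes up to a term $\int v^a_r r v^2$, which is controlled by \eqref{vadetail2}. The terms with small coefficients, namely $u^a-u^e$, $v^a$, $v^a_\th$ and $v^a_r$, are handled with \eqref{uadetail3}, \eqref{vadetail1}, and Hardy \eqref{hardy2}. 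This gives bounds by $\e^2\dl r_0\int u_r^2+\dl r_0\int(r-r_0)(u_\th^2+v_\th^2)$; these are absorbed into the left side, or into $\e^4\dl\int u^2_{0,r}$, once $\dl$ is small.

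The main obstacle is the $O(1)$ coefficient terms coming from $u^e$. These are $K_{23}$, namely $\int (vru^a_r+vu^a)u_{\neq}$, and the $u^e$ part of $K_{25}$, namely $-2\int u^e u_{\neq}v$. In the weighted lemma they were bounded using $(r-r_0)^{\al-1}$ and the Hardy inequality. Without the weight we instead use Cauchy–Schwarz splitting by $(r-r_0)$:
\bes
r_0\int|rv||u_{\neq}|\ls r_0\int (r-r_0)u_{\neq}^2+r_0\int\f{(rv)^2}{r-r_0}.
\ees
The second integral is controlled by Hardy \eqref{hardy2} with $\kappa=-1$ replaced by a slightly safer route. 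Since $rv$ vanishes at $r=r_0$, the one-dimensional inequality $\int(rv)^2(r-r_0)^{-1}\ls\int (r-r_0)(rv)_r^2$ (a log-free form obtained from \eqref{hardy1}-type integration by parts with a cutoff, or simply via \eqref{hardy4} with $\al$ fixed) bounds it by $\int(r-r_0)u_\th^2$, using incompressibility $(rv)_r=-u_\th$. If this borderline Hardy step fails, the fallback is to use \eqref{hardy4} directly together with the weighted estimate \eqref{estiunonzero} already proven, which controls $\int(r-r_0)^\al(u^2_{\neq,r}+v^2_r)$; this costs only constants depending on the fixed $\al$. Collecting everything, multiplying by $\e^2$, dividing by $r_0$, and taking $\dl$ small gives \eqref{estiunonw}.
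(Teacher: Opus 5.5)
Your scheme (testing \eqref{errorequcut1} against $\epsilon^2\Phi_{\neq}$, i.e.\ rerunning Lemma \ref{lemlinearnonzero} with $\alpha=0$) is the paper's, and your treatment of the small-coefficient terms is fine. The gap is in the one term that becomes critical without the weight: the $u^e$-part of $K_{23}$, which equals $2A\int rv\,u_{\neq}$ with $2A\approx\tilde{\omega}r_0$. The $u^e$-part of $K_{25}$ is harmless, because $|u^e|\lesssim r_0(r-r_0)$ already supplies the weight.

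You reduce the critical term to $\int (rv)^2(r-r_0)^{-1}\lesssim\int(r-r_0)(rv)_r^2$. That inequality is the endpoint $\kappa=-1$ of \eqref{hardy1}, and it is false. With $t=\log\frac{1}{r-r_0}$ it becomes $\int_0^\infty g^2\,dt\lesssim\int_0^\infty (g')^2\,dt$. Taking $g(t)=\sin(\pi t/L)$ on $[0,L]$ and $g=0$ for $t>L$ gives the ratio $L^2/\pi^2\to\infty$. A cutoff cannot rescue a false inequality. Nor does \eqref{hardy4} give it: that lemma controls only the weaker weight $(r-r_0)^{\alpha-1}$ and needs $\int(r-r_0)^\alpha f_r^2$ on the right.

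Your fallback is not carried out, and as set up it does not close. Your Cauchy--Schwarz split carries no $\epsilon$, while \eqref{estiunonzero} controls only $\epsilon^2r_0^{-2}\int(r-r_0)^\alpha u_\theta^2$. Alternatively it controls $\epsilon^2\int(r-r_0)^\alpha v_r^2$, but that costs $r_0^2$ through $(rv)_r=rv_r+v$. Going through Hardy with $\kappa=\alpha-2$, you end with $r_0^2\alpha^{-6}\int(r-r_0)(u_\theta^2+v_\theta^2)$ plus the extra forcing terms of \eqref{estiunonzero}. That is not \eqref{estiunonw}.

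The missing idea is that this term sits inside $\epsilon^2\int(S_uu_{\neq}+S_vv)$, while the target needs only $r_0\int(r-r_0)(u_\theta^2+v_\theta^2)$ with no $\epsilon^2$ in front. That is exactly the bound the paper records for $L_2$. Spend the $\epsilon^2$ in the Cauchy--Schwarz step:
\[
\epsilon^2 r_0\int|rv|\,|u_{\neq}|\le r_0\int(r-r_0)u_{\neq}^2+\epsilon^4 r_0\int\frac{(rv)^2}{r-r_0}.
\]
The first term is at most $r_0\int(r-r_0)u_\theta^2$ by Poincar\'e in $\theta$. For the second, cut at $r-r_0=\epsilon^2$:
\begin{itemize}
\item On $\{r-r_0\ge\epsilon^2\}$ we have $\epsilon^4(r-r_0)^{-1}\le r-r_0$. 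Then \eqref{hardy1} with $\kappa=1$ and $(rv)_r=-u_\theta$ give $\int(r-r_0)(rv)^2\le\int(r-r_0)^3u_\theta^2$.
\item On $\{r-r_0<\epsilon^2\}$, \eqref{hardy2} with $\kappa=-2$ bounds the piece by $\epsilon^6r_0\int(rv)^2(r-r_0)^{-2}\le 4\epsilon^6 r_0\int u_\theta^2$. This is absorbed by the $\epsilon^4\int u_\theta^2/(2r)$ part of the left side once $\epsilon r_0$ is small. The contraction argument of Section \ref{sec5} already needs $\epsilon$ small relative to $r_0$. With the $\epsilon$'s distributed this way, your fallback via \eqref{estiunonzero} would also close.
\end{itemize}

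One further correction concerns $K_1'$. The lower bound by $\frac{u_\theta^2+v_\theta^2}{2r}$ does not follow from the nonnegativity of $(v_\theta-u_{\neq})^2$ and $(u_\theta+v)^2$; dropping those squares discards exactly the $\theta$-derivative control. You must expand, e.g.\ $(v_\theta-u_{\neq})^2\ge\frac12v_\theta^2-u_{\neq}^2$ and similarly for $(u_\theta+v)^2$. Then absorb $\epsilon^4\int(u_{\neq}^2+v^2)/r\le 4\epsilon^4r_0^{-1}\int(u_{\neq,r}^2+v_r^2)$, which follows from \eqref{hardy1} with $\kappa=0$, into $\epsilon^4\int r(u_{\neq,r}^2+v_r^2)$ using $r_0\ge 100$.
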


\pf Multiplying \eqref{errorequcut1} by $\e^2\Phi_{\neq}$ to obtain
\be \label{nonwfir}
\begin{aligned}
 &\underbrace{-\epsilon^4 \int r\Delta\o \Phi_{\neq} }_{L_1}+\underbrace{\e^2\int\left(\frac{1}{r} \partial_\th S_v-\p_rS_u\right)\Phi_{\neq}}_{L_2}=\underbrace{\e^2\int\left(\frac{1}{r} \partial_\th R_v-\partial_r R_u\right)\Phi_{\neq}}_{L_3}.
\end{aligned}
\ee
\underline{{\bf\noindent Estimates of $L_1$}}

Essentially the same as and much simpler than \eqref{wenergyj1fi}, we can see that
\begin{align}
L_1=&\e^4\int r\o^2_{\neq}=\e^4\int r(u^2_{\neq,r}+v^2_r)+\f{(v_\th-u_{\neq})^2+(u_\th+v)^2 }{r}\nn\\
   \geq& \e^4\int r(u^2_{\neq,r}+v^2_r)+\f{v^2_\th+u_\th^2 }{2r}-C(r-r_0)(u^2_\th+v^2_\th).  \label{nonwl1fir}
\end{align}

\noindent\underline{{\bf Estimates of $L_2$}}

Using integration by parts and the relation among $\Phi_{\neq}$ and $rv,\, u_{\neq}$, we have
\begin{align}
&L_2=\e^2\int_\O (r-r_0)^\al \Phi_{\neq} \lt(\f{1}{r}\p_\th S_v-\p_rS_u\rt)=\e^2\int_\O  (S_u u_{\neq}+S_v v).\nn
\end{align}
Essentially the same as and much simpler than \eqref{wenergyj5six} and \eqref{wenergyk2six}, we can see that
\be\label{nonwl2sec}
\lt|L_2\rt|=\lt|\e^2\int_\O  (S_u u_{\neq}+S_v v)\rt|\ls  r_0\int (r-r_0)(u^2_\th+v^2_\th)+ \e^4 \dl r_0\int u^2_r.
\ee
\noindent\underline{{\bf Estimates of $L_3$}}

Direct integration by parts indicates that
\begin{align}
|L_3|=\e^2\lt|\int_\O  (R_u u_{\neq}+R_v v)\rt|.\label{nonwl3}
\end{align}
Inserting \eqref{nonwl1fir}, \eqref{nonwl2sec} and \eqref{nonwl3} into \eqref{nonwfir}, we can obtain \eqref{estiunonw}. \qed

{\bf\noindent Proof of Proposition \ref{proplinearh1}}:

Combining the estimate \eqref{estipositive} in Proposition \ref{proppositive} and the estimate \eqref{estilinearwei} in Proposition \ref{proplinearenergy}, we obtain that
\begin{align}
&\int (r-r_0)(u^2_\th+v^2_\th)+ \e^2\int_{\O} u^2_{0,r}+(r-r_0)^\al (u^2_{\neq,r}+v^2_r)\nn\\
\leq& Cr^{-1}_0\lt|\int_{\O} (R_u u_{\neq}+R_v v)\rt|+Cr^{-1}_0\lt|\int_{\O} (r-r_0)^{\al-1}R_u \Phi_{\neq}\rt| \nn\\
&+Cr^{-1}_0\lt|\int_{\O} R_u u_0\rt|+r^{-1}_0\lt|\int_{\O} (R_u u_{\th}+R_v v_{\th})\rt|. \label{linearweight}
\end{align}

At last, by combining the above estimate \eqref{linearweight} and \eqref{estiunonw} in Proposition \ref{proplinearnonw}, we arrive at \eqref{linearstabilityh1} in Proposition \ref{proplinearh1}. \qed

\section{$H^3$ stability estimates of the linear error equations}\label{sec4}

In order to obtain the $L^\i$ estimate and close the nonlinear energy estimates, we will present the following $H^3$ estimates.
\begin{proposition}\label{proplinearh3}
 Let $(u,v)$ be a smooth solution of (\ref{errorequ}).Then there exist $\epsilon_0>0, \dl_0>0$ and a small constant $\al$, such that for any $\epsilon\in (0,\epsilon_0), \dl\in(0,\dl_0)$, there holds
\begin{align}\label{linearh3}
&\e^{10}\lt\{\sum_{i+j=2}\|\p^i_\th\p^j_r(u_r,v_r)\|^2+ r^{-2}_0 \|(u_{\th\th\th},u_{\th\th\th})\|^2+\|(u_{\th\th\th},u_{\th\th\th})\s{r-r_0}\|^2\rt\}\nn\\
&\leq C\e^6r^{-1}_0\lt|\int_{\O} \lt( R_u \p^5_\th u+R_v  \p^5_\th v\rt)\rt|+\e^4\|(u_r,v_r)\|^2\nn\\
&+C\e^6r^{-1}_0\lt|\int_{\O} \lt( R_u  \p^4_\th u+R_v  \p^4_\th v\rt)\rt|+ C\e^6r^{-1}_0\lt|\int_{\O}  R_u \p^3_\th(rv)(r-r_0)^{\al-1}\rt|\nn\\
&+\e^4\lt(r^{-2}_0\lt\|(R_u,\p_\th R_u,\p_r R_u)\rt\|^2+r^{-4}_0\lt\|(R_v, \p_\th R_v)\rt\|^2\rt).
\end{align}
\end{proposition}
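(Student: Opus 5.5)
We will follow the two-step strategy outlined in the introduction: first the tangential derivatives, then the normal derivatives recovered from the equations.

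\textbf{Step 1 (tangential derivatives).} Since the coefficients of the diffusion operator depend only on $r$ and the boundary is $\th$-independent, $(\p^2_\th u,\p^2_\th v,\p^2_\th p)$ satisfies \eqref{errorequ} with the same boundary conditions. The source terms become $\p^2_\th R_u,\p^2_\th R_v$ plus commutators $\p^2_\th S_u-S_u[\p^2_\th u,\p^2_\th v]$. These commutators involve only $\th$-derivatives of $(u^a,v^a)$, which obey \eqref{uadetail1}--\eqref{vadetail1} with the same bounds. We then run the argument of Proposition \ref{proplinearh1} for $(\p^2_\th u,\p^2_\th v)$, scaled by $\e^6$:
\begin{itemize}
\item the positivity estimate of Proposition \ref{proppositive}, tested against $\p^3_\th u,\p^3_\th v$;
\item the energy estimates of Proposition \ref{proplinearenergy} and Proposition \ref{proplinearnonw}, tested against $\p^2_\th\Phi_{\neq}$ with and without the weight $(r-r_0)^\al$.
\end{itemize}
This is equivalent to multiplying \eqref{errorequcut1} by $-\e^2 r v_{\th\th\th}$ as announced. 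After integrating by parts in $\th$, the forcing pairings become $\int R_u\p^5_\th u+R_v\p^5_\th v$, $\int R_u\p^4_\th u+R_v\p^4_\th v$, and $\int R_u\p^3_\th(rv)(r-r_0)^{\al-1}$, which is exactly the right side of \eqref{linearh3}. The zero mode of $\p^2_\th u$ vanishes, so the $u_0$ term does not appear.

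The commutator terms are lower order. They are bounded by $\dl r_0$ times the weighted quantities for $\p_\th u$ and $u$, so they are absorbed either by interpolation in $\th$ or by the $H^1$ quantity, which is what produces $\e^4\|(u_r,v_r)\|^2$ on the right. The outcome is control of
\bes
\e^{10}\Big(\|(u_{\th\th r},v_{\th\th r})\|^2+\|(u_{\th\th\th},v_{\th\th\th})\s{r-r_0}\|^2+r^{-2}_0\|(u_{\th\th\th},v_{\th\th\th})\|^2\Big)
\ees
(we keep a factor $\e^{10}$ rather than $\e^6$ for room). The same procedure with one $\th$-derivative controls $\e^{10}\|(u_{\th r},v_{\th r})\|^2$.

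\textbf{Step 2 (mixed and normal derivatives).} Write $(u_{rr},v_{rr})$ from \eqref{errorequ}$_{1,2}$:
\bes
\e^2 r u_{rr}=p_\th+S_u-R_u-\e^2\Big(\frac{u_{\th\th}}{r}+u_r+\frac{2}{r}v_\th-\frac{u}{r}\Big),
\ees
and, for $v$, use incompressibility, $v_{rr}=-(u_{\th r}+2v_r)/r$ (up to lower-order terms), which is already controlled.

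To bound $p_\th$ in $L^2$ we use duality with the Bogovskii operator on $\O$: for $g$ with zero mean, pick $\bl{w}=\mathcal{B}g\in H^1_0$ with $\|\bl w\|_{H^1}\ls\|g\|$. Testing the momentum equations with $\bl w$ gives $\|p-\bar p\|\ls \e^2\|\na(u,v)\|+\|(S_u,S_v)\|+\|(R_u,R_v)\|$. Applied to $p_\th$ (i.e. to the $\th$-differentiated system), this yields $\|p_\th\|$ and $\|p_{\th\th}\|$. The Bogovskii constant must be uniform in $r_0$: the domain in Cartesian coordinates is a thin annulus, but locally in $(\th,r)$ it is a strip of width $1$, and the metric factor $r\approx r_0$ enters only through weights. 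The factors $r_0^{-2}$ and $r_0^{-4}$ on $R_u,R_v$ come from the $1/r$ factors. The transport terms $S_u,S_v$ are bounded via \eqref{uadetail1}--\eqref{vadetail1} by $r_0\|(u_\th,v_\th)\|+\e^{-1}\dl r_0\|(u,v)\|$-type terms, handled by Hardy. This gives $\e^8\|(u_{rr},u_{rr\th})\|^2$ controlled by the $H^1$ quantities, the Step 1 quantities and the $R$ terms; multiplying by $\e^2$ places them inside \eqref{linearh3}.

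\textbf{Step 3 (third normal derivatives).} From \eqref{errorequcut1}, $\e^2 r\o_{rr}=-\e^2(\o_r+\o_{\th\th}/r)+\frac1r\p_\th S_v-\p_rS_u-(\text{$R$ terms})$, and $u_{rrr}$ is expressed through $\o_{rr}$ and lower derivatives, with $v_{rrr}$ again given by incompressibility. The term $\p_r S_u$ contains $v^a r u_{rr}$ and $u_r\p_\th u^a$, whose coefficients are singular in $\e$ near the boundary layers (for instance $|\p_r u^a|\ls \e^{-1}\dl r_0$). These are paid for by $\e^{10}\le \e^8\cdot\e^2$ together with $|v^a|\ls\e\dl(r-r_0)$, so that $\e^{10}\|u_{rrr}\|^2$ is bounded by the previous quantities.

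\textbf{Main obstacle.} We expect the hardest part to be Step 1's commutator and transport terms in the positivity and weighted estimates. Terms like $\p^2_\th(vru^a_r)$ carry $\p_r u^a$, of size $\e^{-1}\dl r_0$ in the outer layer, and must be closed using only the weight $(r-r_0)$ and $\e^4$-level normal control, uniformly in $r_0$. We would first try to reproduce the cancellation in \eqref{positiveesti1} (integration by parts producing $(u^a_r+u^a/r)_r$), and place the $\e^{-1}$ losses against the $\e$-gain from \eqref{uadetail2} with large $\ell$.
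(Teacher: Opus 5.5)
Your proposal follows the paper's proof essentially step for step. The tangential derivatives are controlled by rerunning the $H^1$ argument of Proposition \ref{proplinearh1} on $(\p_\th^2 u,\p_\th^2 v)$; the paper states this as Proposition \ref{proplinearh3tan} but omits the proof and lists no commutator terms, so your remark that all commutators carry a factor $\dl$ (because $\p_\th u^e=0$) and feed into $\e^4\|(u_r,v_r)\|^2$ is, if anything, more careful than the paper. After that you get $u_{rr},u_{rr\th}$ from \eqref{errorequ}$_1$ with $\|p_\th\|,\|p_{\th\th}\|$ bounded by duality against the Bogovskii field of Lemma \ref{lembogov}, the $v$-derivatives from incompressibility, and $u_{rrr}$ from the vorticity equation \eqref{errorequcut1}, exactly as the paper does.

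One bookkeeping point concerns the Bogovskii step. As you describe it, you apply your $\|p-\bar p\|$ bound to the twice $\th$-differentiated system, which would produce $\|\p_\th^2(R_u,R_v)\|$ on the right. The right side of \eqref{linearh3} contains only $\|(R_u,\p_\th R_u,\p_r R_u)\|$ and $\|(R_v,\p_\th R_v)\|$. To stay within that, move one $\th$-derivative of the forcing (and of $S_u,S_v$) onto the Bogovskii field, as the paper does in \eqref{ptheta3}.
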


Proof of Proposition \ref{proplinearh3} contain the tangential derivatives' estimates whose proof is the same as \eqref{linearstabilityh1} in Proposition \ref{proplinearh1}, and the normal derivatives' estimates which will be achieved by the Bogovski Lemma and the vorticity equation.

\subsection{Tangential derivatives' estimates}

The similar estimate as \eqref{linearstabilityh1} in Proposition \ref{proplinearh1}, we have the following Proposition.
\begin{proposition}\label{proplinearh3tan}
 Let $(u,v)$ be a smooth solution of (\ref{errorequ}). Then there exist $\epsilon_0>0, \dl_0>0$ and a small constant $\al$, such that for any $\epsilon\in (0,\epsilon_0), \dl\in(0,\dl_0)$, there holds
\begin{align}\label{linearh3tan}
&\e^4\lt\{\int_{\O}\lt(u^2_{\th\th r}+v^2_{\th\th r}\rt)+\lt(u^2_{\th\th\th}+v^2_{\th\th\th}\rt)(r-r_0)+r^{-2}_0\lt(u^2_{\th\th\th}+v^2_{\th\th\th}\rt)\rt\}\nn\\
&\leq Cr^{-1}_0\lt|\int_{\O} \lt( R_u \p^5_\th u+R_v  \p^5_\th v\rt)\rt|\nn\\
&+Cr^{-1}_0\lt|\int_{\O} \lt( R_u  \p^4_\th u+R_v  \p^4_\th v\rt)\rt|+ Cr^{-1}_0\lt|\int_{\O}  R_u \p^3_\th(rv)(r-r_0)^{\al-1}\rt|.
\end{align}
\end{proposition}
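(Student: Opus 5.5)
The plan is to apply $\p^2_\th$ to the error system \eqref{errorequ} and then repeat, for $(U,V):=(\p^2_\th u,\p^2_\th v)$ with pressure $\p^2_\th p$, the three-step argument of Proposition \ref{proplinearh1}. Since the domain is periodic in $\th$ and the boundary data vanish on $r=r_0$ and $r=r_0+1$, the pair $(U,V)$ satisfies the same homogeneous boundary conditions and the incompressibility $\p_\th U+\p_r(rV)=0$. The stream function $\p^2_\th\Phi$ also has zero $\th$-mean. Hence $U=U_{\neq}$, and there is no zero-frequency part to treat. The viscous operator commutes with $\p_\th$. The transport terms produce
\[
\p^2_\th S_u=S_u[U,V]+\sum_{k=1}^{2}\binom{2}{k}\big(\p^k_\th u^a\,\p^{2-k}_\th u_\th+\p^k_\th v^a\, r\p_r\p^{2-k}_\th u+\cdots\big),
\]
and similarly for $S_v$. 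In these commutators at least one $\th$-derivative falls on $(u^a,v^a)$.

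First, the positivity estimate. I test the $\p^2_\th$-equations against $(\p_\th U,\p_\th V)=(\p^3_\th u,\p^3_\th v)$, as in Proposition \ref{proppositive}. The terms $I_1$ through $I_5$ are handled exactly as before, using \eqref{uadetail1}--\eqref{vadetail1}, Hardy's inequalities of Lemma \ref{lemhardy}, and the sign property \eqref{positiveesti8} of $u^e$. This yields $\int u^a(u^2_{\th\th\th}+v^2_{\th\th\th})$ on the left, and the forcing term becomes $\int \p^2_\th R_u\,\p^3_\th u=-\int R_u\p^5_\th u$ after integrating by parts in $\th$. The new commutator terms, for example $\int \p_\th u^a\, u_{\th\th}\,u_{\th\th\th}$ and $\int \p^2_\th v^a\, r u_r\, u_{\th\th\th}$, carry the smallness $\dl r_0(r-r_0)$ or $\e\dl(r-r_0)$ from \eqref{uadetail1} and \eqref{vadetail2}. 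The lower-order factors $u_{\th\th}$ and $u_{\th}$ are controlled by $u_{\th\th\th}$ through Poincaré in $\th$. The factors $u_r$ and $u_{\th r}$ are controlled after an integration by parts in $\th$, which moves the $\th$-derivatives onto $u_r$ and produces $\e^2\dl\int(r-r_0)u^2_{\th\th r}$ plus absorbable terms.

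Second, the weighted and non-weighted energy estimates. I test the $\p^2_\th$ vorticity equation against $\p^2_\th\Phi\,(r-r_0)^\al$ and against $\e^2\p^2_\th\Phi$, following Lemma \ref{lemlinearnonzero} and Proposition \ref{proplinearnonw}. This gives $\e^4\int(u^2_{\th\th r}+v^2_{\th\th r})+r_0^{-2}(u^2_{\th\th\th}+v^2_{\th\th\th})$, controlled by $\int(r-r_0)(u^2_{\th\th\th}+v^2_{\th\th\th})$ plus the forcing. Integrating the forcing by parts in $\th$ turns it into the terms $\int R_u\p^4_\th u$, $\int R_v\p^4_\th v$ and $\int R_u\p^3_\th(rv)(r-r_0)^{\al-1}$, using $\p^2_\th\Phi_\th=-\p^2_\th(rv)$. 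Combining the three estimates as in the proof of Proposition \ref{proplinearh1} then closes \eqref{linearh3tan}.

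The main obstacle is the commutator term coming from $K_{23}$ and $I_4$, namely $\p_\th(vr u^a_r)$, which contains $v\,r\,\p_\th u^a_r$ and $v_\th r\,\p_\th u^a_r$. Near the outer boundary, $\p_\th u^a_r$ is of size $\e^{-1}\dl r_0$ by \eqref{uadetail2}, unlike $u^e_r$, which is harmless. My first attempt is to trade the $r$-derivative, integrating by parts in $r$ as in \eqref{positiveesti1} so that it lands on $(rV)^2$-type quantities. I would then use the weighted bound $\dl r_0\big(\e\dl+(r-r_0)^{-1/2}+(r-r_0)^2(r_0+1-r)^{-2}\big)$ together with the double-vanishing Hardy inequality \eqref{hardy2} at $r=r_0+1$ to absorb it. The lower-order pieces that cannot be absorbed are bounded by already controlled quantities from Proposition \ref{proplinearh1}; the statement of \eqref{linearh3tan} does not display these explicitly, so I expect them to be absorbed using the smallness of $\dl$.
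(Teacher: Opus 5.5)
Your route is the one the paper intends: its proof of this proposition is only ``almost the same as Proposition~\ref{proplinearh1}''. The outer-layer commutator you single out is also not the real difficulty. Terms such as $2v_\th\, r\,\p_\th u^a_r\,u_{\th\th\th}$ can be bounded directly by Cauchy--Schwarz. Use \eqref{uadetail2} with $j=1$, $\ell=1$, together with the Hardy inequalities of Lemma~\ref{lemhardy}; in particular use \eqref{hardy2} at $r=r_0+1$, where $rv_\th$ and $(rv_\th)_r=-u_{\th\th}$ both vanish. No integration by parts in $r$ is needed, and the one in \eqref{positiveesti1} relies on a symmetric $(rV)(rV)_r$ structure that these cross terms lack.

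\textbf{The zero mode.} The genuine gap is $u_0$. Applying $\p_\th^2$ removes $u_0$ from $(U,V)$, so the differentiated system has no analogue of Lemma~\ref{lemlinearzero}. But $u_0$ survives in the commutators. The parts of $\p_\th^2S_u$ and $\p_\th^2S_v$ in which both derivatives hit the coefficients contain, respectively,
\[
r\,u_{0,r}\,\p_\th^2 v^a+u_0\,\p_\th^3 u^a+u_0\,\p_\th^2 v^a
\quad\text{and}\quad
u_0\,\p_\th^3 v^a-2u_0\,\p_\th^2 u^a .
\]
Poincar\'e in $\th$ does not reach these, and integrating by parts in $\th$ only moves derivatives onto $(u^a,v^a)$. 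For instance, \eqref{vadetail2} gives only
\[
\Big|\int_\O r\,u_{0,r}\,\p_\th^2v^a\,u_{\th\th\th}\Big|\ls \e^2\dl r_0\int_\O(r-r_0)u_{0,r}^2+\dl r_0\int_\O(r-r_0)u_{\th\th\th}^2 ,
\]
and terms of the same type appear in both energy estimates. The quantity $\int_\O u_{0,r}^2$ is not on the left of \eqref{linearh3tan}, so no smallness of $\dl$ can absorb it. Bounding it through Proposition~\ref{proplinearh1} instead imports $\int R_u u_0$, $\int R_u u_\th,\dots$, which are not on the right either. Your claim that the factor $u_r$ yields only $\e^2\dl\int(r-r_0)u_{\th\th r}^2$ plus absorbable terms is exactly where this is lost. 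Indeed $u_r=u_{0,r}+u_{\neq,r}$, and only the second piece is controlled by $u_{\th\th r}$.

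\textbf{The displayed inequality itself fails.} No sharper estimate can fix this. Take $R_v=0$ and $R_u=R_u(r)$; then every term on the right of \eqref{linearh3tan} vanishes. For a solution of the linear problem, which Section~\ref{sec5} presupposes, the inequality would then force $(u,v)=(u_0(r),0)$. In that case \eqref{errorequ} gives $\p_\th p=-S_u$ and $r\p_r p=-S_v$. Using $\p_\th u^a=-\p_r(rv^a)$, the compatibility condition $\p_r\p_\th p=\p_\th\p_r p$ reduces to
\[
rv^a\,\big(u_{0,rr}+r^{-1}u_{0,r}\big)=u_0\,\big(\p_r^2+r^{-1}\p_r+r^{-2}\p_\th^2+r^{-2}\big)(rv^a).
\]
Choose $u_0$ (equivalently $R_u$) with $u_0(r_*)=0\neq u_{0,rr}(r_*)$ at an interior $r_*$ where $rv^a(\cdot,r_*)\not\equiv0$; then the left side is nonzero while the right side vanishes. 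Such an $r_*$ exists whenever $v^a\not\equiv0$.

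\textbf{What the argument actually proves.} Your argument, and the paper's omitted one, yields \eqref{linearh3tan} with an additional term $C\e^2\dl\int_\O u_{0,r}^2$ on the right. This is harmless downstream. Multiplied by $\e^6$, it is dominated by the term $\e^4\|(u_r,v_r)\|^2$ already on the right of \eqref{linearh3}.
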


Since the proof is almost the same as that for Proposition \ref{proplinearh1}, we omit the details. \qed

\subsection{Normal derivatives' estimates} The normal derivatives' estimates is first by applying the Bogovskii Lemma to obtain the estimates of $u_{rr}$ and $u_{rr\th}$. Then with the help of the vorticity equation, we obtain estimates of $u_{rrr}$.

\subsubsection{Estimates of $u_{rr},  u_{rr\th}$}  We have the following Lemma.

\begin{lemma}\label{lemmnormalurr}
 Let $(u,v)$ be a smooth solution of (\ref{errorequ}). Then there exist $\epsilon_0>0, \dl_0>0$ and a small constant $\al$, such that for any $\epsilon\in (0,\epsilon_0), \dl\in(0,\dl_0)$, there holds
\begin{align}\label{urr7}
&\e^8\|(u_{rr},v_{rr}, u_{rr\th},v_{rr\th},v_{rrr})\|^2\nn\\
\ls & \e^4\lt( \|(u_{\th\th\th}, v_{\th\th\th})(r-r_0)\|^2+\|(u_r, v_{r})\|^2+ \|(u_{r\th\th}, v_{r\th\th})\|^2+r^{-2}_0\|(u_{\th\th\th},v_{\th\th\th})\|^2\rt)\nn\\
                       &+\e^4\lt(r^{-2}_0\|(R_u,\p_\th R_u)\|^2+r^{-4}_0\|(R_v, \p_\th R_v) \|^2\rt).
\end{align}
\end{lemma}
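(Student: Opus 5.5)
The idea is to read $u_{rr}$ off the first momentum equation once the pressure has been controlled, and to get $v_{rr}$, $v_{rrr}$ from incompressibility.

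\textbf{Step 1: bounding $p_\th$ and $p_{\th\th}$ via the Bogovskii lemma.} By periodicity $\int_\O p_\th=0$, so on the bounded domain $\O$ (polar coordinates with $r\approx r_0$, where the Jacobian factor $r$ only costs powers of $r_0$) we can find $\bl{w}\in H^1_0(\O)$ with $\na\cdot \bl{w}=p_\th$ and $\|\na \bl{w}\|\ls\|p_\th\|$. The constant is uniform in $r_0$, since the annulus is locally a fixed-width strip. Testing \eqref{errorequ}$_{1,2}$ with $\bl{w}$ gives $\|p_\th\|^2=-\int \na p\cdot \bl{w}$, and this equals the viscous, transport $S$ and remainder $R$ terms paired with $\bl{w}$. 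After integrating by parts once, the viscous terms are bounded by $\e^2 r_0\|(u_r,v_r,u_\th,v_\th)\|\,\|\na \bl{w}\|$. For the transport terms $S_u,S_v$ the key point is that $u^a\p_\th u$, $v^ar\p_r u$, etc. should be bounded by $r_0\|(u_\th,v_\th)(r-r_0)\|$ plus $\e\dl r_0\|u_r\|$-type terms. This uses $u^a\approx\t{\o}r_0(r-r_0)$ and $|v^a|\ls\e\dl(r-r_0)$ from Lemma \ref{lemdetail1}, together with the Hardy inequalities of Lemma \ref{lemhardy} for the zero-order terms $u\,u^a_\th$, $v u^a_r$ (and $vu^a_r$ via incompressibility and Poincar\'e). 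This yields
\begin{align*}
\|p_\th\|\ls \e^2 r_0\|(u_r,v_r)\|+r_0\|(u_\th,v_\th)(r-r_0)\|+r_0^{-1}\|(u_\th,v_\th)\|+\|R_u\|+r_0^{-1}\|R_v\|.
\end{align*}
Repeating the argument for the $\th$-differentiated system (the coefficients $u^a,v^a$ have $\th$-derivatives obeying the same bounds) gives the same estimate for $p_{\th\th}$, with one more $\th$-derivative on $u,v,R$. The intermediate terms in $u_{\th\th}$, $u_{r\th}$ are interpolated between the $u_r$ and $u_{r\th\th}$, $u_{\th\th\th}$ terms on the right of \eqref{urr7}.

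\textbf{Step 2: reading off the normal derivatives.} From \eqref{errorequ}$_1$,
\begin{align*}
\e^2 r u_{rr}=-\e^2\Big(\f{u_{\th\th}}{r}+u_r+\f{2}{r}v_\th-\f{u}{r}\Big)+p_\th+S_u-R_u,
\end{align*}
and the $\th$-derivative of this identity gives $u_{rr\th}$. Dividing by $r\approx r_0$ and squaring, the factor $r_0^{-1}$ exactly compensates the $r_0$ in the bounds for $p_\th$ and $S_u$. This produces $\e^8\|(u_{rr},u_{rr\th})\|^2$ controlled by $\e^4$ times the listed norms. The only delicate piece of $S_u$ is $v^a r u_r$: since $|v^a r|\ls\e\dl r_0$, it contributes $\e\dl\|u_r\|$, which is harmless. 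For $v$, incompressibility $(rv)_r=-u_\th$ gives $rv_{rr}=-u_{r\th}-2v_r$ and $rv_{rrr}=-u_{rr\th}-3v_{rr}$, so $v_{rr}$, $v_{rr\th}$ (from $u_{r\th\th}$) and $v_{rrr}$ (from $u_{rr\th}$) follow directly.

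\textbf{Main obstacle.} The hardest part is the transport pairing $\int S\cdot \bl{w}$ in Step 1. The coefficient $u^a$ is of size $r_0$, so one must use the degenerate weight $(r-r_0)$ coming from $u^a\approx\t{\o}r_0(r-r_0)$ to land on the weighted norm $\|(u_\th,v_\th)(r-r_0)\|$ rather than an unweighted one. For the large term $-2uu^a$ in $S_v$, the plan is to split $u=u_0+u_{\neq}$. The $u_0 u^e$ part is a pure function of $r$ and can be absorbed into the radial pressure, so it never enters the $\th$-derivative bound; the remaining part is handled by Poincar\'e in $\th$.
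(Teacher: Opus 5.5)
Your architecture is the same as the paper's. You use Bogovskii (Lemma~\ref{lembogov}) for $p_\theta$ and $p_{\theta\theta}$, read $u_{rr}$ and $u_{rr\theta}$ off the first momentum equation, get the $v$-derivatives from $(rv)_r=-u_\theta$, and interpolate in $\theta$ for intermediate terms.

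The gap is in the pairing of Step 1. If $\partial_\theta w^\theta+\partial_r w^r=p_\theta$ with $w\in H^1_0(\Omega)$, then $-\int_\Omega \nabla p\cdot w=\int_\Omega p\,p_\theta=0$ by periodicity. So testing the undifferentiated system \eqref{errorequ} against $w$ gives no information about $\|p_\theta\|$. The correct identity is $\|p_\theta\|^2=-\int_\Omega (p_{\theta\theta}w^\theta+p_{r\theta}w^r)$, and you must substitute the $\theta$-differentiated equations \eqref{urr}; this is exactly what the paper does.

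In that pairing the transport and remainder terms behave as you want. Moving $\partial_\theta$ onto $w$ gives $(\|S_u\|+r_0^{-1}\|S_v\|+\|R_u\|+r_0^{-1}\|R_v\|)\|\nabla w\|$. The viscous terms cannot be treated that way, because shedding the $\theta$-derivative would require $\|\nabla\partial_\theta w\|$ (or would put $u_{rr}$ back on the right). After one integration by parts in $r$ they contribute $\epsilon^2 r_0\|(u_{r\theta},v_{r\theta})\|+\epsilon^2 r_0^{-1}\|(u_{\theta\theta},v_{\theta\theta})\|$ plus lower order, not $\epsilon^2 r_0\|(u_r,v_r)\|$. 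So your displayed bound for $\|p_\theta\|$ does not follow from the argument. The same one-derivative shift occurs for $p_{\theta\theta}$: it needs the twice-differentiated system and produces $\epsilon^2 r_0\|(u_{r\theta\theta},v_{r\theta\theta})\|+\epsilon^2 r_0^{-1}\|(u_{\theta\theta\theta},v_{\theta\theta\theta})\|$.

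The repair costs nothing for the lemma. After dividing by $r\approx r_0$, multiplying by $\epsilon^2$ and squaring, the new terms are:
\begin{itemize}
\item $\epsilon^8\|(u_{r\theta},v_{r\theta})\|^2\le\epsilon^8\|(u_r,v_r)\|\,\|(u_{r\theta\theta},v_{r\theta\theta})\|$, by Fourier interpolation in $\theta$;
\item $\epsilon^8\|(u_{r\theta\theta},v_{r\theta\theta})\|^2+\epsilon^8r_0^{-4}\|(u_{\theta\theta\theta},v_{\theta\theta\theta})\|^2$.
\end{itemize}
All of these are dominated by the right side of \eqref{urr7}, so with this change your Steps 1--2 become the paper's proof.

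Two smaller points.
\begin{itemize}
\item The Bogovskii constant is uniform in $r_0$ because one uses the flat divergence on the fixed rectangle $\mathbb{T}\times[r_0,r_0+1]$. Being ``locally a fixed-width strip'' is not the reason: for the physical annulus, whose circumference is $2\pi r_0$, the Bogovskii constant grows like $r_0$.
\item What you call the main obstacle is not needed. The right side of \eqref{urr7} contains $\epsilon^4\|(u_r,v_r)\|^2$ with no smallness required, so the crude bound \eqref{urr4} suffices. It follows from Lemma~\ref{lemdetail1}, Hardy and incompressibility, e.g.\ $\|rv\|\le 2\|(r-r_0)u_\theta\|$ from \eqref{hardy1} and $r_0^{-1}\|uu^a\|\lesssim\|u(r-r_0)\|\lesssim\|u_r\|$. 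No splitting off of $u_0u^e$ is required, and its contribution vanishes identically in the differentiated pairing anyway.
\end{itemize}
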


From \eqref{errorequ}$_1$, we need  the weighted $L^2$ estimate of $p_{\th}$, which can be obtained by applying the following Bogovskii Lemma.

\begin{lemma}\label{lembogov}
 Define the domain $\O:=\bT \times[r_0,r_0+1]$, then for $f\in L^2(\O)$ with
\bes
\int_{\O} f(\th,r)d\th dr=0,
\ees
there exists a vector $V=(V^\th,V^r)\in H^1_0(\O)$, s. t.
\bes
\bali
&\partial_\theta V^\th+\partial_r V^r=f,\q \|\nabla V\|_{L^2(\O)}\leq  C \|f\|_{L^2(\O)}.
\eali
\ees
where $\na=(\p_\th,\p_r)$ and the constant $C$ is independent of $r_0$.
\end{lemma}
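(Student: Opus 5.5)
The key observation is that the statement is posed in the coordinates $(\th,r)$ with the flat operators $\p_\th,\p_r$, so the radius $r_0$ plays no geometric role. The change of variables $s=r-r_0$ is an isometric translation. It maps $\O=\bT\times[r_0,r_0+1]$ onto the fixed domain $\O_*=\bT\times[0,1]$ and preserves $L^2$ norms, $H^1_0$, the divergence $\p_\th V^\th+\p_r V^r$, and the mean-zero condition. So it suffices to prove the lemma on $\O_*$, with a constant that then trivially does not depend on $r_0$.

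On $\O_*$ I would not work with the periodic strip directly. Instead I would cut it open along $\th=0$ and regard $f$ as a function on the rectangle $Q=(0,2\pi)\times(0,1)$. Since $Q$ covers all of $\O_*$ up to a null set, the hypothesis gives $\int_Q f=0$. The rectangle $Q$ is a bounded convex domain, hence star-shaped with respect to a ball $B\subset Q$. The classical Bogovskii construction then applies: take $\varphi\in C_c^\i(B)$ with $\int\varphi=1$ and set
\[
V(x)=\int_Q f(y)\,\frac{x-y}{|x-y|^2}\int_{|x-y|}^{\i}\varphi\Big(y+t\frac{x-y}{|x-y|}\Big)t\,dt\,dy .
\]
This yields $V\in H^1_0(Q)$ with $\p_\th V^\th+\p_r V^r=f$ and $\|\na V\|_{L^2(Q)}\le C_Q\|f\|_{L^2(Q)}$. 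The bound on $\na V$ comes from the Calder\'on--Zygmund estimate for the kernel. Because $Q$ is fixed, $C_Q$ is an absolute constant.

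Next I would extend $V$ $2\pi$-periodically in $\th$. Since $V$ vanishes on the sides $\th=0$ and $\th=2\pi$, the extension has no jump across the cut. Hence it belongs to $H^1_{loc}$ and defines an element of $H^1_0(\O_*)$, vanishing at $r=0$ and $r=1$, with the same norm and the same divergence. Translating back by $r=s+r_0$ gives the lemma on $\O$.

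If one prefers to avoid quoting Bogovskii, there is an alternative for the zero-frequency part. Take $V^r(r)=\int_{r_0}^r f_0$ and $V^\th=0$; this vanishes at both ends precisely because $\int f_0\,dr=0$, and $\|\p_rV^r\|=\|f_0\|$. The non-zero frequencies would then need a separate argument, for instance the Ne\v{c}as inequality on the compact domain. I would use the rectangle argument, since it handles everything at once.

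The only point requiring care, which I expect to be the (mild) main obstacle, is the gluing across $\th=0\sim2\pi$. One must check that vanishing boundary traces on the two sides give an $H^1$ function on the torus-direction, and that the mean-zero condition on $Q$ is exactly the hypothesis. Both are routine, since the traces agree there (both are zero).
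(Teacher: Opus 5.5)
Your proposal is correct and is essentially the paper's argument: the paper simply invokes Bogovskii's lemma (Lemma III.3.1 of Galdi), noting that in the flat $(\theta,r)$ coordinates the domain has fixed diameter and inner radius, so the constant does not depend on $r_0$. Your steps — the translation $s=r-r_0$, cutting the periodic strip open into the convex rectangle $(0,2\pi)\times(0,1)$, and periodically extending the resulting $V\in H^1_0$ (harmless because $V$ vanishes on $\theta=0$ and $\theta=2\pi$) — supply a detail the paper glosses over: why a lemma stated for bounded star-shaped domains in $\mathbb{R}^2$ applies to the periodic strip.
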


Here we only stated a special case of Lemma III.3.1 of \cite{Galdi:2011} which works for general domains and the constant $C$ depends on the ratio of the diameter and inner radius of the domain. For the above domain $\O$, the diameter and inner radius are fixed, so we have an absolute constant. Note that the Bogovskii function may not be unique. For our purpose, we will choose and fix one for the relevant domain.

{\bf\noindent Proof of Lemma \ref{lemmnormalurr}} By taking $\th$ derivative on \eqref{errorequ}$_{1,2}$, we obtain that
\be\label{urr}
\lt\{
\begin{aligned}
& -\epsilon^2\left(r u_{rr\th}+u_{r\th}+\frac{u_{\th\th\th}}{r}-\frac{u_{\th}}{r}+\frac{2}{r} v_{\th\th}\right)+\p^2_\th p+\partial_\th S_u=\p_\th R_u \\
& -\e^2\left(v_{rr \theta}+\frac{v_{r  \theta}}{r}+\frac{v_{\theta \theta  \theta}}{r^2}-\frac{2}{r^2} u_{\th \theta}\right)+\p_r\p_\th p+\f{1}{r}\partial_\th S_v=\f{1}{r}\p_\theta R_v \\
\end{aligned}
\rt.
\ee
Using Lemma \ref{lembogov} for $f=p_\th$, there exists a vector $V=(V^\th,V^r)\in H^1_0(\O)$, s. t.
\be\label{bogov2}
\bali
&\partial_\theta V^\th+\partial_r V^r=p_\th,\q \|\nabla V\|_{L^2(\O)}\leq  C \|p_\th\|_{L^2(\O)}.
\eali
\ee

Multiplying \eqref{urr}$_1$ by$-V^\th $, \eqref{urr}$_2$ by$-V^r $, and integrations on $\O$ to obtain
\begin{align}
& \int_{\O} p^2_{\theta}=\int p_\th  \lt(\partial_\theta V^\th+\partial_r V^r\rt)=  \int_{\O} \left(-p_{\th\th}V^{\th}-p_{r\theta} V^r\right)\nn\\
=& -\epsilon^2\int_{\O} \left(r u_{rr\th}+u_{r\th}+\frac{u_{\th\th\th}}{r}-\frac{u_{\th}}{r}+\frac{2}{r} v_{\th\th}\right) V^\th \nn\\
 &-\epsilon^2\int_{\O} \left(v_{rr \theta}+\frac{v_{r \theta}}{r}+\frac{v_{\theta \theta  \theta}}{r^2}-\frac{2}{r^2} u_{\th \theta}\right) V^r \label{ptheta}\\
& + \int_{\O} \lt(\partial_\theta S_u V^\th +\f{1}{r}\partial_\theta S_v V^r\rt)-\int_{\O} \lt(\partial_\theta R_u V^\th+\f{1}{r}\partial_\theta R_v V^r\rt).\nn
\end{align}
By using integration by parts, Hardy inequality, Poincar\'{e} inequality and \eqref{bogov2}, we have
\begin{align}
&-\e^2\int_{O_k}\left(r u_{rr\th}+u_{r\th}+\frac{u_{\th\th\th}}{r}-\frac{u_{\th}}{r}+\frac{2}{r} v_{\th\th}\right) V^\th  \nn\\
\ls& \e^2\lt(\|r u_{r\th}\|_{L^2(\O)}+\|r^{-1}(u_{\th\th},v_{\th\th})\|_{L^2(\O)} \rt)\|\nabla V^\th\|_{L^2(\O)}\nn\\
\ls& \e^2\lt(\|r u_{r\th}\|_{L^2(\O)}+\|r^{-1}(u_{\th\th},v_{\th\th})\|_{L^2(\O)} \rt)\|p_\th\|_{L^2(\O)}. \label{ptheta1}
\end{align}
The same, we can obtain that
\begin{align}
&-\e^2\int_{O_k}\left( v_{rr\th}+\f{v_{r\th}}{r}+\frac{v_{\th\th\th}}{r}-\frac{v_{\th}}{r}-\frac{2}{r} u_{\th\th}\right) V^r \nn\\
\ls& \e^2\lt(\|v_{r\th}\|_{L^2(\O)}+\|r^{-1}(u_{\th\th},v_{\th\th})\|_{L^2(\O)} \rt)\|\nabla V^r\|_{L^2(\O)}\nn\\
\ls& \e^2\lt(\|v_{r\th}\|_{L^2(\O)}+\|r^{-1}(u_{\th\th},v_{\th\th})\|_{L^2(\O)} \rt)\|p_\th\|_{L^2(\O)}.  \label{ptheta2}
\end{align}
By using integration by parts, Hardy inequality, Poincar\'{e} inequality and \eqref{bogov2}, we have
{\small
\be \label{ptheta3}
\begin{array}{ll}
\begin{aligned}
&\int_{\O} \lt(\partial_\theta S_u V^\th +\f{1}{r}\partial_\theta S_v V^r\rt)\\
\ls& \|S_u \|_{L^2(\O)}\|\p_\th V^\th\|_{L^2(\O)}+\|r^{-1}  S_v \|_{L^2(\O)}\|\p_\th V^r\|_{L^2(\O)}\\
\ls&  \lt(\|S_u \|_{L^2(\O)}+\|r^{-1} S_v \|_{L^2(\O)}\rt)\|p_\th \|_{L^2(\O)}.
\end{aligned}
&
\begin{aligned}
&\int_{\O} \lt(\partial_\theta R_u V^\th +\f{1}{r}\partial_\theta R_v V^r\rt)\\
\ls& \|R_u \|_{L^2(\O)}\|\p_\th V^\th\|_{L^2(O_k)}+\|r^{-1}  R_v \|_{L^2(\O)}\|\p_\th V^r\|_{L^2(\O)}\\
\ls&  \lt(\|R_u \|_{L^2(\O)}+\|r^{-1} R_v \|_{L^2(\O)}\rt)\|p_\th \|_{L^2(\O)}.
\end{aligned}
\end{array}
\ee
}

Inserting \eqref{ptheta1}, \eqref{ptheta2} and \eqref{ptheta3} into \eqref{ptheta}, we have
\begin{align}
\|p_\th \|^2_{L^2(\O)}\ls& \e^4\lt(r^2_0\| (v_{r\th},u_{r\th})\|^2_{L^2(\O)}+r^{-2}_0\|(u_{\th\th},v_{\th\th})\|^2_{L^2(\O)} \rt) \nn\\
                    &+\lt(\|S_u \|^2_{L^2(\O)}+r^{-2}_0\| S_v \|^2_{L^2(\O)}\rt) +\lt(\|R_u\|^2_{L^2(\O)}+r^{-2}_0\| R_v \|^2_{L^2(\O)}\rt).\label{ptheta4}
\end{align}
Then from the equation \eqref{errorequ}$_1$, we have
\begin{align}
\e^2 u_{rr}=-\epsilon^2 \left(r^{-1}u_{r}+\frac{u_{\th\th}}{r^2}-\frac{u}{r^2}+\frac{2}{r^2} v_{\th}\right)+r^{-1}\p_\th p+ r^{-1}S_u- r^{-1} R_u,\nn
\end{align}
which after using Hardy inequality, indicates that
\begin{align}
\e^4\| u_{rr}\|^2\ls \e^4 \lt(r^{-2}_0\| u_{r}\|^2+r^{-4}_0\|(u_{\th\th},v_{\th})\|^2\rt)+r^{-2}_0\|p_\th\|^2+r^{-2}_0\|S_u \|^2+r^{-2}_0\|R_u \|^2. \label{urr2}
\end{align}
Combining \eqref{ptheta4} and \eqref{urr2}, we can arrive at
\begin{align}
\e^8\|u_{rr}\|^2\ls& \e^8 \lt(\| u_{r}\|^2+\| (u_{r\th},v_{r\th})\|^2+r^{-4}_0\|(u_{\th\th},v_{\th\th})\|^2\rt)\nn\\
                          &+\e^4\lt(r^{-2}_0\|S_u \|^2+r^{-4}_0\|S_v \|^2\rt) +\e^4\lt(r^{-2}_0\|R_u\|^2+r^{-4}_0\| R_v \|^2\rt). \label{urr3}
\end{align}
From the representation of $S_u$ and $S_v$ in \eqref{sudef} and \eqref{svdef} using estimates in Lemma \ref{lemdetail}, it is not hard to obtain that
\be
r^{-2}_0\|S_u \|^2+r^{-4}_0\|S_v \|^2\ls \|(u_\th, v_\th)(r-r_0)\|^2+\|(u_r, v_{r})\|^2. \label{urr4}
\ee
Combining \eqref{urr3} and \eqref{urr4}, we see arrive at
\begin{align}
\e^8\|u_{rr}\|^2\ls &\e^8 \lt(\| u_{r}\|^2+\| (u_{r\th},v_{r\th})\|^2+r^{-4}_0\|(u_{\th\th},v_{\th\th})\|^2\rt)\nn\\
                       & +\e^4\lt( \|(u_\th, v_\th)(r-r_0)\|^2+\|(u_r, v_{r})\|^2\rt)\nn\\
                       &+\e^4\lt(r^{-2}_0\|R_u\|^2+r^{-4}_0\| R_v \|^2\rt). \label{urr5f}
\end{align}
The incompressibility indicates that
\be
 v_{rr}=\lt(\f{-u_\th-v}{r}\rt)_r=\lt[-\f{u_{r\th}+v_r}{r}+\f{u_\th+v}{r^2}\rt]. \label{urr5s}
\ee
Combining \eqref{urr5f} and \eqref{urr5s}, we can obtain that
\begin{align}
\e^8\|(u_{rr},v_{rr})\|^2\ls &\e^8 \lt(\| (u_{r\th},v_{r\th})\|^2+r^{-4}_0\|(u_{\th\th},v_{\th\th})\|^2\rt)\nn\\
                       & +\e^4\lt( \|(u_\th, v_\th)(r-r_0)\|^2+\|(u_r, v_{r})\|^2\rt)\nn\\
                       &+\e^4\lt(r^{-2}_0\|R_u\|^2+r^{-4}_0\| R_v \|^2\rt). \label{urr5}
\end{align}

By applying $\p_\th$ to \eqref{urr} and the same estimate as \eqref{urr5}, we can obtain that
\begin{align}
\e^8\|(u_{rr\th},v_{rr\th})\|^2\ls &\e^8 \lt(\| (u_{r\th\th},v_{r\th\th})\|^2+r^{-4}_0\|(u_{\th\th\th},v_{\th\th\th})\|^2\rt)\nn\\
                       & +\e^4\lt( \|(u_{\th\th}, v_{\th\th})(r-r_0)\|^2+\|(u_r, v_{r})\|^2+ \|(u_{r\th}, v_{r\th})\|^2\rt)\nn\\
                       &+\e^4\lt(r^{-2}_0\|\p_\th R_u\|^2+r^{-4}_0\| \p_\th R_v \|^2\rt). \label{urr6}
\end{align}

Combining \eqref{urr5} and \eqref{urr6} and using Poincar\'{e} inequality with zero mean and the incompressibility, we achieve that
\begin{align}
\e^8\|(u_{rr},v_{rr}, u_{rr\th},v_{rr\th},v_{rrr})\|^2\ls & \e^4\lt( \|(u_{\th\th\th}, v_{\th\th\th})(r-r_0)\|^2+\|(u_r, v_{r})\|^2+ \|(u_{r\th\th}, v_{r\th\th})\|^2+r^{-2}_0\|(u_{\th\th\th},v_{\th\th\th})\|^2\rt)\nn\\
                       &+\e^4\lt(r^{-2}_0\|(R_u,\p_\th R_u)\|^2+r^{-4}_0\|(R_v, \p_\th R_v) \|^2\rt),\nn
\end{align}
which is \eqref{urr7}.

\subsubsection{Estimates of $u_{rrr}$} we have the following Lemma.
\begin{lemma}\label{lemmnormalurrr}
 Let $(u,v)$ be a smooth solution of (\ref{errorequ}). Then there exist $\epsilon_0>0, \dl_0>0$ and a small constant $\al$, such that for any $\epsilon\in (0,\epsilon_0), \dl\in(0,\dl_0)$, there holds
\begin{align}
\e^{10}\|u_{rrr}\|^2\ls &\e^4\lt( \|(u_{\th\th\th}, v_{\th\th\th})(r-r_0)\|^2+\|(u_r, v_{r})\|^2+ \|(u_{r\th\th}, v_{r\th\th})\|^2+r^{-2}_0\|(u_{\th\th\th},v_{\th\th\th})\|^2\rt)\nn\\
                  &+\e^4\lt(r^{-2}_0\lt\|(R_u,\p_\th R_u,\p_r R_u)\rt\|^2+r^{-4}_0\lt\|(R_v, \p_\th R_v)\rt\|^2\rt). \label{urrr}
\end{align}
\end{lemma}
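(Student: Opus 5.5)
To bound $u_{rrr}$ we do not differentiate the momentum equation in $r$, since that would create the uncontrolled term $p_{rr}$. Instead we take $\p_r$ of \eqref{errorequ}$_1$ in a form that avoids the pressure. Explicitly, we $r$-differentiate the identity
\begin{equation*}
\e^2 r u_{rr}=-\e^2\left(u_r+\frac{u_{\th\th}}{r}-\frac{u}{r}+\frac{2}{r}v_\th\right)+p_\th+S_u-R_u .
\end{equation*}
The troublesome term is then $p_{r\th}$. It can be read off from $\p_\th$\eqref{errorequ}$_2$, i.e. the second line of \eqref{urr}:
\begin{equation*}
p_{r\th}=\e^2\left(v_{rr\th}+\frac{v_{r\th}}{r}+\frac{v_{\th\th\th}}{r^2}-\frac{2}{r^2}u_{\th\th}\right)-\frac{1}{r}\p_\th S_v+\frac{1}{r}\p_\th R_v .
\end{equation*}
Equivalently, this amounts to using the vorticity equation \eqref{errorequcut1}: $\e^2 r\Dl\o$ contains $-\e^2 r u_{rrr}$ plus terms involving at most two $r$-derivatives or tangential derivatives.

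Substituting, we get
\begin{equation*}
\e^2 r u_{rrr}=\e^2\big(\text{terms in }u_{rr},u_{r\th\th},v_{rr\th},v_{r\th},v_{\th\th\th},u_{\th\th},\dots\big)+\p_r S_u-\frac{1}{r}\p_\th S_v-\p_r R_u+\frac{1}{r}\p_\th R_v .
\end{equation*}
Dividing by $r\approx r_0$, squaring, and multiplying by $\e^{6}$ gives
\begin{equation*}
\e^{10}\|u_{rrr}\|^2\ls \e^{10}\|(u_{rr},u_{r\th\th},v_{rr\th},\dots)\|^2+\e^6 r_0^{-2}\|\p_r S_u\|^2+\e^6 r_0^{-4}\|\p_\th S_v\|^2+\e^6\left(r_0^{-2}\|\p_rR_u\|^2+r_0^{-4}\|\p_\th R_v\|^2\right).
\end{equation*}
The $\e^{10}$ terms involving $u_{rr},v_{rr\th},u_{rr\th}$ are controlled by Lemma \ref{lemmnormalurr} (since $\e^{10}\le\e^8$). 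The terms $u_{r\th\th}$ and $r_0^{-2}v_{\th\th\th}$ already appear on the right-hand side of \eqref{urrr}. The lower-order terms $u_{\th\th}$ and $v_{r\th}$ are reduced to these by the Poincar\'e inequality in $\th$.

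The main obstacle is $\p_r S_u$, since differentiating the transport terms produces the boundary-layer-sized factors $\p_r^2 u^a\sim \e^{-2}r_0\dl$ near $r=r_0+1$ and $\e^{-4/3}r_0\e\dl$ near $r_0$. We expand
\begin{equation*}
\p_r S_u=u^a u_{r\th}+u^a_r u_\th+v^a r u_{rr}+(v^a r)_r u_r+u_r u^a_\th+u u^a_{r\th}+v_r r u^a_r+v (r u^a_r)_r+(uv^a+vu^a)_r .
\end{equation*}
We then use Lemma \ref{lemdetail1}. The terms with $u^a$ and $u^a_r$ multiplied by first derivatives of $(u,v)$ are bounded by $r_0\e^{-1}\|(u_r,v_r,u_\th,v_\th)\|$, since $|u^a_r|\ls r_0\e^{-1}$. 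After the prefactor $\e^6r_0^{-2}$ they yield $\e^4\|(u_r,v_r)\|^2$, together with $\|(u_{\th},u_{r\th})\|$ terms, which we dominate by tangential third derivatives via Poincar\'e. The term $v^a r u_{rr}$ uses $|v^a|\ls\e\dl$ and Lemma \ref{lemmnormalurr}.

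The worst terms are $u\,u^a_{r\th}$ and $v(ru^a_r)_r$, with coefficients of size $\e^{-2}\dl r_0$ localized in $|r-r_0-1|\ls\e$. For these we use $u=v=0$ at $r_0+1$ and the weighted bounds \eqref{uadetail2}, \eqref{vadetail1}. With a weight $(\e/(r_0+1-r))^{\ell}$ and the Hardy inequality \eqref{hardy2}, we convert $|r_0+1-r|^{-2}|v|$ into $\|v_r\|$ (and similarly for $u$), which gives $\e^6r_0^{-2}\cdot\e^{-2}r_0^2\|(u_r,v_r)\|^2\ls\e^4\|(u_r,v_r)\|^2$. Near $r_0$ the analogous argument with $\eta$ weights costs only $\e^{-4/3}$, which is better. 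The term $r^{-1}\p_\th S_v$ is handled the same way with the extra $r_0^{-1}$, because $\p_\th$ does not hit the boundary-layer scale. Collecting everything, and noting that $\e^6\le\e^4$ on the remainder terms, yields \eqref{urrr}.
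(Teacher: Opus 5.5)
Your route is the same as the paper's: eliminate the pressure through the vorticity equation \eqref{errorequcut1}, solve for $\epsilon^2 u_{rrr}$, multiply by $\epsilon^3$, and invoke Lemma \ref{lemmnormalurr} for $u_{rr}$ and $v_{rr\theta}$. Your substitution of $p_{r\theta}$ from the $\theta$-derivative of the second momentum equation is exactly this elimination, and you handle the viscous lower-order terms and the remainder terms correctly.

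The step that fails as written is your bound for the two terms of $\partial_r S_u$ that carry an explicit factor $r$, namely $v_r\, r u^a_r$ and $v\,(r u^a_r)_r$. Their coefficients are not of the sizes you assert:
\begin{itemize}
\item $r u^a_r$ has size $r_0^2\epsilon^{-1}$, not $r_0\epsilon^{-1}$; already $r u^e_r\approx \tilde{\omega} r_0^2$ outside the layers.
\item $r u^a_{rr}$ has size $\epsilon^{-2}\delta r_0^2$ in the outer layer, not $\epsilon^{-2}\delta r_0$.
\end{itemize}
Estimating these terms by $\|v_r\|$, whether directly or via \eqref{hardy2} applied to $v$, therefore gives $\epsilon^4 r_0^2\|v_r\|^2$ after the prefactor $\epsilon^6 r_0^{-2}$. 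That term is not controlled by the right-hand side of \eqref{urrr} uniformly in $r_0$, and uniformity in $r_0$ is the whole point of the lemma in the expanding-annulus setting.

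The missing ingredient is to let incompressibility absorb the factor $r$. Since $r v_r=-u_\theta-v$ and $v$ vanishes at $r=r_0$ and $r=r_0+1$, Poincar\'e in $r$ and then in $\theta$ give
\[
r_0\|v_r\|\lesssim\|u_\theta\|\lesssim \|u_{r\theta}\|\le\|u_{r\theta\theta}\|.
\]
Equivalently, write $v\, r u^a_{rr}=(rv)\,u^a_{rr}$ and apply Hardy to $rv$, using $(rv)_r=-u_\theta$. The offending contributions then become $\epsilon^4\|u_{r\theta\theta}\|^2$, which is on the right-hand side. This is also why the paper's intermediate bound for $r^{-1}\partial_r S_u-r^{-2}\partial_\theta S_v$ is stated in terms of $u_{r\theta}$ rather than $v_r$.

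A side remark: since $rv$ and $(rv)_r=-u_\theta$ both vanish on the boundary, \eqref{hardy3} with weight $(r_0+1-r)^{-4}$ would even avoid the $\epsilon^{-1}$ loss you accept from \eqref{hardy2}. You do not need this, because the gap between $\epsilon^6$ and $\epsilon^4$ absorbs that loss.
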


\pf From the representation of $\o$, direct calculation implies that
\bes
-\Dl \o=u_{rrr}+\f{3}{r}u_{rr}+\f{v_{rr\th}}{r}-\f{\o_r}{r}+\f{\o_{\th\th}}{r^2}.
\ees
Inserting the above equality into \eqref{errorequcut1}, we obtain that
\begin{align*}
\e^2 \p^3_r u=& -\e^2\lt[\f{3}{r}u_{rr}+\f{v_{rr\th}}{r}-\f{\o_r}{r}+\f{\o_{\th\th}}{r^2}\rt]\\
 &-\frac{1}{r^2}\p_\th S_v+\f{1}{r}\p_r S_u+\frac{1}{r^2} \partial_\theta R_v-\f{1}{r}\partial_r R_u.
\end{align*}
By using estimates in Lemma \ref{lemdetail}, it is not hard to deduce that
\begin{align}
\lt\|-\frac{1}{r^2}\p_\th S_v+\f{1}{r}\p_r S_u\rt\|\ls \|(u_r,u_{r\th}, \e u_{rr},r^{-1}_0v_{\th\th})\|.\nn
\end{align}
Then from the representation of $\o$, Poincar\'{e} inequality, Hardy inequality, we can obtain that
\begin{align}
\e^2\|u_{rrr}\|\ls &\e^2\|u_r, u_{rr},u_{r\th\th},v_{r\th}, r^{-1}_0 v_{\th\th\th}\|+\|(u_r,u_{r\th}, \e u_{rr},r^{-1}_0v_{\th\th})\|\nn\\
                  &+r^{-1}_0\lt\|\p_r R_u\rt\|+r^{-2}_0\lt\| \p_\th R_v\rt\|. \label{urrrx}
\end{align}
Multiplying \eqref{urrrx} by $\e^3$ and using estimate \eqref{urr7}, we can achieve that
\begin{align}
\e^{10}\|u_{rrr}\|^2\ls &\e^4\lt( \|(u_{\th\th\th}, v_{\th\th\th})(r-r_0)\|^2+\|(u_r, v_{r})\|^2+ \|(u_{r\th\th}, v_{r\th\th})\|^2+r^{-2}_0\|(u_{\th\th\th},v_{\th\th\th})\|^2\rt)\nn\\
                  &+\e^4\lt(r^{-2}_0\lt\|(R_u,\p_\th R_u,\p_r R_u)\rt\|^2+r^{-4}_0\lt\|(R_v, \p_\th R_v)\rt\|^2\rt).\nn
\end{align}
This is \eqref{urrr}. \qed

At last, combining estimate \eqref{urr7} in Lemma \ref{lemmnormalurr}, estimate \eqref{urrr} in Lemma \ref{lemmnormalurrr} and estimate \eqref{linearh3tan} in Proposition \ref{proplinearh3tan}, we obtain \eqref{linearh3} in Proposition \ref{proplinearh3}. \qed

\section{ Existence of the error equation and proof of the main result}\label{sec5}

Combining $\dot{H}^1$ and $\dot{H}^3$ estimates in Proposition \ref{proplinearh1} and \ref{proplinearh3}, we can close the nonlinear estimates and prove the main result of this paper, Theorem \ref{thmain}. We first give estimates of the nonlinear terms in \eqref{errorequ} and then using contract mapping to show existence of solutions to system \eqref{errorequ}.

\subsection{Estimates of nonlinear terms}

From \eqref{linearh3}, by using Cauchy inequality, Hardy inequality and the incompressibility, we see that
\begin{align}\label{nonlinear1}
&\e^{10}\lt\{\sum_{i+j=2}\|\p^i_\th\p^j_r(u_r,v_r)\|^2+ r^{-2}_0 \|(u_{\th\th\th},u_{\th\th\th})\|^2+\|(u_{\th\th\th},u_{\th\th\th})\s{r-r_0}\|^2\rt\}\nn\\
&\leq C\e^2r^{-2}_0 \lt(\lt\|\p^2_\th (R_u-R^a_u) (r-r_0)^{-1/2}\rt\|^2+\lt\|\p^2_\th (R_v-R^a_v) (r-r_0)^{-1/2}\rt\|^2\rt)+\e^4\|(u_r,v_r)\|^2\nn\\
&+\e^2r^{-2}_0\lt(\lt\|(R_u,\p_\th R_u,\p_r R_u, \p^3_\th R^a_u)\rt\|^2+\lt\|(R_v, \p_\th R_v, \p^3_\th R^a_v)\rt\|^2\rt).
\end{align}
Also from \eqref{linearstabilityh1}, by using Cauchy inequality, Hardy inequality and the incompressibility, we see that

\begin{align}\label{nonlinear2}
&\e^{4}\lt\{\|(u_r,v_r)\|^2+r^{-2}_0 \|(u_{\th},u_{\th})\|^2+\|(u_{\th},u_{\th})\s{r-r_0}\|^2\rt\}\\
&\leq C\e^{-4}r^{-2}_0 \|(R_u,R_v,\p_\th R_u, \p_\th R_v)\|^2+\e^{4}\mu \|(u_{r},v_{r})\|^2,\nn
\end{align}
for small $\mu$.

Combining \eqref{nonlinear1} and \eqref{nonlinear2}, we see that
\begin{align}\label{nonlinear3}
&\e^{10}\lt\{\sum_{i+j\leq 2}\|\p^i_\th\p^j_r(u_r,v_r)\|^2+ r^{-2}_0 \|(u_{\th\th\th},u_{\th\th\th})\|^2+\|(u_{\th\th\th},u_{\th\th\th})\s{r-r_0}\|^2\rt\}\nn\\
&\leq C\e^2r^{-2}_0 \lt(\lt\|\p^2_\th (R_u-R^a_u) (r-r_0)^{-1/2}\rt\|^2+\lt\|\p^2_\th (R_v-R^a_v) (r-r_0)^{-1/2}\rt\|^2\rt)\nn\\
&+\e^{-4}r^{-2}_0\lt(\lt\|(R_u,\p_\th R_u,\p_r R_u, \p^3_\th R^a_u)\rt\|^2+\lt\|(R_v, \p_\th R_v, \p^3_\th R^a_v)\rt\|^2\rt).
\end{align}
\subsection{Existence for the error equations}
\indent

We apply the contraction mapping theorem to prove the existence for the error equations (\ref{errorequ}).

\begin{proposition}\label{existence and error estimate of error equation}
There exist $\epsilon_0>0,\dl_0>0$ such that for any $\epsilon\in (0,\epsilon_0), \dl\in (0,\dl_0)$, the error equations (\ref{errorequ}) have a unique solution $(u,v)$ which satisfies
$$\|(u,v)\|_\infty\leq C\epsilon r_0.$$
\end{proposition}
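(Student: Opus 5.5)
We will prove the statement by a contraction mapping argument built on the a priori estimate \eqref{nonlinear3}. Introduce the norm
\begin{align*}
\|(u,v)\|_{X}^2:=\e^{10}\Big\{\sum_{i+j\leq 2}\|\p^i_\th\p^j_r(u_r,v_r)\|^2+r^{-2}_0\|(u_{\th\th\th},v_{\th\th\th})\|^2+\|(u_{\th\th\th},v_{\th\th\th})\s{r-r_0}\|^2\Big\},
\end{align*}
and the ball $B_M:=\{(u,v)\in X:\ \|(u,v)\|_X\leq M\e^{\kappa}r_0\}$ of divergence free, $\th$-periodic fields vanishing at $r=r_0,r_0+1$, with $\kappa$ to be fixed. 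Given $(\bar u,\bar v)\in B_M$, let $(u,v)=\mathcal{T}(\bar u,\bar v)$ be the solution of the linear problem \eqref{errorequ} with $R_u=R^a_u-(\bar u\p_\th+\bar v r\p_r)\bar u-\bar u\bar v$ and $R_v=R^a_v-(\bar u\p_\th+\bar vr\p_r)\bar v+\bar u^2$ frozen. Solvability of this linear Stokes-type problem for fixed $\e$ follows from a Galerkin/Lax--Milgram argument, since Propositions \ref{proplinearh1} and \ref{proplinearh3} furnish coercive a priori bounds; uniqueness of the linear problem is immediate from the same bounds with zero forcing.

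The first step is the self-map property. The forcing from $R^a$ is harmless by \eqref{remainderesti}: the terms $\e^{-4}r_0^{-2}\|\p^i_\th\p^j_r R^a\|^2$ are of size $\e^{28}r_0^2$. For the nonlinear parts, use the anisotropic Sobolev embedding on $\O$ (constant independent of $r_0$, since $\O=\bT\times[r_0,r_0+1]$ has fixed shape): $\|(\bar u,\bar v)\|_\i\ls \|\cdot\|_{H^2}$, and with the boundary conditions plus Hardy's inequality \eqref{hardy1}--\eqref{hardy3} every lower norm is controlled by the $X$-norm, giving $\|(\bar u,\bar v)\|_{W^{1,\i}}\ls \e^{-5}\|(\bar u,\bar v)\|_X$ and $\|\nabla^k(\bar u,\bar v)\|_{L^2}\ls\e^{-5}\|(\bar u,\bar v)\|_X$ for $k\leq 3$. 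The weighted term $\|\p^2_\th(R_u-R^a_u)(r-r_0)^{-1/2}\|$ is treated by putting the weight on a factor vanishing at $r=r_0$ (e.g. $\bar u$ or $\bar v$, which are $O(r-r_0)$ there) and using Hardy. Each quadratic term is then bounded by $r_0\e^{-10}\|(\bar u,\bar v)\|_X^2$ times at most $\e^{-2}$ from the prefactors in \eqref{nonlinear3}, so $\|\mathcal{T}(\bar u,\bar v)\|_X^2\ls \e^{28}r_0^2+\e^{-c}r_0^{4}M^4\e^{4\kappa}$ with an explicit power $c$. Choosing $\kappa$ large (e.g. $\kappa=12$) and then $\e_0$ small, depending on $r_0$ only through a fixed polynomial factor that is beaten by the high power $\e^{16}$ of the remainder, closes the map into $B_M$.

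The second step is contraction: the difference of two images solves \eqref{errorequ} with $R^a$ absent and forcing bilinear in the difference and the sum, so the same bound gives $\|\mathcal{T}w_1-\mathcal{T}w_2\|_X\leq C\e^{\kappa-c'}M r_0^{2}\|w_1-w_2\|_X\leq\frac12\|w_1-w_2\|_X$. Banach's fixed point theorem yields a unique solution in $B_M$, and the embedding gives $\|(u,v)\|_\i\ls \e^{-5}\|(u,v)\|_X\ls \e^{\kappa-5}r_0\leq C\e r_0$.

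The main obstacle is bookkeeping the powers of $\e$ and $r_0$: the linear estimate loses $\e^{-10}$ at top order, and the large factor $r_0$ in $u^a$ already entered the linear constants, so one must check that the quadratic terms, with $r$-weights from polar coordinates ($vr\p_r$ contributes an extra $r_0$) and the singular weight $(r-r_0)^{-1/2}$, still lose only a fixed polynomial in $\e^{-1}r_0$; I would handle this by tracking each term of $R_u,R_v$ separately with Hardy's inequality near $r=r_0$, relying on the $\e^{16}$ accuracy of the approximate solution to absorb all losses.
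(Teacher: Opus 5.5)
Your proposal is correct and follows the paper's proof essentially step for step: a contraction in the same $\e^{10}$-weighted $H^3$-type energy norm driven by \eqref{nonlinear3}, with the quadratic remainders bounded by $\e^{-20}r_0^2\|(u,v)\|_E^4$ via Hardy and anisotropic Sobolev (the weight $(r-r_0)^{-1/2}$ is absorbed by factors vanishing at $r=r_0$), the $R^a$ part contributing $\e^{26}r_0^2$, and the $L^\infty$ bound obtained by embedding. Two small adjustments are needed. First, the radius exponent must lie in a window rather than simply be taken ``large'': the $R^a$ term forces $\kappa\le 13$ and the term $\e^{-24}\|(u,v)\|_E^4$ forces $\kappa\ge 12$; the paper takes $\|(u,v)\|_E^2\le\e^{25}r_0^2$, and $\e_0$ then depends on $r_0$, as you note. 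Second, existence for the frozen linear problem cannot come from Lax--Milgram, because the stretching term $\int uv(ru^a_r-u^a)\approx 2Ar_0\int uv$ is indefinite and of size $r_0^2\|(u,v)\|^2$, far beyond what the $\e^2$ viscosity controls; you should instead use the Fredholm alternative (the Stokes operator plus a compact lower-order perturbation at fixed $\e$, which is injective by \eqref{nonlinear3}), a step the paper itself leaves implicit.
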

\begin{proof}
For a smooth function $(u,v)$ which satisfies
\begin{align}\label{iterative conditon}
\left\{
\begin{array}{ll}
 u_\th+(rv)_r=0,\\[5pt]
(u,v)(\th,r)=(u,v)(\th+2\pi,r),\\[5pt]
(u,v)(\th,r_0)=(u,v)(\th,r_0+1)=0,
\end{array}
\right.
\end{align}
we consider the following linear problem:
\begin{align*}
\left\{
\begin{array}{ll}
-\epsilon^2\left(\frac{\t{u}_{\theta \theta}}{r}+r \t{u}_{r r}+\t{u}_r+\frac{2}{r} \t{v}_\theta-\frac{\t{u}}{r}\right)+\p_\th \t{ p} +S_{\t{u}}=R_{u},\\[5pt]
-\epsilon^2\left(\frac{\t{v}_{\theta \theta}}{r}+r \t{v}_{r r}+\t{v}_r-\frac{2}{r} \t{u}_\theta-\frac{\t{v}}{r}\right)+r\p_r\t{p}+S_{\t{v}}=R_v,\\[5pt]
\p_\th \t{u}+\p_r(r\t{v})=0,\\[5pt]
(\t{u},\t{v})(\th,r)=(\t{u},\t{v})(\th+2\pi,r), \\[5pt]
(\t{u},\t{v})(\th,r_0)=(\t{u},\t{v})(\th,r_0+1)=0,
 \end{array}
\right.
\end{align*}
where the transversal terms are
\begin{align*}
S_{\t{u}}:&=u^a \p_\th \t{u} +v^ar\p_r\t{u}+\t{u} \p_\th u^a+\t{v}r\p_ru^a+\t{u}v^a+\t{v}u^a,\\[5pt]
S_{\t{v}}:&=u^a \p_\th \t{v} +v^ar\p_r\t{v}+\t{u} \p_\th v^a+\t{v} r\p_r v^a-2\t{u} u^a,
\end{align*}
and the remainders are
\bes
R_u:=R_u^a-(u\p_\th+v r\p_r)u-uv,\ R_v:=R_v^a-(u\p_\th+v r\p_r)v+u^2.
\ees
Set the total energy to be
\begin{align}
\|(u,v)\|_E^2:=&\e^{10}\lt\{\sum_{i+j\leq 2}\|\p^i_\th\p^j_r(u_r,v_r)\|^2+ r^{-2}_0 \|(u_{\th\th\th},u_{\th\th\th})\|^2+\|(u_{\th\th\th},u_{\th\th\th})\s{r-r_0}\|^2\rt\}.\nn
\end{align}
From estimate \eqref{nonlinear3}, we see that
\begin{align}
\|(\t{u},\t{v})\|_E^2\ls& \e^2r^{-2}_0 \lt(\lt\|\p^2_\th (R_u-R^a_u) (r-r_0)^{-1/2}\rt\|^2+\lt\|\p^2_\th (R_v-R^a_v) (r-r_0)^{-1/2}\rt\|^2\rt)\nn\\
&+\e^{-4}r^{-2}_0\lt(\lt\|(R_u,\p_\th R_u,\p_r R_u, \p^3_\th R^a_u)\rt\|^2+\lt\|(R_v, \p_\th R_v, \p^3_\th R^a_v)\rt\|^2\rt). \label{energyfinal}
\end{align}
By using Sobolev embedding, Cauchy inequality, Hardy inequality, Poincar\'{e} inequality and the incompressibility, we can show that
\begin{align}
&\lt\|\p^2_\th (R_u-R^a_u) (r-r_0)^{-1/2}\rt\|^2+\lt\|\p^2_\th (R_v-R^a_v) (r-r_0)^{-1/2}\rt\|^2\nn\\
&+\lt(\lt\|(R_u-R^a_u,\p_\th (R_u-R^a_u),\p_r (R_u-R^a_u))\rt\|^2+\lt\|((R_v-R^a_v, \p_\th (R_v-R^a_v)\rt\|^2\rt)\ls \e^{-20}r^2_0\|(u,v)\|^4_E. \label{energyfinal1}
\end{align}
Since this is a tedious calculation, we only show the estimate of the most trouble term, $$ \lt\|\p^2_\th (R_u-R^a_u) (r-r_0)^{-1/2}\rt\|^2,$$ then the other term follows the same line.

First from the representation of $R_u-R^a_u$, we see that
\begin{align}
 &\lt\|\p^2_\th (R_u-R^a_u) (r-r_0)^{-1/2}\rt\|^2=\int \f{\lt((uu_\th+vru_r+uv)_{\th\th}\rt)^2}{r-r_0}\nn\\
 \ls& \int \f{|uu_{\th\th\th}|^2+ |u_\th u_{\th\th}|^2+ |v_\th ru_{r}|^2+ |vr u_{r\th\th}|^2+ |v_{\th\th} ru_{r}|^2+ |u_{\th\th} v|^2+ |v_{\th\th} u|^2+|u_{\th} v_\th|^2}{r-r_0}. \label{energyfinal2}
\end{align}
Using H\"{o}lder inequality and Sobolev embedding, we see that
\begin{align}
\int \f{|uu_{\th\th\th}|^2}{r-r_0}\leq \lt\|\f{u}{r-r_0}\rt\|^2_{L^\i}\lt\|u_{\th\th\th}\s{r-r_0}\rt\|^2_{L^2}\ls \lt\|u_r\rt\|^2_{L^\i}\lt\|u_{\th\th\th}\s{r-r_0}\rt\|^2_{L^2}\ls \e^{-20}\|(u,v)\|^4_E. \label{energyfinal3}
\end{align}
Using H\"{o}lder inequality, Hardy inequality and Sobolev embedding, we see that
\begin{align}
\int \f{|u_\th u_{\th\th}|^2}{r-r_0}\leq \lt\|u_\th\s{r-r_0}\rt\|^2_{L^\i}\lt\|\f{u_{\th\th}}{r-r_0}\rt\|^2_{L^2}\ls \lt\|(u_{\th\th}\s{r-r_0},u_{r\th\th})\rt\|^2_{L^\i}\lt\|u_{r\th\th}\rt\|^2_{L^2}\ls \e^{-20}\|(u,v)\|^4_E. \label{energyfinal4}
\end{align}
Using H\"{o}lder inequality, Hardy inequality and Sobolev embedding, we see that
\begin{align}
\int \f{|v_\th ru_{r\th}|^2}{r-r_0}\leq r^2_0\lt\|\f{v_\th}{r-r_0}\rt\|^2_{L^\i_\th L^2_r}\lt\|u_{r\th}\rt\|^2_{L^2_\th L^\i_r}\ls r^2_0\lt\|v_{r\th\th}\rt\|^2_{L^2}\lt\|(u_{r\th},u_{rr\th})\rt\|^2_{L^2}\ls \e^{-20}r^2_0\|(u,v)\|^4_E. \nn
\end{align}
Using H\"{o}lder inequality,  Sobolev embedding, Hardy inequality, we see that
\begin{align}
\int \f{|v ru_{r\th\th}|^2}{r-r_0}\leq \lt\|\f{vr}{\s{r-r_0}}\rt\|^2_{L^\i}\lt\|u_{r\th\th}\rt\|^2_{L^2}\ls \lt\|u_{r\th\th}\rt\|^4_{L^2}\ls \e^{-20}\|(u,v)\|^4_E. \nn
\end{align}
Using H\"{o}lder inequality, Hardy inequality and Sobolev embedding and the incompressibility, we see that
\begin{align}
\int \f{|v_{\th\th} ru_{r}|^2}{r-r_0}\leq r^2_0 \lt\|u_r\rt\|^2_{L^\i}\lt\|\f{v_{\th\th}}{r-r_0}\rt\|^2_{L^2}\ls r^2_0\lt\|u_r\rt\|^2_{L^\i}\lt\|v_{r\th\th}\rt\|^2_{L^2}\ls \e^{-20}r^2_0\|(u,v)\|^4_E. \nn
\end{align}
The same as \eqref{energyfinal3},   \eqref{energyfinal4}, we can obtain that
\begin{align}
\int \f{|u_{\th\th} v|^2+ |v_{\th\th} u|^2+|u_{\th} v_\th|^2}{r-r_0}\leq r^2_0 \lt\|u_r\rt\|^2_{L^\i}\lt\|\f{v_{\th\th}}{r-r_0}\rt\|^2_{L^2}\ls r^2_0\lt\|u_r\rt\|^2_{L^\i}\lt\|v_{r\th\th}\rt\|^2_{L^2}\ls \e^{-20}r^2_0\|(u,v)\|^4_E. \label{energyfinal8}
\end{align}

Inserting \eqref{energyfinal3} to  \eqref{energyfinal8} into \eqref{energyfinal2}, we achieve the first one of \eqref{energyfinal1}.

Applying estimate \eqref{remainderesti}, we see that
\begin{align}
 \|(R_u^a,\p_\th R^a_u, \p^3_\th R^a_u, \p_r R^a_u,  R_v^a, \p_\th R^a_v, \p^3_\th R^a_u)\|^2\leq \epsilon^{30} r^4_0.\label{energyfinal9}
 \end{align}

Inserting \eqref{energyfinal1} and \eqref{energyfinal9} into \eqref{energyfinal}, we see that

\begin{align}
\|(\t{u},\t{v})\|_E^2\ls& \e^{-24}\|(u,v)\|^4_E+ \e^{26} r^2_0. \label{energyfinal10}
\end{align}
Let $E=\{(u,v)\in C^\infty: (u,v) \ satisfes \ (\ref{iterative conditon})\ and \ \|(u,v)\|_E<+\infty\}$.

Then there exist $\epsilon_0>0, \dl_0>0$ such that for any $\epsilon\in (0,\epsilon_0), \dl\in(0,\dl_0)$, the operator
\beas
(u,v)\mapsto (\t{u},\t{v})
\eeas
maps the ball $B:=\{(u,v): \|(u,v)\|^2_E\leq \epsilon^{25}r^2_0\}$  in $B$ into itself.

Moreover, for every two pairs $(u_1, v_1)$ and $(u_2, v_2)$ in the ball,  by considering the system of $\t{u}_1-\t{u}_2,\ \t{v}_1-\t{v}_2$ and following the estimate of \eqref{energyfinal10} line by line, we can achieve that
\begin{align}
\|(\t{u}_1-\t{u}_2, \t{v}_1-\t{v}_2)\|^2_E\leq C\epsilon^{-24}(\|(u_1,v_1)\|_E^2+\|(u_2,v_2)\|_E^2)\|(u_1-u_2, v_1-v_2)\|_E^2.\nn
\end{align}
Hence, there exist $\epsilon_0>0,\dl_0>0$ such that for any $\epsilon\in (0,\epsilon_0), \dl\in (0,\dl_0)$, the operator
\beas
(u,v)\mapsto (\t{u},\t{v})
\eeas
maps the ball $B:=\{(u,v): \|(u,v)\|^2_E\leq \epsilon^{25}r^2_0\}$ into itself and is a contraction mapping. This completes the proof of Proposition \ref{existence and error estimate of error equation}.
\end{proof}

Now we can give the proof of Theorem \ref{thmain}.
\begin{proof}
Combining Proposition \ref{existence and error estimate of error equation} and the constructed approximate solution  in (\ref{app equationd}), we easily obtain Theorem \ref{thmain}.
\end{proof}

\section{Construction of approximate solutions}\label{secappro}
\indent
In this section, we construct an approximate solution of the Navier-Stokes equations (\ref{nspolar}) by the procedure of matched asymptotic expansion which can be explained by the following picture.

{\footnotesize
\begin{tikzpicture}
  \matrix (m) [matrix of math nodes, row sep=1.5em, column sep=1em]
    { (u^{(0)}_p,v^{(1)}_p) & (u^{(1)}_p,v^{(2)}_p) & (u^{(1+\f{1}{3})}_p,v^{(1+\f{4}{3})}_p) &(u^{(1+\f{2}{3})}_p,v^{(1+\f{5}{3})}_p)  & (u^{(1+\f{3}{3})}_p,v^{(1+\f{6}{3})}_p)  & (u^{(1+\f{4}{3})}_p,v^{(1+\f{7}{3})}_p) & \cdots\cdots\\
      (u^{(0)}_e,v^{(0)}_e) & (u^{(1)}_e,v^{(1)}_e) &  (u^{(1+\f{1}{3})}_e,v^{(1+\f{1}{3})}_e) & (u^{(1+\f{2}{3})}_e,v^{(1+\f{2}{3})}_e)  & (u^{(1+\f{3}{3})}_e,v^{(1+\f{3}{3})}_e)  & (u^{(1+\f{4}{3})}_e,v^{(1+\f{4}{3})}_e) & \cdots\cdots\\
      & (\hat{u}^{(1)}_p,\hat{v}^{(1+\f{2}{3})}_p) & (\hat{u}^{(1+\f{1}{3})}_p,\hat{v}^{(1+\f{3}{3})}_p) & (\hat{u}^{(1+\f{2}{3})}_p,\hat{v}^{(1+\f{4}{3})}_p)  & (\hat{u}^{(1+\f{3}{3})}_p,\hat{v}^{(1+\f{5}{3})}_p)  & (\hat{u}^{(1+\f{4}{3})}_p,\hat{v}^{(1+\f{6}{3})}_p)  & \cdots\cdots\\ };
  { [start chain] \chainin (m-2-1);
    \chainin (m-1-1);
    \chainin (m-2-2) [join={node[above,labeled] {\text{upper bound.}}}];
    { [start branch] \chainin (m-1-2); \chainin (m-2-5); \chainin (m-1-5);}
    \chainin (m-3-2);
    \chainin (m-2-4)[join={node[above,labeled] {\text{lower bound.}}}];
    { [start branch] \chainin (m-1-4);}
     \chainin (m-3-4);
     \chainin (m-2-6);\chainin (m-3-6);
     }
  { [start chain] \chainin (m-2-4);\chainin (m-3-4);\chainin (m-2-6);\chainin (m-1-6); }

 { [start chain] \chainin (m-2-5);\chainin (m-3-5);}

 { [start chain] \chainin (m-3-3);\chainin (m-2-5);}

\end{tikzpicture}
}

We give some explanations for this picture.
\begin{itemize}
\item We use $(u_e,v_e)$ to denote the Euler asymptotic expansion away from the boundary, while $(u_p,v_p)$ to denote the asymptotic expansion near the outer boundary and $(\hat{u}_p,\hat{v}_p)$ to denote the asymptotic expansion near the inner boundary.
\item The superscript on the Euler and the boundary layer asymptotic expansion represents the matching order of $\e$.
\item The ``upper" and ``down" arrow mean the matched boundary condition between $u_e$ and $u_p,\ \hat{u}_p$, while the slant arrow means the matched boundary condition matching between $v_e$ and $v_p,\ \hat{v}_p$.
\item The equations satisfied by $(u_e,v_e)$, $(u_p,v_p)$ and $(\hat{u}_p,\hat{v}_p)$ will be obtained by solving \eqref{ns} by matching the order of $\e$. The asymptotic expansion will be solved column by column starting from $(u^{(0)}_e,v^{(0)}_e):=(Ar+B\f{1}{r}, 0)$.
\end{itemize}

Define the outer boundary layer variable $\zeta=\f{r-(r_0+1)}{\e}$ and the inner boundary layer variable $\eta:=\f{r-r_0}{\e^{\f{2}{3}}}$. Now we are ready to write the formal asymptotic expansion away from the boundary and near the boundary.
\begin{remark}\label{rem4.1}
 Let us now explain the reason why we choose the thickness of the inner boundary layer to be $\e^{2/3}$. One can observe that the leading order of our approximate solution, $u^{(0)}_e(\th,r)=u^e(r)=A r+B\f{1}{r}$ is vanishing ($u_e(r_0)=0$) at the inner boundary, which indicates the inner boundary layer expansion doesn't contain a leading zero-order  term compared to the outer boundary layer expansion. Therefore, we can not expect the strong boundary layer scaling of $\e$ as the outer boundary. Instead, we should expect a much weaker boundary layer of thickness $\e^{\beta}$, where $\beta\in (0,1]$, and also form high orders approximation by considering $\e^{k\beta}$-order expansion. The consequence of this milder scaling is that the boundary layer equations at the inner boundary will become now uniform in $\e$. The choosing $\beta$ is as follows.

Assuming that at the inner boundary, the scaling variable is $\eta=\f{r-r_0}{\e^\beta}$ for $\beta\in(0,1]$ and the leading expansion term is
\begin{align*}
u=Ar+B\f{1}{r}+\e\lt(u^{(1)}_e(x,y)+\hat{u}^{(1)}_p(x,\eta)\rt)\\
v=\e\lt(v^{(1)}_e(x,y)+\e^{\beta}\hat{v}^{(1+\beta)}_p(x,\eta)\rt).
\end{align*}
Inserting this expansion in \eqref{ns}$_1$ and taking the leading $\e$ order, we see that
\bes
\e^{1+\beta}2A\eta\p_\th\hat{u}^{(1)}_p(\th,\eta)+\e^{1+\beta}2Ar_0 \hat{v}^{(1+\beta)}_p(\th,\eta)-\e^{3-2\beta}\p^2_{\eta}r_0\hat{u}^{(1)}_p(\th,\eta)=0.
\ees
To match the order, we need $1+\beta=3-2\beta$, which indicates that
\bes
\beta=2/3.
\ees
\end{remark} \qed

Now we write down the Euler asymptotic expansion away from the boundary, the boundary layer asymptotic expansions near the boundary and the boundary conditions satisfied by these expansions.

{\noindent\bf Euler expansions away from the boundary}

Away from the boundary, we make the following formal expansions
\begin{align}
&u^{\e}(\th,r)=u^e(r)+\sum^{k_0}_{k=0}\e^{1+\f{k}{3}}u^{(1+\f{k}{3})}_e(\th,r)+\text{h.o.t.},\nn\\[5pt]
&v^{\e}(\th,r)=\sum^{k_0}_{k=0}\e^{1+\f{k}{3}}v^{(1+\f{k}{3})}_e(\th,r)+\text{h.o.t.},\label{eulerextension}\\[5pt]
&p^{\e}(\th,r)=\sum^{k_0}_{k=0}\e^{1+\f{k}{3}}p^{(1+\f{k}{3})}_e(\th,r)+\text{h.o.t.}.\nn
\end{align}
Here and in what follows, ``h.o.t." means higher order terms. Actually, $(u^{(1+\f{1}{3})}_e,v_e^{(1+\f{1}{3})})\equiv (h_{1+1/3}(r),0)$ is a shear flow.

{\noindent\bf Boundary layer expansions near the outer boundary $\{r=r_0+1\}$}

Near the outer boundary, we make the following formal expansions
\begin{align}
u^{\e}(\th,r)=&u^e(r)+u_p^{(0)}(\th,\zeta)+\sum^{k_0}_{k=0}\e^{1+\f{k}{3}}\lt(u^{(1+\f{k}{3})}_e(\th,r)+u_p^{(1+\f{k}{3})}(\th,\zeta)\rt)+\text{h.o.t.},\nn\\[5pt]
v^{\e}(\th,r)=&\sum^{k_0}_{k=0}\e^{1+\f{k}{3}}\lt(v^{(1+\f{k}{3})}_e(\th,r)+v^{(1+\f{k}{3})}_p(\th,\zeta)\rt)+\text{h.o.t.}, \label{boundaryextensionu}\\[5pt]
p^{\e}(\th,r)=&\sum^{k_0}_{k=0}\e^{1+\f{k}{3}}\lt(p^{(1+\f{k}{3})}_e(\th,r)+p^{(1+\f{k}{3})}_p(\th,\zeta)\rt)+\text{h.o.t.},\nn
\end{align}
where the followings are set to be zero
\bes
u^{(1+\f{1}{3})}_p=0,\q v^{(1+\f{1}{3})}_p=v^{(1+\f{2}{3})}_p=v^{(1+\f{4}{3})}_p=0,\q p^{(1+\f{1}{3})}_p=p^{(1+\f{2}{3})}_p=p^{(1+\f{4}{3})}_p\equiv0.
\ees
The boundary condition is matched by
\begin{align}
&u^e(r)\big|_{r=r_0+1}+u_p^{(0)}\big|_{\zeta=0}=\t{\o}+\dl f(\th),\nn\\
&u_e^{(1+\f{k}{3})}\big|_{r=r_0+1}+u_p^{(1+\f{k}{3})}\big|_{\zeta=0}=0,\q k\in\{0\}\cup\bN, \nn\\
&v_e^{(1+\f{k}{3})}\big|_{r=r_0+1}+v_p^{(1+\f{k}{3})}\big|_{\zeta=0}=0,\q k\in\{0\}\cup\bN.\nn
\end{align}

{\noindent\bf Boundary layer expansions near the inner boundary $\{r=r_0\}$}

Near the inner boundary, we make the following formal expansions
\begin{align}
u^{\e}(\th,r)=&u^e(r)+\sum^{k_0}_{k=0}\e^{1+\f{k}{3}}\lt(u^{(1+\f{k}{3})}_e(\th,r)+\hat{u}_p^{(1+\f{k}{3})}(\th,\eta)\rt)+\text{h.o.t.},\nn\\[5pt]
v^{\e}(\th,r)=&\sum^{k_0}_{k=0}\e^{1+\f{k}{3}}\lt(v^{(1+\f{k}{3})}_e(\th,r)+\hat{v}^{(1+\f{k}{3})}_p(\th,\eta)\rt)+\text{h.o.t.}, \label{boundaryextensionl}\\[5pt]
p^{\e}(\th,r)=&\sum^{k_0}_{k=1}\e^{1+\f{k}{3}}\lt(p^{(1+\f{k}{3})}_e(\th,r)+\hat{p}^{(1+\f{k}{3})}_p(\th,\eta)\rt)+\text{h.o.t.},\nn
\end{align}
where we make the following assumptions
\bes
\hat{v}^{(1)}_p=\hat{v}^{(1+\f{1}{3})}_p=0,\q \hat{p}^{(1)}_p=\hat{p}^{(1+\f{1}{3})}_p=0.
\ees
The boundary condition is matched by
\begin{align}
&u_e^{(1+\f{k}{3})}\big|_{r=r_0}+\hat{u}_p^{(1+\f{k}{3})}\big|_{\eta=0}=0,\q  k\in\{0\}\cup\bN, \nn\\
&v_e^{(1+\f{k}{3})}\big|_{r=r_0}+\hat{v}_p^{(1+\f{k}{3})}\big|_{\eta=0}=0,\q  k\in\{0\}\cup\bN.\nn
\end{align}
Next we deduce the equations satisfied by these expansions.
\subsection{Equations for lower order expansions}\label{lowerexpansion}
\subsubsection{Equations for $(u_p^{(0)},v_p^{(1)},p_p^{(1)})$}
\indent

By substituting the outer boundary layer expansion \eqref{boundaryextensionu} into (\ref{nspolar}) and collecting the $\epsilon-0$th order terms, we obtain
 the following steady boundary layer equations for $(u_p^{(0)},v_p^{(1)},p_p^{(1)})$
\be\label{boundarylayeru0}
\left\{
\begin {array}{ll}
\big(\t{\o}(r_0+1)+u_p^{(0)}\big)\partial_\th u_p^{(0)}+\big( v_p^{(1)}- v_p^{(1)}(\th,0)\big)(r_0+1)\partial_\zeta u_p^{(0)}-(r_0+1)\partial^2_{\zeta}u_p^{(0)}=0,\\[5pt]
\p_\zeta p^{(1)}_p=0,\\[5pt]
\partial_\th u_p^{(0)}+(r_0+1)\partial_\zeta v_p^{(1)}=0,\\[5pt]
(u_p^{(0)},v_p^{(1)})(\th,\zeta)=(u_p^{(0)},v_p^{(1)})(\th+2\pi,\zeta),\\[5pt]
u_p^{(0)}\big|_{\zeta=0}=(\o+\dl f(\th)-\t{\o})(r_0+1),\\[5pt]
\lim\limits_{\zeta\rightarrow -\infty}(u_p^{(0)},v_p^{(1)},p_p^{(1)})=0.
\end{array}
\right.
\ee
From the \eqref{boundarylayeru0}$_{2,6}$ for the requirement of $p^{(1)}_p$, we have $p^{(1)}_p\equiv 0$.
\subsubsection{Equations for $(u_e^{(1)},v_e^{(1)},p_e^{(1)})$}
\indent

Inserting the Euler expansion \eqref{eulerextension} into \eqref{nspolar} and collecting the $\epsilon-1$th order terms, we deduce that $(u_e^{(1)},v_e^{(1)},p_e^{(1)})$ satisfies the following linearized Euler equations
\be\label{middleeuler1}
\left \{
\begin{array}{ll}
u^e(r) \partial_\th u_e^{(1)}+rv_e^{(1)}\p_r u^e(r)+u^e(r)v^{(1)}_e+\partial_\th p_e^{(1)}=0,\\[5pt]
u^e(r) \partial_\th v_e^{(1)}-2u^e(r) u^{(1)}_e+r\partial_r p_e^{(1)}=0,\\[5pt]
\partial_\th u_e^{(1)}+r\partial_r v_e^{(1)}+v_e^{(1)}=0,\\
(u_e^{(1)},v_e^{(1)})(\th,r)=(u_e^{(1)},v_e^{(1)})(\th+2\pi,r),\\
 v_e^{(1)}|_{r=r_0+1}=-v_p^{(1)}|_{\zeta=0},\ v_e^{(1)}|_{r=r_0}=0,
\end{array}
\right.
\ee
where $v_p^{(1)}$ is obtained from \eqref{boundarylayeru0}.

\subsubsection{Equations for $(u_p^{(1)},v_p^{(2)},p_p^{(2)})$}

\indent

By substituting the the outer boundary layer expansion \eqref{boundaryextensionu} into (\ref{nspolar}) and collecting the $\epsilon-1$th order terms, we obtain
 the following steady boundary layer equations for $(u_p^{(1)},v_p^{(2)},p_p^{(2)})$
 \be\label{boundarylayeru1}
 \lt\{
 \begin{aligned}
 &(\t{\o}(r_0+1)+u^{(0)}_p)\p_\th u^{(1)}_p+(v^{(2)}_p-v^{(2)}_p(\th,0))(r_0+1)\p_\zeta u^{(0)}_p-(r_0+1)\p^2_\zeta u^{(1)}_p\\
 &+u^{(1)}_p\p_\th u^{(0)}_p+(v^{(1)}_e(\th,r_0+1)+v^{(1)}_p)(r_0+1)\p_\zeta u^{(1)}_p=f_{p,1}(\th,\zeta),\\
 &-(r_0+1)\p_\zeta p^{(2)}_p=g_{p,1}(\th,\zeta),\\
 &\p_\th u^{(1)}_p+ (r_0+1)\p_\zeta v^{(2)}_p+\p_\zeta (\zeta v^{(1)}_p)=0,
 \end{aligned}
 \rt.
 \ee
 where
\begin{align}
&{f}_{p,1}(\theta, \zeta):=  \zeta \partial^2_{ \zeta} {u}_p^{(0)}+\partial_\zeta {u}_p^{(0)} \nn\\
& -{u}_p^{(0)}\left(\partial_\theta u_e^{(1)}\left(\theta, r_0+1\right)+v_e^{(1)}\left(\theta, r_0+1\right)+{v}_p^{(1)}\right)-\left(\zeta(u^e)^{\prime}\left(r_0+1\right) +u_e^{(1)}\left(\theta, r_0+1\right)\right) \partial_\theta {u}_p^{(0)} \nn\\
& -\left(r_0+1\partial_r v_e^{(1)}\left(\theta, r_0+1\right)+v_e^{(1)}\left(\theta, r_0+1\right)+{v}_p^{(1)}\right) \zeta \partial_\zeta u_p^{(0)}-\left[u^e\left(r_0+1\right)+(r_0+1) (u^e)^{\prime}\left(r_0+1\right)\right] {v}_p^{(1)}.\nn
\end{align}
and
\begin{align}
g_{p,1}(\th,\zeta):=&u^{(0)}_p \p_\th v^{(1)}_e(x,r_0+1)+(u^e(r_0+1)+u^{(0)}_p)\p_\th v^{(1)}_p\nn\\
                    &+(r_0+1)(v^{(1)}_e(\th,r_0+1)+v^{(1)}_p)\p_\zeta v^{(1)}_p-(r_0+1)\p^2_\zeta v^{(1)}_p\nn\\
                    &-2\lt(u^e(r_0+1)+u^{(0)}_p\rt)u^{(1)}_p-2u^{(0)}_p\lt(u^e(r_0+1)+(u^e)'(r_0+1)\rt).\nn
\end{align}

\subsubsection{Equations for $(\hat{u}_p^{(1)},\hat{v}_p^{(1+\f{2}{3})},\hat{p}_p^{(1+\f{2}{3})})$}

\indent

By substituting the the inner boundary layer expansion \eqref{boundaryextensionl} into (\ref{nspolar}), collecting the $\epsilon-(1+\f{2}{3})$th order terms for the first equation and collecting the $\epsilon-1$th order terms for the second equation, we obtain the following steady boundary layer equations for $(\hat{u}_p^{(1)},\hat{v}_p^{(1+\f{2}{3})},\hat{p}_p^{(1+\f{2}{3})})$
 \be\label{boundarylayerl1}
 \lt\{
 \begin{aligned}
 &2A\eta \p_\th \hat{u}^{(1)}_p+2Ar_0\lt(\hat{v}^{(1+\f{2}{3})}_p-\hat{v}^{(1+\f{2}{3})}_p(\th,0)\rt)-r_0\p^2_{\eta}\hat{u}^{(1)}_p+\p_\th \hat{p}^{(1+\f{2}{3})}_{p}=0,\\
 &\p_\eta \hat{p}_p^{(1+\f{2}{3})}=0,\\
 &\p_\th \hat{u}^{(1)}_p+r_0\p_\eta\hat{v}^{(1+\f{2}{3})}_p=0,\\
 &\hat{u}^{(1)}_p\big|_{\eta=0}=-u^{(1)}_e(\th,0),\q \lim\limits_{\eta\rightarrow +\infty}\hat{v}^{(1+\f{2}{3})}_p=0.
 \end{aligned}
 \rt.
 \ee

\subsubsection{Equations for $(u_p^{(1+\f{1}{3})},v_p^{(1+\f{4}{3})},p_p^{(1+\f{4}{3})})$}
\indent

Since there isn't $v_e^{(1+\f{1}{3})}$ and the scaling of order $\e$ is 1,  We just take
\bes
(u_p^{(1+\f{1}{3})},v_p^{(1+\f{4}{3})},p_p^{(1+\f{4}{3})})\equiv (0,0,0).
\ees

\subsubsection{Equations for $(\hat{u}_p^{(1+\f{1}{3})},\hat{v}_p^{(2)},\hat{p}_p^{(2)})$}

\indent

By substituting the the inner boundary layer expansion \eqref{boundaryextensionl} into (\ref{nspolar}), collecting the $\epsilon-2$th order terms for the first equation and collecting the $\epsilon-1\f{1}{3}$th order terms for the second equation, we obtain the following steady boundary layer equations for $(\hat{u}_p^{(1+\f{1}{3})},\hat{v}_p^{(2)},\hat{p}_p^{(2)})$
 \be\label{boundarylayerl4/3}
 \lt\{
 \begin{aligned}
 &2A\eta \p_\th \hat{u}^{(1+\f{1}{3})}_p+2Ar_0\lt(\hat{v}^{(2)}_p-\hat{v}^{(2)}_p(\th,0)\rt)-r_0\p^2_{\eta}\hat{u}^{(1+\f{1}{3})}_p+\p_\th \hat{p}^{(2)}_{p}=\hat{f}_{p,4/3},\\
 &\p_\eta \hat{p}_p^{(2)}=0,\\
 &\p_\th \hat{u}^{(1+\f{1}{3})}_p+r_0\p_\eta\hat{v}^{(2)}_p=0,\\
 &\hat{u}^{(1+\f{1}{3})}_p\big|_{\eta=0}=0,\q \lim\limits_{\eta\rightarrow +\infty}\hat{v}^{(2)}_p=0,
 \end{aligned}
 \rt.
 \ee
where
\bes
\hat{f}_{p,4/3}:=r_0\lt(\eta (\p_r v^{(1)}_e) (\th,r_0)+\hat{v}_p^{(1+\f{2}{3})}-\hat{v}_p^{(1+\f{2}{3})}(\theta,r_0)\rt)\p_\eta \hat{u}_p^{(1)}-\hat{u}_p^{(1)}\p_\th\hat{u}_p^{(1)}-\p_\th(\hat{u}_e^{(1)}(\th,r_0)\hat{u}_p^{(1)}).
\ees
\subsubsection{Equations for $(u_e^{(1+\f{2}{3})},v_e^{(1+\f{2}{3})},p_e^{(1+\f{2}{3})})$}
\indent

Inserting the Euler expansion \eqref{eulerextension} into \eqref{ns} and collecting the $\epsilon-1\f{2}{3}$th order terms, we deduce that $(u_e^{(1+\f{2}{3})},v_e^{(1+\f{2}{3})},$ $ p_e^{(1+\f{2}{3})})$ satisfies the following linearized Euler equations

\be\label{middleeuler2}
\left \{
\begin{array}{ll}
u^e(r) \partial_\th u_e^{(1+2/3)}+rv_e^{(1+2/3)}\p_r u^e(r)+u^e(r)v^{(1+2/3)}_e+\partial_\th p_e^{(1+2/3)}=0,\\[5pt]
u^e(r) \partial_\th v_e^{(1+2/3)}-2u^e(r) u^{(1+2/3)}_e+r\partial_r p_e^{(1+2/3)}=0,\\[5pt]
\partial_\th u_e^{(1+2/3)}+r\partial_r v_e^{(1+2/3)}+v_e^{(2/3)}=0,\\
(u_e^{(1+2/3)},v_e^{(1+2/3)})(\th,r)=(u_e^{(1+2/3)},v_e^{(1+2/3)})(\th+2\pi,r),\\
 v_e^{(1+2/3)}|_{r=r_0+1}=0,\ v_e^{(1+2/3)}|_{r=r_0}=-\hat{v}_p^{(1+2/3)}|_{\eta=0},
\end{array}
\right.
\ee

where $\hat{v}_p^{(1+\f{2}{3})}$ is obtained from \eqref{boundarylayerl1}.

\subsection{Solvability of the lower order asymptotic expansions}
\indent

Now we give the solvability of the equations satisfied by the lower order asymptotic expansions in Section \ref{lowerexpansion}. The order in which we solve these equations are as follows
{\small
\begin{align*}
(u^e(r),0)\rightarrow (u_p^{(0)},v_p^{(1)})\rightarrow (u_e^{(1)},v_e^{(1)})\rightarrow(u_p^{(1)},v_p^{(2)})/(\hat{u}_p^{(1)},\hat{v}_p^{(1+\f{2}{3})})/(\hat{u}_p^{(1+\f{1}{3})},\hat{v}_p^{(2)})\rightarrow (u_e^{(1+\f{2}{3})},v_e^{(1+\f{2}{3})})\cdots.
\end{align*}
}
\subsubsection{The boundary layer system \eqref{boundarylayeru0} and its solvability}
\indent
Denote $\bar{u}_p^{(0)}=(r_0+1)^{-1}{u}_p^{(0)}$, then from \eqref{boundarylayeru0}, we see that $(\bar{u}_p^{(0)},v_p^{(1)})$ satisfies
\be\label{boundarylayeru0scaling}
\left\{
\begin {array}{ll}
\big(\t{\o}+\bar{u}_p^{(0)}\big)\partial_\th \bar{u}_p^{(0)}+\big(v_p^{(1)}- v_p^{(1)}(\th,0)\big)\partial_\zeta \bar{u}_p^{(0)}-\partial^2_{\zeta}\bar{u}_p^{(0)}=0,\\[5pt]
\partial_\th \bar{u}_p^{(0)}+\partial_\zeta v_p^{(1)}=0,\\[5pt]
(\bar{u}_p^{(0)},v_p^{(1)})(\th,\zeta)=(\bar{u}_p^{(0)},v_p^{(1)})(\th+2\pi,\zeta),\\[5pt]
\bar{u}_p^{(0)}\big|_{\zeta=0}=\o+\dl f(\th)-\t{\o},\\[5pt]
\lim\limits_{\zeta\rightarrow -\infty}(\bar{u}_p^{(0)},v_p^{(1)},p_p^{(1)})=0.
\end{array}
\right.
\ee

A necessary condition for the solvability of the system \eqref{boundarylayeru0scaling} is stated in the following Lemma. One can also refer to \cite[Lemma 2.1]{FeiGLT:2023CMP}.
\begin{lemma}\label{lembw}
If the system \eqref{boundarylayeru0scaling} has a solution $(\bar{u}_p^{(0)}, v_p^{(1)})$, which satisfies
\beas
&&\t{\o}+\bar{u}_p^{(0)}(\th,\zeta)>0, \q \|\bar{u}_p^{(0)}\|_\infty<+\i,\ \forall\ \zeta\leq 0,
\eeas
then there holds
\begin{align}
\t{\o}^2=\o^2+\frac{\o\dl}{\pi}\int_0^{2\pi}f(\th)d\th+\frac{\dl^2}{2\pi}\int_0^{2\pi}f^2(\th)d\th.\nn
\end{align}
\end{lemma}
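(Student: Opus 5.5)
The plan is to pass to von Mises variables, in which the boundary layer equation becomes a linear equation for $\tfrac12(\t{\o}+\bar{u}_p^{(0)})^2$ whose $\th$-average is affine in the stream function. Set $U:=\t{\o}+\bar{u}_p^{(0)}$ and $V:=v_p^{(1)}-v_p^{(1)}(\th,0)$.

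First I would rewrite \eqref{boundarylayeru0scaling} in these variables. The momentum equation reads $UU_\th+VU_\zeta=U_{\zeta\zeta}$. Since $v_p^{(1)}(\th,0)$ does not depend on $\zeta$, the divergence condition becomes $U_\th+V_\zeta=0$, and moreover $V(\th,0)=0$.

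Next I would introduce a stream function $\psi(\th,\zeta)$ with $\psi_\zeta=U$, $\psi_\th=-V$ and $\psi(\th,0)=0$. This is consistent because $V|_{\zeta=0}=0$ makes $\psi(\cdot,0)$ constant. It is $2\pi$-periodic in $\th$ because $\int_0^{2\pi}V\,d\th$ is independent of $\zeta$ (by $U_\th+V_\zeta=0$ and periodicity), vanishes at $\zeta=0$, and hence vanishes identically. The hypothesis $U>0$ makes $\zeta\mapsto\psi$ strictly increasing. I also need $\psi\to-\i$ as $\zeta\to-\i$; this follows from $U\to\t{\o}>0$ as $\zeta\to-\i$ (the decay condition in \eqref{boundarylayeru0scaling}), which gives $U\ge c>0$ for $\zeta$ very negative. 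Thus $(\th,\zeta)\mapsto(\th,\psi)$ is a diffeomorphism of $\bT\times(-\i,0]$ onto $\bT\times(-\i,0]$.

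In the new variables, $\p_\zeta=U\p_\psi$ and $\p_\th|_\zeta=\p_\th|_\psi-V\p_\psi$. The left-hand side $UU_\th+VU_\zeta$ becomes $U\,\p_\th|_\psi U$. Setting $w=U^2/2$, the right-hand side becomes $U_{\zeta\zeta}=U\p_\psi(U\p_\psi U)=U\,w_{\psi\psi}$. Dividing by $U>0$ gives the key identity
\[
\p_\th|_\psi U=w_{\psi\psi},\qquad U=\sqrt{2w},
\]
which is a linear equation in divergence form in $\th$.

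Integrating over $\th\in\bT$ at fixed $\psi$ kills the left side by periodicity, so $\f{d^2}{d\psi^2}\int_0^{2\pi}w(\th,\psi)\,d\th=0$. Hence $W(\psi):=\int_0^{2\pi}w\,d\th$ is affine on $(-\i,0]$. It is bounded because $\|\bar{u}_p^{(0)}\|_\i<\i$, so it is constant. Its limit as $\psi\to-\i$ equals $\pi\t{\o}^2$, since $U\to\t{\o}$ as $\zeta\to-\i$.

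Evaluating at $\psi=0$, i.e.\ $\zeta=0$, where $U=\o+\dl f(\th)$, gives
\[
\f{1}{2}\int_0^{2\pi}(\o+\dl f)^2\,d\th=\pi\t{\o}^2 .
\]
Expanding the square gives exactly $\t{\o}^2=\o^2+\f{\o\dl}{\pi}\int_0^{2\pi}f\,d\th+\f{\dl^2}{2\pi}\int_0^{2\pi}f^2\,d\th$, which is the claimed formula.

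The main obstacle is justifying the change of variables and the limits. Concretely, I need $\psi\to-\i$ uniformly in $\th$, and I need the limit $w\to\t{\o}^2/2$ as $\psi\to-\i$ to hold uniformly in $\th$ so that it can be interchanged with the $\th$-integral. I would obtain both from the decay $\bar{u}_p^{(0)}\to0$ as $\zeta\to-\i$ in \eqref{boundarylayeru0scaling}, together with boundedness and dominated convergence. Enough regularity of the solution for the computation to be classical is also assumed.
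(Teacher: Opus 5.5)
Your proposal is correct and follows the same route as the proof the paper relies on. The paper gives no argument of its own and defers to \cite[Lemma 2.1]{FeiGLT:2023CMP}, where the von Mises transformation is used in exactly this way: one shows that $\int_0^{2\pi}(\t{\o}+\bar{u}_p^{(0)})^2\,d\th$ is affine, hence constant by boundedness, in the stream function, and then compares its values at $\psi=0$ and as $\psi\to-\i$.

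Two minor remarks. First, no uniformity in $\th$ of the decay is really needed: $|\psi|\le\|U\|_{\i}|\zeta|$ already forces $\zeta\to-\i$ uniformly as $\psi\to-\i$, and dominated convergence handles the rest. Second, your use of $\t{\o}>0$ comes from the paper's definition of $\t{\o}$ in \eqref{BatchelorWood}, not from the hypotheses of the lemma, which by themselves only give $\t{\o}\ge 0$.
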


Next we need to solve the steady boundary layer equations (\ref{boundarylayeru0}), one can refer to \cite[Corollary 2.4]{FeiGLT:2023CMP} or \cite[Proposition 2.3]{FeiPZ:2023ARXIV} for the solvability of system \eqref{boundarylayeru0scaling}, which can be proven by using iteration argument and contraction mapping theorem. After scaling back to $({u}_p^{(0)},{v}_p^{(1)})$, we have the following result.
\begin{proposition}\label{propdcu0}
 There exists $\dl_0>0$ such that for any $\dl\in (0,\dl_0)$ and any $j,k,\ell\in \mathbb{N}\cup \{0\}$, the system (\ref{boundarylayeru0}) has a unique solution $(u^{(0)}_p,v^{(1)}_p)$ which satisfies $\forall\ \zeta\leq 0$,
\begin{align}\label{decayofblu0}
\begin{aligned}
&\lt\|\partial_\th^j\partial_\zeta^k {u}_p^{(0)}\big<\zeta\big>^\ell\rt\|_{L^\i}\leq C_{j,k,\ell}\dl r_0, \q \lt\|\partial_\th^j\partial_\zeta^k {v}_p^{(1)}\big<\zeta\big>^\ell \rt\|_{L^\i}\leq C_{j,k,\ell}\dl,\q \int_0^{2\pi}{v}_p^{(1)}(\th,\zeta)d \th=0.
\end{aligned}
\end{align}
\end{proposition}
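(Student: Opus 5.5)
The plan is to strip the $r_0$-dependence out of \eqref{boundarylayeru0} by the scaling already introduced, and to solve the resulting $r_0$-independent problem \eqref{boundarylayeru0scaling} by a perturbative fixed point around the uniform state $\t{\o}$. I would use the von Mises variables and rely on the Batchelor--Wood relation of Lemma \ref{lembw} to remove the obstruction in the zero mode. Writing $\bar u=\bar u^{(0)}_p=(r_0+1)^{-1}u^{(0)}_p$, the boundary datum is $g(\th):=\o+\dl f(\th)-\t{\o}$. Since $\t{\o}=\o+O(\dl)$ by \eqref{BatchelorWood}, we have $\|\p^j_\th g\|_\i\le C_j\dl$, and all constants below are independent of $r_0$.

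Once $(\bar u,v^{(1)}_p)$ is found with $\|\p^j_\th\p^k_\zeta\bar u\,\langle\zeta\rangle^\ell\|_\i+\|\p^j_\th\p^k_\zeta v^{(1)}_p\langle\zeta\rangle^\ell\|_\i\le C_{j,k,\ell}\dl$, the bounds \eqref{decayofblu0} follow at once by multiplying $\bar u$ by $r_0+1\le 2r_0$. The zero-mean property of $v^{(1)}_p$ follows from integrating \eqref{boundarylayeru0scaling}$_2$ in $\th$: periodicity gives $\p_\zeta\int_0^{2\pi}v^{(1)}_p\,d\th=0$, and the decay as $\zeta\to-\i$ forces the integral to vanish.

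For the scaled problem I introduce the stream function $\psi$ with $\p_\zeta\psi=\t{\o}+\bar u$ and $\p_\th\psi=-(v^{(1)}_p-v^{(1)}_p(\th,0))$, normalized by $\psi(\th,0)=0$. Since $\t{\o}+\bar u\ge\t{\o}/2>0$ for small $\dl$, the map $(\th,\zeta)\mapsto(\th,\psi)$ is a diffeomorphism of $\bT\times(-\i,0]$ onto $\bT\times(-\i,0]$. In these variables $w:=(\t{\o}+\bar u)^2$ satisfies the quasilinear degenerate-free parabolic equation
\[
\p_\th w-\s{w}\,\p^2_\psi w=0,\q w(\th,0)=(\o+\dl f)^2,\q w\to\t{\o}^2 \ (\psi\to-\i),
\]
in which $\th$ plays the role of a periodic time. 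I then decompose $w-\t{\o}^2=W_0(\psi)+W_{\neq}(\th,\psi)$ into its $\th$-mean and oscillating part, and linearize around $\t{\o}^2$. For the oscillating part, the linear operator $\p_\th-\t{\o}\p^2_\psi$ on $\bT\times(-\i,0]$ with Dirichlet data at $\psi=0$ has explicit exponentially decaying solutions $e^{in\th}e^{\sqrt{|n|/\t{\o}}\,(\pm1+ i\,\mathrm{sgn}\,n)\psi/\sqrt2}$. From these I get estimates in a space weighted by $e^{-c\psi}$, with all $\th$- and $\psi$-derivatives controlled by the data and the nonlinear forcing.

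The main obstacle is the zero mode. Averaging the equation in $\th$ gives $\p_\psi\big(\fint\s{w}\,\p_\psi w\,d\th\big)=\fint\p_\psi\s{w}\,\p_\psi w\,d\th$, so $W_0$ solves a second-order ODE in $\psi$ forced quadratically by $W_{\neq}$. A bounded, decaying solution exists only if its boundary value at $\psi=0$ matches the far-field value $0$, up to an integrable quadratic correction. The requirement that the boundary value matches is exactly the Batchelor--Wood condition $\fint(\o+\dl f)^2d\th=\t{\o}^2$ built into \eqref{BatchelorWood}. I would therefore solve for $W_0$ by integrating twice from $-\i$, with the quadratic forcing decaying exponentially. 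I would then check that the boundary mismatch created by the quadratic terms is consistent, adjusting the construction along the lines of \cite[Corollary 2.4]{FeiGLT:2023CMP} and \cite[Proposition 2.3]{FeiPZ:2023ARXIV}.

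With both the linear oscillating estimate and the zero-mode inversion in hand, I would run a contraction in a ball of radius $C\dl$ in the exponentially weighted $C^k$ space, using the smallness of $\dl$; uniqueness follows from the same contraction. Finally I would return to $(\th,\zeta)$, recover $\bar u=\s{w}-\t{\o}$, recover $v^{(1)}_p$ from the divergence relation together with the zero-mean normalization, and convert the exponential decay into the polynomial weights $\langle\zeta\rangle^\ell$ claimed in \eqref{decayofblu0}.
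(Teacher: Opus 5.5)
Your reduction is the paper's entire proof. The paper sets $\bar u^{(0)}_p=(r_0+1)^{-1}u^{(0)}_p$, notes that \eqref{boundarylayeru0scaling} contains no $r_0$ and has $O(\dl)$ data, and quotes \cite[Corollary 2.4]{FeiGLT:2023CMP} or \cite[Proposition 2.3]{FeiPZ:2023ARXIV} for that system. It then multiplies back by $r_0+1$. Your derivation of $\int_0^{2\pi}v^{(1)}_p\,d\th=0$ is also fine. The rest of your proposal sketches what lies behind the citation, and that sketch mishandles the one structural point, the zero mode.

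Your von Mises computation is correct, but you should use it in conservative form. With $U=\t{\o}+\bar u$ and $\p_\th$ taken at fixed $\psi$, dividing $U\p_\th U=U\p_\psi(U\p_\psi U)$ by $U>0$ gives $\p_\th\s{w}=\f12\p^2_\psi w$. Averaging in $\th$ then yields $\p^2_\psi\int_0^{2\pi}w\,d\th=0$ exactly. So $\int_0^{2\pi}w\,d\th$ is affine in $\psi$, hence constant by boundedness. It equals $2\pi\t{\o}^2$ at $-\i$ and $\int_0^{2\pi}(\o+\dl f)^2d\th$ at $\psi=0$. Hence $W_0\equiv0$, and Batchelor--Wood is the whole compatibility condition: there is no quadratic correction and no boundary mismatch left to check. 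Your identity for $\p_\psi\fint\s{w}\,\p_\psi w\,d\th$ is true, but it comes from the non-conservative form. Its left side is $\f23\p^2_\psi\fint w^{3/2}d\th$, which is not linear in $W_0$, and this is what makes the zero mode look forced.

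This matters for your fixed point. Suppose each iterate solves $\p_\th W-\t{\o}\p^2_\psi W=(\s{w}-\t{\o})\p^2_\psi W$ with the previous iterate on the right. The $\th$-average is then a second-order ODE on $(-\i,0]$ whose solution must be bounded, tend to $0$ at $-\i$, and vanish at $\psi=0$ (by Batchelor--Wood). That is one condition too many, so it is generically unsolvable and the iteration map is not even defined. There are two fixes. One is to linearize the conservative form,
\[
\f{1}{2\t{\o}}\p_\th W-\f12\p^2_\psi W=-\p_\th\Big(\s{\t{\o}^2+W}-\t{\o}-\f{W}{2\t{\o}}\Big).
\]
Its right side has zero $\th$-mean, so every iterate has vanishing zero mode once Batchelor--Wood holds. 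The other is to let the far-field constant float during the iteration and identify it with $\t{\o}^2$ afterwards via the exact identity above. This is precisely the step you defer to the references with your remark about checking the boundary mismatch.

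Smaller points:
\begin{itemize}
\item The nonlinearity costs two $\psi$-derivatives (equivalently one $\th$-derivative). The inverse of $\p_\th-\t{\o}\p^2_\psi$ does not recover these in integer $C^k$ spaces. Run the contraction in exponentially weighted $H^k$ (energy) spaces or H\"older spaces instead.
\item Only $\lambda=\s{|n|/\t{\o}}\,(1+i\,\mathrm{sgn}\,n)/\s2$ gives a mode decaying as $\psi\to-\i$. With $-1+i\,\mathrm{sgn}\,n$ one gets $\lambda^2=-in/\t{\o}$, so that choice is not a root at all.
\item $v^{(1)}_p$ must be recovered as $v^{(1)}_p=-\int_{-\i}^{\zeta}\p_\th\bar u\,d\zeta'$, i.e.\ normalized by the decay at $-\i$ required in \eqref{boundarylayeru0}. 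A zero-mean normalization does not determine the $\th$-dependent constant of integration.
\end{itemize}
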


\subsubsection{The linearized Euler system for $(u_e^{(1)}, v_e^{(1)}, p_e^{(1)})$ and its solvability}
\begin{proposition}\label{propeulerorder1}
There exists $\dl_0>0$ such that for any $\dl\in(0,\dl_0)$, the linearized Euler equations (\ref{middleeuler1}) have a solution $(u_e^{(1)}, v_e^{(1)}, p_e^{(1)})$ which satisfies
\begin{align}\label{euler1esti}
&\|\partial^{j}_\th\partial^{k}_ru_e^{(1)}\|_{L^\i}\leq C_{j,k}\dl r^{1-k}_0,\q \|\partial^j_\th\partial^{k}_rv_e^{(1)}\|_{L^\i}\leq C_{j,k}\dl r^{-k}_0, \ \forall\ j,k\geq 0.
\end{align}
\end{proposition}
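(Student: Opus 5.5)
The system \eqref{middleeuler1} is a linearized Euler problem around the Taylor--Couette shear flow $u^e(r)=Ar+B/r$. I would reduce it to a single ODE for each Fourier mode of a stream function. Set $\psi$ with $\psi_r=u^{(1)}_e$, $\psi_\th=-rv^{(1)}_e$; this is consistent with \eqref{middleeuler1}$_3$ because the boundary data $-v^{(1)}_p(\th,0)$ has zero $\th$-mean by Proposition \ref{propdcu0}. Eliminating the pressure, i.e.\ taking $\p_r$ of the first equation minus $\frac1r\p_\th$ of the second, gives the linearized vorticity equation
\[
u^e(r)\,\p_\th\o^{(1)}+ \frac{1}{r}\,\p_\th\psi\,\p_r\Omega^e=0 .
\]
Here $\Omega^e=\frac1r(ru^e)_r=2A$ is constant, since $\Delta u^e$-type harmonicity was exactly how Taylor--Couette arose. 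Hence $\p_r\Omega^e=0$ and the equation reduces to $u^e(r)\p_\th\o^{(1)}=0$. Because $u^e>0$ in the interior, the nonzero modes of $\o^{(1)}$ vanish. I would therefore choose the solution with $\o^{(1)}_{\neq}=0$, and then pick the zero mode (a shear flow) freely; the simplest choice is $u^{(1)}_{e,0}\equiv0$, or whatever choice the matching with $\hat u^{(1)}_p$ later prefers.

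For the nonzero modes I would solve $-\Delta\psi_{\neq}=0$ in $\Omega$ with Dirichlet data $\psi_{\neq}(\th,r_0)=0$ and $\psi_{\neq}(\th,r_0+1)=(r_0+1)\int_0^\th v^{(1)}_p(\bar\th,0)\,d\bar\th$. This data is periodic because of the zero mean. Mode by mode, $\psi_k(r)=a_kr^{k}+b_kr^{-k}$ with $\psi_k(r_0)=0$ and $\psi_k(r_0+1)=g_k$. I then define $v^{(1)}_e=-\psi_\th/r$ and $u^{(1)}_e=\psi_r$, recover $p^{(1)}_e$ by integrating \eqref{middleeuler1}$_1$ in $\th$ (its compatibility being exactly the vorticity equation), and use \eqref{middleeuler1}$_2$ for the $r$-dependence of the zero mode.

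The estimates \eqref{euler1esti} come from explicit bounds on the modes, uniform in the large radius $r_0$. The main obstacle is this $r_0$-uniformity. On the interval $[r_0,r_0+1]$, the basis $r^{\pm k}$ is nearly degenerate when $k\ll r_0$, because $r^k/r_0^k\approx e^{k(r-r_0)/r_0}$. I would rewrite $\psi_k(r)=g_k\,\sinh(k\log(r/r_0))/\sinh(k\log((r_0+1)/r_0))$. Since $\log(r/r_0)\approx (r-r_0)/r_0$, this behaves like $g_k\sinh(k s/r_0)/\sinh(k/r_0)$ with $s=r-r_0\in[0,1]$. This quantity is bounded by $|g_k|\min(1,\,s\,e^{k(s-1)/r_0})$ uniformly. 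Each $r$-derivative costs a factor $\max(1,k/r_0)$, and that factor is absorbed by the rapid decay of $g_k$ coming from the smoothness in $\th$ of $v^{(1)}_p(\cdot,0)$, giving $|g_k|\lesssim_N r_0\dl\,k^{-N}$.

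This yields $|\p^j_\th\p^k_r\psi|\lesssim r_0\dl$. Consequently $|v^{(1)}_e|=|\psi_\th|/r\lesssim\dl$ and $u^{(1)}_e=\psi_r$ is $O(\dl r_0)$ at worst, consistent with \eqref{euler1esti}. The claimed gain $r_0^{-k}$ for radial derivatives is delicate. For low modes it comes from $\psi_k$ being nearly linear in $s$ with curvature $O(k^2/r_0^2)$; for high modes it must be paid by the decay of $g_k$. This is the step I would check most carefully, possibly by exploiting that the $\th$-frequency content relevant here is $O(1)$ while the radius is $r_0$. Smallness of $\dl_0$ is only inherited from Proposition \ref{propdcu0}.
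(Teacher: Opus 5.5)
Your construction is the paper's. Eliminating the pressure gives $u^e\Delta(rv^{(1)}_e)=0$, which is your $\o^{(1)}_{\neq}=0$ because $rv^{(1)}_e=-\psi_\th$. The paper then separates variables in $r^{\pm n}$, sets $u^{(1)}_e=-\int_0^\th\p_r(rv^{(1)}_e)\,d\bar\th$ (your $\psi_r$ with zero shear mode), and integrates the momentum equations for $p^{(1)}_e$. Your normalized modes $h_n=\sinh(nL)/\sinh(nL_1)\in[0,1]$, with $L=\log(r/r_0)$ and $L_1=\log(1+1/r_0)$, are actually better than the paper's upper bound $n2^nr_0^{-1}$ in \eqref{eulerfirstorder5}, which is not summable against the polynomial decay in \eqref{eulerfirst1}.

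The gap is the step you leave open, the factor $r_0^{-k}$, and it splits in two. For $u^{(1)}_e$ it can be closed. Fa\`a di Bruno with $|L^{(i)}|\ls r_0^{-i}$, $\cosh(nL)\le\cosh(nL_1)$ and $\coth(nL_1)\le 1+2r_0/n$ gives $|\p_r^mh_n|\le C_m(1+n)^mr_0^{1-m}$ for $m\ge1$. Writing $\psi_{\neq}=\sum_{n\ne0}\Psi_nh_{|n|}(r)e^{in\th}$ with $|\Psi_n|\ls_N\dl r_0|n|^{-N}$, you get $|\p^j_\th\p^k_r\psi_r|\ls\dl r_0^{1-k}$, as claimed. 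Your proposed mechanism is misidentified, though: $h_n''(r_0)=-n/(r_0^2\sinh(nL_1))\approx-1/r_0$, coming from $\p_r^2\log r=-r^{-2}$, not $O(n^2/r_0^2)$ as you guessed (your $k$ is my $n$).

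For $v^{(1)}_e$ with $k\ge1$, however, the bound in \eqref{euler1esti} is false, so this part cannot be proved by any argument. Indeed $v^{(1)}_e(\th,r_0)=0$ and $v^{(1)}_e(\th,r_0+1)=-v^{(1)}_p(\th,0)$. Moreover $v^{(1)}_p$ does not depend on $r_0$, because the rescaled problem \eqref{boundarylayeru0scaling} does not. It is nonzero for nonconstant $f$: to first order, $\bar u^{(0)}_p\approx\dl\sum_{n\ne0}f_ne^{\lambda_n\zeta+in\th}$ with $f_n$ the Fourier coefficients of $f$, $\lambda_n^2=in\t\o$ and $\mathrm{Re}\,\lambda_n>0$, so $v^{(1)}_p(\th,0)=-\int_{-\i}^0\p_\th\bar u^{(0)}_p\,d\zeta\approx-\dl\sum_{n\ne0}\f{in}{\lambda_n}f_ne^{in\th}$. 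By the mean value theorem, $\|\p_rv^{(1)}_e\|_{L^\i}\ge\|v^{(1)}_p(\cdot,0)\|_{L^\i}$, which is incompatible with $C\dl r_0^{-1}$ as $r_0\to\i$. Concretely, a mode $c\cos\th$ of the datum produces $c\,\f{(r_0+1)^2}{2r_0+1}\lt(1-\f{r_0^2}{r^2}\rt)\cos\th$, whose radial derivative is $\approx c$ and whose second radial derivative is $\approx-3c/r_0$.

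What your method does prove is $\|\p^j_\th\p^k_rv^{(1)}_e\|_{L^\i}\le C_{j,k}\dl r_0^{1-k}$ for $k\ge1$ (and $C_j\dl$ for $k=0$), since $|\p^j_\th\p^k_r(rv^{(1)}_e)|\ls\dl r_0^{2-k}$ for $k\ge1$.

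The paper's proof has the same hole. Its step \eqref{eulerfirst3} claims $\|\p^j_\th\p^k_r(rv^{(1)}_e)\|_{L^\i}\le C\dl r_0^{1-k}$. But $\p_r[(r/r_0)^n-(r_0/r)^n]\approx 2n/r_0$ is as large as the normalizing factor in \eqref{eulerfirstorder3}. With $|a^1_n|\sim\dl r_0^2/n$ this gives $\p_r(rv^{(1)}_e)\sim\dl r_0$, so the paper's bound is off by a factor $r_0$ for every $k\ge1$.
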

\begin{proof}
Subtracting $\p_\th(\ref{middleeuler1})_2$ from $r\p_r(\ref{middleeuler1})_1$ to eliminate the pressure $p_e^{(1)}$, we obtain that
\bes
u^e(r)\Dl(r v^{(1)}_e)=0.
\ees
where $\Dl=\p^2_r+\f{1}{r^2}\p^2_{\th}+\f{1}{r}\p_r$. Since $u^\e(r)$ is non-degenerate in the annulus, we obtain that
\bes
\Dl(r v^{(1)}_e)=0.
\ees
Then we can obtain $v_e^{(1)}$ by solving the following linear boundary value problem
\be\label{eulerfirstorder1}
\lt\{
\bali
&\Dl (rv^{(1)}_e)=0,\\
&rv_e^{(1)}\big|_{r=r_0}=0,\q rv_e^{(1)}\big|_{r=r_0+1}=-(r_0+1){v}_p^{(1)}\big|_{\zeta=0}.\\
&rv_e^{(1)}(\th,r)=rv_e^{(1)}(\th+2\pi,r).
\eali
\rt.
\ee

The above equations can be solved by using the method of variables separation. By noting that
\bes
\int^{2\pi}_0 rv_e^{(1)} d\th\big|_{r=r_0}=0,\q \int^{2\pi}_0 rv_e^{(1)} dx\big|_{r=r_0+1}=-\int^{2\pi}_0(r_0+1){v}_p^{(1)}d\th\big|_{\zeta=0}=0,
\ees
and integrating \eqref{eulerfirstorder1} with respect to $\th$ variable we get that
\bes
\lt(\int^{2\pi}_0 rv_e^{(1)}(\th,r) d\th\rt)^{''}+\f{1}{r}\lt(\int^{2\pi}_0 rv_e^{(1)}(\th,r) d\th\rt)^{'}=0 \q \forall\ r\in [r_0,r_0+1],
\ees
which indicates that
\bes
\int^{2\pi}_0 rv_e^{(1)}(\th,r) d\th=0 \q \forall\ r\in [r_0,r_0+1].
\ees

By using the inner boundary condition $rv_e^{(1)}|_{r=r_0}=0$ and the equation \eqref{eulerfirstorder1}$_1$, we can assume that
\be\label{eulerfirst2}
rv_e^{(1)}:=\sum^\i_{n=1} a^1_n \lt(\lt(\f{r}{r_0}\rt)^n-\lt(\f{r_0}{r}\rt)^{n}\rt)\cos n\th+b^1_n \lt(\lt(\f{r}{r_0}\rt)^n-\lt(\f{r_0}{r}\rt)^{n}\rt)\sin n\th,
\ee
where $(a^1_n,b^1_n)$ are constants.  From the boundary condition on the outer boundary, we have
\begin{align}
a^1_n \lt(\lt(\f{r_0+1}{r_0}\rt)^n-\lt(\f{r_0}{r_0+1}\rt)^{n}\rt)=-(r_0+1)\f{1}{\pi}\int^{2\pi}_0{v}_p^{(1)}\big|_{\zeta=0}\cos n\th d\th, \label{eulerfirstorder3}\\
b^1_n \lt(\lt(\f{r_0+1}{r_0}\rt)^n-\lt(\f{r_0}{r_0+1}\rt)^{n}\rt)=-(r_0+1)\f{1}{\pi}\int^{2\pi}_0{v}_p^{(1)}\big|_{\zeta=0}\sin n\th d\th. \label{eulerfirstorder4}
\end{align}

By square-difference formula, it is easi to see that
\be
r^{-1}_0 \leq \lt(\f{r_0+1}{r_0}\rt)^n-\lt(\f{r_0}{r_0+1}\rt)^{n}\leq n 2^{n} r^{-1}_0 . \label{eulerfirstorder5}
\ee
By noting estimates for ${v}_p^{(1)}$ in \eqref{decayofblu0}, integration by parts for \eqref{eulerfirstorder3} and \eqref{eulerfirstorder4}, and the above inequality \eqref{eulerfirstorder5}, we see that

\begin{align}
|a^1_n|+|b^1_n|\leq  C_k\f{\dl}{n^k} r^2_0, \q \forall\ k\in\bN. \label{eulerfirst1}
\end{align}

By taking $\p^j_\th\p^k_r$ to \eqref{eulerfirst2} and using \eqref{eulerfirst1} and \eqref{eulerfirstorder5}, we can see that for $0\leq k\in \bN$,
\be\label{eulerfirst3}
\|\p^j_\th\p^k_r(rv_e^{(1)})\|_{L^\i}\leq C_{j,k}\dl r^{1-k}_0.
\ee
Then Leibniz formula indicates that
\bes
\|\p^j_\th\p^k_r v_e^{(1)}\|_{L^\i}\leq C_{j,k} \dl r^{-k}_0\q \text{for } 0\leq k\in \bN.
\ees
After $v^{(1)}_e$ is given, we define
\bes
u^{(1)}_e(\th,r)=-\int^\th_0 \p_r(rv^{(1)}_e(\bar{\th},r))d\bar{\th},
\ees
which satisfies
\begin{eqnarray}
\left \{
\begin {array}{ll}
\partial_\th u_e^{(1)}+\partial_r(rv_e^{(1)})=0,\\[5pt]
u_e^{(1)}(\th,r)=u_e^{(1)}(\th+2\pi,r).\nonumber
\end{array}
\right.
\end{eqnarray}
Using \eqref{eulerfirst3}, we see that for $k\in\bN$
\bes
\|\p^j_\th\p^k_r u_e^{(1)}\|_{L^\i}\leq C_{j,k}\dl r^{1-k}_0.
\ees
After obtaining $(u_e^{(1)}, v_e^{(1)})$, we construct $p_e^{(1)}$ as following
\begin{align*}
p_e^{(1)}(\th,r):=\phi(r)- u^e(r)u_e^{(1)}(\th,r)-\p_r(ru^e(r)) \int^\th_0v_e^{(1)}(\bar{\th},r)d\bar{\th},
\end{align*}
which satisfies
\bes
\p_\th p_e^{(1)}+u^e(r)\p_{\th} u_e^{(1)}+\p_r(ru^e(r))v_e^{(1)}=0.
\ees
Let $\phi(r)$ be a function satisfying
\begin{align*}
r\phi'(r)+u^e(r)\p_\th v^{(1)}_e(0,r)-2u^e(r)u_e^{(1)}(0,r)=0.
\end{align*}
Combining the equations of $(u_e^{(1)}, v_e^{(1)})$, it's direct to obtain
\begin{align*}
u^e(r) \partial_\th v_e^{(1)}-2u^e(r) u^{(1)}_e+r\partial_rp_e^{(1)}=0.
\end{align*}
Hence, $(u_e^{(1)}, v_e^{(1)},p_e^{(1)})$ solves the equation (\ref{middleeuler1}) and satisfies (\ref{euler1esti}).

\end{proof}

\subsubsection{The linearized Prandtl system for  $(u_p^{(1)},v_p^{(2)})$ and its solvability}
\indent

In this subsection, we consider the solvability of \eqref{boundarylayeru1}. We have the following Proposition.

\begin{proposition}\label{propdcu1}
There exists $\dl_0>0$ such that for any $\dl\in(0,\dl_0)$, the equations (\ref{boundarylayeru1}) have a unique solution $(u_p^{(1)},v_p^{(2)})$ which satisfies
\begin{align*}
\begin{aligned}
&\lt\|\partial_\th^j\partial_\zeta^k \big({u}_p^{(1)}-A_{1}\big)\big<\zeta\big>^\ell\rt\|_{L^\i}\leq C_{j,k,\ell}\dl r_0,\\
&\lt\|\partial_\th^j\partial_\zeta^k v_p^{(2)}\big<\zeta\big>^\ell\rt\|_{L^\i}\leq C_{j,k,\ell}\dl,\q \int_0^{2\pi}v_p^{(2)}(\th,\zeta)dx=0, \ \forall\ \zeta\leq 0,
\end{aligned}
\end{align*}
where
$A_{1}:=\lim\limits_{\zeta\rightarrow -\infty} u_p^{(1)}(\th,\zeta)$ is a constant which satisfies $|A_{1}|\leq C\dl r_0.$
\end{proposition}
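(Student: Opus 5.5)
The statement is a linear problem for $(u_p^{(1)},v_p^{(2)})$, with coefficients built from the leading-order layer $(u_p^{(0)},v_p^{(1)})$, so the plan is to treat \eqref{boundarylayeru1} as a linearized steady Prandtl system around $\t{\o}(r_0+1)+u_p^{(0)}$. I would first rescale exactly as in \eqref{boundarylayeru0scaling}: set $\bar u=(r_0+1)^{-1}u_p^{(1)}$. Then the tangential velocity $\t{\o}+\bar u_p^{(0)}$ is bounded below by $\t{\o}/2$ for $\dl$ small, and all coefficients are $O(\dl)$ perturbations of the constant rotation, uniformly in $r_0$. By Proposition \ref{propdcu0} and Proposition \ref{propeulerorder1}, the forcing $f_{p,1}/(r_0+1)$ is bounded in every weighted norm $\langle\zeta\rangle^\ell\p^j_\th\p^k_\zeta$ by $C\dl$. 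This uses that every term in $f_{p,1}$ contains a factor of $u_p^{(0)}$, $v_p^{(1)}$ or their derivatives, which decay rapidly; the coefficients $u^e(r_0+1)$, $(u^e)'(r_0+1)$ are $O(r_0)$, and they pair with $v^{(1)}_p=O(\dl)$.

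The pressure equation $-(r_0+1)\p_\zeta p_p^{(2)}=g_{p,1}$ decouples. I would integrate it from $-\infty$ and define $p^{(2)}_p$ afterwards; it does not enter the first equation. So the core step is to solve the $\bar u$ equation, with $v_p^{(2)}$ determined by the divergence relation:
\[
v_p^{(2)}(\th,\zeta)=-\int_{-\infty}^\zeta\Big(\p_\th \bar u+(r_0+1)^{-1}\p_\zeta(\zeta v^{(1)}_p)\Big)\,d\zeta'.
\]
The boundary condition is $\bar u|_{\zeta=0}=-(r_0+1)^{-1}u^{(1)}_e(\th,r_0+1)=O(\dl)$ by \eqref{euler1esti}.

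\textbf{Zero mode and the constant $A_1$.} A bounded solution need not decay at $\zeta\to-\infty$. I would decompose $\bar u=\bar u_0(\zeta)+\bar u_{\neq}(\th,\zeta)$. The zero-frequency part satisfies an ODE of the form $-\p^2_\zeta\bar u_0=$ (averages of products involving $u^{(0)}_p$ and $v^{(1)}_p$, all rapidly decaying) $+$ (terms linear in $\bar u_{\neq}$). Integrating twice, with the condition $\p_\zeta\bar u_0\to0$, gives $\bar u_0\to$ a finite constant. This constant is $A_1/(r_0+1)$, so $|A_1|\ls\dl r_0$.

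\textbf{Nonzero modes.} For $\bar u_{\neq}$, the leading operator is $\t{\o}\p_\th-\p^2_\zeta$ on nonzero modes. This operator is coercive: multiplying by $\bar u_{\neq}\langle\zeta\rangle^{2\ell}$ and by $\p_\th\bar u_{\neq}$ gives $\|\p_\zeta\bar u_{\neq}\|$ and $\|\p_\th\bar u_{\neq}\|$ control plus exponential decay in $\zeta$. Applying $\p^j_\th\p^k_\zeta$ and using the equation to trade $\p^2_\zeta$ for lower-order terms yields the weighted bounds. The remaining terms are treated as perturbations:
\begin{itemize}
\item the $O(\dl)$ coefficient terms $\bar u_p^{(0)}\p_\th$, $u_p^{(1)}\p_\th u_p^{(0)}$, $(v^{(1)}_e+v^{(1)}_p)\p_\zeta$;
\item the nonlocal term $(v^{(2)}_p-v^{(2)}_p(\th,0))\p_\zeta u^{(0)}_p$, whose coefficient $\p_\zeta u^{(0)}_p$ decays rapidly.
\end{itemize}
I would close with a contraction/fixed-point argument in a weighted $L^\infty$ space with $\dl$ small, following \cite[Corollary 2.4]{FeiGLT:2023CMP}. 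This argument also yields uniqueness.

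\textbf{Main obstacle.} I expect the hardest point to be the interaction between the non-decaying constant $A_1$ and the $v^{(2)}_p$ coupling. The term $\p_\th\bar u$ kills $A_1$, so $v^{(2)}_p$ stays decaying and has zero mean, because $\int_0^{2\pi}\p_\th\bar u\,d\th=0$ and $v_p^{(1)}$ has zero mean. I would therefore work with $\bar u-A_1/(r_0+1)$ in the decaying space, treating $A_1$ as an unknown fixed by the zero-mode ODE. I would also check carefully that no factor of $r_0$ is lost in the $u^e(r_0+1)v^{(1)}_p$ forcing term; it yields exactly the $\dl r_0$ scale after scaling back.
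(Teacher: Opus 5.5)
Your proposal is correct and shares the paper's overall strategy. You normalize by $(r_0+1)$ so that \eqref{boundarylayeru1} becomes a linearized steady Prandtl problem around $\tilde\omega+(r_0+1)^{-1}u_p^{(0)}$, with coefficients and data that are $O(\delta)$ uniformly in $r_0$, and then rely on the Fei--Gao--Lin--Tao theory.

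The difference is what happens after the normalization. The paper proves no estimates at all. It lifts the boundary data with a cutoff $\kappa\in C_c^\infty((-\infty,0])$, $\kappa(0)=1$, $\int\kappa=0$, setting $u=(r_0+1)^{-1}\big(u_p^{(1)}+u_e^{(1)}(\theta,r_0+1)\kappa(\zeta)\big)$. It corrects $v$ by $-v_p^{(2)}(\theta,0)$, by $\zeta v_p^{(1)}$ and by $-(r_0+1)^{-1}\partial_\theta u_e^{(1)}(\theta,r_0+1)\int_0^\zeta\kappa$. The system is then literally (2.47) of \cite{FeiGLT:2023CMP}, with $u=v=0$ at $\zeta=0$ and a forcing $\bar f$ bounded by $C\delta$ independently of $r_0$, and the proposition is quoted. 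The directly relevant result there is the one for the linearized system (2.47), not Corollary 2.4, which concerns the nonlinear leading-order layer.

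You instead sketch a self-contained argument: the zero-mode ODE producing the constant $A_1$, coercivity of $\tilde\omega\partial_\theta-\partial_\zeta^2$ on nonzero modes, and a perturbation argument in $\delta$. This makes transparent two points the paper leaves entirely to the citation: why the limit at $-\infty$ is independent of $\theta$, and why $v_p^{(2)}$ decays with zero mean. The cost is that you must actually carry out the estimates.

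Two technical corrections.

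First, your multipliers $\bar u_{\neq}\langle\zeta\rangle^{2\ell}$ and $\partial_\theta\bar u_{\neq}$ implicitly assume homogeneous Dirichlet data. With $\bar u|_{\zeta=0}=-(r_0+1)^{-1}u_e^{(1)}(\theta,r_0+1)$, the second multiplier leaves the uncontrolled trace term $\int_{\mathbb T}\partial_\zeta\bar u\,\partial_\theta\bar u\,d\theta\,\big|_{\zeta=0}$. So you need a lift first. The paper's $\kappa$ is exactly such a lift; because $\int\kappa=0$, the induced correction to $v$ vanishes below the support of $\kappa$.

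Second, your scaling bookkeeping is off by a factor $r_0$. With $u_p^{(1)}=(r_0+1)\bar u$, the normalized equation is obtained by dividing by $(r_0+1)^2$, and the forcing is $(r_0+1)^{-2}f_{p,1}$, as in the paper's $\bar f$. Since $(u^e)'(r_0+1)\approx\tilde\omega r_0$, the terms $(r_0+1)(u^e)'(r_0+1)v_p^{(1)}$ and $\zeta(u^e)'(r_0+1)\partial_\theta u_p^{(0)}$ in $f_{p,1}$ are of size $\delta r_0^2$. Hence $f_{p,1}/(r_0+1)$ is $O(\delta r_0)$, not $O(\delta)$ as you claim. With the correct normalization, your conclusions $u_p^{(1)}-A_1=O(\delta r_0)$, $|A_1|\lesssim\delta r_0$ and $v_p^{(2)}=O(\delta)$ are unaffected.
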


Let $\kappa\in C_c^\infty ((-\infty,0])$ satisfy
\begin{align*}
\kappa(0)=1,\ \int^0_{-\infty}\kappa(\zeta)d\zeta=0.
\end{align*}
For simplicity, we set
\begin{align*}
\bar{u}:&=\t{\o}+(r_0+1)^{-1}u_p^{(0)}, \ \bar{v}:=v_p^{(1)}+v^{(1)}_p(\th,r_0+1),\\
u:&=(r_0+1)^{-1} u_p^{(1)}+(r_0+1)^{-1}u_e^{(1)}(\th,r_0+1)\kappa(\zeta),\\
v:&=v_p^{(2)}-v_p^{(2)}(\th,0)+\zeta v_p^{(1)} -(r_0+1)^{-1}\p_\th u_e^{(1)}(\th,r_0+1)\int_0^\zeta \kappa(\bar{\zeta})d\bar{\zeta}.
\end{align*}
Then, the equations (\ref{boundarylayeru1}) reduce to
\be\label{boundarylayeru1r}
\left \{
\begin {array}{ll}
\bar{u}\partial_x u+\bar{v}\partial_\zeta u+u\partial_x \bar{u}+v\partial_\zeta\bar{u}-\partial^2_{\zeta}u=\bar{f},\\[7pt]
\partial_\th u+\partial_\zeta v=0,\\[5pt]
u(\th,\zeta)=u(\th+2\pi,\zeta),\ v(\th,\zeta)=v(\th+2\pi,\zeta)\\[5pt]
u|_{\zeta=0}=v|_{\zeta=0}=0,\  \lim\limits_{\zeta\rightarrow -\infty}\partial_\zeta u=0,
\end{array}
\right.
\ee
where $\bar{f}(\th,\zeta)$ is $2\pi$-periodic function and decays fast as $\zeta\rightarrow -\infty$ given as follows
\begin{align*}
\bar{f}=&(r_0+1)^{-2} f_{p,1}+\bar{u} (r_0+1)^{-1} \p_\th u_e^{(1)}(\th,r_0+1)\kappa(\zeta)\\
        &+\lt(\zeta v_p^{(1)} -(r_0+1)^{-1}\p_\th u_e^{(1)}(\th,r_0+1)\int_0^\zeta \kappa(\bar{\zeta})d\bar{\zeta}\rt)\p_\zeta \bar{u}\\
        &+(r_0+1)^{-1} u_e^{(1)}(\th,r_0+1)\lt(\p_\th \bar{u}\kappa(\zeta)+\bar{v}\kappa'-\kappa^{''}(\zeta)\rt)
\end{align*}
It is easy to see that all the estimates on $\bar{f}$ is uniform for $r_0$, which means
\bes
\lt\|\partial_\th^j\partial_\zeta^k \bar{f}\big<\zeta\big>^\ell\rt\|_{L^\i}\leq C_{j,k,\ell}\dl,\ \forall\ \zeta\leq 0.
\ees

This system \eqref{boundarylayeru1r} is exactly (2.47) in \cite{FeiGLT:2023CMP}. Reader can refer there for further detailed proof of Proposition \ref{propdcu1}. Here we omit the details.

Next, we construct the pressure $p_p^{(2)}(x,\zeta)$. Thanks to \eqref{boundarylayeru1}$_2$, we take
\bes
p_p^{(2)}(\th,\zeta)=\int^{-\i}_{\zeta} (r_0+1)^{-1}g_{p,1}(\th,\bar{\zeta})d\bar{\zeta}.
\ees
which satisfies
\begin{align*}
-(r_0+1)\p_\zeta p_p^{(2)}(\th,\zeta)=g_1(\th,\zeta), \quad \lim_{\zeta\rightarrow -\infty}p_p^{(2)}(\th,\zeta)=0,
\end{align*}

\subsubsection{The linearized boundary layer system for  $(\hat{u}_p^{(1)},\hat{v}_p^{(1+\f{2}{3})},\hat{p}^{(1+\f{2}{3})}_p)$ and its solvability}
\indent

In this subsection, we consider the solvability of \eqref{boundarylayerl1}. We have the following proposition.

\begin{proposition}\label{propdcl1}
There exists $\dl_0>0$ and a constant $\hat{A}_1$ such that for any $\dl\in(0,\dl_0)$, the equations (\ref{boundarylayerl1}) have a unique solution $(\hat{u}_p^{(1)},\hat{v}_p^{(1+\f{2}{3})},\hat{p}^{(1+\f{2}{3})}_p)$ which satisfies
\begin{align} \label{decayofbll1}
\begin{aligned}
&\lt\|\partial_\th^j\p^k_{\eta}\big(\hat{u}_p^{(1)}-\hat{A}_{1}\big)\eta^{\ell} \rt\|_{L^\i}\leq C_{j,k,\ell}\dl r_0, \\
&\lt\|\partial_\th^j\p^k_{\eta}\hat{v}_p^{(1+\f{2}{3})}\eta^{\ell} \rt\|_{L^\i}\leq C_{j,k,\ell}\dl ,\q \int_0^{2\pi}\hat{v}_p^{(1+\f{2}{3})}(\th,\eta)dx=0, \ \forall\ \ell\in \mathbb{N},\ \eta\geq 0,
\end{aligned}
\end{align}
where $\hat{A}_{1}=\lim\limits_{\eta\rightarrow+\i}\hat{u}_p^{(1)}$ satisfying $\hat{A}_{1}\ls C\dl r_0$.
\end{proposition}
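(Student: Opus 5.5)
The plan is to solve \eqref{boundarylayerl1} explicitly by a Fourier expansion in $\th$. For each nonzero mode the problem reduces to an Airy equation in $\eta$, and all the estimates in \eqref{decayofbll1} then follow from the decay of the Airy function together with the rapid decay in $n$ of the boundary datum $-u^{(1)}_e(\th,r_0)$ supplied by Proposition \ref{propeulerorder1}.

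\textbf{Reduction.} First I eliminate $\hat{v}^{(1+\f{2}{3})}_p$. Equation \eqref{boundarylayerl1}$_3$ and the condition $\hat{v}^{(1+\f{2}{3})}_p\to0$ as $\eta\to+\i$ force
\[
\hat{v}^{(1+\f{2}{3})}_p(\th,\eta)=\f{1}{r_0}\int_\eta^{\i}\p_\th\hat{u}^{(1)}_p(\th,\bar\eta)\,d\bar\eta .
\]
Hence $\hat{v}^{(1+\f{2}{3})}_p-\hat{v}^{(1+\f{2}{3})}_p(\th,0)=-\f{1}{r_0}\int_0^\eta\p_\th\hat{u}^{(1)}_p$. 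By \eqref{boundarylayerl1}$_2$ the pressure is a function $\hat{p}(\th)$ only. I write
\[
\hat{u}^{(1)}_p=\sum_n \hat{u}_n(\eta)e^{in\th},\qquad -u^{(1)}_e(\th,r_0)=\sum_n g_n e^{in\th},\qquad \hat p=\sum_n p_n e^{in\th}.
\]

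\textbf{Zero mode.} The equation reduces to $\hat{u}_0''=0$. Boundedness then forces $\hat{u}_0\equiv g_0=:\hat{A}_1$. By \eqref{euler1esti}, $|\hat{A}_1|\ls\dl r_0$.

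\textbf{Nonzero modes, $n\neq0$.} The mode equation is
\[
2Ain\Big(\eta\hat{u}_n-\int_0^\eta\hat{u}_n\Big)-r_0\hat{u}_n''+inp_n=0 .
\]
Differentiating in $\eta$ and using $(\eta\hat u_n-\int_0^\eta\hat u_n)'=\eta\hat u_n'$, the function $w_n:=\hat{u}_n'$ solves
\[
w_n''=\f{2Ain}{r_0}\,\eta\,w_n .
\]
This is an Airy equation with a complex coefficient. By \eqref{defAB}, $\f{2A}{r_0}\approx\t{\o}$ is bounded above and below uniformly in $r_0\ge100$, which is what will make every constant independent of $r_0$.

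I take the unique solution decaying as $\eta\to+\i$:
\[
w_n=c_n\,\mathrm{Ai}\big(e^{i\,\mathrm{sgn}(n)\pi/6}\lambda_n\eta\big),\qquad \lambda_n=(2A|n|/r_0)^{1/3}\approx|n|^{1/3}.
\]
The argument lies in the sector $|\arg z|<\pi/3$, where $\mathrm{Ai}$ decays like $\exp(-\f{2}{3}z^{3/2})$ with $\mathrm{Re}\,z^{3/2}>0$. I then set $\hat{u}_n=-\int_\eta^\i w_n$, so that $\hat u_n\to0$ at infinity; the whole limit $\hat{A}_1$ therefore comes from the zero mode.

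The constant $c_n$ is fixed by $\hat{u}_n(0)=g_n$. This requires $\int_0^\i\mathrm{Ai}(e^{\pm i\pi/6}s)\,ds\neq0$, which I obtain by rotating the contour back to the real axis: the integral equals $e^{\mp i\pi/6}\cdot\f13$. Hence $|c_n|\approx\lambda_n|g_n|$. The pressure mode is read off at $\eta=0$:
\[
p_n=\f{r_0w_n'(0)}{in}.
\]
Integrating the differentiated equation back shows the undifferentiated one holds. One then checks that $\hat{v}^{(1+\f{2}{3})}_p$ decays and has zero $\th$-mean, which is automatic since only the $n\neq0$ modes enter it.

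\textbf{Estimates.} Since $u^{(1)}_e(\cdot,r_0)$ is smooth, \eqref{euler1esti} and integration by parts give $|g_n|\le C_k\dl r_0|n|^{-k}$ for every $k$. For each mode,
\[
|\eta^\ell\p_\eta^k\hat{u}_n|\ls |g_n|\,\lambda_n^{k-\ell}\sup_s s^\ell e^{-cs^{3/2}}\ls |g_n|\,|n|^{(k+\ell)/3}.
\]
Summing over $n$ with the weights $|n|^j$ gives the bound $C_{j,k,\ell}\dl r_0$ for $\hat{u}^{(1)}_p-\hat{A}_1$. For the normal velocity, the explicit formula gives $\hat{v}^{(1+\f{2}{3})}_p=\f{1}{r_0}\sum_n in\big(\int_\eta^\i\hat{u}_n\big)e^{in\th}$, so the factor $r_0^{-1}$ cancels the $r_0$ in $g_n$ and yields $C_{j,k,\ell}\dl$. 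Uniqueness holds in the class of bounded solutions, since the growing Airy branch is excluded mode by mode.

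\textbf{Main obstacle.} The delicate point is keeping all Airy bounds uniform in both $n$ and $r_0$. This requires a clean lower bound on the boundary-normalizing integral, which I get from the contour rotation, and the decay estimate along the rotated ray $e^{\pm i\pi/6}\mathbb{R}_+$. For the latter I would use the standard asymptotics of $\mathrm{Ai}$ on $|\arg z|\le\pi/6$ combined with its boundedness on compact sets.
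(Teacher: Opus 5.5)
Your argument is correct, and it takes a different route from the paper's. The paper does not solve \eqref{boundarylayerl1} by hand. It chooses $\hat p_p^{(1+\f23)}=2Ar_0\int_0^\th\hat v_p^{(1+\f23)}(\bar\th,0)\,d\bar\th$ to cancel the trace term. It then lifts the Dirichlet datum with a cutoff $\kappa$ satisfying $\kappa(0)=1$, $\int_0^\i\kappa=0$. This reduces the problem to the forced system \eqref{boundarylayerl1f} with homogeneous boundary data and coefficient $2Ar_0^{-1}\approx\t\o$. Existence, uniqueness and weighted decay for that system are quoted from Proposition 4.6 of \cite{FeiPZ:2023ARXIV}. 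The bounds on $\hat u^{(1)}_p-\hat A_1$ and $\hat v^{(1+\f23)}_p$ follow by undoing the lifting, with $\hat A_1=r_0\hat B_1$.

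Your Fourier--Airy construction is self-contained and more explicit. You differentiate the mode equation to get $w_n''=\f{2Ain}{r_0}\eta w_n$, keep the decaying branch $\mathrm{Ai}(e^{\pm i\pi/6}\lambda_n\eta)$, and normalize using $\int_0^\i\mathrm{Ai}(e^{\pm i\pi/6}s)\,ds=\f13e^{\mp i\pi/6}$. Every nonzero mode is then $g_nF_\pm(\lambda_n\eta)$ for one fixed profile $F_\pm$. So uniformity in $n$ and $r_0$ follows directly from $\lambda_n\gtrsim|n|^{1/3}$. You also get the identification $\hat A_1=-\fint_0^{2\pi}u^{(1)}_e(\th,r_0)\,d\th$ for free.

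What the paper's reduction buys is reuse. The same forced system (\eqref{boundarylayerl1f}, resp. \eqref{boundarylayerlkf}) also handles the inner layers \eqref{boundarylayerl4/3} and \eqref{boundarylayerlk}, which carry nonzero forcing. Your method would need a Green's function built from the decaying and growing Airy branches, with the same uniform bounds, to cover those.

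One precision on uniqueness. Boundedness alone only removes the growing branch: with it gone, $\hat u_n=g_n+\al\int_0^\eta\mathrm{Ai}(\cdot)$ stays bounded for every $\al$. It is the condition $\hat v^{(1+\f23)}_p\to0$, i.e. convergence of $\int_0^\i\hat u_n$, that forces $\hat u_n\to0$ for $n\neq0$ and pins down $c_n$. Your formula $\hat v^{(1+\f23)}_p=\f1{r_0}\int_\eta^\i\p_\th\hat u^{(1)}_p$ already builds this in. So state uniqueness in the class of bounded $\hat u^{(1)}_p$ with $\hat v^{(1+\f23)}_p\to0$, and with $\hat p^{(1+\f23)}_p$ determined up to a constant.
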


\pf

We set
\bes
 \hat{p}_p^{(1+\f{2}{3})}=2Ar_0\int^\th_0\hat{v}^{(1+\f{2}{3})}_p(\bar{\th},0)d\bar{\th}.
\ees
Let $\kappa\in C_c^\infty ([0,+\i))$ satisfy
\begin{align*}
\kappa(0)=1,\ \int_0^{\infty}\kappa(\eta)d\eta=0,\q  \kappa^{(k)}(\eta)\leq  C_k.
\end{align*} define
\begin{align*}
&\hat{u}(\th,\eta)=r_0^{-1}\hat{u}^{(1)}_p(\th,\eta)+\kappa(\eta)r_0^{-1}u^{(1)}_e(\th,r_0),\q \hat{v}=\hat{v}^{(1+\f{2}{3})}_p-r_0^{-1}\int^{\eta_\ast}_0\kappa(\bar{\eta})d\bar{\eta}\p_\th u^{(1)}_e(\th,r_0).
\end{align*}
Then the system \eqref{boundarylayerl1} can be reformulated into
 \be\label{boundarylayerl1f}
 \lt\{
 \begin{aligned}
 &\lt(2Ar_0^{-1}\rt)\eta \p_\th \hat{u}+\lt(2A r_0^{-1}\rt)\hat{v}-\p^2_{\eta}\hat{u}=\hat{f}_{p,1},\\
 &\p_\th \hat{u}+\p_{\eta}\hat{v}=0,\\
 &\hat{u}\big|_{\eta=0}=0,\q \p_\eta\hat{u}\big|_{\eta=+\i}=\hat{v}\big|_{\eta=+\i}=0,
 \end{aligned}
 \rt.
 \ee

where
\begin{align}
\hat{f}_{p,1}:=&\lt(2Ar_0^{-1}\rt) r_0^{-1}\p_\th u^{(1)}_e(\th,r_0)\lt(\eta\kappa(\eta)-\int^{\eta}_0\kappa(\bar{\eta})d\bar{\eta}\rt)-\kappa^{''}(\eta)r_0^{-1}u^{(1)}_e(\th,r_0),
\end{align}
has compact support for $\eta$ and satisfying
\bes
\|\eta_\ast^\ell\p^j_\th\p^k_{\eta} \hat{f}_{p,1}\|_{L^\i}\leq C_{j,k,\ell}\dl .
\ees
From the definition of $A$ in \eqref{defAB}, we see that
\bes
2A r_0^{-1}\approx\t{\o}.
\ees

The system \eqref{boundarylayerl1f} is the same with (4.14) in \cite{FeiPZ:2023ARXIV} and has been solved there by Proposition 4.6. Actually we can obtain that there exists a Constant $\hat{B}_1=\lim\limits_{\eta\rightarrow+\i}\hat{u}$ such that

\begin{align*}
&\lt\|\partial_\th^j\p^k_{\eta}\big(\hat{u}-\hat{B}_{1}\big)\eta^{\ell} \rt\|_{L^\i}\leq C_{j,k,\ell}\dl, \\
&\lt\|\partial_\th^j\p^k_{\eta}\hat{v}\eta^{\ell} \rt\|_{L^\i}\leq C_{j,k,\ell}\dl,\q \int_0^{2\pi}\hat{v}_p^{(1+\f{2}{3})}(\th,\eta)d\th=0, \ \forall\ \ell\in \mathbb{N},\ \eta\geq 0,
\end{align*}
and $\hat{B}_{1}=\lim\limits_{\eta\rightarrow+\i}\hat{u}$ satisfying $\hat{B}_{1}\ls C\dl$. From the above estimates for $(\hat{u},\hat{v})$ and going back to $(\hat{u}_p^{(1)}, \hat{v}_p^{(1+2/3)})$, we have
\begin{align*}
&\hat{u}^{(1)}_p(\th,\eta)=r_0 \hat{u}(\th,\eta)-\kappa(\eta)u^{(1)}_e(\th,r_0),\\
&\hat{v}^{(1+\f{2}{3})}_p(\th,\eta)=\hat{v}(\th,\eta)+r_0^{-1}\int^{\eta}_0\kappa(\bar{\eta})d\bar{\eta}\p_\th u^{(1)}_e(\th,r_0).
\end{align*}
Then define $\hat{A}_{1}:=\hat{B}_{1}r_0$,
\begin{align*}
&\hat{u}^{(1)}_p(\th,\eta)-\hat{A}_{1}=r_0 \lt[\hat{u}(\th,\eta)-\hat{B}_{1}\rt]-\kappa(\eta) u^{(1)}_e(\th,r_0),\\
&\hat{v}^{(1+\f{2}{3})}_p(\th,\eta)=\hat{v}(\th,\eta)+r_0^{-1}\int^{\eta}_0\kappa(\bar{\eta})d\bar{\eta}\p_\th u^{(1)}_e(\th,r_0),
\end{align*}

which indicates that
\begin{align*}
&\lt|\eta^\ell\partial_\th^j\p^k_{\eta}\lt[\hat{u}^{(1)}_p(\th,\eta)-\hat{A}_{1}\rt]\rt|= r_0\lt|\eta^\ell\partial_\th^j\p^k_{\eta} \lt[\hat{u}(\th,\eta)-\hat{B}_{1}\rt]\rt| +\lt|\eta^\ell\partial_\th^j\p^k_{\eta}\kappa(\eta)u^{(1)}_e(\th,r_0)\rt|\leq C_{j,k,\ell}\dl r_0,
\end{align*}

and
\begin{align*}
&\lt|\eta^\ell\partial_\th^j\p^k_{\eta}\hat{v}^{(1+\f{2}{3})}_p(\th,\eta)\rt|=\lt|\eta_\ast^\ell\partial_\th^j\p^k_{\eta} \lt[\hat{v}(\th,\eta)+r_0^{-1}\int^{\eta}_0\kappa(\bar{\eta})d\bar{\eta}\p_\th u^{(1)}_e(\th,r_0)\rt]\rt|\leq  C_{j,k,\ell}\dl.
\end{align*}

 we can obtain \eqref{decayofbll1}. \ef

\subsubsection{The linearized Euler system for $({u}_p^{(1+\f{1}{3})},\ {v}_p^{(2)},\ {p}_p^{(2)})$ and its solvability}

It is taken to be $({u}_p^{(1+\f{1}{3})},\ {v}_p^{(2)},\ {p}_p^{(2)})\equiv (0,0,0)$.

\subsubsection{The linearized Euler system for $(\hat{u}_p^{(1+\f{1}{3})},\ \hat{v}_p^{(2)},\ \hat{p}_p^{(2)})$ and its solvability}

\indent

By setting $(\hat{u},\hat{v})=(r_0^{-1}\hat{u}_p^{(1+\f{1}{3})},\ \hat{v}_p^{(2)})$, we see $(\hat{u},\hat{v})$ satisfies the same system with \eqref{boundarylayerl1f}, so the solvability and estimates for $(\hat{u}_p^{(1+\f{1}{3})},\ \hat{v}_p^{(2)})$ are the same with Proposition \ref{propdcl1}. Here we have the following Proposition and omit the proof.

\begin{proposition}\label{propdcl4/3}
There exists $\dl_0>0$ and a constant $\hat{A}_{1+\f{1}{3}}$ such that for any $\dl\in(0,\dl_0)$, the equations (\ref{boundarylayerl4/3}) have a unique solution $(\hat{u}_p^{(1+\f{1}{3})},\hat{v}_p^{(2)},\hat{p}^{(2)}_p)$ which satisfies
\bes
\begin{aligned}
&\lt\|\partial_\th^j\p^k_{\eta}\big(\hat{u}_p^{(1+\f{1}{3})}-\hat{A}_{1+\f{1}{3}}\big)\langle\eta\rangle^{\ell} \rt\|_{L^\i}\leq C_{j,k,\ell}\dl r_0, \\
&\lt\|\partial_\th^j\p^k_{\eta}\hat{v}_p^{(2)}\langle\eta\rangle^{\ell} \rt\|_{L^\i}\leq C_{j,k,\ell}\dl,\q \int_0^{2\pi}\hat{v}_p^{(2)}(x,\eta)d\th=0, \ \forall\ \ell\in \mathbb{N},\ \eta\geq 0,
\end{aligned}
\ees
where $\hat{A}_{1+\f{1}{3}}=\lim\limits_{\eta\rightarrow+\i}\hat{u}_p^{(1+\f{1}{3})}$ satisfying $\hat{A}_{1+\f{1}{3}}\ls C\dl r_0$.
\end{proposition}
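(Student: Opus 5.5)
The plan is to reduce \eqref{boundarylayerl4/3} to the same normalized system \eqref{boundarylayerl1f} that was solved in the proof of Proposition \ref{propdcl1}, and then rescale.

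\textbf{Step 1: remove the pressure.} Since $\p_\eta \hat{p}^{(2)}_p=0$, the pressure depends on $\th$ only. Evaluating the first equation at $\eta=0$ and using $\hat{u}^{(1+\f13)}_p|_{\eta=0}=0$ shows that $\p_\th\hat{p}^{(2)}_p$ is determined by $\hat{f}_{p,4/3}(\th,0)$ and $\p^2_\eta\hat{u}^{(1+\f13)}_p(\th,0)$. I will choose
\[
\hat{p}^{(2)}_p=2Ar_0\int_0^\th\hat{v}^{(2)}_p(\bar\th,0)\,d\bar\th,
\]
exactly as in Proposition \ref{propdcl1}. This cancels the term $-2Ar_0\hat{v}^{(2)}_p(\th,0)$ and leaves a system with no pressure.

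\textbf{Step 2: normalize.} Set $\hat{u}=r_0^{-1}\hat{u}^{(1+\f13)}_p$ and $\hat{v}=\hat{v}^{(2)}_p$. Dividing by $r_0$ gives
\[
(2Ar_0^{-1})\eta\p_\th\hat{u}+(2Ar_0^{-1})\hat{v}-\p^2_\eta\hat{u}=r_0^{-1}\hat{f}_{p,4/3}, \qquad \p_\th\hat{u}+\p_\eta\hat{v}=0,
\]
with $\hat{u}|_{\eta=0}=0$ and $\hat{v}\to0$ at infinity. The boundary data are already homogeneous, so no $\kappa$-lift is needed. The coefficient satisfies $2Ar_0^{-1}\approx\t{\o}$, uniformly in $r_0$.

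\textbf{Step 3: bound the source (main obstacle).} The solvability result in \cite[Proposition 4.6]{FeiPZ:2023ARXIV} requires
\[
\|\eta^\ell\p^j_\th\p^k_\eta(r_0^{-1}\hat{f}_{p,4/3})\|_{L^\i}\le C_{j,k,\ell}\dl,
\]
uniformly in $r_0$. I will check this term by term using Propositions \ref{propdcl1} and \ref{propeulerorder1}.

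The term $r_0\bigl(\eta\p_rv^{(1)}_e(\th,r_0)+\hat{v}^{(1+\f23)}_p-\hat{v}^{(1+\f23)}_p(\th,0)\bigr)\p_\eta\hat{u}^{(1)}_p$ has prefactor of size $r_0(\dl r_0^{-1}\eta+\dl)$. It multiplies $\p_\eta\hat{u}^{(1)}_p=O(\dl r_0)$, which decays rapidly in $\eta$. The product is $O(\dl^2r_0^2)$ with fast decay, and after dividing by $r_0$ it is of order $\dl^2r_0$. The terms $\hat{u}^{(1)}_p\p_\th\hat{u}^{(1)}_p$ and $\p_\th(u^{(1)}_e(\th,r_0)\hat{u}^{(1)}_p)$ are likewise $O(\dl^2r_0^2)$.

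So, naively, the source after normalization is $O(\dl^2 r_0)$, not $O(\dl)$. Two further issues arise. First, $\hat{u}^{(1)}_p$ tends to the constant $\hat{A}_1\neq0$ rather than decaying. Second, the source may not decay in $\eta$: the piece $\eta\p_rv^{(1)}_e\,\p_\eta\hat u^{(1)}_p$ decays, but $\hat{u}^{(1)}_p\p_\th\hat{u}^{(1)}_p$ does not.

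To handle the missing decay, I would split $\hat{u}^{(1)}_p=\hat{A}_1+(\hat{u}^{(1)}_p-\hat{A}_1)$. The constant part $\hat{A}_1\p_\th\hat{u}^{(1)}_p$ still decays, because $\p_\th$ kills $\hat{A}_1$ and only $\p_\th(\hat{u}^{(1)}_p-\hat{A}_1)$ survives. The non-decaying piece coming from $\p_\th(u^{(1)}_e\hat{A}_1)$ would be moved into the Euler corrector at the next order, consistent with the matching picture.

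To handle the $r_0$ size, the plan is to accept the constant $C_{j,k,\ell}\dl r_0$ claimed in the statement for $\hat{u}^{(1+\f13)}_p$. This requires the normalized $\hat u$ to be $O(\dl)$, which in turn needs $\dl r_0\lesssim1$ or a refined tracking of the $r_0$ factors. I expect this step to require the most care: if the bound $\dl r_0\lesssim1$ is not available, I would redo the weighted estimates of \cite{FeiPZ:2023ARXIV}, which are linear in the source, and obtain a constant proportional to the size of the source.

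\textbf{Step 4: conclude.} By linearity, \cite[Proposition 4.6]{FeiPZ:2023ARXIV} gives a unique solution of the normalized system with
\[
\|\p^j_\th\p^k_\eta(\hat{u}-\hat{B})\eta^\ell\|_{L^\i}+\|\p^j_\th\p^k_\eta\hat{v}\,\eta^\ell\|_{L^\i}\lesssim\text{(size of the source)},
\]
where $\hat{B}$ is a constant. The zero-mean property $\int_0^{2\pi}\hat{v}\,d\th=0$ follows from integrating the divergence equation in $\th$ together with the decay at infinity. Scaling back with $\hat{A}_{1+\f13}=r_0\hat{B}$ gives the stated bounds.
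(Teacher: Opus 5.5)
Your route is the paper's. Its whole argument is to set $(\hat u,\hat v)=(r_0^{-1}\hat u_p^{(1+\f{1}{3})},\hat v_p^{(2)})$, note that with the pressure chosen as in Proposition \ref{propdcl1} this gives the system \eqref{boundarylayerl1f}, and apply the same solvability result.

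However, Step 3 as written does not close, and the obstruction you find there comes from a scaling slip in Step 2. Substituting $\hat u_p^{(1+\f{1}{3})}=r_0\hat u$ and your pressure into \eqref{boundarylayerl4/3} gives
\[
2Ar_0\,\eta\,\p_\th\hat u+2Ar_0\,\hat v-r_0^2\,\p_\eta^2\hat u=\hat f_{p,4/3}.
\]
So the left-hand side you display comes from dividing by $r_0^2$, not $r_0$, and the normalized source is $r_0^{-2}\hat f_{p,4/3}$.

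Your own size count then settles the matter. From Propositions \ref{propdcl1} and \ref{propeulerorder1} you have $\hat u_p^{(1)},\p_\eta\hat u_p^{(1)},u_e^{(1)}(\th,r_0)=O(\dl r_0)$, $\hat v_p^{(1+\f{2}{3})}=O(\dl)$ and $\p_rv_e^{(1)}=O(\dl r_0^{-1})$. Hence every term of $\hat f_{p,4/3}$ is $O(\dl^2r_0^2)$, and the normalized source is $O(\dl^2)$. All weighted derivative bounds are uniform in $r_0$, apart from the non-decaying piece discussed below.

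Linearity then gives $\hat u-\hat B=O(\dl^2)$, so $\hat u_p^{(1+\f{1}{3})}-\hat A_{1+\f{1}{3}}=O(\dl^2r_0)\le C\dl r_0$, with no condition linking $\dl$ and $r_0$. As it stands, your fallback would only give $O(\dl^2r_0^2)$ for $\hat u_p^{(1+\f{1}{3})}$, which is not $\ls\dl r_0$ uniformly as the annulus expands. You also cannot assume $\dl r_0\ls 1$, since $\dl_0$ must not depend on $r_0$. Replace the hedge with the correct scaling and the ``main obstacle'' disappears.

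Your decay remark is legitimate. The proposition comes before the paper's correction step, and $\hat f_{p,4/3}$ as written tends to $\hat A_1\p_\th u_e^{(1)}(\th,r_0)$ as $\eta\to\infty$. Your fix matches what the paper does afterwards: it replaces $\hat u_p^{(1)}$ by $\hat u_p^{(1)}-\hat A_1$ and absorbs $\hat A_1$ into $u_e^{(1)}$ through $h_1(r)$. That piece then becomes an Euler--Euler interaction belonging to the order-$\e^2$ Euler forcing, and the remaining inner-layer source decays.
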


\subsubsection{The linearized Euler system for $(u_e^{(1+\f{2}{3})},\ v_e^{(1+\f{2}{3})},\ p_e^{(1+\f{2}{3})})$ and its solvability}
\indent

Since the system \eqref{middleeuler2} is completely the same as the system \eqref{middleeuler1}, we omit the details to prove the solvability.

\subsubsection{Corrections of the expansions to the boundary layer equations and the Euler equations}
\indent

Since we want the solutions of the boundary layer equations decaying fast at infinity, we define
\bes
\t{u}^{(1)}_p:=u^{(1)}_p-A_1,\q \t{\hat{u}}^{(1)}_p:=\hat{u}^{(1)}_p-\hat{A}_1,\q  \t{\hat{u}}^{(1+1/3)}_p:=\hat{u}^{(1+1/3)}_p-\hat{A}_{1+1/3}.
\ees
and use $\t{u}^{(1)}_p$, $\t{\hat{u}}^{(1)}_p$ and $\t{\hat{u}}^{(1+\f{1}{3})}_p$ to replace the original ${u}^{(1)}_p$ and ${\hat{u}}^{(1)}_p$ and ${\hat{u}}^{(1+\f{1}{3})}_p$ respectively. Then we need to correct $u^{(1)}_e$ and $u^{(1+\f{1}{3})}_e$ and to $\t{u}^{(1)}_e$  and $\t{u}^{(1+\f{1}{3})}_e,$ such that they satisfy
\begin{align}
&\t{u}^{(1)}_e\big|_{r=r_0+1}=-u^{(1)}_p\big|_{\zeta=0}+A_1={u}^{(1)}_e\big|_{r=r_0+1}+A_1\nn\\
&\t{u}^{(1)}_e\big|_{r=r_0}=-\hat{u}^{(1)}_p\big|_{\eta=0}+\hat{A}_1={u}^{(1)}_e\big|_{r=r_0}+\hat{A}_1,\nn\\
&{u}^{(1+\f{1}{3})}_e\big|_{r=r_0+1}=0, \q \text{we take } u_p^{(1+\f{1}{3})}\equiv 0,\nn\\
&\t{u}^{(1+\f{1}{3})}_e\big|_{r=r_0}=-\hat{u}^{(1+\f{1}{3})}_p\big|_{\eta=0}+\hat{A}_{1+\f{1}{3}}=\hat{A}_{1+\f{1}{3}},\nn
\end{align}
and $(\t{u}^{(1)}_e,v^{(1)}_e)$ is still a solution of the system \eqref{middleeuler1}.

Also, using divergence-free condition and a direct calculation, we see that
\bes
\p_\th \lt( r(\Dl-\f{1}{r^2})u_e^{(1)} +\f{2}{r}\p_\th v_e^{(1)}\rt)=0.
\ees
Let
\bes
\t{u}^{(1)}_e={u}^{(1)}_e+h_1(r),
\ees
where $h_1(r)$ satisfies the following ODE
\begin{align}
&r(\Dl-\f{1}{r^2})h_1(r)=-r(\Dl-\f{1}{r^2})u_e^{(1)} +\f{2}{r}\p_\th v_e^{(1)},\nn\\
&h_1(r_0)=\hat{A}_1 ,\q h_1(r_0+1)=A_1.\nn
\end{align}
By estimates of $u_e^{(1)}$ and $v_e^{(1)}$.  It is not hard to deduce that
\bes
\|\p^{k}_r h_1(r)\|_{L^\i}\leq C_{k} \dl r^{1-k}_0.
\ees

Obviously, $(\t{u}^{(1)}_e, v^{(1)}_e)$ is still a solution of the system \eqref{middleeuler1} and satisfies \eqref{euler1esti}. Also we have that
  \bes
 r(\Dl-\f{1}{r^2})\t{u}_e^{(1)} +\f{2}{r}\p_\th v_e^{(1)}\equiv 0.
  \ees
At last we choose
\bes
(u_e^{(1+1/3)}, v_e^{(1+1/3)})=\lt(\f{\hat{A}_{1+1/3}r_0^2}{-(2r_0+1)} \lt(r-\f{(r_0+1)^2}{r}\rt),0\rt),
\ees
which satisfies
\begin{align}
&r(\Dl-\f{1}{r^2})u_e^{(1+1/3)}=0,\nn\\
&u_e^{(1+1/3)}(r_0)=\hat{A}_{1+\f{1}{3}},\q u_e^{(1+1/3)}(r_0+1)=0,\nn\\
&\|\p^{k}_r u_e^{(1+1/3)}(r)\|_{L^\i}\leq C_{k} \dl r_0^{1-k}.\nn
\end{align}
  By letting $\t{u}_p^{1}$ and $\t{u}_e^{(1)}$ to replace  ${u}_p^{1}$ and $u_e^{(1)}$, the equations for the first order outer boundary layer is invariant, which are also true for the equations of the first order inner boundary when replacing $\hat{u}_p^{(1)}$ and $u_e^{(1)}$ with $\t{\hat{u}}_p^{(1)}$ and $\t{u}_e^{(1)}$, and true for the equations of the $(1+\f{1}{3})$th order inner boundary  when replacing $\hat{u}_p^{(1+\f{1}{3})}$ with $\t{\hat{u}}_p^{(1+\f{1}{3})}$.

\subsection{Equations for higher order expansions and its solvability}
\indent

After performing the above lower order extensions, we can perform the higher order expansions inductively as follows.

\subsubsection{Equations for $(u_e^{(1+\f{k}{3})},v_e^{(1+\f{k}{3})},p_e^{(1+\f{k}{3})})$}\label{sec431}
\indent

For $k\geq 2$, by collecting the $\epsilon-1+\f{k}{3}$th order terms of the Euler expansions in \eqref{eulerextension}, we deduce that $(u_e^{(1+\f{k}{3})},v_e^{(1+\f{k}{3})},$ $ p_e^{(1+\f{k}{3})})$ satisfies the following linearized Euler equations
\be\label{middleeulerk}
\left \{
\begin{array}{ll}
u^e(r) \partial_\th u_e^{(1+\f{k}{3})}+rv_e^{(1+\f{k}{3})}\p_r u^e(r)+u^e(r)v_e^{(1+\f{k}{3})} +\partial_\th p_e^{(1+\f{k}{3})}=f^{(1+\f{k}{3})}_e,\\[5pt]
u^e(r) \partial_\th v_e^{(1+\f{k}{3})}-2u^e(r)u_e^{(1+\f{k}{3})}+r\partial_r p_e^{(1+\f{k}{3})}=g^{(1+\f{k}{3})}_e,\\[5pt]
\partial_\th u_e^{(1+\f{k}{3})}+\partial_r(r v_e^{(1+\f{k}{3})})=0,\\
 v_e^{(1+\f{k}{3})}|_{r=r_0+1}=-{v}_p^{(1+\f{k}{3})}|_{\zeta=0},\ v_e^{(1+\f{k}{3})}|_{r=r_0}=-\hat{v}_p^{(1+\f{k}{3})}|_{\eta=0},\\
(u_e^{(1+\f{k}{3})}, v_e^{(1+\f{k}{3})})(\th,r)=(u_e^{(1+\f{k}{3})}, v_e^{(1+\f{k}{3})})(\th+2\pi,r),
\end{array}
\right.
\ee
where the forced term $f^{(1+\f{k}{3})}_e$ is defined as
\begin{align}
f^{(1+\f{k}{3})}_e:=&-\sum^{k}_{i=0} u_e^{(1+\f{i}{3})}\partial_\th u_e^{(\f{k-i}{3})}-r\sum^{k}_{i=0} v_e^{(1+\f{i}{3})}\partial_r u_e^{(\f{k-i}{3})}\nn\\
          &-\sum^{k}_{i=0} v_e^{(1+\f{i}{3})} u_e^{(\f{k-i}{3})}+r\lt(\Dl-\f{1}{r^2}\rt) u_e^{(1+\f{k}{3}-2)}+\f{2}{r}\p_\th v_e^{(1+\f{k}{3}-2)},\nn
\end{align}
and $g^{(1+\f{k}{3})}_e$ is defined as
\begin{align}
g^{(1+\f{k}{3})}_e:=&-\sum^{k}_{i=0} u_e^{(1+\f{i}{3})}\partial_\th v_e^{(\f{k-i}{3})}-r\sum^{k}_{i=0} v_e^{(1+\f{i}{3})}\partial_r v_e^{(\f{k-i}{3})}\nn\\
     &+\sum^{k-1}_{i=0} u_e^{(1+\f{i}{3})} u_e^{(\f{k-i}{3})}+\Dl (rv_e^{(1+\f{k}{3}-2)}).\nn
\end{align}
Here for notation simplification, we understand that
\begin{align}
u^{(-\f{1}{3})}_e=u^{(\f{1}{3})}_e=u^{(\f{2}{3})}_e\equiv 0,\q u^{(0)}_e=u^e(r),\q
v^{(-\f{1}{3})}_e=v^{(0)}_e=v^{(\f{1}{3})}_e=v^{(\f{2}{3})}_e\equiv 0. \nn
\end{align}

{\noindent\bf Here we give another explanation why we choose the Euler shear flow satisfying}
\bes
u^e(r)=Ar+\f{B}{r}.
\ees

When $k=3$, the system \eqref{middleeulerk} is simplified to
\be\label{middleeuler3}
\left \{
\begin{array}{ll}
u^e(r) \partial_\th u_e^{(2)}+v_e^{(2)}\p_r(r u^e(r))+\partial_\th p_e^{(2)}=f^{(2)}_e,\\[5pt]
u_e(r) \partial_\th v_e^{(2)}-2u^e(r)u_e^{(2)}+r\partial_r p_e^{(2)}=g^{(2)}_e,\\[5pt]
\partial_\th u_e^{(2)}+\partial_r(r v_e^{(2)})=0,\\
 v_e^{(2)}|_{r=r_0+1}=-{v}_p^{(2)}|_{\zeta=0},\ v_e^{(2)}|_{r=r_0}=-\hat{v}_p^{(2)}|_{\eta=0},\\
(u_e^{(2)}, v_e^{(2)})(\th,r)=(u_e^{(2)}, v_e^{(2)})(\th+2\pi,r),
\end{array}
\right.
\ee
where
\begin{align}
f^{(2)}_e:&=- u_e^{(1)}\partial_\th u_e^{(1)}-v_e^{(1)}\partial_r(r u_e^{(1)})+r(\Dl-\f{1}{r^2}) u^e(r),\label{bwx}\\
g^{(2)}_e:&=- u_e^{(1)}\partial_\th v_e^{(1)}-rv_e^{(1)}\partial_r v_e^{(1)}+(u_e^{(1)})^2.\nn
\end{align}

If $(u^e(r), 0)$ is the shear flow solution of the Euler equation, by taking $\th$ average of \eqref{middleeuler3}, we can see that it satisfies
\bes
\int^{2\pi}_0 f^{(2)}_e(\th,r)d\th\equiv 0.
\ees
Then from \eqref{bwx}, we see that
\begin{align*}
&2\pi r\lt(\p^2_r+\f{1}{r}\p_r-\f{1}{r^2}\rt)u^e(r)=\int_0^{2\pi}v_e^{(1)}\partial_r(ru_e^{(1)})d\th\\
=&\int_0^{2\pi}\partial_r(ru_e^{(1)})d(\int_0^\th v_e^{(1)}(\th',r)d\th')=-\int_0^{2\pi}\partial_{ r}(r\p_\th u_e^{(1)})\int_0^\th v_e^{(1)}(\th',r)d\th' d\th\\
=&\int_0^{2\pi}\partial_{r}(r\p_r(r v_e^{(1)}))\int_0^\th v_e^{(1)}(\th',r)d\th' d\th=\int_0^{2\pi}r\lt(\Dl-\f{\p^2_\th}{r^2}\rt)(rv_e^{(1)})\int_0^\th v_e^{(1)}(\th',r)d\th' d\th\\
=&-\int_0^{2\pi}\p^2_\th v_e^{(1)}\int_0^\th v_e^{(1)}(\th',r)d\th' d\th=\int_0^{2\pi}\p_\th v_e^{(1)} v_e^{(1)} d\th=0,
\end{align*}
which also gives the explanation of the choice of the leading Euler equation. \qed

Now we show the solvability of the system \eqref{middleeulerk}.
\begin{proposition}\label{propeulerorderk}
There exists $\dl_0>0$ such that for any $\dl\in(0,\dl_0)$, the linearized Euler equations (\ref{middleeulerk}) has a solution $(u_e^{(1+\f{1+k}{3})}, v_e^{(1+\f{1+k}{3})}, p_e^{(1+\f{1+k}{3})})$ which satisfies
\begin{align}\label{eulerkesti}
&\|\partial^{i}_\th\partial^{j}_r u_e^{(1+\f{k}{3})}\|_{L^\i}\leq C_{i,j}\dl r_0^{1-j},\q \|\partial^i_\th\partial^{j}_rv_e^{(1+\f{k}{3})}\|_{L^\i}\leq C_{i,j}\dl r_0^{-j}, \ \forall\ i,j\geq 0.
\end{align}
\end{proposition}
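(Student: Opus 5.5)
We argue by induction on $k$, following the scheme of Proposition \ref{propeulerorder1}, with the forcing terms $f^{(1+\f{k}{3})}_e,g^{(1+\f{k}{3})}_e$ treated as known. By the induction hypothesis, all lower order Euler terms satisfy \eqref{eulerkesti}. The boundary layer profiles entering the boundary data satisfy the analogues of Propositions \ref{propdcu0}, \ref{propdcu1}, \ref{propdcl1}, \ref{propdcl4/3}, namely $|\p^j_\th v_p^{(1+\f{k}{3})}(\th,0)|+|\p^j_\th \hat v_p^{(1+\f{k}{3})}(\th,0)|\le C_j\dl$ with zero $\th$-mean. From these bounds, and the explicit form of $f_e,g_e$, we first check that
\[
\|\p^i_\th\p^j_r f^{(1+\f{k}{3})}_e\|_\infty\le C\dl r_0^{1-j},\qquad \|\p^i_\th\p^j_r g^{(1+\f{k}{3})}_e\|_\infty\le C\dl r_0^{2-j}.
\]
These follow by counting powers: $u_e\sim \dl r_0$, $v_e\sim\dl$, each $\p_r$ costs $r_0^{-1}$, and the viscous term $r(\Dl-r^{-2})u_e^{(1+\f{k}{3}-2)}$ is harmless.

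\textbf{Step 1 (eliminate the pressure).} Apply $r\p_r$ to the first equation and subtract $\p_\th$ of the second. Using incompressibility and $\p_r(r^{-1}\p_r(ru^e))=0$, which is exactly the property $(u^e_r+u^e/r)_r=0$ of Taylor--Couette flow, we arrive at
\[
u^e(r)\,\p_\th\Dl(rv_e^{(1+\f{k}{3})})=F:=r\p_r f^{(1+\f{k}{3})}_e-\p_\th g^{(1+\f{k}{3})}_e .
\]
This equation is only solvable if $\int_0^{2\pi}F\,d\th=0$, i.e. $\p_r\int f_e\,d\th=0$. The paper's computation for $k=3$ shows that $\int f^{(2)}_e d\th=0$ is precisely where the Batchelor--Wood choice of $A,B$ enters. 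In general we would instead adjust the zero mode by adding a shear flow $h_{1+\f{k}{3}}(r)$ to an earlier Euler term, as was done with $h_1$ and $u_e^{(1+1/3)}$. That is, we impose
\[
\fint f^{(1+\f{k}{3})}_e\,d\th=0
\]
by solving an ODE $r(\Dl-r^{-2})h=\cdots$ for the shear correction at order $1+\f{k-2}{3}$, with boundary values chosen to absorb the constants $A_{1+\f{k}{3}},\hat A_{1+\f{k}{3}}$. This is the step I expect to be the main obstacle. One must check that the corrections fed back do not break lower-order equations; they do not, since a shear flow satisfying $r(\Dl-r^{-2})h=$ given is an exact solution of the linearized Euler system. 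One must also check that the bound $\dl r_0^{1-j}$ survives. For the latter, the ODE has the explicit fundamental solutions $r,1/r$ on an interval of unit length, so the constants are uniform in $r_0$.

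\textbf{Step 2 (nonzero modes).} With the zero mode removed, write $\Psi=rv_e$, divide by $u^e(r)\approx\t\o r_0(r-r_0)$ and integrate in $\th$. The problem is that $u^e$ vanishes at $r=r_0$, so the forcing $F/u^e$ is singular there. We handle this as in the lower-order construction: the inner layer $\hat v_p$ is chosen so that, to each order, the forcing combination vanishes at $r=r_0$. Then $F=O(r-r_0)$ and $F/u^e$ is smooth, with norms obtained via $F/u^e=\int_0^1\p_rF(r_0+s(r-r_0))ds/(\t\o r_0)$. If this fails, we would fall back to treating it as a regular singular point and check the solution is still $C^\infty$.

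\textbf{Step 3 (solve the Poisson problem).} Solve $\Dl\Psi=G$ with $\Psi=-r\,v_p|_{\zeta=0}$ at $r_0+1$ and $\Psi=-r\,\hat v_p|_{\eta=0}$ at $r_0$. We use Fourier series in $\th$ with the basis $(r/r_0)^n,(r_0/r)^n$ and the bound \eqref{eulerfirstorder5}, plus a particular solution via the Green's function of each mode on $[r_0,r_0+1]$. This gives $\|\p^i_\th\p^j_r\Psi\|\le C\dl r_0^{1-j}$, hence the bound on $v_e$. Then we set $u_e=-\int_0^\th\p_r\Psi\,d\bar\th+$ (the zero mode from Step 1). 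The pressure is recovered exactly as in Proposition \ref{propeulerorder1}: integrate the first equation in $\th$, and fix $\phi(r)$ from the second equation, whose consistency is guaranteed by Step 1.
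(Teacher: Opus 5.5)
There is a genuine gap at the core of your argument. You never establish the structural fact on which the paper's proof rests, namely that $F=r\partial_r f_e^{(1+\frac k3)}-\partial_\theta g_e^{(1+\frac k3)}\equiv 0$, i.e.\ that $rv_e^{(1+\frac k3)}$ is harmonic. The paper proves this by a direct computation that uses three ingredients: incompressibility, the harmonicity of every lower-order $rv_e^{(1+\frac i3)}$, and the identity $r(\Delta-r^{-2})u_e^{(1+\frac i3)}+\frac 2r\partial_\theta v_e^{(1+\frac i3)}\equiv 0$. That identity is imposed at each earlier order by the same-order shear correction $h_{1+\frac i3}$. Equivalently, $u^e$ and every corrected Euler term have constant vorticity, so the order-$(1+\frac k3)$ vorticity equation reduces to $u^e\,\Delta(rv_e^{(1+\frac k3)})=0$.

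Treating $F$ as a generic forcing makes your Step 2 fail. Near the inner circle $u^e\approx 2A(r-r_0)$, so $\Delta\Psi\approx -F(r_0,\theta)/(2Ar_0(r-r_0))$. Unless $F(r_0,\cdot)=0$, the derivative $\partial_r\Psi$, and hence $u_e$, has a logarithmic singularity at $r=r_0$. No smooth solution exists, so the regular-singular-point fallback cannot help. The mechanism you invoke to get $F(r_0,\cdot)=0$ does not exist: $F$ is built only from lower-order Euler terms, while the inner-layer profiles are uniquely determined by their own equations and enter the Euler problem only as Dirichlet data for $\Psi$ at $r=r_0$.

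Likewise, the compatibility condition \eqref{middleeulerk3} does not need to be enforced by a new correction; it holds automatically. The Reynolds-stress terms have zero $\theta$-mean (the integration by parts leading to \eqref{middleeulerk5}), and the viscous term vanishes thanks to the correction made at order $1+\frac k3-2$ (Taylor--Couette when $k=3$). If you added a shear $h$ whose vorticity $-r^{-1}(rh)'$ is not constant to force that mean to vanish, you would produce terms $v_e\,\partial_r(r^{-1}(rh)')$ in the vorticity equation at later orders. That is a nonzero $F$, and you would be back to the singular division.

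Several smaller points also need repair. Eliminating the pressure actually gives $-r\,u^e\,\Delta(rv_e^{(1+\frac k3)})=F$, with no $\partial_\theta$ (compare the case $k=1$ in Proposition \ref{propeulerorder1}). The viscous term in $f_e^{(1+\frac k3)}$ comes from order $1+\frac k3-2$, not $1+\frac{k-2}3$. The constants $A_{1+\frac k3},\hat A_{1+\frac k3}$ can only be absorbed by a shear at the same order $1+\frac k3$. This is exactly the paper's $h_{1+\frac k3}$: its right-hand side enforces the constant-vorticity identity, and its two homogeneous modes $r,r^{-1}$ match the two boundary constants.

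Finally, your $r_0$-bookkeeping in Step 3, and in the claim that the shear correction keeps $\|h'\|_{L^\infty}\lesssim\delta$, is too optimistic. The traces of $\Psi$, namely $-(r_0+1)v_p^{(1+\frac k3)}|_{\zeta=0}$ and $-r_0\hat v_p^{(1+\frac k3)}|_{\eta=0}$, are unrelated and generically of size $\delta r_0$ (for $k=2$ the outer one vanishes but the inner one does not). Likewise $A_{1+\frac k3}-\hat A_{1+\frac k3}$ is generically of size $\delta r_0$. On an annulus of unit width this forces $\partial_r\Psi$ and $h'$ to be of size $\delta r_0$, so for instance $\|\partial_r v_e\|_{L^\infty}\sim\delta$, not $\delta r_0^{-1}$. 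The paper's own estimates \eqref{eulerfirst3} and \eqref{eulerk9} have the same defect; separation of variables actually yields $\|\partial^i_\theta\partial^j_r v_e\|_{L^\infty}\le C\delta r_0^{1-j}$ for $j\ge 1$.
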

\indent

The idea of proof is the same as that of the system \eqref{middleeuler1}. First we cancel the pressure from \eqref{middleeulerk}$_1$ and \eqref{middleeulerk}$_2$ to obtain that
\be\label{middleeulerk1}
\lt\{
\bali
&-u^e(r) \Dl ( rv_e^{(1+\f{k}{3})}) =r\p_rf^{(1+\f{k}{3})}_e-\p_\th g^{(1+\f{k}{3})}_e,\\
&rv_e^{(1+\f{k}{3})}|_{r=R_2}=-R_2{v}_p^{(1+\f{k}{3})}|_{\zeta=0},\ rv_e^{(1+\f{k}{3})}|_{r=R_1}=-R_1\hat{v}_p^{(1+\f{k}{3})}|_{\eta=0},\\
&v_e^{(1+\f{k}{3})}(\th,r)=v_e^{(1+\f{k}{3})}(\th+2\pi,r).
\eali
\rt.
\ee
Noting in fact that $rv_e^{(1+\f{k}{3})}$ is harmonic, i.e.
\bes
r\p_rf^{(1+\f{k}{3})}_e-\p_\th g^{(1+\f{k}{3})}_e=0.
\ees
Actually, by the procedure of the Euler expansions, we construct
\bes
r\lt(\Dl-\f{1}{r^2}\rt) u_e^{(1+\f{k}{3}-2)}+\f{2}{r}\p_\th v_e^{(1+\f{k}{3}-2)}=\Dl (rv_e^{(1+\f{k+1}{3}-2)})=0,
\ees
Then a direct computation indicates that
\begin{align}
r\p_rf^{(1+\f{k}{3})}_e-\p_\th g^{(1+\f{k}{3})}_e =&\underbrace{-\sum^{k}_{i=0} r\p_r u_e^{(1+\f{i}{3})}\partial_\th u_e^{(\f{k-i}{3})}}_{I_1}\underbrace{-\sum^{k}_{i=0} u_e^{(1+\f{i}{3})}r\p_r\partial_\th u_e^{(\f{k-i}{3})}}_{I_2}\nn\\
   &\quad\underbrace{-\sum^{k}_{i=0} r\p_rv_e^{(1+\f{i}{3})}\partial_r(r u_e^{(\f{k-i}{3})})}_{I_3}\underbrace{-\sum^{k}_{i=0} v_e^{(1+\f{i}{3})}r\p_r(\p_r(r u_e^{(\f{k-i}{3})}))}_{I_4}\nn\\
   &\quad\underbrace{+\sum^{k}_{i=0} \p_\th u_e^{(1+\f{i}{3})}\partial_\th v_e^{(\f{k-i}{3})}}_{I_5}+\underbrace{\sum^{k}_{i=0} u_e^{(1+\f{i}{3})}\partial^2_\th v_e^{(\f{k-i}{3})}}_{I_6}\nn\\
   &\quad+\underbrace{\sum^{k}_{i=0} r\p_\th v_e^{(1+\f{i}{3})}\partial_r v_e^{(\f{k-i}{3})}}_{I_7}+\underbrace{\sum^{k}_{i=0} rv_e^{(1+\f{i}{3})}\p_\th \partial_r v_e^{(\f{k-i}{3})}}_{I_8}.\nn\\
   &\quad-\underbrace{\sum^{k}_{i=0} \p_\th u_e^{(1+\f{i}{3})} u_e^{(\f{k-i}{3})}}_{I_{9}}-\underbrace{\sum^{k}_{i=0} u_e^{(1+\f{i}{3})}\p_\th  u_e^{(\f{k-i}{3})}}_{I_{10}}.\nn
\end{align}
By using the incompressibility and harmonic property of $r v_e^{(1+\f{i}{3})}$, we can obtain that
\bes
I_2+I_6+I_{10}=0.
\ees
By using the incompressibility and the property of
\bes
r\lt(\Dl-\f{1}{r^2}\rt) u_e^{(1+\f{i}{3})}+\f{2}{r}\p_\th v_e^{(1+\f{i}{3})}=0,
\ees
we can obtain that
\bes
I_1+I_{3}+I_{4}+I_{5}+I_{7}+I_{8}+I_{9}=0.
\ees
This indicates that $  \Dl (rv_e^{(1+\f{1+k}{3})})=0$. Then the system \eqref{middleeulerk1} is solved by solving the following linear boundary value problem
\be\label{middleeulerk2}
\lt\{
\bali
&- \Dl (rv_e^{(1+\f{k}{3})}) =0,\\
&rv_e^{(1+\f{k}{3})}|_{r=r_0+1}=-(r_0+1){v}_p^{(1+\f{k}{3})}|_{\zeta=0},\ rv_e^{(1+\f{k}{3})}|_{r=r_0}=-r_0\hat{v}_p^{(1+\f{k}{3})}|_{\eta=0},\\
&v_e^{(1+\f{k}{3})}(\th,r)=v_e^{(1+\f{k}{3})}(\th+2\pi,r).
\eali
\rt.
\ee
The above equations can be solved by using the method of variables separation. By noting that
\bes
\int^{2\pi}_0 rv_e^{(1+\f{k}{3})} d\th\big|_{r=r_0}= \int^{2\pi}_0 rv_e^{(1+\f{k}{3})} d\th\big|_{r=r_0+1}=0,
\ees
and integrating \eqref{middleeulerk2}$_1$ with respect to $\th$ variable, we get that
\bes
\lt(\int^{2\pi}_0 rv_e^{(1+\f{k}{3})}(\th,r) d\th\rt)^{''}+\f{1}{r}\lt(\int^{2\pi}_0 rv_e^{(1+\f{k}{3})}(\th,r) d\th\rt)^{'}=0 \q \forall\ r\in [r_0,r_0+1],
\ees
which indicates that
\be\label{eulerkmeanzero}
\int^{2\pi}_0 rv_e^{(1+\f{k}{3})}(\th,r) d\th=0 \q \forall\ r\in [r_0,r_0+1].
\ee
This implies, after integrating \eqref{middleeulerk}$_1$ in $\th\in [0,2\pi]$, the compatibility condition: for any $r\in[r_0,r_0+1]$,
\be\label{middleeulerk3}
\int^{2\pi}_0 f^{(1+\f{k}{3})}_e(\th,r)d\th=0.
\ee

Actually by integration by parts, we have
\begin{align}
 &\sum^{k}_{i=0}\int^{2\pi}_0 u_e^{(1+\f{i}{3})}\partial_\th u_e^{(\f{k-i}{3})}d\th =-\sum^{k}_{i=0}\int^{2\pi}_0 \p_\th u_e^{(1+\f{i}{3})} u_e^{(\f{k-i}{3})}d\th\nn \text{ after relabeling the index} \label{middleeulerk4}\\
 & =-\sum^{k}_{i=0}\int^{2\pi}_0u_e^{(1+\f{i}{3})}\partial_\th u_e^{(\f{k-i}{3})}d\th,\nn
\end{align}
which implies that
\begin{align}
 &\sum^{k}_{i=0}\int^{2\pi}_0 u_e^{(1+\f{i}{3})}\partial_\th u_e^{(\f{k+1-i}{3})}d\th=0.
\end{align}

Using the incompressibility and harmonic property of $rv_e^{(1+\f{k}{3})}$, we have
\begin{align}
 &\sum^{k}_{i=0}\int^{2\pi}_0 v_e^{(1+\f{i}{3})}\partial_r(r u_e^{(\f{k-i}{3})})d\th =\sum^{k}_{i=0}\int^{2\pi}_0 v_e^{(1+\f{i}{3})}\int^\th_0\p_r(r\partial_\th u_e^{(\f{k-i}{3})})(\bar{\th},r)d\bar{\th}d\th\nn\\
& =-\sum^{k}_{i=0}\int^{2\pi}_0 v_e^{(1+\f{i}{3})}\int^\th_0\p_r(r\partial_r(r v_e^{(\f{k-i}{3})}))(\bar{\th},r)d\bar{\th}d\th =\sum^{k}_{i=0}\int^{2\pi}_0 v_e^{(1+\f{i}{3})}\int^\th_0\partial^2_\th v_e^{(\f{k-i}{3})}(\bar{\th},r)d\bar{\th}d\th \label{middleeulerk5}\\
 & =\sum^{k}_{i=0}\int^{2\pi}_0 v_e^{(1+\f{i}{3})}\lt(\partial_\th v_e^{(\f{k-i}{3})}(\th,r)-\partial_\th v_e^{(\f{k-i}{3})}(0,r)\rt)d\th =0.\nn
\end{align}
In the last but second line of \eqref{middleeulerk5},  integration by parts (the same as \eqref{middleeulerk4}) implies that the first term is zero, while the fact that the second term is zero is due to
\bes
\int^{2\pi}_0 v_e^{(1+\f{i}{3})}d\th=0, \text{ for } \forall\ r\in[r_0,r_0+1].
\ees
Combining \eqref{middleeulerk4} and \eqref{middleeulerk5}, we obtain \eqref{middleeulerk3}.

From \eqref{middleeulerk2}$_1$ and \eqref{eulerkmeanzero}, we can assume that
\be\label{eulerk2}
rv_e^{(1+\f{k}{3})}:=\sum^\i_{n=1} \lt(a^k_n\lt(\f{r}{r_0}\rt)^n-a^k_{-n}\lt(\f{r_0}{r}\rt)^{n}\rt)\cos n\th+\lt(b^k_n\lt(\f{r}{r_0}\rt)^n-b^k_{-n}\lt(\f{r_0}{r}\rt)^{n}\rt)\sin n\th,
\ee
where $(a^k_{n},a^k_{-n},b^k_{n},b^k_{-n})$ are constants.  From the boundary condition on the outer boundary, we have
\begin{align}
a^k_n\lt(\f{r_0+1}{r_0}\rt)^n-a^k_{-n}\lt(\f{r_0}{r_0+1}\rt)^{n}=-(r_0+1)\f{1}{\pi}\int^{2\pi}_0{v}_p^{(1+\f{k}{3})}\big|_{\zeta=0}\cos n\th d\th, \label{eulerk3}\\
b^k_n\lt(\f{r_0+1}{r_0}\rt)^n-b^k_{-n}\lt(\f{r_0}{r_0+1}\rt)^{n}=-(r_0+1)\f{1}{\pi}\int^{2\pi}_0{v}_p^{(1+\f{k}{3})}\big|_{\zeta=0}\sin n\th d\th, \nn\\
a^k_n-a^k_{-n}=-r_0\f{1}{\pi}\int^{2\pi}_0{v}_p^{(1+\f{k}{3})}\big|_{\eta=0}\cos n\th d\th, \label{eulerk5}\\
b^k_n-b^k_{-n}=-r_0\f{1}{\pi}\int^{2\pi}_0{v}_p^{(1+\f{k}{3})}\big|_{\eta=0}\sin n\th d\th. \nn
\end{align}

From \eqref{eulerk3} and \eqref{eulerk5}, integration by parts for the righthand terms, we see that
\begin{align}
a^k_n\lt(\lt(\f{r_0+1}{r_0}\rt)^n-\lt(\f{r_0}{r_0+1}\rt)^{n}\rt)\leq C_i\f{\dl}{n^i} (r_0+1), \q \forall\ i\in\bN,\nn\\
a^k_{-n}\lt(1-\lt(\f{r_0}{r_0+1}\rt)^{2n}\rt)\leq C_i\f{\dl}{n^i} (r_0+1), \q \forall\ i\in\bN \nn
\end{align}
By square-difference formula, it is easy to see that
\bes
r_0^{-1}\leq \lt(\f{r_0+1}{r_0}\rt)^n-\lt(\f{r_0}{r_0+1}\rt)^{n}\leq n 2^nr_0^{-1}.
\ees
Then we have
\begin{align}
|a^k_n|\leq  C_i\f{\dl}{n^i} r_0^2,\q |a^k_{-n}|\leq C_i\f{\dl}{n^i} r_0^2 \q \forall\ i\in\bN. \label{eulerk8}
\end{align}
The same estimate is true for $b^k_n$ and $b^k_{-n}$. By taking $\p^j_\th\p^{k}_r$ to $\f{1}{r}\eqref{eulerk2}$, and using \eqref{eulerk8}, we can see that
\be\label{eulerk9}
\|\p^i_\th\p^{j}_rv_e^{(1+\f{k}{3})}\|_{L^\i}\leq C_{i,j}\dl r_0^{-j} .
\ee
After $v^{(1+\f{k}{3})}_e$ is given, we define
\bes
u^{(1+\f{k}{3})}_e(\th,r)=-\int^\th_0 \p_r(rv^{(1+\f{k}{3})}_e(\bar{\th},r))d\bar{\th},
\ees
which satisfies
\be
\left \{
\begin {array}{ll}
\partial_\th u_e^{(1+\f{k}{3})}+\partial_r(rv_e^{(1+\f{k}{3})})=0,\\[5pt]
u_e^{(1+\f{k}{3})}(\th,r)=u_e^{(1+\f{k}{3})}(\th+2\pi,r).\nonumber
\end{array}
\right.
\ee
Using \eqref{eulerk9}, we see that
\bes
\|\p^i_\th\p^{j}_r u_e^{(1+\f{k}{3})}\|_{L^\i}\leq C_{j,k}\dl r_0^{1-j}.
\ees

After obtaining $(u_e^{(1+\f{k}{3})}, v_e^{(1+\f{k}{3})})$, we construct $p_e^{(1+\f{k}{3})}$ as following
\begin{align*}
p_e^{(1+\f{k}{3})}(\th,r):=\phi(r)- u^e(r)u_e^{(1+\f{k}{3})}(\th,r)-\p_r(ru^e(r)) \int^\th_0v_e^{(1+\f{k}{3})}(\bar{\th},r)d\bar{\th}+\int^\th_0f_e^{(1+\f{k}{3})}(\bar{\th},r)d\bar{\th},
\end{align*}
which satisfies
\bes
\p_\th p_e^{(1+\f{k}{3})}+u^e(r)\p_{\th} u_e^{(1+\f{k}{3})}+\p_r(ru^e(r))v_e^{(1+\f{k}{3})}=f_e^{(1+\f{k}{3})}.
\ees
Let $\phi(r)$ be a function satisfying
\begin{align*}
r\phi'(r)+u^e(r)\p_\th v^{(1+\f{k}{3})}_e(0,r)-2u^e(r)u_e^{(1+\f{k}{3})}(0,r)+g_e^{(1+\f{k}{3})}(0,r)=0.
\end{align*}
Combining the equations of $(u_e^{(1+\f{k}{3})}, v_e^{(1+\f{k}{3})})$, it's direct to obtain
\begin{align*}
u^e(r) \partial_\th v_e^{(1+\f{k}{3})}-2u^e(r) u^{(1+\f{k}{3})}_e+r\partial_rp_e^{(1+\f{k}{3})}=0.
\end{align*}
Hence, $(u_e^{(1+\f{k}{3})}, v_e^{(1+\f{k}{3})},p_e^{(1+\f{k}{3})})$ solves the equation (\ref{middleeulerk}) and satisfies (\ref{eulerkesti}). \ef

\subsubsection{Higher order boundary expansions near the outer boundary $\{r=r_0+1\}$}

\indent

By substituting the the outer boundary layer expansion \eqref{boundaryextensionu} into (\ref{nspolar}) and collecting the $\epsilon-1+\f{k}{3}$th $(k\geq 2)$ order terms, we obtain
 the following steady boundary equations for $(u_p^{(1+\f{k}{3})},v_p^{(2+\f{k}{3})},p_p^{(2+\f{k}{3})})$
 \be\label{boundarylayeruk}
 \lt\{
 \begin{aligned}
 &(\t{\o}(r_0+1)+u^{(0)}_p)\p_\th u^{(1+\f{k}{3})}_p+u^{(1+\f{k}{3})}_p\p_\th u^{(0)}_p+(v^{(1)}_e(\th,r_0+1)+v^{(1)}_p)(r_0+1)\p_\zeta u^{(1+\f{k}{3})}_p\\
 &-(r_0+1)\p^2_\zeta u^{(1+\f{k}{3})}_p+(v^{(2+\f{k}{3})}_p-v^{(2+\f{k}{3})}_p(\th,0))(r_0+1)\p_\zeta u^{(0)}_p=f_{p,k}(\th,\zeta),\\
 &-(r_0+1)\p_\zeta p^{(2+\f{k}{3})}_p=g_{p,k}(\th,\zeta),\\
 &\p_\th u^{(1+\f{k}{3})}_p+ \p_\zeta v^{(2+\f{k}{3})}_p+\p_\zeta(\zeta v_p^{(1+\f{k}{3})})=0,
 \end{aligned}
 \rt.
 \ee
 where the force term $f_{p,k}(\th,\zeta)$ and $g_{p,k}(\th,\zeta)$ are lower order terms, which decay fast at $\zeta$ infinity. The exact representations of $f_{p,k}$ and $g_{p,k}$ are not important here, we choose to omit to write it out.

For the system \eqref{boundarylayeruk}, we have the following proposition.

\begin{proposition}\label{propdcuk}
There exists $\dl_0>0$ such that for any $\dl\in(0,\dl_0)$, the equations (\ref{boundarylayeruk}) has a unique solution $(u_p^{(1+\f{k}{3})},v_p^{(2+\f{k}{3})})$ which satisfies
\begin{align*}
\begin{aligned}
&\lt\|\partial_\th^i\partial_\zeta^j \big({u}_p^{(1+\f{k}{3})}-A_{1+\f{k}{3}}\big)\big<\zeta\big>^{\ell}\rt\|_{L^\i}\leq C_{i,j,\ell}\dl r_0,\\
&\lt\|\partial_\th^i\partial_\zeta^j v_p^{(2+\f{k}{3})}\big<\zeta\big>^{\ell}\rt\|_{L^\i}\leq C_{i,j,\ell}\dl ,\q \int_0^{2\pi}v_p^{(2+\f{k}{3})}(\th,\zeta)d\th =0, \ \forall\ \zeta\leq 0,
\end{aligned}
\end{align*}
where
$A_{1+\f{k}{3}}:=\lim\limits_{\zeta\rightarrow -\infty} u_p^{(1+\f{k}{3})}(\th,\zeta)$ is a constant which satisfies $|A_{1+\f{k}{3}}|\leq C\dl r_0.$
\end{proposition}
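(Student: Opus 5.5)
We reduce \eqref{boundarylayeruk} to the same normalized linearized Prandtl system \eqref{boundarylayeru1r} used for Proposition \ref{propdcu1}, i.e. (2.47) of \cite{FeiGLT:2023CMP}, and then invoke that solvability result. We argue by induction on $k$. All lower order profiles $u^{(0)}_p,\dots,u^{(1+\f{k-1}{3})}_p$, $v^{(1)}_p,\dots,v^{(1+\f{k+2}{3})}_p$, together with the Euler and inner layer terms up to order $1+\f{k}{3}$, are assumed already constructed. They obey the bounds of Propositions \ref{propdcu0}, \ref{propdcu1}, \ref{propeulerorderk} and the analogous inner layer statements, with the constants $A_{1+\f{i}{3}}$ already subtracted as in the correction step. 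In particular the trace $u^{(1+\f{k}{3})}_e(\th,r_0+1)$ is known, with $\|\p^j_\th u_e^{(1+\f{k}{3})}\|_{L^\i}\le C_j\dl r_0$.

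\textbf{Step 1: rescaling and lifting the boundary data.} As in the proof of Proposition \ref{propdcu1}, set $\bar u=\t{\o}+(r_0+1)^{-1}u^{(0)}_p$ and $\bar v=v^{(1)}_p+v^{(1)}_e(\th,r_0+1)$. With the same cutoff $\kappa$ satisfying $\kappa(0)=1$ and $\int\kappa=0$, define
\[
u=(r_0+1)^{-1}\big(u^{(1+\f{k}{3})}_p+u^{(1+\f{k}{3})}_e(\th,r_0+1)\kappa(\zeta)\big),
\]
\[
v=v^{(2+\f{k}{3})}_p-v^{(2+\f{k}{3})}_p(\th,0)+\zeta v^{(1+\f{k}{3})}_p-(r_0+1)^{-1}\p_\th u_e^{(1+\f{k}{3})}(\th,r_0+1)\int_0^\zeta\kappa .
\]
Because $\int\kappa=0$, $\p_\zeta v=-\p_\th u$ holds and $v\to0$ at $-\i$ up to the mean-zero part. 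This puts the system exactly in the form \eqref{boundarylayeru1r} with homogeneous conditions $u|_{\zeta=0}=v|_{\zeta=0}=0$. The forcing is $\bar f=(r_0+1)^{-2}f_{p,k}$ plus $\kappa$-correction terms built from $\bar u$, $\bar v$ and the Euler trace. The normalization of $v^{(2+\f{k}{3})}_p$ that makes its mean vanish is fixed as in Proposition \ref{propdcu1}.

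\textbf{Step 2: bounds on $\bar f$, uniform in $r_0$.} This is the bookkeeping step, and I expect it to be the main obstacle. We must show $\|\p^j_\th\p^\ell_\zeta\bar f\langle\zeta\rangle^m\|_{L^\i}\le C\dl$. The constants must be independent of $r_0$, and $\bar f$ must decay rapidly in $\zeta$. The function $f_{p,k}$ collects Taylor expansions at $r=r_0+1$ of Euler terms, written in powers of $\e\zeta$, multiplied by boundary layer profiles. It also contains curvature and viscous terms such as $\zeta\p^2_\zeta u_p$ and $\p_\zeta u_p$, and quadratic interactions among lower order profiles. Each term either contains a decaying profile or comes from matching. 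The latter cancel exactly because the Euler pieces solve \eqref{middleeulerk}, and the remaining polynomial growth in $\zeta$ is always multiplied by a rapidly decaying profile. For the $r_0$ dependence, every $u$-type quantity is $O(\dl r_0)$, every $v$-type quantity is $O(\dl)$, and derivatives $\p_r$ of Euler terms cost $r_0^{-1}$. Each term of $f_{p,k}$ then scales like $r_0^2\dl$, for example $(r_0+1)u\,\p_\zeta u$-type products of size $r_0\cdot\dl r_0$, so division by $(r_0+1)^2$ gives $O(\dl)$. Since $\dl<\dl_0$, quadratic terms are at most $O(\dl)$ as well.

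\textbf{Step 3: solve and undo the normalization.} Apply the solvability result for \eqref{boundarylayeru1r} from \cite{FeiGLT:2023CMP}; it requires $\bar u$ close to $\t{\o}>0$ and smallness in $\dl$. This yields a unique $(u,v)$ and a constant $B=\lim_{\zeta\to-\i}u$ with $\|\p^i_\th\p^j_\zeta(u-B)\langle\zeta\rangle^\ell\|\le C\dl$ and $\|\p^i_\th\p^j_\zeta v\langle\zeta\rangle^\ell\|\le C\dl$. Reverting the definitions and setting $A_{1+\f{k}{3}}=(r_0+1)B$ gives the stated bounds for $u^{(1+\f{k}{3})}_p-A_{1+\f{k}{3}}$ (size $\dl r_0$) and for $v^{(2+\f{k}{3})}_p$ (size $\dl$). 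Here the $\kappa$ terms are compactly supported and $O(\dl r_0)$, and $\zeta v^{(1+\f{k}{3})}_p$ decays by the induction hypothesis. The zero mean of $v^{(2+\f{k}{3})}_p$ follows by integrating the divergence relation in $\th$ and using periodicity and decay. Finally, the pressure $p^{(2+\f{k}{3})}_p$ is obtained by integrating $g_{p,k}$ from $-\i$, as was done for $p^{(2)}_p$.
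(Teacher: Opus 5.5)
This is essentially the paper's proof: the same lift with $\kappa(0)=1$, $\int\kappa=0$, the same normalized unknowns and coefficients (your $\bar v=v^{(1)}_p+v^{(1)}_e(\theta,r_0+1)$ is the intended one; the paper's $v^{(1)}_p(\theta,r_0+1)$ is a typo), reduction to system (2.47) of \cite{FeiGLT:2023CMP}, and the pressure obtained by integrating $g_{p,k}$ from $-\infty$. The paper simply asserts the $r_0$-uniform bound on $\bar f_{p,k}$ that your Step 2 sketches. In that sketch, your rule that $\partial_r$ costs $r_0^{-1}$ fails for the leading profile, since $(u^e)'(r_0+1)=A+Ar_0^2/(r_0+1)^2\approx\tilde{\omega}r_0$; the count still closes, because terms such as $(r_0+1)(u^e)'(r_0+1)\,v^{(1+\frac{i}{3})}_p$ or $\zeta(u^e)'(r_0+1)\,\partial_\theta u^{(1+\frac{i}{3})}_p$ are $O(\delta r_0^2)$ and hence become $O(\delta)$ after division by $(r_0+1)^2$.
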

\pf

Let $\kappa\in C_c^\infty ((-\infty,0])$ satisfy
\begin{align*}
\kappa(0)=1,\ \int^0_{-\infty}\kappa(\zeta)d\zeta=0.
\end{align*}
For simplicity, we set
\begin{align*}
\bar{u}:&=\t{\o}+(r_0+1)^{-1}u_p^{(0)}, \ \bar{v}:=v_p^{(1)}+v^{(1)}_p(\th,r_0+1),\\
u:&=(r_0+1)^{-1}u_p^{(1+\f{k}{3})}+(r_0+1)^{-1}u_e^{(1+\f{k}{3})}(\th,r_0+1)\kappa(\zeta),\\
v:&=v_p^{(2+\f{k}{3})}-v_p^{(2+\f{k}{3})}(\th,0)+\zeta v_p^{(1+\f{k}{3})} -(r_0+1)^{-1}\p_\th u_e^{(1+\f{k}{3})}(\th,r_0+1)\int_0^\zeta \kappa(\bar{\zeta})d\bar{\zeta}.
\end{align*}
Then, the equations (\ref{boundarylayeruk}) reduce to
\begin{eqnarray}\label{bluk1}
\left \{
\begin {array}{ll}
\bar{u}\partial_\th u+\bar{v}\partial_\zeta u+u\partial_\th \bar{u}+v\partial_\zeta\bar{u}-\partial^2_{\zeta}u=\bar{f}_{p,k},\\[7pt]
\partial_\th u+\partial_\zeta v=0,\\[5pt]
u(\th,\zeta)=u(\th+2\pi,\zeta),\ v(\th,\zeta)=v(\th+2\pi,\zeta)\\[5pt]
u|_{\zeta=0}=v|_{\zeta=0}=0,\  \lim\limits_{\zeta\rightarrow -\infty}\partial_\zeta u=0,
\end{array}
\right.
\end{eqnarray}
where $\bar{f}_{p,k}$ is $2\pi$-periodic function and decays fast as $\zeta\rightarrow -\infty$ and have the following estimate
\bes
\lt\|\partial_\th^i\partial_\zeta^j \bar{f}_{p,k}\big<\zeta\big>^\ell\rt\|_{L^\i}\leq C_{i,j,\ell}\dl,\ \forall\ \zeta\leq 0.
\ees

This system \eqref{bluk1} is solved  as \eqref{boundarylayeru1} and is exactly (2.47) in \cite{FeiGLT:2023CMP}. One can refer to there for further detailed proof of Proposition \ref{propdcuk}. Here we omit the details.

Next, we construct the pressure $p_p^{(2+\f{k}{3})}(\th,\zeta)$. Thanks to \eqref{boundarylayeruk}$_2$, we take
\bes
p_p^{(2+\f{k}{3})}(\th,\zeta)=\int^{-\i}_{\zeta} (r_0+1)^{-1}g_{p,k}(\th,\bar{\zeta})d\bar{\zeta}.
\ees
which satisfies
\begin{align*}
-(r_0+1)\p_\zeta p_p^{(2+\f{k}{3})}(\th,\zeta)=g_{p,k}(\th,\zeta), \quad \lim_{\zeta\rightarrow -\infty}p_p^{(2+\f{k}{3})}(\th,\zeta)=0.
\end{align*}

\ef

\subsubsection{Higher order boundary expansions near the inner boundary $\{r=r_0\}$}

\indent

By substituting the the inner boundary layer expansion \eqref{boundaryextensionl} into (\ref{nspolar}) and collecting the $\epsilon-(1+\f{2+k}{3})$th order terms, we obtain
 the following steady boundary layer equations for $(\hat{u}_p^{(1+\f{k}{3})},\hat{v}_p^{(1+\f{2+k}{3})},$ $\hat{p}_p^{(1+\f{2+k}{3})})$

 \be\label{boundarylayerlk}
 \lt\{
 \begin{aligned}
 &2A\eta\p_\th\hat{u}^{(1+\f{k}{3})}_p+2A r_0\lt(\hat{v}^{(1+\f{2+k}{3})}_p-\hat{v}^{(1+\f{2+k}{3})}_p(\th,0)\rt)+\p_\th\hat{p}^{(1+\f{2+k}{3})}_p-r_0\p^2_{\eta}\hat{u}^{(1+\f{k}{3})}_p=\hat{f}_{p,k}(\th,\eta),\\
 &r_0\p_\eta p^{(1+\f{2+k}{3})}_p=\hat{g}_{p,k}(\th,\eta),\\
 &\p_\th \hat{u}^{(1+\f{k}{3})}_p+ r_0\p_\eta \hat{v}^{(1+\f{2+k}{3})}_p+\p_\eta(\eta \hat{v}^{(1+\f{k}{3})}_p)=0,\\
& \lim\limits_{\eta\rightarrow +\infty} \hat{v}^{(1+\f{2+k}{3})}_p=0,
 \end{aligned}
 \rt.
 \ee
  where the force term $\hat{f}_{p,k}(\th,\eta)$ and $\hat{g}_{p,k}(\th,\eta)$ are lower order terms, which decay fast at $\eta$ infinity. The exact representations of $\hat{f}_{p,k}$ and $\hat{g}_{p,k}$ are not important here, we choose to omit to write it out.

For the system \eqref{boundarylayerlk}, we have the following proposition.
\begin{proposition}\label{propdclk}
There exists $\dl_0>0$ and a constant $\hat{A}_{1+\f{k}{3}}$ such that for any $\dl\in(0,\dl_0)$, the system (\ref{boundarylayerlk}) has a unique solution $(\hat{u}_p^{(1+\f{k}{3})},\hat{v}_p^{(1+\f{2+k}{3})},\hat{p}^{(1+\f{2+k}{3})}_p)$ which satisfies
\begin{align*}
\begin{aligned}
&\lt\|\eta^{\ell}\partial_\th^i\p^j_{\eta}\big(\hat{u}_p^{(1+\f{k}{3})}-\hat{A}_{1+\f{k}{3}}\big) \rt\|_{L^\i}\leq C_{i,j,\ell}\dl r_0,\\
&\lt\|\eta^{\ell}\partial_\th^i\p^j_{\eta}\hat{v}_p^{(1+\f{2+k}{3})} \rt\|_{L^\i}\leq C_{i,j,\ell}\dl ,\q \int_0^{2\pi}\hat{v}_p^{(1+\f{2+k}{3})}(\th,\eta)d\th=0, \ \forall\ \eta\geq 0,
\end{aligned}
\end{align*}
where $\hat{A}_{1+\f{k}{3}}=\lim\limits_{\eta\rightarrow+\i}\hat{u}_p^{(1+\f{k}{3})}$ and $|\hat{A}_{1+\f{k}{3}}|\leq C\dl r_0$.
\end{proposition}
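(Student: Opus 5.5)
We prove Proposition \ref{propdclk} by reducing \eqref{boundarylayerlk} to the model linear system \eqref{boundarylayerl1f}, exactly as in the proof of Proposition \ref{propdcl1}, arguing by induction on $k$. The inductive hypothesis is that all lower-order profiles $\hat{u}^{(1+\f{i}{3})}_p,\hat{v}^{(1+\f{2+i}{3})}_p$ ($i<k$), outer profiles and Euler profiles $u_e^{(1+\f{i}{3})},v_e^{(1+\f{i}{3})}$ already satisfy their estimates. Proposition \ref{propeulerorderk} supplies the Euler bounds, with $u_e$-terms of size $\dl r_0$ and $v_e$-terms of size $\dl$. From this we first check that the forcing $\hat{f}_{p,k}$ is built from products and derivatives of these lower-order terms (after the constants $\hat{A}_{1+\f{i}{3}}$ have been subtracted and absorbed into the Euler shear corrections, as in the correction step for $\t{\hat{u}}^{(1)}_p$), so it decays rapidly in $\eta$. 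We also check that $\|\eta^\ell\p^i_\th\p^j_\eta \hat{f}_{p,k}\|_{L^\i}\ls \dl r_0$, with every $r_0$-factor compensated as in \eqref{boundarylayerl4/3}. The same check applies to $\hat g_{p,k}$.

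Next we construct the pressure. Integrating $\hat{g}_{p,k}$ from $+\i$ gives $\hat p_p^{(1+\f{2+k}{3})}(\th,\eta)=-r_0^{-1}\int_\eta^{\i}\hat g_{p,k}\,d\bar\eta+\hat\pi(\th)$, with $\hat\pi$ chosen as in Proposition \ref{propdcl1} so that $\p_\th\hat\pi=2Ar_0\hat{v}_p^{(1+\f{2+k}{3})}(\th,0)$, which cancels the boundary-trace term. The $\eta$-dependent pressure part moves into the forcing. We then lift the boundary data: set
$$\hat u=r_0^{-1}\hat u^{(1+\f{k}{3})}_p+\kappa(\eta)r_0^{-1}u_e^{(1+\f{k}{3})}(\th,r_0),$$
define $\hat v$ through the divergence relation, and absorb the extra term $\p_\eta(\eta\hat v_p^{(1+\f{k}{3})})$ into $\hat v$ by subtracting $r_0^{-1}\eta\hat v_p^{(1+\f{k}{3})}$. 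This yields exactly \eqref{boundarylayerl1f}, with a new right side $\hat f$ satisfying $\|\eta^\ell\p^i_\th\p^j_\eta\hat f\|_{L^\i}\le C\dl$ uniformly in $r_0$ (using $2Ar_0^{-1}\approx\t\o$), homogeneous Dirichlet data, and periodicity.

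Applying the solvability result for \eqref{boundarylayerl1f} (Proposition 4.6 of \cite{FeiPZ:2023ARXIV}, as used in Proposition \ref{propdcl1}) gives a unique solution with a limit constant $\hat B_k$, $|\hat B_k|\ls\dl$, and rapid decay of $\hat u-\hat B_k$ and $\hat v$ together with all derivatives. We then undo the scalings, setting $\hat A_{1+\f{k}{3}}=r_0\hat B_k$. This gives the bounds $\dl r_0$ for $\hat u^{(1+\f{k}{3})}_p-\hat A_{1+\f{k}{3}}$ and $\dl$ for $\hat v_p^{(1+\f{2+k}{3})}$. The zero-mean property follows by integrating the divergence equation in $\th$ and using decay at $\eta=\i$.

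The main obstacle is the bookkeeping in the first step: verifying that $\hat f_{p,k}$, after division by $r_0^2$, is $O(\dl)$ uniformly in $r_0$ with rapid $\eta$-decay. This requires the nonzero limits $\hat A$ to have been removed beforehand, so that no non-decaying terms such as $\eta\,\p_r u_e\,\p_\eta\hat u$ survive without a decaying factor. I would organize this by tracking, for each term, its powers of $r_0$ from the Euler estimates \eqref{eulerkesti} together with the factors $\eta^m$ arising from Taylor-expanding $u_e(\th,r_0+\e^{2/3}\eta)$. Each such $\eta^m$ factor always multiplies an $\eta$-derivative of a decaying profile.
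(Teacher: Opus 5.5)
Your proposal is correct and follows essentially the same route as the paper. Both arguments build $\hat p_p^{(1+\f{2+k}{3})}$ from $2Ar_0\int_0^\th \hat v_p^{(1+\f{2+k}{3})}(\bar\th,0)\,d\bar\th-r_0^{-1}\int_\eta^{\infty}\hat g_{p,k}\,d\bar\eta$, lift the boundary data with $\kappa$, reduce to the model system \eqref{boundarylayerl1f}, invoke the existing solvability result for it, and rescale with $\hat A_{1+\f{k}{3}}=r_0\hat B_{1+\f{k}{3}}$.

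There are two small slips, and neither affects the argument:
\begin{itemize}
\item The divergence relation forces $\hat v$ to contain $+r_0^{-1}\eta\hat v_p^{(1+\f{k}{3})}$, not minus.
\item Your intermediate claim $\hat f_{p,k}=O(\dl r_0)$ is not uniform in $r_0$. For example, $\hat u_p^{(1)}\p_\th\hat u_p^{(1)}$ in $\hat f_{p,4/3}$ is of size $\dl^2r_0^2$. What is actually needed, and what you state at the end, is only $r_0^{-2}\hat f_{p,k}=O(\dl)$.
\end{itemize}
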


\pf

We set
\bes
 \hat{p}_p^{(1+\f{3+k}{3})}=2Ar_0\int^\th_0\hat{v}_p^{(1+\f{2+k}{3})}(\bar{\th},0)d\bar{\th}-\int^{+\i}_\eta r_0^{-1}\hat{g}_{p,k}(\th,\bar{\eta})d\bar{\eta},
\ees

Let $\kappa\in C_c^\infty ([0,+\i))$ satisfy
\begin{align*}
\kappa(0)=1,\ \int_0^{\infty}\kappa(\eta)d\eta=0,\q  \kappa^{(k)}(\eta)\leq  C_k.
\end{align*}
Define
\begin{align*}
&\hat{u}(\th,\eta)=r_0^{-1}\hat{u}^{(1+\f{k}{3})}_p(\th,\eta)+\kappa(\eta)r_0^{-1}u^{(1+\f{k}{3})}_e(\th,r_0),\\
&\hat{v}(\th,\eta)=\hat{v}^{(1+\f{2+k}{3})}_p+r_0^{-1}\eta\hat{v}^{(1+\f{k}{3})}_p-r_0^{-1}\int^{\eta}_0\kappa(\bar{\eta})d\bar{\eta}\p_\th u^{(1+\f{k}{3})}_e(\th,r_0).
\end{align*}
Then system \eqref{boundarylayerlk} can be reformulated into
 \be\label{boundarylayerlkf}
 \lt\{
 \begin{aligned}
 &\lt(2Ar_0^{-1}\rt)\eta \p_\th \hat{u}+\lt(2Ar_0^{-1}\rt)\hat{v}-\p^2_{\eta}\hat{u}=\hat{f}_{p},\\
 &\p_\th \hat{u}+\p_{\eta}\hat{v}=0,\\
 &\hat{u}\big|_{\eta=0}=0\q \p_\eta\hat{u}\big|_{\eta=+\i}=\hat{v}\big|_{\eta=+\i}=0,
 \end{aligned}
 \rt.
 \ee
where $\hat{f}_{p}(\th,\eta)$ is defined as

\begin{align*}
\hat{f}_{p}:=&r_0^{-2}\hat{f}_{p,k}+r_0^{-3}\int^{+\i}_\eta \p_\th \hat{g}_{p,k}(\th,\bar{\eta})d\bar{\eta}\\
   &+\lt(2Ar_0^{-1}\rt) r_0^{-1}\p_\th u^{(1+\f{k}{3})}_e(\th,r_0)\lt(\eta_\ast\kappa(\eta)-\int^{\eta}_0\kappa(\bar{\eta})d\bar{\eta}\rt)\nn\\
&-\kappa^{''}(\eta)r_0^{-1}u^{(1)}_e(\th,r_0)+\lt(2Ar_0^{-1}\rt)r_0^{-1}\eta \hat{v}_p^{(1+\f{k}{3})},
\end{align*}

which decays fast enough at $\eta$ infinity. From the definition of $A$ in \eqref{defAB} and estimate for $\hat{f}_{p,k}$ and $\hat{g}_{p,k}$, we can obtain that
\bes
2Ar_0^{-1}\approx \t{\o},\q \| \eta^\ell\p^i_\th\p^j_{\eta} \hat{f}_{p}\|_{L^\i}\leq C_{i,j,\ell}\dl.
\ees

The system \eqref{boundarylayerlkf} is the same with (4.14) in \cite{FeiPZ:2023ARXIV} and has been solved there by Proposition 4.6. Actually we can obtain that there exists a Constant $\hat{B}_{1+\f{k}{3}}=\lim\limits_{\eta\rightarrow+\i}\hat{u}$ such that

\begin{align*}
&\lt\|\eta^{\ell} \partial_\th^i\p^j_{\eta}\big(\hat{u}-\hat{B}_{1+\f{k}{3}}\big)\rt\|_{L^\i}\leq C_{i,j,\ell}\dl, \\
&\lt\|\eta^{\ell} \partial_\th^i\p^j_{\eta}\hat{v}\rt\|_{L^\i}\leq C_{i,j,\ell}\dl,\q \int_0^{2\pi}\hat{v}_p^{(1+\f{2+k}{3})}(\th,\eta)d\th=0, \ \forall\ \ell\in \mathbb{N},\ \eta\geq 0.
\end{align*}
and $\hat{B}_{1+\f{k}{3}}=\lim\limits_{\eta\rightarrow+\i}\hat{u}$ satisfying $\hat{B}_{1+\f{k}{3}}\ls C\dl$. From the above estimates for $(\hat{u},\hat{v})$ and going back to $(\hat{u}_p^{(1+\f{k}{3})}, \hat{v}_p^{(1+\f{2+k}{3})})$, we have
\begin{align*}
\hat{u}^{(1)}_p(\th,\eta)=&r_0 \hat{u}(\th,\eta)-\kappa(\eta)r_0^{-1}u^{(1)}_e(\th,r_0),\\
\hat{v}^{(1+\f{2}{3})}_p(\th,\eta)=&\hat{v}(\th,\eta)+r_0^{-1}\int^{\eta}_0\kappa(\bar{\eta})d\bar{\eta}\p_\th u^{(1)}_e(\th,r_0)-r_0^{-1}\eta \hat{v}^{(1+\f{k}{3})}_p.
\end{align*}
Then define $\hat{A}_{1+\f{k}{3}}:=\hat{B}_{1+\f{k}{3}}r_0$
\begin{align*}
&\hat{u}^{(1+\f{k}{3})}_p(\th,\eta)-\hat{A}_{1+\f{k}{3}}=r_0 \lt[\hat{u}(\th,\eta)-\hat{B}_{1+\f{k}{3}}\rt]-\kappa(\eta)u^{(1+\f{k}{3})}_e(\th,r_0),\\
&\hat{v}^{(1+\f{2+k}{3})}_p(\th,\eta)=\hat{v}(\th,\eta)+r_0^{-1}\int^{\eta}_0\kappa(\bar{\eta})d\bar{\eta}\p_\th u^{(1+\f{k}{3})}_e(\th,r_0)-r_0^{-1}\eta \hat{v}^{(1+\f{k}{3})}_p,
\end{align*}

which indicates that
\begin{align*}
&\lt|\eta^\ell\partial_\th^j\p^k_{\eta}\lt[\hat{u}^{(1+\f{k}{3})}_p(\th,\eta)-\hat{A}_{1+\f{k}{3}}\rt]\rt|= r_0\lt|\eta^\ell\partial_\th^j\p^k_{\eta} \lt[\hat{u}(\th,\eta_\ast)-\hat{B}_{1+\f{k}{3}}\rt]\rt|-\lt||\eta^\ell\partial_\th^j\p^k_{\eta}\kappa(\eta)u^{(1+\f{k}{3})}_e(\th,r_0)\rt|\ls C_{j,k,\ell}\dl r_0,
\end{align*}
and
\begin{align*}
&\lt|\eta^\ell\partial_\th^j\p^k_{\eta}\hat{v}^{(1+\f{2}{3})}_p(\th,\eta)\rt|\leq \lt|\eta^\ell\partial_\th^j\p^k_{\eta} \lt[\hat{v}(\th,\eta)+r_0^{-1}\int^{\eta}_0\kappa(\bar{\eta})d\bar{\eta}\p_\th u^{(1)}_e(\th,r_0)\rt]\rt|\\
 &+\lt|r_0^{-1}\eta^\ell\partial_\th^j\p^k_{\eta}\lt(\eta \hat{v}^{(1+\f{k}{3})}_p\rt)\rt|\leq C_{j,k,\ell}\dl.
\end{align*}

Then we can obtain the result in Proposition \ref{propdclk}. \ef

\subsubsection{Correction of the boundary layer solutions and the Euler equations of higher order}
\indent

Since we want the solutions of the boundary layer equations decaying fast at infinity, we define
\bes
\t{u}^{(1+\f{k}{3})}_p:=u^{(1+\f{k}{3})}_p-A_{1+\f{k}{3}},\q \t{\hat{u}}^{(1)}_p:=\hat{u}^{(1+\f{k}{3})}_p-\hat{A}_{1+\f{k}{3}}.
\ees
and use $\t{u}^{(1+\f{k}{3})}_p$ and $\t{\hat{u}}^{(1+\f{k}{3})}_p$ to replace the original ${u}^{(1+\f{k}{3})}_p$ and ${\hat{u}}^{(1+\f{k}{3})}_p$ respectively. Then we need to correct $u^{(1+\f{k}{3})}_e$ to $\t{u}^{(1+\f{k}{3})}_e,$ such that they satisfy
\begin{align}
&\t{u}^{(1+\f{k}{3})}_e\big|_{r=r_0+1}=-u^{(1+\f{k}{3})}_p\big|_{\zeta=0}+A_{1+\f{k}{3}}={u}^{(1+\f{k}{3})}_e\big|_{r=r_0+1}+A_{1+\f{k}{3}},\nn\\
&\t{u}^{(1+\f{k}{3})}_e\big|_{r=r_0}=-\hat{u}^{(1+\f{k}{3})}_p\big|_{\eta=0}+\hat{A}_{1+\f{k}{3}}={u}^{(1+\f{k}{3})}_e\big|_{r=r_0}+\hat{A}_{1+\f{k}{3}}.\nn
\end{align}
and $(\t{u}^{(1+\f{k}{3})}_e,v^{(1+\f{k}{3})}_e)$ is still a solution of the system \eqref{middleeulerk}.

Also, using divergence-free condition and the harmonic property of  $rv_e^{(1+\f{k}{3})}$, a direct calculation implies that
\be
\p_\th \lt( r(\Dl-\f{1}{r^2})u_e^{(1+\f{k}{3})} +\f{2}{r}\p_\th v_e^{(1+\f{k}{3})}\rt)=0.
\ee
Let
\bes
\t{u}^{(1+\f{k}{3})}_e={u}^{(1+\f{k}{3})}_e+h_{1+\f{k}{3}}(r),
\ees
where $h_{1+\f{k}{3}}(r)$ satisfies the following ODE
\begin{align}
&r(\Dl-\f{1}{r^2})h_{1+\f{k}{3}}(r)=-r(\Dl-\f{1}{r^2})u_e^{(1+\f{k}{3})} +\f{2}{r}\p_\th v_e^{(1+\f{k}{3})},\nn\\
&h_{1+\f{k}{3}}(r_0)=\hat{A}_{1+\f{k}{3}},\q h_{1+\f{k}{3}}(r_0+1)=A_{1+\f{k}{3}}.\nn
\end{align}
Obviously, $(\t{u}^{(1+\f{k}{3})}_e, v^{(1+\f{k}{3})}_e)$ is still a solution of the system \eqref{middleeulerk} and satisfies \eqref{eulerkesti}. Also we have that
  \bes
 r(\Dl-\f{1}{r^2})\t{u}_e^{(1+\f{k}{3})} +\f{2}{r}\p_\th v_e^{(1+\f{k}{3})}\equiv 0.
  \ees
  By letting $\t{u}_p^{(1+\f{k}{3})}$ and $\t{u}_e^{(1+\f{k}{3})}$ to replace  ${u}_p^{(1+\f{k}{3})}$ and $u_e^{(1+\f{k}{3})}$, the equations for the $1+\f{k}{3}$th order outer boundary layer \eqref{boundarylayeruk} is invariant, which are also true for the equations of the $1+\f{2+k}{3}$th order inner boundary layer \eqref{boundarylayerlk} when replacing $\hat{u}_p^{(1+\f{k}{3})}$ and $u_e^{(1+\f{k}{3})}$ with $\t{\hat{u}}_p^{(1+\f{k}{3})}$ and $\t{u}_e^{(1+\f{k}{3})}$. \ef

\subsection{Approximate solutions}\label{Approximate solutions}

\indent

In this subsection, we construct an approximate solution of the Navier-Stokes equations (\ref{nspolar}). First let $\chi(y)\in C^\i_c((-\i,r_0+1])$ be a function satisfying
\bes
\chi(r)=\lt\{
\bali
&0,\q r\in \lt[-\i,r_0+0.25\rt],\\
&1,\q r\geq r_0+0.75,
\eali
\rt.
\ees
and $\hat{\chi}(r)=1-\chi(r)$.  Set
\begin{align*}
{u}_{p,\text{comb}}^a(\th,r):=&\chi(r)\lt[u_p^{(0)}+\sum^{45}_{k=0}\e^{1+\f{k}{3}}u^{(1+\f{k}{3})}_p\rt]+\hat{\chi}\lt[\sum^{45}_{k=0}\e^{1+\f{k}{3}}\hat{u}^{(1+\f{k}{3})}_p \rt]:=\chi(r)u_p^a+\hat{\chi}(r) \hat{u}_p^a,\\[5pt]
{v}_{p,\text{comb}}^a(\th,r):=&\chi(r)\lt[\sum^{48}_{k=0}\e^{1+\f{k}{3}}v^{(1+\f{k}{3})}_p\rt]+\hat{\chi}(r)\lt[\sum_{k=0}^{47}\e^{1+\f{k}{3}}\hat{v}^{(1+\f{k}{3})}_p\rt]:=\chi(r)v_p^a+\hat{\chi}(r)\hat{v}_p^a,\\[5pt]
{p}_{p,\text{comb}}^a(x,y):=&\chi^2(r)\lt[\sum^{48}_{k=0}\e^{1+\f{k}{3}}p^{(1+\f{k}{3})}_p\rt]+\hat{\chi}^2(r)\lt[\sum_{k=0}^{47}\e^{1+\f{k}{3}}\hat{p}^{(1+\f{k}{3})}_p\rt]:=\chi^2p_p^a+(1-\chi(r))^2\hat{p}_p^a,
\end{align*}
and
\begin{align*}
u_e^a:=u^e(r)+\sum^{45}_{k=0}\e^{1+\f{k}{3}}u^{(1+\f{k}{3})}_e, \q v_e^a:=\sum^{45}_{k=0}\e^{1+\f{k}{3}}v^{(1+\f{k}{3})}_e,\q p_e^a:=\sum^{45}_{k=0}\e^{1+\f{k}{3}}p^{(1+\f{k}{3})}_e.
\end{align*}

We construct an approximate solution $(u^a,v^a,p^a)$ by
\begin{align*}
u^a(x,y):&=u_e^a+{u}_{p,\text{comb}}^a+\epsilon^{16}h(\th,r),\q v^a(x,y):=v_e^a+{v}_{p,\text{comb}}^a,\q p^a(x,y):=p_e^a+{p}_p^a,
\end{align*}
where the corrector $h(\th,r)$ will be given in Appendix, which satisfies
\begin{align*}
h(\th,r_0)=h(\th,r_0+1)=0, \ \|\partial_\th^i\partial_r^j h\|_2\leq C_{i,j}r_0,
\end{align*}
and makes $(u^a, v^a)$ be divergence-free
\begin{align*}
\p_\th u^a+\p_r(rv^a)=0.
\end{align*}
Moreover, $(u^a, v^a)$  satisfies the following boundary conditions
\begin{align*}
&( u^a,v^a)(\th,r)=( u^a,v^a)(\th+2\pi,r),\\[5pt]
&u^a(\th, r_0+1)=(\o+\dl f(\th))(r_0+1), \ v^a(\th,r_0+1)=0,\\[5pt]
&u^a(\th,r_0)=0,\ v^a(\th,r_0)=0.
\end{align*}
By collecting the estimates in Proposition \ref{propeulerorder1} and Proposition \ref{propeulerorderk},  we deduce that
\begin{align}
&\|\p^j_\th\p^{k}_r(u^a_e-u^e(r))\|_\infty\leq C_{j,k}\epsilon(\dl+\epsilon)r_0^{1-k},\nn \\
& \|\p^j_\th\p^{k}_r v^a_e\|_\infty\leq C_{j,k}\epsilon(\dl+\epsilon)r_0^{-k}.\nn
\end{align}
By collecting the estimates in Proposition \ref{propdcu0}, Proposition \ref{propdcu1} and Proposition \ref{propdcuk}, one has
\begin{align}
\|\zeta^\ell\p^{j}_\th\partial_\zeta^k u_p^a\|_\infty\leq C_{j,k,\ell} (\dl+\epsilon)r_0, \ \|\zeta^\ell\p^{j}_\th\partial_\zeta^k v_p^a\|_\infty\leq C_{j,k,\ell} \epsilon(\dl+\epsilon).\nn
\end{align}
Similarly, there holds
\begin{align*}
&\|\eta^\ell\p^{j}_\th\partial_\eta^k \hat{u}_p^a\|_\infty\leq C_{j,k,\ell} \e(\dl+\epsilon^{2/3})r_0, \q \|\eta^\ell\p^{j}_\th\partial_\eta^k \hat{u}_p^a\|_\infty\leq C_{j,k,\ell} \epsilon^{1+2/3}(\dl+\epsilon^{2/3}).
\end{align*}

Finally, set
 \begin{align*}
 R_u^a:&=u^{a} u_\theta^{a}+r v^{a} u_r^{a}+u^{a} v^{a}+p_\theta^{a}-\epsilon^2\left(\frac{u_{\theta \theta}^{a}}{r}+r u_{r r}^{a}+u_r^{a}+\frac{2}{r} v_\theta^{a}-\frac{u^{a}}{r}\right), \\
R_v^a:&=u^{a} v_\theta^{a}+r v^{a} v_r^{a}-\left(u^{a}\right)^2+r p_r^{a}-\epsilon^2\left(\frac{v_{\theta \theta}^{a}}{r}+r v_{r r}^{a}+v_r^{a}-\frac{2}{r} u_\theta^{a}-\frac{v^{a}}{r}\right),
 \end{align*}
 then there holds
 \begin{align*}
\|\partial^i_\th \p^j_r (R_u^a,R_v^a)\|_2\leq C\e^{16-j}r_0^2,
 \end{align*}
and $(u^a, v^a, p^a)$ satisfies
\begin{eqnarray*}
\left\{
\begin{array}{lll}
u^{a} u_\theta^{a}+r v^{a} u_r^{a}+u^{a} v^{a}+p_\theta^{a}-\epsilon^2\left(\frac{u_{\theta \theta}^{a}}{r}+r u_{r r}^{a}+u_r^{a}+\frac{2}{r} v_\theta^{a}-\frac{u^{a}}{r}\right)=R_u^a,\\[5pt]
u^{a} v_\theta^{a}+r v^{a} v_r^{a}-\left(u^{a}\right)^2+r p_r^{a}-\epsilon^2\left(\frac{v_{\theta \theta}^{a}}{r}+r v_{r r}^{a}+v_r^{a}-\frac{2}{r} u_\theta^{a}-\frac{v^{a}}{r}\right)=R_v^a,\\[5pt]
 \p_\th u^a+\p_r(rv^a)=0, \\[5pt]
( u^a,v^a)(\th,r)=( u^a,v^a)(\th+2\pi,r),\\[5pt]
u^a(\th, r_0+1)=(\o+\dl f(\th))(r_0+1), \ v^a(\th,r_0+1)=0,\\[5pt]
u^a(\th,r_0)=0,\ v^a(\th,r_0)=0.
\end{array}
\right.
\end{eqnarray*}
This is exactly the constructed approximate solution that we stated in the beginning of Section \ref{sec2}. \qed

\section{Appendix}

In this section, we give a construction of corrector $h(\th,r)$ defined in section \ref{Approximate solutions}. Firstly, we give a simple lemma which is similar to Lemma 6.1 in Appendix B in \cite{FeiGLT:2023CMP}, hence we omit the proof.

\begin{lemma}\label{corector equation}
Assume that $K(\th,r)$ is a $2\pi$-periodic smooth function which satisfies
\bes
\int_0^{2\pi}K(\th,r)d\th=0, \ \forall\ r\in [r_0,r_0+1]; \quad K(\th, r_0)=K(\th, r_0+1)=0,
\ees
then there exists a $2\pi$-periodic function $h(\th,r)$ such that
\begin{align}
&\partial_\th h(\th,r)=K(\th,r); \ h(\th,r_0)=h(\th, r_0+1)=0;\nonumber\\
&\int_0^{2\pi}h(\th,r)d\th=0, \ \|\partial_\th^j\partial_r^kh\|_2\leq C\|\partial_\th^j\partial_r^kK\|_2.\nn
\end{align}
\end{lemma}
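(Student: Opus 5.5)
We construct $h$ by Fourier series in $\th$, prescribing its derivative mode by mode. Since $K(\cdot,r)$ is $2\pi$-periodic with zero $\th$-mean for every $r$, we expand
\bes
K(\th,r)=\sum_{n\neq 0}\hat K_n(r)e^{in\th},\qquad \hat K_0(r)\equiv 0,
\ees
and define
\bes
h(\th,r):=\sum_{n\neq 0}\f{\hat K_n(r)}{in}e^{in\th}.
\ees
Equivalently, $h(\th,r)=\int_0^\th K(\bar\th,r)d\bar\th-\fint_0^{2\pi}\int_0^{\th'}K(\bar\th,r)d\bar\th\,d\th'$. This representation makes periodicity immediate, because $\int_0^{2\pi}K\,d\th=0$.

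The required properties are then checked in turn. First, $\p_\th h=K$ holds termwise. The zero mode of $h$ vanishes by construction, so $\int_0^{2\pi}h\,d\th=0$. For the boundary conditions, $K(\th,r_0)=0$ for all $\th$ forces $\hat K_n(r_0)=0$ for all $n$, hence $h(\th,r_0)=0$, and the same argument gives $h(\th,r_0+1)=0$.

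For the estimates, $\p_r^k$ commutes with the Fourier multiplier $(in)^{-1}$ and $\p_\th^j$ multiplies the $n$-th coefficient by $(in)^j$. Plancherel in $\th$ followed by integration in $r$ therefore gives
\bes
\|\p_\th^j\p_r^k h\|_2^2=2\pi\sum_{n\neq0}|n|^{2j-2}\int_{r_0}^{r_0+1}|\hat K^{(k)}_n(r)|^2dr\le \|\p_\th^j\p_r^kK\|_2^2,
\ees
since $|n|\geq 1$. The constant here is $1$, independent of $r_0$, because the measure is $d\th\,dr$ on $\O$.

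The only genuine point to watch is smoothness and the convergence of the series. Smoothness of $K$ gives rapid decay of $\hat K_n^{(k)}$ uniformly in $r$, so the series for $h$ and all its derivatives converge uniformly and $h$ is smooth. No real obstacle is expected.
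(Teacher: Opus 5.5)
Your proof is correct and follows the standard construction that the paper relies on. The paper omits the proof and defers to Lemma 6.1 of Fei--Gao--Lin--Tao: take $h$ to be the zero-mean $\theta$-primitive of $K$, i.e.\ divide the $n$-th Fourier mode by $in$, so $h$ inherits the boundary conditions from $K$, and Plancherel with $|n|\ge 1$ gives the bound with constant $1$.
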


Next, we construct the corrector $h(\th,r)$ by the above Lemma.
Direct computation gives
\begin{align*}
&\p_\th (u^a_e+ u^a_{p,\text{comb}})+\p_r(rv^a_e+rv^a_{p,\text{comb}})=\p_\th  u^a_{p,\text{comb}}+\p_r(rv^a_{p,\text{comb}})\\
=&\chi(r)(\p_\th {u}_p^a+\p_r(r{v}_p^a))+\hat{\chi}(\p_\th \hat{u}_p^a+\p_r(r\hat{v}_p^a))+r(\chi^{\prime}(r) {v}^a_p+\hat{\chi}'\hat{v}^a_p),\nn\\
=&\chi\sum^{48}_{k=46} \e^{1+\f{k}{3}}\p_\zeta(\zeta v_p^{(1+\f{k}{3})})+\hat{\chi}\sum^{47}_{k=46} \e^{1+\f{k}{3}}\p_\eta(\eta \hat{v}_p^{(1+\f{k}{3})})+r(\chi'(r){v}^a_p+\hat{\chi}'\hat{v}^a_p),\nn\\
=&\chi\sum^{48}_{k=46} \e^{1+\f{k}{3}}\p_\zeta(\zeta v_p^{(1+\f{k}{3})})+\hat{\chi}\sum^{47}_{k=46} \e^{1+\f{k}{3}}\p_\eta(\eta \hat{v}_p^{(1+\f{k}{3})})\nn\\
 &+r\e^{16}(\chi^{\prime}(r-r_0-1)^{-16}\zeta^{16}{v}^a_p+\hat{\chi}^{\prime}(r-r_0)^{-16}\eta^{16}\hat{v}^a_p)\nn\\
:=&-\e^{10} K(\th,r).
\end{align*}

Notice that $\chi'(r)$ and $\hat{\chi}'(r)$ are supported in $ r\in [r_0+0.25,r_0+0.75]$ and the properties of ${v}^a_p,  \hat{v}^a_p$, we know that $K(\th,r)$ satisfies the assumption in Lemma \ref{corector equation}, then there exists  $h(\th,r)$ such that
\beas
&\p_\th (u^a_e+ u^a_{p,\text{comb}})+\p_r(rv^a_e+rv^a_{p,\text{comb}})=-\e^{10}\p_\th h(\th,r),
\eeas
which indicates that
\bes
\p_\th u^a+\p_r(rv^a)=0.
\ees

\section*{Data availability statement}

Data sharing is not applicable to this article as no datasets were generated or analysed during the current study.

\section*{Conflict of interest statement}

The authors declare that they have no conflict of interest.

\section*{Acknowledgments}

 X. Pan is supported by NSF of China  under Grant  No.12471222 and No.12031006. We thank Dr. Chen Gao for helpful discussions on this topic.

\end{document}